\documentclass[11pt]{article}
\usepackage[T1]{fontenc}
\usepackage[utf8]{inputenc}
\usepackage{lmodern}
\usepackage[margin=1.02in,headheight=14pt]{geometry}
\usepackage{amsmath,amssymb,amsthm,mathtools}
\usepackage{microtype,booktabs,array}
\usepackage{mathrsfs}
\usepackage{enumitem,needspace,etoolbox}
\usepackage{fancyhdr}
\usepackage[hidelinks,pdfencoding=auto]{hyperref}
\usepackage{bookmark}
\usepackage{tikz}
\usetikzlibrary{patterns}
\numberwithin{equation}{section}
\newtheorem{theorem}{Theorem}[section]
\newtheorem{proposition}[theorem]{Proposition}
\newtheorem{lemma}[theorem]{Lemma}
\newtheorem{corollary}[theorem]{Corollary}
\BeforeBeginEnvironment{theorem}{\Needspace{10\baselineskip}}
\BeforeBeginEnvironment{proposition}{\Needspace{8\baselineskip}}
\BeforeBeginEnvironment{lemma}{\Needspace{8\baselineskip}}
\BeforeBeginEnvironment{corollary}{\Needspace{8\baselineskip}}
\theoremstyle{definition}

\theoremstyle{remark}
\newtheorem{remark}[theorem]{Remark}
\DeclareMathOperator{\supp}{supp}
\DeclareMathOperator{\sgn}{sgn}
\DeclareMathOperator{\PV}{PV}

\newcommand{\R}{\mathbb R}
\newcommand{\Sph}{\mathbb S}
\newcommand{\Cyl}{\mathcal C}
\newcommand{\E}{\mathcal E}
\newcommand{\Pp}{\mathcal P}
\newcommand{\Aa}{\mathcal A}

\newcommand{\dd}{\,\mathrm d}
\newcommand{\eref}{\mathcal E_{\mathrm{ref}}}
\newcommand{\sh}{\mathrm{sh}}

\newcommand{\nsc}{\mathfrak n_{p,q}}
\newcommand{\test}{C_c^\infty(\R^d\setminus\{0\})}

\newcommand{\Kij}{Kijaczko~\cite{KijTL}}

\setlist[itemize]{topsep=4pt,itemsep=1pt,leftmargin=1.5em}
\setlist[enumerate]{topsep=4pt,itemsep=2pt,leftmargin=1.8em}
\hypersetup{pdftitle={Sharpness transitions for weighted Triebel--Lizorkin Hardy inequalities with p < q},pdfauthor={Yongyang Jin and Li Tang},pdfsubject={Two-weight Triebel--Lizorkin Hardy inequalities; mixed exponents p<q; sharpness transitions}}

\title{\bfseries\boldmath Sharpness transitions for weighted Triebel--Lizorkin Hardy inequalities with $p<q$}
\author{Jun Cao\textsuperscript{1}\qquad   Yongyang Jin\textsuperscript{1}\qquad
Li Tang\textsuperscript{2}\thanks{Corresponding author.}\qquad Chenfeng Zhu\textsuperscript{1} \\[0.5em]
{\small\textsuperscript{1} School of Mathematical Sciences, Zhejiang University of Technology,}\\
{\small Hangzhou, China}\\[0.25em]
{\small\textsuperscript{2} Institute for Theoretical Sciences, Westlake University,}\\
{\small Hangzhou, 310030, China}\\[0.4em]
{\small E-mail: \href{mailto:caojun1860@zjut.edu.cn}{\texttt{caojun1860@zjut.edu.cn}} (J. Cao);}\\
{\small\href{mailto:yongyang@zjut.edu.cn}{\texttt{yongyang@zjut.edu.cn}} (Y. Jin);}\\
{\small \href{mailto:tangli42@westlake.edu.cn}{\texttt{tangli42@westlake.edu.cn}} (L. Tang);}\\
{\small \href{mailto:chenfengzhu@zjut.edu.cn}{\texttt{chenfengzhu@zjut.edu.cn}} (C. Zhu)}}
\date{}

\begin{document}
\maketitle
\begin{abstract}
For two-weight fractional difference energies with inner and outer
exponents $1<p<q$, we study the optimality of the Hardy constant
generated by a homogeneous power. We prove nontrivial sharpness
intervals for every sign of $D=d+2\beta-sp$, including a two-sided
interval when $D=0$, and strict loss of optimality near both
finite-energy endpoints. For $q\ge2$, sharpness extends beyond the
elementary pointwise scalar criterion through an integrated remainder
estimate: after opposite logarithmic displacements are combined,
a positive contribution at bounded displacements absorbs the negative
contribution at larger displacements. A single quadratic reference
energy controls this comparison and the angular fluctuations of
near-extremizers. When $1<p<q<2$, optimization of the scalar
supporting-line inequality gives the exact power exponent for that
criterion and a larger explicit sharpness interval. Angular variations
provide explicit strict-loss examples. The method extends to flat
singular sets and regional half-space energies, with separate
finiteness ranges for test energies and the homogeneous candidate.
Complementary results for $p>q$ include flat-subspace inequalities. The
constants are optimal on the stated smooth test classes and extend unchanged
to their difference-norm completions whenever they are positive.
\end{abstract}

\noindent\textbf{Keywords:} fractional Hardy inequality; Triebel--Lizorkin space; sharp constant; logarithmic cylinder; quantitative convexity.\\
\textbf{2020 Mathematics Subject Classification:} 26D15; 46E35; 35R11.

\section{Introduction}

A homogeneous Hardy inequality often has an optimal constant generated
by a singular power, although that power lies outside the admissible
test class. For fractional energies with distinct inner and outer
exponents, this relation can change with the weights. We study that
change for two-weight Triebel--Lizorkin difference energies: the
homogeneous candidate is sharp on nontrivial weight intervals, but
becomes strictly nonoptimal near both endpoints of the finite-energy range.

The candidate comes from the established fractional Hardy constant
calculation. Kijaczko~\cite{KijTL} proved its mixed-exponent optimality
for $q\le p$ and left the opposite order $p<q$ open. For $q\ge2$, sharpness extends strictly beyond the elementary
scalar criterion: after opposite logarithmic displacements are combined,
a positive remainder at bounded displacements absorbs the negative
remainder at larger ones. For $1<p<q<2$,
optimization of the scalar power exponent instead gives a larger
explicit interval.

A quadratic reference energy makes this comparison quantitative and
controls the angular fluctuations of sequences approaching the optimal
constant. Its translation estimate also controls the tangential shifts
associated with flat singular sets and regional half-space energies.
We first state the sharpness intervals and endpoint-loss theorem,
then discuss the earlier constant formulas and develop the proofs
and geometric consequences.

\subsection{The problem and the homogeneous candidate}\label{sec:problem}

Let $d\ge2$, $0<s<1$, and $1<p<q<\infty$. The $q$th power
of the two-weight Triebel--Lizorkin difference seminorm is
\begin{equation}\label{eq:intro-mixed-functional}
 \int_{\R^d}|x|^\alpha
 \left(\int_{\R^d}\frac{|u(x)-u(y)|^p|y|^\beta}
 {|x-y|^{d+sp}}\dd y\right)^{q/p}\dd x.
\end{equation}
We work on $C_c^\infty(\R^d\setminus\{0\})$. We write
$\widehat x=x/|x|\in\Sph^{d-1}$ for the direction of a point
$x\ne0$, and $\omega_{d-1}=|\Sph^{d-1}|$ for the area of the unit
sphere. For this class the
exact finite-energy conditions are
\begin{equation}\label{eq:intro-weight-range}
 -d<\beta<sp,\qquad -d<\alpha<sq+d(q/p-1).
\end{equation}
The denominator dictated by scaling is
$\int|u(x)|^q|x|^{\alpha+q\beta/p-sq}\dd x$.
With $\beta$ fixed, the natural exponent and candidate integral are
\begin{equation}\label{eq:intro-weighted-J}
\begin{aligned}
 a&=\frac{\alpha+d+q\beta/p-sq}{q},\\
 J_{d,s,p}^{(\beta)}(a)
 &=\int_{\R^d}\frac{|1-|z|^{-a}|^p|z|^\beta}
 {|e-z|^{d+sp}}\dd z,\qquad |e|=1.
\end{aligned}
\end{equation}

The integral in \eqref{eq:intro-weighted-J} is part of the established
fractional Hardy constant calculation. When $q\le p$, Kijaczko~\cite[Theorem~1 and Eq.~(4)]{KijTL}
identified its sharp mixed-exponent role.
For $p<q$, logarithmic cutoffs still bound the best constant from
above by this candidate, while the matching lower bound depends on
the weights. We call the candidate \emph{sharp} when it equals the best
constant on the stated test class. \emph{Loss of sharpness} means that
the best constant is strictly smaller than the candidate. The main results
below describe where sharpness persists and where it must fail.
Historical comparisons and explicit evaluations are given after the
statements, with their original attributions.

\paragraph{Triebel--Lizorkin difference spaces.}
We use the mixed difference seminorm appearing in Prats and
Saksman~\cite[Theorems~1.2 and~1.3]{PS}, and adopt
Kijaczko's~\cite{KijTL} convention of placing the inner exponent first.
Let $\Omega\subset\R^d$ be open, $1\le p,q<\infty$, and
\[
 \max\left\{0,\frac{d(p-q)}{pq}\right\}<s<1.
\]
For a measurable function $u$ on $\Omega$, define the difference seminorm
\[
 [u]_{F^s_{p,q}(\Omega)}
 :=\left(\int_\Omega
       \left(\int_\Omega
       \frac{|u(x)-u(y)|^p}{|x-y|^{d+sp}}\dd y\right)^{q/p}
       \dd x\right)^{1/q}.
\]
Then
\[
 F^s_{p,q}(\Omega)
 :=\left\{u\in L^{\max\{p,q\}}(\Omega):
       \|u\|_{L^q(\Omega)}+[u]_{F^s_{p,q}(\Omega)}<\infty\right\}.
\]
In this convention, $p$ is the exponent in the inner integral and $q$
is the exponent in the outer integral. When $p=q$, this is the
fractional Sobolev space $W^{s,p}(\Omega)$ with its Gagliardo seminorm.
For $p<q$, the condition on $s$ reduces to $0<s<1$.
The $q$th power of the corresponding two-weight difference seminorm
on $\R^d$ is the integral in \eqref{eq:intro-mixed-functional}.

\paragraph{The variational quotient and weight parameters.}

In the $p<q$ part of the manuscript we assume
\begin{equation}\label{eq:parameters}
 d\ge2,\quad 0<s<1,\quad 1<p<q<\infty,\quad
 -d<\beta<sp.
\end{equation}
For $u\in\test$, define
\begin{align}
 A_{u,\beta}(x)&=\int_{\R^d}
 \frac{|u(x)-u(y)|^p|y|^\beta}{|x-y|^{d+sp}}\dd y,
 \label{eq:pointenergy}\\
 \E_{\alpha,\beta}[u]&=\int_{\R^d}|x|^\alpha A_{u,\beta}(x)^{q/p}\dd x,
 \label{eq:energy}\\
 \Pp_{\alpha,\beta}[u]&=\int_{\R^d}|u(x)|^q
 |x|^{\alpha+q\beta/p-sq}\dd x.\label{eq:denominator}
\end{align}
The subscript $\beta$ records the inner weight; the derived
parameters are given in \eqref{eq:reparam}.
The best constant is
\begin{equation}\label{eq:Hdef}
 H_{\alpha,\beta}^{p,q}
 =\inf_{0\ne u\in\test}\frac{\E_{\alpha,\beta}[u]}{\Pp_{\alpha,\beta}[u]}.
\end{equation}
We suppress the superscripts $p,q$ when they are fixed and use a
difference kernel with coefficient one. Introduce
\begin{align}
 D&=d+2\beta-sp,\qquad
 a=\frac{\alpha+d+q\beta/p-sq}{q},\label{eq:reparam}\\
 \alpha_-&=-d,\qquad \alpha_H=sq-d-q\beta/p,\qquad
 \alpha_+=sq+d(q/p-1),\nonumber\\
 \lambda_+&=sp-\beta>0,\qquad \lambda_-=d+\beta>0,
 \qquad D=\lambda_- -\lambda_+.\nonumber
\end{align}
Thus $\alpha=\alpha_H+qa$, and
\begin{equation}\label{eq:a-range}
 \alpha_-<\alpha<\alpha_+
 \quad\Longleftrightarrow\quad
 -\frac{\lambda_+}{p}<a<\frac{\lambda_-}{p}.
\end{equation}
For the proof we use the shorter notation
\begin{equation}\label{eq:Ja}
 J_{a,\beta}=J_{d,s,p}^{(\beta)}(a)
 =\int_{\R^d}\frac{|1-|z|^{-a}|^p|z|^\beta}
 {|e-z|^{d+sp}}\dd z,\qquad |e|=1.
\end{equation}
It is independent of $e$, finite on \eqref{eq:a-range}, zero at
$a=0$, and positive elsewhere on that interval.

All optimal constants below refer to the coefficient-one difference
seminorm and the specified smooth test class.

\Needspace{14\baselineskip}
\subsection{Sharpness intervals and strict extension}

We use \emph{elementary scalar criterion} to refer to the following
sufficient condition for nonnegativity of the scalar remainder
$G_{\vartheta,b}$ defined in \eqref{eq:Gtwo}:
\[
 0\le b\le\vartheta^{n_{p,q}}
 \quad\Longrightarrow\quad
 G_{\vartheta,b}(U,V)\ge0
 \qquad(0<\vartheta<1,\ U,V\ge0).
\]
This implication is proved in Lemma~\ref{lem:scalar-support}.
The exponent $n_{p,q}$, which also enters the proof of extension
for each sign of $D$, is defined by
\begin{equation}\label{eq:n-pq}
 n_{p,q}=\begin{cases}
 1+\dfrac{\log(q-1)}{\log(1-2^{-1/(p-1)})},&1<p<q<2,\\[2.5mm]
 1,&2\le q\le2p,\\
 q-p,&q>2p.
 \end{cases}
\end{equation}
The first line is finite and greater than one. The optimized exponent
$\nsc$ in \eqref{eq:nsc-main} is strictly smaller in the
low-exponent range.

\begin{theorem}[Sharpness intervals for all signs of $D$]\label{thm:extension}
Assume \eqref{eq:parameters}.
\begin{enumerate}[label=\textup{(\roman*)}]
\item If $D\ne0$, let
\[
 a_0=\frac{|D|}{p+n_{p,q}},\qquad \varepsilon_D=\sgn D.
\]
There exists $\eta=\eta(d,s,p,q,\beta)>0$ such that
\begin{equation}\label{eq:extension-main}
 H_{\alpha_H+qa,\beta}^{p,q}=J_{a,\beta}^{q/p}
 \qquad (0\le\varepsilon_D a\le a_0+\eta).
\end{equation}
The width $\eta$ is chosen so that all exponents remain in \eqref{eq:a-range}.
\item If $D=0$, there exists $\eta>0$ such that
\begin{equation}\label{eq:balanced-main}
 H_{\alpha_H+qa,\beta}^{p,q}=J_{a,\beta}^{q/p}\qquad (|a|\le\eta).
\end{equation}
\end{enumerate}
\end{theorem}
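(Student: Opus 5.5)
The proof has two halves. The upper bound $H\le J_{a,\beta}^{q/p}$ comes from logarithmic cutoffs of the homogeneous power. The lower bound comes from a ground-state representation on the logarithmic cylinder, and all the real work sits in the lower bound.

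\textbf{Upper bound.} Test the quotient on $u_\varepsilon(x)=|x|^{-a}\phi_\varepsilon(\log|x|)$, where $\phi_\varepsilon$ is a wide plateau cutoff. The substitution $y=|x|z$ (after rotating $\widehat x$ to $e$) shows that
\[
A_{u_\varepsilon,\beta}(x)\approx J_{a,\beta}|x|^{\beta-sp-ap}
\]
on the plateau. Hence $\E_{\alpha,\beta}[u_\varepsilon]/\Pp_{\alpha,\beta}[u_\varepsilon]\to J_{a,\beta}^{q/p}$, since the exponents balance exactly by the definition of $a$ in \eqref{eq:reparam}. The transition zones contribute a relative error $O(1/\log)$. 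This step only uses finiteness on \eqref{eq:a-range}.

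\textbf{Lower bound.} Write $u=|x|^{-a}v$ and pass to $t=\log|x|$, $\theta=\widehat x$. The inner energy becomes an average over displacements $(\tau,\theta')$, with $y=e^{\tau}|x|\theta'$, of $|v(x)-e^{-a\tau}v(y)|^p$ against the kernel $e^{(d+\beta)\tau}|e-e^\tau\theta'|^{-d-sp}$. This kernel has exponential tails of rates $\lambda_\pm$.

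Next apply the supporting-line (convexity) inequality for $r\mapsto r^{q/p}$ at the point $J_{a,\beta}|v(x)|^p$. The left side then splits into $J^{q/p}\Pp$ plus a remainder. After pairing each displacement $\tau$ with $-\tau$ (symmetrizing in $x\leftrightarrow y$), that remainder is an integral of the scalar function $G_{\vartheta,b}(|v(x)|,|v(y)|)$ with $\vartheta=e^{-|\tau|\cdot(\text{const})}$ and $b$ a weight ratio. The ratio is $e^{-D\tau}$ times factors coming from $a$; more precisely it is $e^{(-D+(p+?)a)\tau}$-type, so that the sign of $D$ picks the favorable direction.

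By Lemma~\ref{lem:scalar-support}, $G_{\vartheta,b}\ge0$ whenever $0\le b\le\vartheta^{n_{p,q}}$. Matching exponents, this holds pointwise for every $\tau$ exactly when $0\le\varepsilon_D a\le |D|/(p+n_{p,q})=a_0$. This already gives the theorem on $[0,a_0]$ for $D\ne0$. For $D=0$ it gives only the single point $a=0$.

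\textbf{Strict extension by $\eta$.} This is the main obstacle. For $\varepsilon_Da$ slightly beyond $a_0$, or for $|a|$ small when $D=0$, pointwise positivity fails, but only at large $|\tau|$, and only by an amount that is $O(\varepsilon_D a-a_0)$ times exponentially decaying kernel mass. At bounded $|\tau|$, strict convexity gives a positive remainder that is quantitatively comparable to the quadratic reference energy
\[
\eref[v]=\iint \min(|v(x)|,|v(y)|)^{q-2}\,|v(x)-v(y)|^2\,K\,\dd x\dd y,
\]
or the natural $p,q$-analogue with homogeneous weights, restricted to $|\tau|\le 1$.

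The plan is therefore threefold:
\begin{enumerate}
\item Bound the bounded-displacement remainder below by $c\,\eref[v]$, uniformly for $a$ in a neighborhood.
\item Bound the negative large-displacement contribution above by $C\delta\,\eref[v]$, with $\delta=\varepsilon_Da-a_0$, or $\delta=|a|$ when $D=0$.
\item Choose $\eta$ with $C\eta<c$.
\end{enumerate}

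Step~(2) is the delicate one. I would first telescope a displacement $\tau$ into $\lceil|\tau|\rceil$ unit steps and use a translation estimate of the form $\|v(\cdot+\tau)-v\|^2\lesssim \tau^2\,\eref$ in the log-cylinder variable. The polynomial growth $\tau^2$ is then absorbed by the exponential kernel decay $e^{-\min(\lambda_\pm)|\tau|}$ minus the slack from the exponent. The angular part is handled the same way, since the sphere is compact. A further difficulty is that the negative part of $G$ must vanish to second order at $U=V$ for this comparison to work. I would verify this from the first-order cancellation built into the supporting line. Continuity of $J_{a,\beta}$ in $a$, together with keeping all exponents inside \eqref{eq:a-range}, fixes the admissible size of $\eta$.
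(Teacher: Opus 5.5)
\textbf{Gap in the absorption step (the case $D=0$ in particular).} Your architecture is the paper's:
\begin{itemize}
\item the cutoff upper bound;
\item the convexity (equivalently H\"older) reduction to $I_{a,\beta}[v]\ge J_{a,\beta}N[v]$;
\item pairing $t\leftrightarrow-t$ into $G_{\vartheta,b}$ with $\vartheta=e^{-at}$ and $b=e^{-(D-pa)t}=\vartheta^{D/a-p}$, which gives the elementary range;
\item absorption of a negative far tail by a quadratic reference energy whose translation chain grows like $1+t^2$.
\end{itemize}
The absorption step does not close as you set it up, however.

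You treat the bounded-displacement remainder as uniformly positive. But as $(\vartheta,b)\to(1,1)$ the quadratic coefficient degenerates. By \eqref{eq:diagonalratio}, $c(\vartheta,b)\le2f''(1)=\frac{2p}{q^2}(1-\vartheta)^{p-2}E_{\vartheta,b}(1)$ with $E_{1,1}(1)=2(p-1)$. Hence $c(\vartheta,b)\lesssim(1-\vartheta)^{p-2}\to0$ when $p>2$.

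This degeneration occurs as $t\downarrow0$ for every $a$. So the positive strip must stay away from $t=0$ (the paper uses $1\le t\le2$), rather than $|\tau|\le1$ against the singular kernel. When $D=0$ it occurs on \emph{every} fixed strip as $a\to0$. There your steps (1)--(3) fail: the strip yields only $\gtrsim|a|^{(p-2)_+}\eref$, while you bound the negative part by $C|a|\eref$. This cannot be absorbed once $p\ge3$.

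The missing ingredient is the uniform estimate near $\vartheta=1$ of Lemma~\ref{lem:short}: $G_{\vartheta,b}\ge c_0(1-\vartheta)^{(p-2)_+}(U^{q/2}-V^{q/2})^2$ for $1-h_0<\vartheta<1$ and $1/2\le b\le2$. It is proved via the Bregman remainder of $|t|^p$ at $t=1-\vartheta$; pointwise strictness plus continuity gives nothing in this regime. It yields $G\ge0$ for $t<t_0$ when $D\ne0$, and for all $t<c_0/|a|$ when $D=0$.

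In the balanced case the negative region therefore starts at displacements of order $1/|a|$. With $[c]_-\le C(1+e^{p|a|t})$ against $k_\beta\lesssim e^{-\lambda t}$, it costs only $C(1+a^{-2})e^{-c/|a|}=o(|a|^{(p-2)_+})$. This exponential smallness, not an $O(|a|)$ bound, is what closes the case $D=0$.

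\textbf{Wrong reference energy.} $\min(|v(x)|,|v(y)|)^{q-2}|v(x)-v(y)|^2$ does not work, for two reasons.
\begin{itemize}
\item It is not a squared $L^2$ distance between translates of a single function, so the telescoping/Cauchy--Schwarz bound $P_F(t)\le N^2P_F(t/N)$ is unavailable for it.
\item For $q>2$ it vanishes on the axes (for $q<2$ it is singular there), and that is exactly where $G$ turns negative. For instance, if $2<q\le2p$, $D>0$ and $a>a_0$, then $b=\vartheta^{1-\epsilon'}$ with $\epsilon'>0$. Then $G(U,0)=(1-A_{\vartheta,b})U^q$ with $1-A_{\vartheta,b}=\frac pq\vartheta\bigl((q-1)-\vartheta^{-\epsilon'}\bigr)+O(\vartheta^{2-\epsilon'})<0$ for large $t$.
\end{itemize}
The paper uses $f=v^{q/2}$ and $(U^{q/2}-V^{q/2})^2$. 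With this choice Lemma~\ref{lem:continuity} gives $[c(\vartheta,b)]_-\le C(1+b)$, axes included. The mean/fluctuation splitting together with the translation chain (Lemma~\ref{lem:reference}) gives $\mathcal D_{\beta,t}[f]\le64k_\beta(t)(1+t^2)\eref[f]$.

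Relatedly, for $D\ne0$ the negative part is not $O(\varepsilon_Da-a_0)$ uniformly in $t$, since $b/\vartheta^{n_{p,q}}$ grows exponentially in $t$ once $\varepsilon_Da>a_0$. What holds is that $\mathcal N_\beta(a)=\int_{t_0}^\infty[c_{a,\beta}(t)]_-k_\beta(t)(1+t^2)\dd t\to0$ by dominated convergence. This suffices because the strip constant is uniform near $a_0$.
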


Figure~\ref{fig:sharpness-map} records the three cases on the $a$-axis
of \eqref{eq:reparam}, together with the proved strict-loss zones near
the endpoints of \eqref{eq:a-range}.

\begin{figure}[t]
\centering
\begin{tikzpicture}[x=1cm,y=1cm,font=\footnotesize]
\tikzset{bar/.style={line width=0.3pt}}
\draw[black!35,dashed] (4.2,0.42) -- (4.2,-1.62);
% D>0
\fill[pattern=north east lines] (0,0) rectangle (1.2,0.26);
\fill[black!15] (1.2,0) rectangle (4.2,0.26);
\fill[black] (4.2,0) rectangle (6.4,0.26);
\fill[black!15] (6.4,0) rectangle (7.2,0.26);
\fill[pattern=north east lines] (7.2,0) rectangle (8.4,0.26);
\draw[bar] (0,0) rectangle (8.4,0.26);
\node[left] at (-0.08,0.13) {$D>0$};
% D=0
\fill[pattern=north east lines] (0,-0.7) rectangle (1.3,-0.44);
\fill[black!15] (1.3,-0.7) rectangle (3.4,-0.44);
\fill[black] (3.4,-0.7) rectangle (5.0,-0.44);
\fill[black!15] (5.0,-0.7) rectangle (7.1,-0.44);
\fill[pattern=north east lines] (7.1,-0.7) rectangle (8.4,-0.44);
\draw[bar] (0,-0.7) rectangle (8.4,-0.44);
\node[left] at (-0.08,-0.57) {$D=0$};
% D<0
\fill[pattern=north east lines] (0,-1.4) rectangle (1.2,-1.14);
\fill[black!15] (1.2,-1.4) rectangle (2.0,-1.14);
\fill[black] (2.0,-1.4) rectangle (4.2,-1.14);
\fill[black!15] (4.2,-1.4) rectangle (7.2,-1.14);
\fill[pattern=north east lines] (7.2,-1.4) rectangle (8.4,-1.14);
\draw[bar] (0,-1.4) rectangle (8.4,-1.14);
\node[left] at (-0.08,-1.27) {$D<0$};
% axis
\draw[->] (0,-1.95) -- (8.78,-1.95);
\foreach \xp/\lb in {0/{$-\lambda_+/p$},4.2/{$0$},8.4/{$\lambda_-/p$}}
  {\draw (\xp,-1.90) -- (\xp,-2.00); \node[below] at (\xp,-2.02) {\lb};}
% legend
\begin{scope}[yshift=-2.72cm]
 \fill[black] (0,0) rectangle (0.5,0.2);
 \node[right] at (0.52,0.1) {optimality proved};
 \fill[black!15] (3.2,0) rectangle (3.7,0.2);
 \node[right] at (3.72,0.1) {no conclusion};
 \fill[pattern=north east lines] (6.1,0) rectangle (6.6,0.2);
 \node[right] at (6.62,0.1) {strict loss proved};
\end{scope}
\end{tikzpicture}
\caption{Schematic picture on the $a$-axis of \eqref{eq:reparam}, whose
admissible range is \eqref{eq:a-range}; $a=0$ corresponds to
$\alpha=\alpha_H$. Solid: $J_{a,\beta}^{q/p}$ is proved to be the exact
constant. Hatched: strict loss of optimality is proved near the
finite-energy endpoints. Light grey: no conclusion. For $D\ne0$ the
proved interval is $[0,a_0+\eta]$ or $[-a_0-\eta,0]$ with
$a_0=|D|/(p+n_{p,q})$, and for $D=0$ it is $[-\eta,\eta]$; neither
$\eta$ nor the first sharpness threshold is explicit.}
\label{fig:sharpness-map}
\end{figure}

\Needspace{12\baselineskip}
\subsubsection{The original-weight form when \texorpdfstring{$D>0$}{D>0}}

\begin{theorem}[Sharp Hardy inequalities and enlarged weight intervals]\label{thm:main}
Let $d\ge2$, $0<s<1$, $1<p<q<\infty$, and $-d<\beta<sp$ with $d+2\beta>sp$.
For the weight parameters $\alpha$ specified below, set
$a=(\alpha+d+q\beta/p-sq)/q$, and let $J_{d,s,p}^{(\beta)}(a)$ be the homogeneous
integral defined in \eqref{eq:intro-weighted-J}.
For every $u\in C_c^\infty(\R^d\setminus\{0\})$, the inequality
\begin{equation}\label{eq:main-original-integral}
\begin{aligned}
 &\int_{\R^d}|x|^\alpha
 \left(\int_{\R^d}
 \frac{|u(x)-u(y)|^p|y|^\beta}{|x-y|^{d+sp}}\dd y\right)^{q/p}\dd x
 \\
 &\quad\ge
 J_{d,s,p}^{(\beta)}(a)^{q/p}
 \int_{\R^d}|u(x)|^q|x|^{\alpha+q\beta/p-sq}\dd x
\end{aligned}
\end{equation}
holds with the best possible multiplicative constant in each of the following ranges.
\begin{enumerate}[label=\textup{(\roman*)}]
\item If $2\le q\le2p$, there exists $\varepsilon=\varepsilon(d,s,p,q,\beta)>0$ such that
\begin{equation}\label{eq:main-middle-alpha}
 sq-d-q\beta/p\le\alpha\le
 \frac{sq+(q-p-1)d+q(p-1)\beta/p}{p+1}+\varepsilon.
\end{equation}
\item If $q>2p$, there exists $\varepsilon=\varepsilon(d,s,p,q,\beta)>0$ such that
\begin{equation}\label{eq:main-high-alpha}
 sq-d-q\beta/p\le\alpha\le s(q-p)+(2-q/p)\beta+\varepsilon.
\end{equation}
\item If $1<p<q<2$, define
\begin{equation}\label{eq:nsc-main}
 \nsc=
 \max_{0<t<1}
 \frac{\displaystyle\log\!\left(
 \frac{q\bigl((1-t)^{1-p}-1\bigr)+(q-p)t}{p}
 \right)}{\log t}.
\end{equation}
Then the inequality holds throughout
\begin{equation}\label{eq:main-low-alpha}
 sq-d-q\beta/p\le\alpha\le
 sq-d-q\beta/p+\frac{q(d+2\beta-sp)}{p+\nsc}.
\end{equation}
\end{enumerate}
The value $a=(\alpha+d+q\beta/p-sq)/q$ lies in $((\beta-sp)/p,(d+\beta)/p)$ on all the displayed intervals.
The widths in \eqref{eq:main-middle-alpha}--\eqref{eq:main-high-alpha} are measured in $\alpha$ and are chosen so that the upper endpoints remain below $sq+d(q/p-1)$.
\end{theorem}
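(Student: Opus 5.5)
The plan is to read Theorem~\ref{thm:main} as the case $D>0$ of Theorem~\ref{thm:extension}(i), rewritten in the original weight $\alpha$. Part~(iii) needs one extra ingredient: the optimized scalar exponent $\nsc$ in place of $n_{p,q}$.

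First, the parameter translation. By \eqref{eq:reparam} we have $\alpha=\alpha_H+qa$ with $\alpha_H=sq-d-q\beta/p$, and $D=d+2\beta-sp>0$ by hypothesis, so $\varepsilon_D=1$. Hence the range $0\le a\le a_0+\eta$ of \eqref{eq:extension-main} becomes
\[
\alpha_H\le\alpha\le\alpha_H+\frac{qD}{p+n_{p,q}}+q\eta .
\]
For $2\le q\le 2p$ we have $n_{p,q}=1$, and a direct computation gives
\[
\alpha_H+\frac{qD}{p+1}=\frac{(sq-d-q\beta/p)(p+1)+qd+2q\beta-sqp}{p+1}=\frac{sq+(q-p-1)d+q(p-1)\beta/p}{p+1}.
\]
This is \eqref{eq:main-middle-alpha} with $\varepsilon=q\eta$. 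For $q>2p$ we have $n_{p,q}=q-p$, so $qD/(p+n_{p,q})=D$ and
\[
\alpha_H+D=s(q-p)+(2-q/p)\beta,
\]
which is \eqref{eq:main-high-alpha}. Since $H_{\alpha,\beta}=J_{a,\beta}^{q/p}$, the infimum in \eqref{eq:Hdef} equals the displayed constant. This gives both \eqref{eq:main-original-integral} and its optimality. The containment $a\in(-\lambda_+/p,\lambda_-/p)$ follows from the choice of $\eta$ in Theorem~\ref{thm:extension} together with \eqref{eq:a-range}.

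For part~(iii), with $1<p<q<2$, I would not invoke the $\eta$-extension. Instead I would rerun the pointwise ground-state argument behind Theorem~\ref{thm:extension}(i) with a sharper scalar criterion. The argument has three steps.
\begin{enumerate}
\item Write $u=|x|^{-a}v$ in logarithmic-cylinder coordinates.
\item Apply the supporting-line (convexity) inequality to $t\mapsto t^{q/p}$ at the pointwise value $J_{a,\beta}|x|^{\dots}|v|^p$.
\item Reduce the nonnegativity of the remainder, after pairing the opposite displacements $\vartheta$ and $1/\vartheta$, to $G_{\vartheta,b}(U,V)\ge0$ with $b=\vartheta^{\,D/a-p}$ or its analogue.
\end{enumerate}
The elementary criterion $b\le\vartheta^{n}$ then translates into $a\le D/(p+n)$. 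So it suffices to prove the criterion with $n=\nsc$: for $0<\vartheta<1$ and $0\le b\le\vartheta^{\nsc}$, we need $G_{\vartheta,b}\ge0$. I would prove this by reducing $G_{\vartheta,b}$, via homogeneity, to a one-variable function of the ratio $t=\min(U,V)/\max(U,V)$, or of $1-t$ as the difference variable. Minimizing over the free supporting-line slope produces the expression $\bigl(q((1-t)^{1-p}-1)+(q-p)t\bigr)/p$. The worst admissible $b$ is then $t^{\,\nsc}$, by the definition \eqref{eq:nsc-main} of $\nsc$ as a maximum over $t$ (note that $\log t<0$ reverses the inequality). Substituting $n=\nsc$ gives the upper endpoint $\alpha_H+qD/(p+\nsc)$ in \eqref{eq:main-low-alpha}. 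The matching upper bound $H\le J^{q/p}$ comes from the logarithmic cutoffs of $|x|^{-a}$, as stated in the setup.

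The main obstacle is part~(iii). There I must check three things: that the optimized scalar inequality holds uniformly for all pairs $(U,V)$ and not just along the extremal ratio; that the maximum in \eqref{eq:nsc-main} is attained and finite for $1<p<q<2$; and that $D/(p+\nsc)<\lambda_-/p$, so the homogeneous candidate still has finite energy. The last point should follow from $\nsc>0$, since then $D/(p+\nsc)<D/p<\lambda_-/p$. Parts~(i) and~(ii) are bookkeeping once Theorem~\ref{thm:extension} is available.
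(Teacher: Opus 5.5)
Parts (i) and (ii) follow the paper: they are Theorem~\ref{thm:extension}(i) for $D>0$ with $\varepsilon=q\eta$, and your endpoint algebra is correct. Replacing the outer H\"older step by the tangent-line inequality for $t\mapsto t^{q/p}$ at $J_{a,\beta}v^p$ is also fine.

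The gap is in part (iii), the only part needing new input, and your proposed mechanism does not work. The variable $t$ in \eqref{eq:nsc-main} is not the ratio $\min(U,V)/\max(U,V)$. It is the paired displacement parameter $\vartheta=\vartheta_a(t)=e^{-at}$ of \eqref{eq:theta-b-a}, and $b=\vartheta^{D/a-p}$ is tied to $\vartheta$, not to $U/V$. So ``the worst admissible $b$ is $t^{\nsc}$'' with $t$ a ratio has no meaning.

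There is also no free slope to minimize over. The coefficients $A_{\vartheta,b}$ and $B_{\vartheta,b}$ are forced by first-order contact with $F_{\vartheta,b}$ on the diagonal; otherwise $G_{\vartheta,b}$ changes sign near $U=V$. They are also forced by $A_{\vartheta,b}+B_{\vartheta,b}=(1-\vartheta)^p(1+b)$, which is what makes the linear part integrate to exactly $J_{a,\beta}N[v]$. The expression $C_{p,q}(\vartheta)$ comes instead from one necessary condition at $U/V\to\infty$: $G_{\vartheta,b}(U,1)/U^q\to1-A_{\vartheta,b}$, and $A_{\vartheta,b}\le1$ is equivalent to $b\le C_{p,q}(\vartheta)$. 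Then $\vartheta^n\le C_{p,q}(\vartheta)$ for all $\vartheta$ holds iff $n\ge\max_{\vartheta}\log C_{p,q}(\vartheta)/\log\vartheta=\nsc$.

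What is missing is the converse: that $A_{\vartheta,b}\le1$ alone gives $G_{\vartheta,b}(U,V)\ge0$ for every pair. This is exactly the point you list as an obstacle. It is not automatic, because for $0<m=q-p<1$ and $b>0$ the function $f(x)=|x^{1/q}-\vartheta|^p(x^{m/q}+b)$ is not convex. The paper handles it in criterion (ii) of Lemma~\ref{lem:scalar-support}, used in Proposition~\ref{prop:low-exact}:
\begin{enumerate}
\item Write $f''$ through the factor $E_{\vartheta,b}$ in \eqref{eq:Escalar}.
\item Using the strictly decreasing function $R(y)=y^{m-1}(ky-m)/(my-k)$ with $k=q-1$, show that $f''>0$ on $(\vartheta^q,x_*)$ and $f''<0$ on $(x_*,\infty)$.
\item Hence $f'$ increases and then decreases to its limit $1$, so $f'>1$ on $[x_*,\infty)$.
\item Since $f'(1)=A_{\vartheta,b}<1$, this forces $1<x_*$. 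Therefore $f'<A_{\vartheta,b}$ on $(\vartheta^q,1)$ and $f'>A_{\vartheta,b}$ on $(1,\infty)$.
\item Integrating gives tangent support on $[\vartheta^q,\infty)$. The region $x\le\vartheta^q$ follows because the tangent line is negative there while $f\ge0$.
\item The endpoint $b=C_{p,q}(\vartheta)$ follows by a limit in $b$.
\end{enumerate}
Without this step your plan does not prove the lower bound on \eqref{eq:main-low-alpha}.

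You also leave unchecked that the maximum in \eqref{eq:nsc-main} is attained and finite with $\nsc>1$. You need this so that $p+\nsc>0$ and the interval is admissible. The paper obtains it from $L(\vartheta)=\log C_{p,q}(\vartheta)/\log\vartheta$:
\begin{enumerate}
\item As $\vartheta\downarrow0$, $C_{p,q}(\vartheta)=(q-1)\vartheta+O(\vartheta^2)$, so $L(\vartheta)\to1$ from above, because $\log(q-1)<0$ when $q<2$.
\item As $\vartheta\uparrow1$, $C_{p,q}(\vartheta)\to\infty$, so $L(\vartheta)\to-\infty$.
\end{enumerate}
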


The maximum in \eqref{eq:nsc-main} is attained in $(0,1)$ and satisfies
\begin{equation}\label{eq:nsc-improvement-main}
 1<\nsc<
 1+\frac{\log(q-1)}{\log\!\left(1-2^{-1/(p-1)}\right)}.
\end{equation}
It is the optimal exponent in the pointwise scalar supporting-line criterion established in Proposition~\ref{prop:nsc}.

At $\alpha=sq-d-q\beta/p$, the displayed best constant is zero; it is strictly positive at the other parameters in the theorem. Optimality is witnessed by long logarithmic cutoffs of the homogeneous power: for any nonzero nonnegative $\zeta\in C_c^\infty(\R)$,
\begin{equation}\label{eq:main-original-cutoff}
 u_T(x)=|x|^{-\frac{\alpha+d+q\beta/p-sq}{q}}
 \zeta\!\left(\frac{\log|x|}{T}\right),\qquad T\longrightarrow\infty,
\end{equation}
has quotient converging to $J_{d,s,p}^{(\beta)}((\alpha+d+q\beta/p-sq)/q)^{q/p}$.
Appendix~\ref{app:evaluations} records the established radial and
quadratic evaluations for the kernel used here, whose multiplicative factor is one.

The low-exponent interval in \eqref{eq:main-low-alpha} strictly contains the interval obtained from the elementary scalar criterion in Lemma~\ref{lem:scalar-support},
whose upper endpoint is
\[
 sq-d-q\beta/p+\frac{q(d+2\beta-sp)}{
 p+1+\dfrac{\log(q-1)}{\log(1-2^{-1/(p-1)})}}.
\]
In this range, the enlargement follows from a sharper pointwise support criterion at every logarithmic scale. For $q>2p$, the extension instead retains positivity on a finite range of scales and absorbs the negative remote scales.

\paragraph{Relation between Theorems~\ref{thm:main} and~\ref{thm:extension}.}
Both theorems identify the same best constant, namely
$J_{d,s,p}^{(\beta)}(a)^{q/p}$, on the same smooth test class.
Their different roles concern the intervals on which sharpness is proved, not the
value of the constant or the class of test functions. More precisely:
\begin{enumerate}[label=\textup{(\alph*)}]
\item \emph{The right-hand interval when $D>0$.}
Here $a\ge0$ and $\alpha=\alpha_H+qa\ge\alpha_H$.
For $2\le q\le2p$, the elementary endpoint is $D/(p+1)$;
for $q>2p$, it is $D/q$. Theorem~\ref{thm:extension}(i)
extends sharpness by a positive width beyond these endpoints.
After the change of variables $\varepsilon=q\eta$, these are
exactly the assertions of Theorem~\ref{thm:main}(i)--(ii).
Thus, in these two exponent ranges, Theorem~\ref{thm:extension}
does not give an additional enlargement of Theorem~\ref{thm:main}.
\item \emph{The low-exponent range, $D>0$ and $1<p<q<2$.}
Theorem~\ref{thm:extension}(i) asserts the existence of a positive
extension beyond $D/(p+n_{p,q})$. Theorem~\ref{thm:main}(iii)
instead gives the specified larger endpoint
\[
 \frac{D}{p+\nsc}>\frac{D}{p+n_{p,q}},
\]
by optimizing the scalar supporting-line criterion.
The increase over the elementary endpoint is therefore
\[
 \frac{qD(n_{p,q}-\nsc)}
 {(p+\nsc)(p+n_{p,q})}>0
\]
in the original variable $\alpha$. This explicit enlargement
supplies the existence assertion in Theorem~\ref{thm:extension}(i)
after choosing $\eta$ sufficiently small.
\item \emph{The left-hand and two-sided intervals when $D\le0$.}
These are outside the assumptions of Theorem~\ref{thm:main}.
For $D<0$, Theorem~\ref{thm:extension}(i) gives the left-hand interval
\[
 \alpha_H-q\left(\frac{|D|}{p+n_{p,q}}+\eta\right)
 \le\alpha\le\alpha_H.
\]
For $D=0$, part~\textup{(ii)} gives a two-sided interval
$\alpha_H-q\eta\le\alpha\le\alpha_H+q\eta$.
The condition $D>0$ in Theorem~\ref{thm:main} thus selects its
right-hand sharpness branch; it is not required for the existence
of a Hardy inequality or for finiteness of the test-function energy.
\end{enumerate}

\Needspace{14\baselineskip}
\subsection{Endpoint loss and the first sharpness threshold}

\begin{theorem}[Loss of sharpness near the endpoints]\label{thm:loss}
Let $\Lambda=\alpha_+>0$ and $\theta=sq/\Lambda\in(0,1)$. As $\epsilon\downarrow0$,
\begin{equation}\label{eq:endpoint-upper}
 H_{\alpha_+-\epsilon,\beta}^{p,q}+H_{\alpha_-+\epsilon,\beta}^{p,q}
 \le C\epsilon^{-\theta},
\end{equation}
whereas at either endpoint the homogeneous candidate satisfies
\begin{equation}\label{eq:Jendpoint}
 J_{a,\beta}\sim\frac{q\omega_{d-1}}{p\epsilon}.
\end{equation}
Consequently $H_{\alpha,\beta}^{p,q}<J_{a,\beta}^{q/p}$ sufficiently close to either endpoint.
If $D>0$, define $a_{\sh}$ as the supremum of all
$b\in[0,(d+\beta)/p)$ such that
\[
 H_{\alpha_H+qa,\beta}^{p,q}=J_{a,\beta}^{q/p}
 \qquad\text{for every }a\in[0,b].
\]
Then
\begin{equation}\label{eq:threshold-existence}
 \frac{D}{p+n_{p,q}}<a_{\sh}<\frac{d+\beta}{p}.
\end{equation}
Sharpness holds for every $a\in[0,a_{\sh}]$.
For every $0<\delta<(d+\beta)/p-a_{\sh}$, there exists
$a\in(a_{\sh},a_{\sh}+\delta)$ such that
\[
 H_{\alpha_H+qa,\beta}^{p,q}<J_{a,\beta}^{q/p}.
\]
\end{theorem}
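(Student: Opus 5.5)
The proof has three parts: an upper bound for $H$ from explicit test functions, an asymptotic lower bound for the candidate $J_{a,\beta}$, and a soft argument that places the threshold $a_{\sh}$.

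\emph{Asymptotics of the candidate.} I would begin with \eqref{eq:Jendpoint}, since it is a direct computation. At the right endpoint $a=\lambda_-/p-\epsilon/q$ the integrand in \eqref{eq:Ja} behaves near $z=0$ like $|z|^{-ap+\beta}=|z|^{-d+p\epsilon/q}$, because $|e-z|\to1$. Integrating in polar coordinates gives
\[
 \omega_{d-1}\int_0^1 r^{p\epsilon/q-1}\dd r=\frac{q\omega_{d-1}}{p\epsilon}.
\]
The rest of the integral stays bounded uniformly in $\epsilon$. At the left endpoint $a=-\lambda_+/p+\epsilon/q$ the same computation applies at $z=\infty$, where the integrand behaves like $|z|^{-ap+\beta-d-sp}=|z|^{-d-p\epsilon/q}$. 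Consequently $J_{a,\beta}^{q/p}\gtrsim\epsilon^{-q/p}$. Since $q/p>1>\theta$, any bound of the form \eqref{eq:endpoint-upper} immediately yields $H<J^{q/p}$ close to both endpoints.

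\emph{Upper bound \eqref{eq:endpoint-upper}.} This is the step I expect to be the main obstacle. A fixed bump $u$ only gives $H_{\alpha_+-\epsilon,\beta}\lesssim\epsilon^{-1}$. The reason is that the tail $A_{u,\beta}(x)\sim|x|^{-d-sp}\int|u|^p|y|^\beta$ at infinity, weighted by $|x|^{\alpha}$ in the $x$-integral, produces exactly a factor $1/\epsilon$. Plateau functions (equal to $1$ on $1<|x|<R$) also give ratio $\sim 1/\epsilon$, because the tail beyond $R$ scales like the denominator.

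I would therefore use a two-parameter family $u_{R}$, equal to a profile $|x|^{-b}$ on $1\le|x|\le R$ with smooth cutoffs of unit logarithmic width at both ends. The idea is to trade the length $\log R$ of the logarithmic interval against the mismatch $b-a$: the tail contribution is damped by $R^{-\epsilon}$ relative to the bulk, while the bulk pays a local cost from the mismatch. I then optimize $R$ and $b$ as powers of $\epsilon$. The exponent $\theta=sq/\Lambda$ should emerge from balancing the inner-scale term, which carries $s q$ through the $|x-y|^{-sp}$ singularity raised to $q/p$, against the full endpoint weight $\Lambda=\alpha_+$. I expect this bookkeeping to require the most care, and I would first verify it in the radial case $u=\phi(|x|)$.

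The left endpoint is handled symmetrically, with mass placed near the origin instead of at infinity. Both cases reduce to the estimates already developed for the logarithmic cutoffs \eqref{eq:main-original-cutoff}.

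\emph{The threshold.} By Theorem~\ref{thm:extension}(i) with $D>0$, sharpness holds on $[0,a_0+\eta]$, so $a_{\sh}\ge a_0+\eta>D/(p+n_{p,q})$. By the endpoint-loss statement just proved, sharpness fails for all $a$ sufficiently close to $(d+\beta)/p$, so $a_{\sh}<(d+\beta)/p$.

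Sharpness also holds at $a_{\sh}$ itself, by a closedness argument. For each fixed $u\in\test$, both $\E_{\alpha,\beta}[u]$ and $\Pp_{\alpha,\beta}[u]$ are continuous in $\alpha$ inside the finite-energy range, because $u$ is supported away from $0$ and $\infty$ and the tails are dominated uniformly on compact subranges. The map $a\mapsto J_{a,\beta}$ is continuous by dominated convergence. Hence the inequality $\E\ge J^{q/p}\Pp$ passes to the limit $a\uparrow a_{\sh}$ and holds at $a_{\sh}$. The reverse inequality $H\le J^{q/p}$ holds everywhere by the cutoff construction \eqref{eq:main-original-cutoff}, so $H=J^{q/p}$ at $a_{\sh}$.

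Finally, if some $\delta$ admitted no failure point in $(a_{\sh},a_{\sh}+\delta)$, then sharpness would hold on all of $[0,a_{\sh}+\delta)$. This contradicts the definition of $a_{\sh}$ as a supremum.
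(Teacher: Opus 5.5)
The parts of your proposal other than \eqref{eq:endpoint-upper} coincide with the paper's argument. That covers your computation of \eqref{eq:Jendpoint} and your threshold argument: the extension theorem gives $a_{\sh}>D/(p+n_{p,q})$, endpoint loss gives $a_{\sh}<(d+\beta)/p$, relative closedness comes from continuity in $a$ of each fixed quotient and of $J_{a,\beta}$, and the supremum yields the contradiction.

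\textbf{The gap is the upper bound \eqref{eq:endpoint-upper} with $\theta=sq/(sq+d(q/p-1))$.} The radial family you propose cannot give this exponent, and the mechanism you describe does not operate.
\begin{itemize}
\item The factor $R^{-\epsilon}$ multiplies the tail and the denominator equally; this is exactly your plateau computation. So the length $\log R$ buys nothing.
\item On the cylinder your test is $v=e^{(a-b)\tau}$ between the two cutoffs. As long as $|a-b|$ stays bounded, the far field generated by $|v|^p$ still costs $\gtrsim\epsilon^{-1}$ relative to $\int v^q$.
\item Improvement occurs only as $|a-b|\to\infty$. Then $v$ concentrates on a logarithmic window of width $w=|a-b|^{-1}$, so your test becomes a thin spherical shell.
\item For a shell the quotient is of order $w^{-sq}+\epsilon^{-1}w^{q/p-1}$. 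The best you can reach is therefore $\epsilon^{-sq/(sq+q/p-1)}$. A radial function concentrates in only one direction, so there is no reason to expect better.
\item Because $d\ge2$, this exponent is strictly larger than $\theta$.
\end{itemize}
The appeal to the logarithmic cutoffs \eqref{eq:main-original-cutoff} also points the wrong way: their quotients converge to $J_{a,\beta}^{q/p}\asymp\epsilon^{-q/p}$.

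\textbf{The missing idea is concentration at a point, in all $d$ directions; this is where $d$ enters $\Lambda$.} The paper uses $u_\ell(x)=\psi((x-e)/\ell)$ with $|e|=1$ and $0\ne\psi\in C_c^\infty(B_{1/4})$.
\begin{itemize}
\item The denominator satisfies $\Pp_{\alpha,\beta}[u_\ell]\asymp\ell^d$.
\item On $B(e,1/2)$, rescaling gives $A_{u_\ell,\beta}(x)\le C\ell^{-sp}A_{\psi,0}((x-e)/\ell)+C|u_\ell(x)|^p$. Hence the near energy is $O(\ell^{d-sq})$.
\item Off $B(e,1/2)$, $A_{u_\ell,\beta}(x)\le C\ell^d|x-e|^{-d-sp}$, so the far energy is $O(\epsilon^{-1}\ell^{dq/p})$.
\item The factor $\epsilon^{-1}$ comes from $|x|\to\infty$ when $\alpha=\alpha_+-\epsilon$, and from $|x|\to0$ when $\alpha=\alpha_-+\epsilon$. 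One family therefore treats both endpoints. No separate placement of mass near the origin is needed; the quotient is dilation invariant anyway.
\item Thus $H^{p,q}_{\alpha,\beta}\le C(\ell^{-sq}+\epsilon^{-1}\ell^{d(q/p-1)})$, and choosing $\ell=\epsilon^{1/\Lambda}$ gives $C\epsilon^{-\theta}$.
\end{itemize}

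The remaining assertions do not need $\theta<1$. Your fixed-bump bound $H\lesssim\epsilon^{-1}$, combined with $J_{a,\beta}^{q/p}\asymp\epsilon^{-q/p}$ and $q/p>1$, already yields strict loss near both endpoints, and hence $a_{\sh}<(d+\beta)/p$. Only the stated inequality \eqref{eq:endpoint-upper} requires the point concentration.
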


\begin{remark}[The initial sharpness interval]
The threshold $a_{\sh}$ marks the end of the sharpness interval
starting at zero. Its relation to the full sharpness set and to the
scalar and angular criteria is discussed in
Remark~\ref{rem:threshold-comparison}.
\end{remark}

\Needspace{12\baselineskip}
\subsection{Finiteness, positivity, and the zero-constant weight}

\begin{theorem}[Finiteness and positivity]\label{thm:basic}
For every nonzero $u\in\test$, the energy $\E_{\alpha,\beta}[u]$ is finite if and only if $\alpha_-<\alpha<\alpha_+$. On this interval,
\begin{equation}\label{eq:positive-range}
 0<H_{\alpha,\beta}^{p,q}<\infty\quad\text{if }\alpha\ne\alpha_H,
 \qquad H_{\alpha_H,\beta}^{p,q}=0.
\end{equation}
Moreover, $H_{\alpha,\beta}^{p,q}\le J_{a,\beta}^{q/p}$, and for fixed $d,s,p,q,\beta$,
\begin{equation}\label{eq:vanishing}
 H_{\alpha,\beta}^{p,q}\asymp |\alpha-\alpha_H|^q
 \qquad\text{as }\alpha\to\alpha_H.
\end{equation}
\end{theorem}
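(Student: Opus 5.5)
We prove the four assertions of Theorem~\ref{thm:basic} in turn: finiteness, the upper bound by $J_{a,\beta}^{q/p}$, positivity away from $\alpha_H$ (with $H=0$ at $\alpha_H$), and the two-sided asymptotics \eqref{eq:vanishing}.

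\emph{Finiteness.} Fix $u\in\test$ with $\supp u\subset\{r_1<|x|<r_2\}$. We estimate $A_{u,\beta}(x)$ in three regions.
\begin{itemize}
\item Near the origin, $|x|<r_1/2$, we have $u(x)=0$. The $y$-integral runs over the support, so $A_{u,\beta}(x)\to\int|u(y)|^p|y|^{\beta-d-sp}\dd y>0$. Hence $|x|^\alpha A^{q/p}$ is integrable near $0$ iff $\alpha>-d$.
\item At infinity, $A_{u,\beta}(x)\sim|x|^{-d-sp}\int|u|^p|y|^\beta\dd y$. Integrability of $|x|^{\alpha-(d+sp)q/p}$ at infinity is exactly $\alpha<sq+d(q/p-1)=\alpha_+$.
\item On the bounded annular region, we split the $y$-integral into $|x-y|<1$ and $|x-y|\ge1$. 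The first part is controlled by the Lipschitz bound, using $s<1$. The second part converges because $\beta<sp$ at $|y|\to\infty$ and $\beta>-d$ at $y\to0$. So $A$ is locally bounded there.
\end{itemize}
Since $u\ne0$ makes the two limiting coefficients positive, the stated range is also necessary.

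\emph{Upper bound $H\le J_{a,\beta}^{q/p}$.} We insert the logarithmic cutoffs $u_T$ of \eqref{eq:main-original-cutoff} and pass to logarithmic coordinates $x=e^{t}\omega$. The power $|x|^{-a}$ factorizes the energy, so that
\[
A_{u_T,\beta}(x)=|x|^{-ap+\beta-sp}\bigl(|\zeta(\log|x|/T)|^pJ_{a,\beta}+o(1)\bigr).
\]
The error is controlled uniformly by the tails of the convergent integral $J_{a,\beta}$, finite by \eqref{eq:a-range}, together with $|\zeta(t/T)-\zeta(t'/T)|\le\|\zeta'\|_\infty|t-t'|/T$. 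Both numerator and denominator are then $T\int|\zeta|^q\,(1+o(1))$ times the respective constants. The quotient therefore tends to $J_{a,\beta}^{q/p}$. This also gives $H_{\alpha_H,\beta}=0$, since $J_{0,\beta}=0$.

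\emph{Positivity for $\alpha\ne\alpha_H$.} We first reduce to a $p$-type estimate. Restricting the inner integral to $|y|\in(|x|/2,2|x|)$ and applying H\"older in the outer variable over dyadic annuli is one option. The simpler route is Jensen's inequality, $A^{q/p}\ge c\,(\text{local average})^{q/p}$. In logarithmic coordinates both $\E$ and $\Pp$ become translation-invariant functionals of $v(t,\omega)=e^{(\alpha+d+q\beta/p-sq)t/q}u(e^t\omega)$, i.e.\ $v=e^{at}u$. Writing $u=e^{-at}v$ makes the inner difference $|e^{-at}v(t)-e^{-at'}v(t')|$, and $\E$ becomes a cylinder energy with a drift $a$.

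For $a\ne0$ we then need $\E\ge c\int|v|^q$ on the cylinder $\Cyl=\R\times\Sph^{d-1}$. We prove this by a pointwise lower bound. For each $x$, choose $y$ in the shell $|y|\in(e^{-1}|x|,|x|)$ if $a>0$, or $(|x|,e|x|)$ if $a<0$, so that $|u(x)-u(y)|\ge\cdots$. Rather than work pointwise, we use the one-dimensional fractional Hardy inequality with drift. The integral $\int|e^{-a\tau}v(t+\tau)-v(t)|^p\ldots$ controls $|v(t)|^p\,c(a)$ minus an average of $|v|^p$ over shifted points. Writing $|v(t)|^p\le 2^{p-1}(|v(t)-e^{-a\tau}v(t+\tau)|^p+e^{-ap\tau}|v(t+\tau)|^p)$ and averaging over $\tau$ in a sign-chosen window where $e^{-ap\tau}\le\frac12$ works in the $L^p$ sense.

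The outer exponent $q/p>1$ is the main obstacle. Averaging inside the $q/p$ power with Minkowski's inequality is the wrong direction, so we use instead $|v(t)|^q\le C\,\bigl(A$-type term$\bigr)^{q/p}+\tfrac12\sup$-type averaged terms and close by translation invariance. Concretely, we apply the inequality at the level of $A(x)^{1/p}\ge c|v(t)|-\bigl(\text{avg}\,e^{-ap\tau}|v(t+\tau)|^p\bigr)^{1/p}$. Raising to the power $q$ and using $(X-Y)_+^q\ge 2^{1-q}X^q-Y^q$, we bound $\int(\text{avg}|v|^p)^{q/p}\le\int\text{avg}|v|^q$ by Jensen, since $q/p>1$. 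The weight $e^{-ap\tau}$ then becomes $e^{-aq\tau}$ with a small factor, which is absorbed. This yields $H\ge c(a)>0$ and $H<\infty$.

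\emph{Asymptotics \eqref{eq:vanishing}.} The upper bound follows from $J_{a,\beta}=O(a^2)$ near $a=0$. Differentiating under the integral at $a=0$ gives the first-order term $\propto\int|\log|z||^p|z|^\beta|e-z|^{-d-sp}\,|a|^p$, so in fact $J_{a,\beta}\asymp|a|^p$ and $J^{q/p}\asymp|a|^q$. For the lower bound we track the constant in the previous step. Choosing the window length $\sim1/|a|$ gives $c(a)\gtrsim|a|^q$, since the difference quotient $|1-e^{-a\tau}|\sim|a||\tau|$. This is the delicate bookkeeping step, which we expect to be the hardest to make uniform in $a$.
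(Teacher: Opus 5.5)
Your finiteness argument and the logarithmic-cutoff upper bound follow the paper. Your absorption scheme also gives $H>0$ for each fixed $a\ne0$, after three small repairs:
\begin{itemize}
\item In the cutoff step, a pointwise uniform $o(1)$ is not enough, because the error is supported on an unbounded range of the base variable. Use Minkowski's integral inequality in $L^{q/p}$ with $\|\zeta_T-\zeta_T(\cdot+\tau)\|_q\le|\tau|\,\|\zeta_T'\|_q$, as in Lemma~\ref{lem:cutoff}, or an exponential tail bound for $|t|>2T$.
\item The window $e^{-ap\tau}\le\frac12$ is not far enough out to beat the factor $2^{1-q}$; take, e.g., $e^{-aq\tau}\le2^{-q}$.
\item $J_{a,\beta}=O(a^2)$ is false for $p<2$. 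What is true, and what you use next, is $J_{a,\beta}/|a|^p\to\int|\log|z||^p|z|^\beta|e-z|^{-d-sp}\dd z$, by dominated convergence with majorant $k_\beta(\tau)|\tau|^pe^{p\delta|\tau|}$.
\end{itemize}

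The genuine gap is the lower half of \eqref{eq:vanishing}, $H\gtrsim|a|^q$, which your scheme cannot deliver.
\begin{itemize}
\item \emph{Why it fails.} The splitting $(X-Y)_+^q\ge2^{1-q}X^q-Y^q$ closes only if the average of $e^{-aq\tau}$ over your window is below $2^{1-q}$. That forces a fixed fraction of the window, in whatever measure you average, onto displacements with $|a\tau|\gtrsim1$. As $a\to0$ these are $|\tau|\gtrsim1/|a|$, where $k_\beta(\tau)\lesssim e^{-\lambda_\pm|\tau|}$ by \eqref{eq:k0bounds}. So the constant you obtain is at most of order $e^{-C/|a|}$, not $|a|^q$.
\item Choosing the window length $\sim1/|a|$ does not change this. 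The heuristic $|1-e^{-a\tau}|\sim|a\tau|$ describes the homogeneous profile, not an arbitrary $v$.
\item \emph{The fix.} Drop the lossy splitting. Keep a fixed window $W$ (say $[1,2]$ for $a>0$, $[-2,-1]$ for $a<0$), on which the kernel is bounded below, and apply Minkowski in $L^q(\Cyl)$. Set $Y(t,\omega)=\bigl(\mathrm{avg}_{\tau\in W,\sigma}\,e^{-ap\tau}|v(t+\tau,\sigma)|^p\bigr)^{1/p}$. Your pointwise bound $\Aa_{a,\beta}[v]^{1/p}\ge c\,(|v|-Y)_+$ then gives $\E_{\alpha,\beta}[u]^{1/q}\ge c\,(\|v\|_q-\|Y\|_q)$. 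Jensen gives $\|Y\|_q\le(\mathrm{avg}_We^{-aq\tau})^{1/q}\|v\|_q$. Since $1-(\mathrm{avg}_We^{-aq\tau})^{1/q}\asymp|a|$, this yields $H\gtrsim|a|^q$.
\end{itemize}
This repaired version is essentially the paper's argument. There, Jensen is applied to the linear annular average $T_\pm u$ over a fixed shell, giving $A_{u,\beta}(x)\ge c|x|^{\beta-sp}|u(x)-T_\pm u(x)|^p$. The contraction $\|T_\pm u\|\le k_\pm(a)\|u\|$, with $1-k_\pm(a)\asymp|a|$ because $k_\pm'(0)\ne0$, produces the same $|a|^q$.
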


The sharpness and strict-loss conclusions above concern parameters
inside this finite-energy interval. In particular, the loss of
homogeneous optimality in Theorem~\ref{thm:loss} occurs while
both the optimal constant and the candidate remain finite.

\subsection{Proof strategy}

For $D>0$ and $a>0$, applying this criterion at every logarithmic
displacement means requiring
\[
 e^{-(D-pa)t}\le(e^{-at})^{n_{p,q}}
 \quad\text{for every }t>0,
 \qquad\text{equivalently}\qquad
 a\le\frac{D}{p+n_{p,q}}.
\]
By Lemma~\ref{lem:scalar-support}, it guarantees that the paired
scalar remainder in \eqref{eq:Gtwo} is nonnegative for every pair
of nonnegative real numbers and at every logarithmic displacement.
Integrating this inequality gives the Hardy lower bound with the
homogeneous candidate constant; the cutoff sequence then proves
optimality.

To extend beyond this elementary range when $q\ge2$, the proof uses
$u(e^\tau\omega)=e^{-a\tau}v(\tau,\omega)$ and the kernel
\[
 K_{\beta,t}(\omega,\sigma)
 =\frac{e^{(d+\beta)t}}{|\omega-e^t\sigma|^{d+sp}},\qquad
 K_{\beta,-t}(\sigma,\omega)=e^{-(d+2\beta-sp)t}
 K_{\beta,t}(\omega,\sigma).
\]
This reversal explains the appearance of $d+2\beta-sp$ in the
sharp-interval formulas. Pairing opposite displacements reduces the
lower bound to the scalar remainder $G$ defined in \eqref{eq:Gtwo}. At the endpoint
of the elementary criterion, $G>0$ whenever its two arguments differ.
Integrating over $1\le t\le2$ therefore controls the reference energy
$\eref[v^{q/2}]$, defined in \eqref{eq:Eref}. This energy measures
squared differences of $v^{q/2}$ between pairs of points at those
displacements. For nearby parameters, the remainder stays nonnegative
for small $t$, while the negative contributions at larger $t$ are
bounded by a coefficient tending to zero times the same reference
energy. The positive contribution can therefore absorb them.

For a flat singular set of codimension $m$, tangential integration
preserves the scalar structure and gives $m+2\beta-sp$ in the reversal.
The needed estimate for tangential differences follows by composing
the shifts $(y,\tau)\mapsto(y+e^\tau\eta,\tau+t)$. The ambient dimension
still controls the far field of compact tests, whereas $m$ controls
the homogeneous profile. Section~\ref{sec:flat} states both ranges
separately and proves the flat inequality also for $p>q$.
Section~\ref{sec:halfspace} treats the regional half-space kernel,
which contains only same-side normal interactions. It yields sharp
$p<q$ intervals and strict extension, together with loss of sharpness
near the endpoints of the finiteness range of the homogeneous integral.

\subsection{Historical context: sharp constants and homogeneous powers}

Hardy's integral inequality~\cite{Hardy1920} led to the
multidimensional model
\begin{equation}\label{eq:intro-local-Hardy}
 \int_{\R^d}|\nabla u|^p\dd x
 \ge\left(\frac{d-p}{p}\right)^p
 \int_{\R^d}\frac{|u(x)|^p}{|x|^p}\dd x,
 \qquad 1<p<d,\quad u\in C_c^\infty(\R^d\setminus\{0\}).
\end{equation}
The singular power predicts the constant, but its truncations, not the
power itself, are admissible tests; this distinction persists in the
fractional theory. Table~\ref{tab:sharp-constants} collects the sharp
constants known for the weighted energies used below, in the
coefficient-one normalization of Appendix~\ref{app:evaluations}.

The single-exponent fractional case is the baseline. For $0<s<1$,
$1<p<\infty$ and $sp\ne d$,
\begin{equation}\label{eq:intro-fractional-Hardy}
 \int_{\R^d}\int_{\R^d}
 \frac{|u(x)-u(y)|^p}{|x-y|^{d+sp}}\dd y\dd x
 \ge C_{d,s,p}\int_{\R^d}\frac{|u(x)|^p}{|x|^{sp}}\dd x,
\end{equation}
with the optimal constant $C_{d,s,p}=J_{d,s,p}((d-sp)/p)$
of~\cite[Theorem~1.1 and Eqs.~(1.4)--(1.5)]{FS}. The angular kernel of
the operator energy is, with $|e|=1$ and ordinary surface measure,
\begin{equation}\label{eq:intro-Phi}
\begin{aligned}
 \Phi_{d,s,p}(r)
 &=\int_{\Sph^{d-1}}|e-r\omega|^{-d-sp}\dd\omega\\
 &=|\Sph^{d-2}|\int_{-1}^{1}
 \frac{(1-t^2)^{(d-3)/2}}{(1-2rt+r^2)^{(d+sp)/2}}\dd t
 \qquad(d\ge2),
\end{aligned}
\end{equation}
and $\Phi_{1,s,p}(r)=(1-r)^{-1-sp}+(1+r)^{-1-sp}$. The radial
inversion of~\cite[Section~3]{FS}, extended to the mixed formula
by \Kij\ (Theorem~1, Eq.~(4)), evaluates the weighted candidate
$J^{(\beta)}_{d,s,p}(a)$ of \eqref{eq:intro-weighted-J} as a
one-dimensional integral against $\Phi_{d,s,p}$;
Appendix~\ref{app:evaluations} records that formula and the Gamma
evaluations at $p=2$ of~\cite[Eqs.~(3.3)--(3.5)]{FLS}. These
established calculations identify the candidate. The present question
is its global sharpness when $p<q$.

Quantitative fractional estimates and weighted
Caffarelli--Kohn--Nirenberg inequalities were obtained by
Heinig--Kufner--Persson~\cite{HKP},
Maz'ya--Shaposhnikova~\cite{MS} and
Abdellaoui--Bentifour~\cite{AB}; a framework for irregular open sets
is due to Dyda--V\"ah\"akangas~\cite{DV}, and the corresponding
function-space conditions to Ihnatsyeva--V\"ah\"akangas~\cite{IV,IV2}.
For $p=2$ the spectral normalization gives the constant
$4^s[\Gamma((d+2s)/4)/\Gamma((d-2s)/4)]^2$ of
Herbst~\cite{Herbst}, Beckner~\cite{Beckner95} and
Yafaev~\cite{Yafaev}, which is $C^{\mathrm{FL}}_{d,s}/2$ times the
coefficient-one quadratic energy; see Appendix~\ref{app:evaluations}.

\begin{table}[tbp]
\centering
\small
\caption{Sharp constants for related energies; all are optimal on the
stated test class, in the normalization of
Appendix~\ref{app:evaluations}. The exponent is $p$ unless a mixed
order is indicated.}
\label{tab:sharp-constants}
\begin{tabular}{@{}p{1.9in}p{2.8in}p{1.15in}@{}}
\toprule
Setting & Sharp constant or conclusion & Reference\\
\midrule
$\R^d$, one power weight $|x|^{-sp}$
 & $C_{d,s,p}=J_{d,s,p}\bigl((d-sp)/p\bigr)$ & \cite{FS}\\
$\R^d$, two power weights
 & $C^{\mu,\nu}_{d,s,p}$ by radial inversion & \cite{DKWeighted}\\
$\R^d$, $p=2$
 & Gamma and trigonometric forms & \cite{FLS}\\
$\R^d$, remainder terms
 & $c_p$ times the weighted difference energy & \cite{FS, DKLow}\\
half-space, regional energy
 & $D_{d,s,p}$ & \cite{BD, FSHalf}\\
convex $\Omega$, regional energy, $sp>1$
 & $D_{d,s,p}$ with the distance weight & \cite{LS}\\
convex $\Omega$, zero extension
 & half-space value $h_{d,s,p}$ if $p=2$ or $1/p\le s$ & \cite{BBZ}\\
submanifold $K$, codimension $m$
 & $\operatorname{dist}(x,K)^{-sp}$, log weight if $sp=m$ & \cite{ARS}\\
flat $\Sigma=\R^{d-m}\times\{0\}$
 & tangential Gamma factor times $C^{\mu,\nu}_{m,s,p}$
 & \cite{KS, Sahu, Mallick}\\
point, mixed exponents $q\le p$
 & $J^{(\beta)}_{d,s,p}(a)^{q/p}$ on his test classes & \cite{KijTL}\\
half-space, mixed exponents $p\ge q$
 & comparison on the overlap of hypotheses & \cite{KijTL}\\
\bottomrule
\end{tabular}
\end{table}

For boundary singularities the regional energy and the energy of the
zero extension must be distinguished. For $u\in C_c^\infty(\Omega)$,
extended by zero to $\R^d$, the exact relation is
\begin{equation}\label{eq:intro-regional-full}
\begin{aligned}
 &\int_{\R^d}\int_{\R^d}
 \frac{|u(x)-u(y)|^p}{|x-y|^{d+sp}}\dd y\dd x\\
 &=\int_\Omega\int_\Omega
 \frac{|u(x)-u(y)|^p}{|x-y|^{d+sp}}\dd y\dd x
 +2\int_\Omega |u(x)|^p
 \int_{\R^d\setminus\Omega}|x-y|^{-d-sp}\dd y\dd x.
\end{aligned}
\end{equation}
Dyda~\cite{Dyda04} established regional fractional boundary Hardy
inequalities on several classes of domains, and lower-order
Hardy--Sobolev--Maz'ya improvements were obtained
by~\cite{DydaRem,Sloane}.

The two energies have different sharp constants, so the half-space
values in Table~\ref{tab:sharp-constants} do not transfer directly to
the regional functional of Section~\ref{sec:halfspace}. For convex
domains the case $sp<1$, $p\ne2$ was left open by the argument
of~\cite{BBZ} and received a partial negative answer at $p=1$
in~\cite{BSZ}.

Two aspects are not covered by the cited work. In the
weighted mixed-exponent setting only the order $q\le p$ was settled;
the sharpness transitions for $p<q$ are the subject of this paper.
For flat singular sets the listed constants concern a single exponent,
whereas Section~\ref{sec:flat} treats the mixed-exponent flat energy:
the effective codimension and the tangential Gamma factor are not new,
but their mixed-exponent counterparts are.

\subsection{Mixed exponents and the case \texorpdfstring{$p>q$}{p>q}}\label{sec:mixed-background}

For $1<q\le p$, \Kij\ (Theorem~1) proved
\begin{equation}\label{eq:intro-Kij}
\begin{aligned}
 &\int_{\R^d}|x|^\alpha
 \left(\int_{\R^d}\frac{|u(x)-u(y)|^p|y|^\beta}
 {|x-y|^{d+sp}}\dd y\right)^{q/p}\dd x\\
 &\quad\ge J_{d,s,p}^{(\beta)}(a)^{q/p}
 \int_{\R^d}|u(x)|^q|x|^{\alpha+q\beta/p-sq}\dd x
\end{aligned}
\end{equation}
with the optimal constant, under \eqref{eq:intro-weight-range},
$\alpha+q\beta/p>-d$, and away from the zero-constant weight.
His choice of test class depends on the sign of
$sq-\alpha-q\beta/p-d$. Our smooth test class, whose functions vanish near the origin, is contained
in either of his classes. For $p>q$ his theorem therefore applies
directly to our test class, and Appendix~\ref{sec:reverse-order}
collects the scalar estimates that the flat and half-space arguments
use.

Kijaczko's Theorem~2 treats the regional half-space energy, again for
$p\ge q$. Thus his mixed-exponent results cover the whole space and
the half-space; they do not include a singular flat subspace of
arbitrary codimension in the whole space. The equal-exponent flat
results of~\cite{Sahu,KS} concern a separate exponent regime.
Theorem~\ref{thm:flat-reverse} below supplies the flat $p>q$
inequality on its stated smooth test class. The half-space
$p>q$ statement in Theorem~\ref{thm:halfspace} is a comparison with
Kijaczko's theorem on the overlap of hypotheses; the new half-space
sharpness intervals concern $p<q$. In both geometries we verify
ambient far-field integrability directly.

The scalar background includes Brasco--Franzina's discrete Picone
inequality~\cite[Proposition~4.2]{BF}. For $1<q\le p$, $w>0$, and
$v\ge0$, it states
\begin{equation}\label{eq:intro-Picone}
\begin{aligned}
 &|w(x)-w(y)|^{p-2}(w(x)-w(y))
 \left(\frac{v(x)^q}{w(x)^{q-1}}-
       \frac{v(y)^q}{w(y)^{q-1}}\right)\\
 &\qquad\le |v(x)-v(y)|^q|w(x)-w(y)|^{p-q}.
\end{aligned}
\end{equation}
The exponent ordering in this principle and the mixed-norm argument
cannot simply be reversed. Kijaczko explicitly left the sharpness of
his formulas for $p<q$ open. We prove sharpness of the established
candidate on nontrivial two-weight regions and loss of sharpness near
both finite-energy endpoints in $\alpha$, for each fixed admissible
$\beta$. The parameter controlling paired scales is
$d+2\beta-sp$, whereas the homogeneous exponent involves
$\alpha+q\beta/p$. These are different effects of the inner weight.

In the complementary order $p>q$ the scalar step is the discrete
Picone inequality, whose weight is chosen from the homogeneous power.
The estimates that the flat and half-space arguments need are collected
in Appendix~\ref{sec:reverse-order}. The sharpness transitions studied
in the main part of the paper concern $p<q$.

\subsection{Triebel--Lizorkin spaces and the choice of seminorm}
\label{sec:TL-setting}

Section~\ref{sec:problem} uses Kijaczko's difference-space notation
$F^s_{p,q}$, with inner exponent $p$ and outer exponent $q$.
Here we use $\widetilde F^s_{r,\ell}$ for the \emph{classical
inhomogeneous Triebel--Lizorkin space}: its first subscript $r$ is
the spatial $L^r$ integrability exponent, and its second subscript
$\ell$ is the summability exponent over dyadic frequency scales.

The unweighted first-order difference characterization of
Prats and Saksman~\cite[Theorem~1.2]{PS} states that, for
$1<p,q<\infty$ and $\max\{0,d/q-d/p\}<s<1$,
\begin{equation}\label{eq:intro-TL-characterization}
 \|u\|_{\widetilde F^s_{q,p}(\R^d)}
 \asymp \|u\|_{L^q(\R^d)}
       +\E_{0,0}[u]^{1/q}.
\end{equation}
Thus the inner exponent $p$ and outer exponent $q$ used in this paper
correspond to the classical space $\widetilde F^s_{q,p}$, while our
difference-space notation is $F^s_{p,q}$. The smoothness restriction
in this characterization is automatic throughout the main regime
$p<q$, $0<s<1$.

For general $\alpha,\beta$, the optimal constants are attached to the
exact difference seminorm $\E_{\alpha,\beta}^{1/q}$, with the stated
kernel normalization. Corollary~\ref{cor:TL-completion} realizes the
completion of the smooth test class in this norm and extends the same
positive optimal constant to it. Identification with other weighted
function spaces entails a separate norm or density comparison.

\subsection{Organization}

Section~\ref{sec:elementary} establishes the finite-energy range,
positivity, the exact cylindrical representation, and the homogeneous
cutoff upper bound. It also identifies the completion of the smooth
test class and extends positive optimal constants to that space.
Sections~\ref{sec:scalar} and~\ref{sec:reference} give the complete
scalar support estimates and the reference-energy control of all
logarithmic displacements. Section~\ref{sec:proofs} proves the
sharpness and strict-extension results, including the low-exponent
optimization, the negative and balanced cases, the alternative
high-exponent proof, and the quantitative remainder.

Sections~\ref{sec:endpoint-loss} and~\ref{sec:explicit} complete the
point-singularity analysis with endpoint loss, the first sharpness
threshold, and angular tests. The geometric applications follow:
Section~\ref{sec:flat} treats flat singular sets for both exponent
orders, and Section~\ref{sec:halfspace} treats regional half-space
energies. Section~\ref{sec:questions} records the remaining questions.
The appendices record the established constant evaluations, the
all-parameter scalar comparison for $p>q$, and the auxiliary estimates
for that exponent order. All sharpness constructions use the original
smooth test classes.

\section{Elementary estimates and the logarithmic cylinder}\label{sec:elementary}

\subsection{Finiteness of the energy}

We first prove the energy-finiteness assertion of
Theorem~\ref{thm:basic}.

First, if $\beta\le-d$ or $\beta\ge sp$, then $A_{u,\beta}(x)=\infty$
at every point with $u(x)\ne0$: integrate respectively near $y=0$
or at $|y|=\infty$, where the test function
vanishes. Thus $-d<\beta<sp$ is necessary.
Fix such a $\beta$ and $0\ne u\in\test$. Near $x=0$, $u(x)=0$,
and dominated convergence yields
\[
 A_{u,\beta}(x)\longrightarrow
 \int_{\R^d}|u(y)|^p|y|^{\beta-d-sp}\dd y>0.
\]
The outer integral is finite there exactly when $\alpha>-d$.
At infinity, uniformly in direction,
\begin{equation}\label{eq:compacttail}
 A_{u,\beta}(x)=|x|^{-d-sp}
 \left(\int_{\R^d}|u(y)|^p|y|^\beta\dd y\right)
 (1+O(|x|^{-1})).
\end{equation}
Consequently the outer tail is finite exactly when
$\alpha<q(d+sp)/p-d=\alpha_+$. On bounded sets separated from
zero, split the inner integral into a diagonal neighborhood, a
neighborhood of zero, and a tail. Smoothness controls the first by
$\int_0^1r^{p-sp-1}\dd r$; the other two are controlled by
$\beta>-d$ and $\beta<sp$. These bounds prove the exact energy
range in Theorem~\ref{thm:basic}. The denominator is finite and
positive on the stated test class for every $\alpha,\beta$.
In particular, $\alpha+q\beta/p>-d$ is unnecessary on this class.

\subsection{A positive, nonsharp lower bound}

We next prove the positivity assertion of Theorem~\ref{thm:basic}
for $a\ne0$, as stated in \eqref{eq:positive-range}.

Let $\delta=\alpha+d+q\beta/p-sq=qa$. For $\delta>0$, define the outward annular average
\[
 T_+u(x)=\frac{1}{V_+|x|^d}\int_{|x|<|y|<2|x|}u(y)\dd y,
 \qquad V_+=\frac{\omega_{d-1}}d(2^d-1).
\]
Write $E_x^+=\{y:|x|<|y|<2|x|\}$, so
$|E_x^+|=V_+|x|^d$. Jensen's inequality for the probability measure
$\dd y/(V_+|x|^d)$ on this annulus gives
\[
 |u(x)-T_+u(x)|^p
 \le\frac1{V_+|x|^d}\int_{E_x^+}|u(x)-u(y)|^p\dd y.
\]
On $E_x^+$ we also have $|x-y|\le3|x|$ and
$|y|^\beta\ge c_{\beta,+}|x|^\beta$, where
$c_{\beta,+}=\min\{1,2^\beta\}>0$. Hence
\begin{equation}\label{eq:weighted-annular-pointwise}
\begin{aligned}
 A_{u,\beta}(x)
 &\ge\frac{c_{\beta,+}|x|^{\beta-d-sp}}{3^{d+sp}}
       \int_{E_x^+}|u(x)-u(y)|^p\dd y\\
 &\ge\frac{c_{\beta,+}V_+}{3^{d+sp}}
       |x|^{\beta-sp}|u(x)-T_+u(x)|^p.
\end{aligned}
\end{equation}
Raising this inequality to $q/p$ and multiplying by $|x|^\alpha$
produces precisely the denominator weight
$|x|^{\alpha+q\beta/p-sq}$.
Hence, with $\|\cdot\|_{q,\alpha,\beta}$ denoting the norm in $L^q(|x|^{\alpha+q\beta/p-sq}\dd x)$,
\begin{equation}\label{eq:annularlower}
 \E_{\alpha,\beta}[u]\ge
 \left(\frac{c_{\beta,+}V_+}{3^{d+sp}}\right)^{q/p}
 \|u-T_+u\|_{q,\alpha,\beta}^q.
\end{equation}
To justify the contraction explicitly, let
\[
 Mu(r)=\frac1{\omega_{d-1}}\int_{\Sph^{d-1}}u(r\sigma)\dd\sigma.
\]
This is a function of the radius $r$. We distinguish its one-dimensional
weighted norm from the full-space norm by writing
\[
 \|h\|_{\mathrm{rad}}
 :=\left(\int_0^\infty |h(r)|^q
       r^{\alpha+q\beta/p-sq+d-1}\dd r\right)^{1/q}.
\]
For a radial function $f(x)=h(|x|)$, polar coordinates give
\[
 \|f\|_{q,\alpha,\beta}
 =\omega_{d-1}^{1/q}\|h\|_{\mathrm{rad}}.
\]
Polar coordinates in the averaging annulus give
\[
 T_+u(r\omega)=\frac d{2^d-1}\int_1^2 t^{d-1}Mu(rt)\dd t.
\]
Jensen's inequality on each sphere, followed by radial integration, yields
\[
 \begin{aligned}
 \|Mu\|_{\mathrm{rad}}^q
 &\le\frac1{\omega_{d-1}}
   \int_0^\infty\int_{\Sph^{d-1}}|u(r\sigma)|^q
        r^{\alpha+q\beta/p-sq+d-1}\dd\sigma\dd r\\
 &=\frac1{\omega_{d-1}}\|u\|_{q,\alpha,\beta}^q.
 \end{aligned}
\]
The change of variables $\rho=tr$, together with
$\alpha+d+q\beta/p-sq=qa$, gives the radial dilation identity
\[
 \|Mu(t\,\cdot)\|_{\mathrm{rad}}
 =t^{-a}\|Mu\|_{\mathrm{rad}}.
\]
Minkowski's inequality for the probability measure
$\frac{d}{2^d-1}\,t^{d-1}\dd t$ on $(1,2)$ therefore yields
\[
 \begin{aligned}
 \|T_+u\|_{q,\alpha,\beta}
 &\le\omega_{d-1}^{1/q}\frac d{2^d-1}
       \int_1^2t^{d-1}\|Mu(t\,\cdot)\|_{\mathrm{rad}}\dd t\\
 &=\omega_{d-1}^{1/q}k_+(a)\|Mu\|_{\mathrm{rad}}\\
 &\le k_+(a)\|u\|_{q,\alpha,\beta},
 \end{aligned}
 \qquad
 k_+(a)=\frac d{2^d-1}\int_1^2t^{d-1-a}\dd t<1
\]
when $a>0$. Thus
\begin{equation}\label{eq:positiveplus}
 H_{\alpha,\beta}^{p,q}\ge
 \left(\frac{c_{\beta,+}V_+}{3^{d+sp}}\right)^{q/p}(1-k_+(a))^q>0.
\end{equation}
For $a<0$, define the inward annular average by
\[
 T_-u(x)=\frac1{V_-|x|^d}\int_{|x|/2<|y|<|x|}u(y)\dd y,
 \qquad V_-=\frac{\omega_{d-1}}d(1-2^{-d}).
\]
On this annulus $|x-y|\le2|x|$ and
$|y|^\beta\ge c_{\beta,-}|x|^\beta$, with
$c_{\beta,-}=\min\{1,2^{-\beta}\}>0$. The same normalized Jensen
argument therefore gives
\[
 A_{u,\beta}(x)\ge\frac{c_{\beta,-}V_-}{2^{d+sp}}
 |x|^{\beta-sp}|u(x)-T_-u(x)|^p.
\]
The corresponding contraction factor is
\[
 k_-(a)=\frac d{1-2^{-d}}\int_{1/2}^1t^{d-1-a}\dd t<1.
\]
Consequently
\[
 H_{\alpha,\beta}^{p,q}\ge
 \left(\frac{c_{\beta,-}V_-}{2^{d+sp}}\right)^{q/p}
 (1-k_-(a))^q>0\qquad(a<0).
\]
Together with \eqref{eq:positiveplus}, this proves the positivity
assertion of Theorem~\ref{thm:basic} for $a\ne0$.

\subsection{Exact cylindrical representation}

We derive the cylindrical identities used in
Lemma~\ref{lem:cutoff} and in the later sharpness proofs.

Let $\mathcal C=\R\times\Sph^{d-1}$ with measure
$\dd\tau\dd\omega$, and set
\begin{equation}\label{eq:cylinderchange}
 u(e^\tau\omega)=e^{-a\tau}v(\tau,\omega),\qquad
 K_{\beta,t}(\omega,\sigma)=\frac{e^{(d+\beta)t}}
 {|\omega-e^t\sigma|^{d+sp}}.
\end{equation}
For $x=e^\tau\omega$, put $y=e^{\tau+t}\sigma$. Under this
identification the spherical coordinate is the direction of the
point, that is, $\widehat x=\omega$; correspondingly
$\widehat y=\sigma$. The inner
Jacobian and weight are
$\dd y=e^{d(\tau+t)}\dd t\dd\sigma$ and
$|y|^\beta=e^{\beta(\tau+t)}$. Hence
\[
 A_{u,\beta}(e^\tau\omega)=e^{-(pa+sp-\beta)\tau}\Aa_{a,\beta}[v](\tau,\omega).
\]
The outer exponent vanishes because
$\alpha+d-q(a+s-\beta/p)=0$. Writing $\rho=q/p>1$, we obtain
\begin{gather}
 \E_{\alpha,\beta}[u]=\int_{\mathcal C}\Aa_{a,\beta}[v]^\rho,
 \qquad
 \Pp_{\alpha,\beta}[u]=\int_{\mathcal C}|v|^q,
 \label{eq:cylinderenergy}\\
 \Aa_{a,\beta}[v](\tau,\omega)
 =\int_\R\int_{\Sph^{d-1}}K_{\beta,t}(\omega,\sigma)
 |v(\tau,\omega)-e^{-at}v(\tau+t,\sigma)|^p\dd\sigma\dd t.
 \label{eq:cylinderA}
\end{gather}
Since $|\sigma-e^{-t}\omega|=e^{-t}|\omega-e^t\sigma|$,
\begin{equation}\label{eq:reversal}
 K_{\beta,-t}(\sigma,\omega)
 =e^{-(d+2\beta-sp)t}K_{\beta,t}(\omega,\sigma)=e^{-Dt}K_{\beta,t}(\omega,\sigma).
\end{equation}
This is why the relevant parameter is $d+2\beta-sp$.
Integrating the kernel in either angular variable gives the same
quantity, independent of the remaining angle; only rotation invariance
is used. Denote this quantity by $k_\beta(t)$:
\[
 k_\beta(t)=\int_{\Sph^{d-1}}K_{\beta,t}(\omega,\sigma)\dd\sigma.
\]
Integrating instead in the base variable,
$\int_{\Sph^{d-1}}K_{\beta,t}(\omega,\sigma)\dd\omega$, gives the
same value. By Fubini's theorem, integrating the kernel against a
function of $\omega$ alone, or of $\sigma$ alone, therefore reduces
to a single integral against $k_\beta(t)$.
It satisfies
\begin{equation}\label{eq:k0bounds}
 k_\beta(t)\asymp|t|^{-1-sp}\ (t\to0),\quad
 k_\beta(t)\sim\omega_{d-1}e^{-\lambda_+t}\ (t\to+\infty),\quad
 k_\beta(t)\sim\omega_{d-1}e^{\lambda_-t}\ (t\to-\infty).
\end{equation}
Indeed, $K_{\beta,t}=e^{\beta t}K_{0,t}$. At small $t$, use
$|\omega-e^t\sigma|^2=(1-e^t)^2+e^t|\omega-\sigma|^2$;
a local angular chart reduces the singularity to
$\int_{|z|<c}(t^2+|z|^2)^{-(d+sp)/2}\dd z\asymp|t|^{-1-sp}$.
At positive infinity factor $e^{(d+sp)t}$ from the denominator;
at negative infinity the denominator tends uniformly to one.
Finally,
\begin{equation}\label{eq:Jcylinder}
 J_{a,\beta}=\int_\R k_\beta(t)|1-e^{-at}|^p\dd t.
\end{equation}

\Needspace{14\baselineskip}
\subsection{Logarithmic cutoffs and an upper bound for the best constant}

\begin{lemma}[Slow logarithmic cutoffs]\label{lem:cutoff}
For every $-\lambda_+/p<a<\lambda_-/p$,
\begin{equation}\label{eq:Jupper}
 H_{\alpha_H+qa,\beta}^{p,q}\le J_{a,\beta}^{q/p}.
\end{equation}
More precisely, if $0\ne\zeta\in C_c^\infty(\R)$ is nonnegative and
\[
 u_T(x)=|x|^{-a}\zeta\left(\frac{\log|x|}{T}\right),
\]
then, with $\alpha=\alpha_H+qa$, the Hardy quotient satisfies
\[
 \lim_{T\to\infty}
 \frac{\displaystyle
 \int_{\R^d}|x|^\alpha
 \left(\int_{\R^d}
 \frac{|u_T(x)-u_T(y)|^p|y|^\beta}{|x-y|^{d+sp}}\dd y
 \right)^{q/p}\dd x}
 {\displaystyle
 \int_{\R^d}|u_T(x)|^q|x|^{\alpha+q\beta/p-sq}\dd x}
 =J_{a,\beta}^{q/p}.
\]
\end{lemma}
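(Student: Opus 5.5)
Pass to the logarithmic cylinder through \eqref{eq:cylinderchange}. The function $u_T$ becomes $v_T(\tau,\omega)=\zeta(\tau/T)$, which is independent of $\omega$. By \eqref{eq:cylinderenergy} the denominator is
\[
 \int_{\mathcal C}|v_T|^q=\omega_{d-1}T\|\zeta\|_{L^q(\R)}^q .
\]
Because $v_T$ has no angular dependence, the angular integral in \eqref{eq:cylinderA} reduces to $k_\beta(t)$. This gives
\[
 \Aa_{a,\beta}[v_T](\tau)=\int_\R k_\beta(t)\,\bigl|\zeta(\tau/T)-e^{-at}\zeta((\tau+t)/T)\bigr|^p\dd t ,
\]
which is a function of $\tau$ alone. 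Put $\tau=T\xi$. The numerator is then $\omega_{d-1}T\int_\R F_T(\xi)^\rho\dd\xi$, where
\[
 F_T(\xi)=\int_\R k_\beta(t)\,\bigl|\zeta(\xi)-e^{-at}\zeta(\xi+t/T)\bigr|^p\dd t .
\]
The factors $\omega_{d-1}T$ cancel. By \eqref{eq:Jcylinder}, the claim reduces to
\[
 \int_\R F_T^\rho\dd\xi\to J_{a,\beta}^{\rho}\int_\R\zeta^q\dd\xi .
\]
The pointwise limit is immediate: for fixed $\xi,t$, the integrand tends to $k_\beta(t)\zeta(\xi)^p|1-e^{-at}|^p$. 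So the natural limit is $F_T(\xi)\to\zeta(\xi)^pJ_{a,\beta}$, and $(\zeta^pJ)^\rho=J^\rho\zeta^q$.

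The main obstacle is domination. There are two separate difficulties. First, the $t$-integral must be dominated uniformly in $T$ for fixed $\xi$. Second, the region where $\zeta(\xi)=0$ but $F_T(\xi)\ne0$ must be controlled; this region is not bounded in $\xi$, since $F_T$ has tails coming from large $|t|$.

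For the $t$-integral, split at $|t|\le1$ and $|t|>1$.
\begin{itemize}
\item Near $t=0$, write $\zeta(\xi)-e^{-at}\zeta(\xi+t/T)=(1-e^{-at})\zeta(\xi)+e^{-at}(\zeta(\xi)-\zeta(\xi+t/T))$. This is $O(|t|)$ uniformly in $T\ge1$, and $|t|^{p}k_\beta(t)\asymp|t|^{p-1-sp}$ is integrable by \eqref{eq:k0bounds}.
\item For $|t|>1$, bound the integrand by $C\|\zeta\|_\infty^p(1+e^{-pat})$. The factor $k_\beta(t)(1+e^{-pat})$ is integrable at $\pm\infty$ exactly because $-\lambda_+<pa<\lambda_-$, again by \eqref{eq:k0bounds}.
\end{itemize}
Dominated convergence then gives $F_T(\xi)\to\zeta(\xi)^pJ_{a,\beta}$ for each $\xi$.

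For the $\xi$-integral, suppose $\supp\zeta\subset[-R,R]$. If $|\xi|\le R+1$, the bound above gives $F_T\le C$ uniformly, and dominated convergence handles this part. If $|\xi|>R+1$, then $\zeta(\xi)=0$, and
\[
 F_T(\xi)=\int k_\beta(t)e^{-pat}\zeta(\xi+t/T)^p\dd t ,
\]
where only $|t|\ge T(|\xi|-R)$ contributes. By \eqref{eq:k0bounds}, for $\xi>R+1$ this is at most $C\exp(-(\lambda_++pa)T(\xi-R))$. The symmetric bound with rate $\lambda_--pa>0$ holds for $\xi<-R-1$. Both rates are positive, so these tails are bounded by $Ce^{-c\rho T|\xi|}$. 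Their $\rho$-th power has integral $O(e^{-cT})\to0$.

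Combining the two regions gives the limit. The displayed limit and the inequality \eqref{eq:Jupper} then follow. Finiteness of every quantity involved holds by Theorem~\ref{thm:basic}, since $u_T\in\test$ and the equivalence \eqref{eq:a-range} places $\alpha=\alpha_H+qa$ in the finite-energy range.
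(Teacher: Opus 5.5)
Your proof is correct, but it takes a different route from the paper's. The paper keeps the slow variable $\tau$ and writes $\zeta_T=\zeta(\cdot/T)$. It splits the cylindrical difference as $M_T+R_T$, with $M_T=(1-e^{-at})\zeta_T(\tau)$ and $R_T=e^{-at}(\zeta_T(\tau)-\zeta_T(\tau+t))$. The first piece has mixed norm exactly $J_{a,\beta}^{1/p}\|\zeta_T\|_q$. The second is bounded by Minkowski's integral inequality in $L^{q/p}$ together with $\|\zeta_T-\zeta_T(\cdot+t)\|_q\le|t|\,\|\zeta_T'\|_q$. The triangle inequality in both directions then gives a relative error at most $M_{a,\beta}\|\zeta'\|_q/(T\|\zeta\|_q)$, where $M_{a,\beta}^p=\int k_\beta(t)e^{-pat}|t|^p\dd t$.

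You instead rescale $\tau=T\xi$, so the profile is fixed and only the displacement $t/T$ shrinks. You then apply dominated convergence twice: once with a $T$-independent integrable majorant in $t$, and once using the exponential decay of $F_T$ outside $\supp\zeta$.

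Each route has its advantages. The paper's is quantitative, with an explicit $O(T^{-1})$ rate, and needs only the single moment $M_{a,\beta}$. However, its Minkowski interchange requires $q/p\ge1$. Yours gives no rate and uses the tail asymptotics of $k_\beta$ a second time. On the other hand, it never uses $q>p$, so it works verbatim for either exponent order, in the spirit of the paper's separate Lemma~\ref{lem:reverse-cutoff} for $p>q$.

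There is one harmless slip. For $\xi>R+1$ only $t\le-T(\xi-R)$ contributes, so the relevant tail is $k_\beta(t)e^{-pat}\lesssim e^{(\lambda_--pa)t}$ as $t\to-\infty$. The decay rate there is therefore $\lambda_--pa$, while the rate $\lambda_++pa$ belongs to $\xi<-R-1$. Both rates are positive, so the conclusion is unaffected.
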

\begin{proof}
Write $\zeta_T(\tau)=\zeta(\tau/T)$, and set
\[
 \|F\|_{q,p,\beta}:=
 \left(\int_\R
 \left(\int_\R k_\beta(t)|F(\tau,t)|^p\dd t\right)^{q/p}
 \dd\tau\right)^{1/q}.
\]
For $u_T$, the weighted difference in \eqref{eq:cylinderA} becomes
\[
 \zeta_T(\tau)-e^{-at}\zeta_T(\tau+t)
 =M_T(\tau,t)+R_T(\tau,t),
\]
where
\[
 M_T(\tau,t)=(1-e^{-at})\zeta_T(\tau),\qquad
 R_T(\tau,t)=e^{-at}(\zeta_T(\tau)-\zeta_T(\tau+t)).
\]
The transformed cutoff is independent of the angle. Integrating over
the outer angular variable in \eqref{eq:cylinderenergy} gives
\[
 \E_{\alpha,\beta}[u_T]^{1/q}
 =\omega_{d-1}^{1/q}\|M_T+R_T\|_{q,p,\beta}.
\]
The first term satisfies $\|M_T\|_{q,p,\beta}=J_{a,\beta}^{1/p}\|\zeta_T\|_q$.
Since $q/p>1$, Minkowski's integral inequality in $L^{q/p}$ gives
\[
 \|R_T\|_{q,p,\beta}
 \le\left(\int_\R k_\beta(t)e^{-pat}
 \|\zeta_T-\zeta_T(\cdot+t)\|_q^p\dd t\right)^{1/p}.
\]
The fundamental theorem of calculus, followed by Minkowski in
$L^q(\R)$, yields
$\|\zeta_T-\zeta_T(\cdot+t)\|_q\le|t|\|\zeta_T'\|_q$.
Hence
\[
 \|R_T\|_{q,p,\beta}\le M_{a,\beta}\|\zeta_T'\|_q,
 \qquad M_{a,\beta}^p=\int_\R k_\beta(t)e^{-pat}|t|^p\dd t<\infty.
\]
Indeed, the integrand behaves as $|t|^{p-1-sp}$ near zero, as a
polynomial times $e^{-(\lambda_++pa)t}$ at positive infinity, and as a
polynomial times $e^{(\lambda_--pa)t}$ at negative infinity. All are
integrable precisely under the stated strict inequalities on $a$.
Now use the norm triangle inequality in both directions:
\[
 \left|\|M_T+R_T\|_{q,p,\beta}
       -J_{a,\beta}^{1/p}\|\zeta_T\|_q\right|
 \le M_{a,\beta}\|\zeta_T'\|_q.
\]
Since $\|\zeta_T\|_q=T^{1/q}\|\zeta\|_q$ and
$\|\zeta_T'\|_q=T^{1/q-1}\|\zeta'\|_q$, division by
$\|\zeta_T\|_q$ proves convergence to $J_{a,\beta}^{1/p}$.
Raising to the $q$th power gives the quotient limit, because
$\Pp_{\alpha_H+qa,\beta}[u_T]=\omega_{d-1}\|\zeta_T\|_q^q$.
For every finite $T$, the support is a compact annulus and the power
is smooth there, so the sequence belongs to the original test class.
The argument also covers $a=0$, with $M_T=0$.
\end{proof}

We now complete the proof of Theorem~\ref{thm:basic} by treating
the zero-constant weight and the asymptotic behavior in
\eqref{eq:vanishing}. At $a=0$, Lemma~\ref{lem:cutoff} gives
$H_{\alpha_H,\beta}^{p,q}=0$. For the asymptotic estimate,
\begin{equation}\label{eq:Jsmall}
 \frac{J_{a,\beta}}{|a|^p}\longrightarrow
 L_{d,s,p,\beta}:=\int_{\R^d}\frac{|\log|z||^p|z|^\beta}{|e-z|^{d+sp}}\dd z\in(0,\infty).
\end{equation}

To justify the limit, choose $0<\delta<\min\{\lambda_+/p,\lambda_-/p\}$.
For $0<|a|\le\delta$, the mean value theorem gives
\[
 \frac{|1-e^{-at}|^p}{|a|^p}\le |t|^p e^{p\delta|t|}.
\]
Multiplication by $k_\beta(t)$ gives an integrable majorant by
\eqref{eq:k0bounds}; pointwise the quotient tends to $|t|^p$.
This proves \eqref{eq:Jsmall}, including positivity and finiteness
of its limit. Finally $k_\pm(0)=1$, and differentiation under
their bounded $t$-integrals gives
\[
 k_+'(0)=-\frac d{2^d-1}\int_1^2t^{d-1}\log t\dd t<0,
 \qquad
 k_-'(0)=-\frac d{1-2^{-d}}\int_{1/2}^1t^{d-1}\log t\dd t>0.
\]
Thus $1-k_+(a)\asymp a$ for small $a>0$ and
$1-k_-(a)\asymp|a|$ for small $a<0$.
The annular lower bound and the cutoff upper bound consequently
give $H_{\alpha_H+qa,\beta}^{p,q}\asymp|a|^q$ on both sides.
Since $\alpha-\alpha_H=qa$, this is \eqref{eq:vanishing}.

\begin{remark}[Comparison with all measurable finite-energy functions]\label{rem:class}
One may also define a maximal measurable class by requiring both quantities in \eqref{eq:energy}--\eqref{eq:denominator} to be finite. Equality of its infimum with \eqref{eq:Hdef} requires a density theorem and is not assumed here. All sharpness sequences constructed here belong to the stated smooth test class.
\end{remark}

\subsection{Completion of the smooth test class in the difference norm}

Here we allow either exponent order $1<p,q<\infty$, with
$-d<\beta<sp$ and $-d<\alpha<sq+d(q/p-1)$, so the difference
seminorm is finite on $\test$. It is a norm on this test class:
the triangle inequality follows from the mixed $L^q(L^p)$ norm,
and vanishing forces a function to be constant almost everywhere;
a compactly supported such function is zero.
Define the completion of this test class in the difference norm by
\begin{equation}\label{eq:TL-core-completion}
 \mathcal X_{\alpha,\beta}^{s;p,q}
 :=\overline{\test}^{\,\E_{\alpha,\beta}^{1/q}}.
\end{equation}
This is the two-weight Triebel--Lizorkin difference space generated
by the specified smooth test functions. For nonzero Hardy constants it has the
following concrete realization.

\begin{corollary}[Extension with the same optimal constant]
\label{cor:TL-completion}
Assume $H:=H_{\alpha,\beta}^{p,q}>0$ on the test class in
\eqref{eq:Hdef}. Then $\mathcal X_{\alpha,\beta}^{s;p,q}$ embeds
continuously and injectively into
$L^q(\R^d,|x|^{\alpha+q\beta/p-sq}\dd x)$.
Its elements are represented by measurable functions, its norm
is the displayed difference norm, and
\begin{equation}\label{eq:TL-completed-Hardy}
 \E_{\alpha,\beta}[u]\ge
 H\int_{\R^d}|u(x)|^q|x|^{\alpha+q\beta/p-sq}\dd x
 \qquad(u\in\mathcal X_{\alpha,\beta}^{s;p,q}).
\end{equation}
The constant remains optimal. In particular, on every proved sharp
interval with positive homogeneous constant it remains
$J_{d,s,p}^{(\beta)}(a)^{q/p}$ on this space.
\end{corollary}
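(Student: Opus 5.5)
The plan is the standard completion argument: the Hardy inequality on the test class turns Cauchy sequences for the difference norm into Cauchy sequences in the weighted $L^q$ space, and a Fatou argument identifies the completed norm with the energy of the limit. Write $w(x)=|x|^{\alpha+q\beta/p-sq}$ and $\mu(\dd x\,\dd y)=|y|^\beta|x-y|^{-d-sp}\dd y$. The norm $\E_{\alpha,\beta}^{1/q}$ is the mixed norm of the difference function
\[
 F_u(x,y)=u(x)-u(y)
\]
in $L^q(|x|^\alpha\dd x;L^p(\mu))$. Let $(u_n)\subset\test$ be Cauchy in this norm. Applying the definition \eqref{eq:Hdef} of $H$ to the differences $u_n-u_m\in\test$ gives
\[
 \|u_n-u_m\|_{L^q(w)}^q\le H^{-1}\E_{\alpha,\beta}[u_n-u_m].
\]
Hence $u_n$ converges in $L^q(w)$ to a measurable $u$, and after passing to a subsequence $u_n\to u$ almost everywhere. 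At the same time $F_{u_n}$ converges in the complete mixed-norm space to some $F$. Again along a subsequence, $F_{u_n}\to F$ almost everywhere with respect to $\dd x\otimes\mu$, by the standard subsequence argument for iterated Lebesgue spaces. Since $F_{u_n}\to u(x)-u(y)$ almost everywhere, we conclude $F=F_u$.

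This defines the map $\iota$ sending the completion class of $(u_n)$ to $u$. It is well defined because equivalent Cauchy sequences have the same $L^q(w)$ limit, by the same Hardy bound. It is continuous, with $\|\iota U\|_{L^q(w)}\le H^{-1/q}\|U\|$. The completed norm equals the norm of $F_u$. By Fatou's lemma applied twice (inner, then outer integral), this equals $\E_{\alpha,\beta}[u]^{1/q}$: Fatou gives one inequality, and convergence of $F_{u_n}$ in the mixed norm to $F_u$ gives equality.

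Injectivity is the main point to check. If $\iota U=0$, then $u=0$ almost everywhere, so $F_u=0$ and the completed norm of $U$ is $\E_{\alpha,\beta}[0]^{1/q}=0$; hence $U=0$. Thus the elements of $\mathcal X_{\alpha,\beta}^{s;p,q}$ are realized as measurable functions, and the norm is the displayed difference norm.

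For \eqref{eq:TL-completed-Hardy}, write the inequality for $u_n$ and pass to the limit. Both sides converge: the left side because the completed norm equals $\E_{\alpha,\beta}[u]^{1/q}$, and the right side by $L^q(w)$ convergence.

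Optimality is immediate. The constant obtained on the larger space cannot exceed the infimum over the subclass $\test$, and by the inequality just proved it is at least $H$. So the optimal constant is again $H$. On the sharp intervals of Theorems~\ref{thm:main} and~\ref{thm:extension}, where $J_{a,\beta}>0$, this gives the value $J_{a,\beta}^{q/p}$.
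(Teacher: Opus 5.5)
Your proposal is correct and follows the paper's completion argument: the Hardy bound on differences produces an $L^q(w)$ limit $u$, and the difference-norm limit is identified with $u$, which gives the representation, the norm identity, injectivity and the completed inequality; optimality then follows because $\test$ sits inside the completion. The only variation is the identification step, where you use completeness of the mixed space $L^q(L^p)$ and an a.e.\ subsequence to get $F=F_u$, while the paper applies Fatou directly to get $\E_{\alpha,\beta}[u_j-u]\le\liminf_{\ell\to\infty}\E_{\alpha,\beta}[u_j-u_\ell]$; in your route the extra appeal to Fatou is superfluous, since the mixed norm of $F_u$ is $\E_{\alpha,\beta}[u]^{1/q}$ by definition.
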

\begin{proof}
Write $w(x)=|x|^{\alpha+q\beta/p-sq}$. If $(u_j)$ is Cauchy
in the difference norm, the Hardy inequality on the test class applied to
$u_j-u_\ell$ gives
\[
 \|u_j-u_\ell\|_{L^q(w)}
 \le H^{-1/q}\E_{\alpha,\beta}[u_j-u_\ell]^{1/q}.
\]
Thus $u_j\to u$ in $L^q(w)$ for a measurable $u$; a subsequence
converges almost everywhere since $w$ is positive and finite almost
everywhere. For each fixed $j$, apply Fatou first to the inner
integral and then to the outer integral along that subsequence:
\begin{equation}\label{eq:TL-completion-Fatou}
 \E_{\alpha,\beta}[u_j-u]
 \le\liminf_{\ell\to\infty}
          \E_{\alpha,\beta}[u_j-u_\ell].
\end{equation}
The Cauchy property makes the right side tend to zero as $j\to\infty$.
For one fixed $j$ the right side is finite, so the triangle inequality
also gives $\E_{\alpha,\beta}[u]<\infty$. Consequently the abstract
completion is represented by $u$ in the same difference norm.
The norm triangle inequality gives
$\E_{\alpha,\beta}[u_j]^{1/q}\to\E_{\alpha,\beta}[u]^{1/q}$.
If the weighted $L^q$ limit is zero, the same Fatou argument shows
that the completion element is zero, proving injectivity.
Passing to the limit in the Hardy inequality on the test class proves
\eqref{eq:TL-completed-Hardy}.

Since the smooth test class is contained in the completion, an improvement of
$H$ on the latter would improve the constant in \eqref{eq:Hdef}. This proves
optimality without imposing a density theorem for any larger
measurable class. The same argument, with the corresponding distance
weights and kernels, applies to the flat-subspace and regional
half-space completions of the smooth test classes whenever their Hardy
constant is positive. At a zero-constant weight no positive Hardy
embedding is asserted.
\end{proof}

\section{Two-point supporting lines and quantitative remainders}\label{sec:scalar}

The statements in this section are scalar inequalities: they do not
involve a radial measure or an additional power weight. In their
application, all dependence on $\beta$ enters through
$\vartheta=e^{-at}$ and $b=e^{-(d+2\beta-sp-pa)t}$.
Put $m=q-p>0$. For $0<\vartheta<1$ and $b\ge0$, define
\begin{align}
 F_{\vartheta,b}(U,V)&=|U-\vartheta V|^p(U^m+bV^m),\label{eq:Ftwo}\\
 A_{\vartheta,b}&=\frac{(1-\vartheta)^{p-1}}q(q-m\vartheta+pb),\label{eq:Atwo}\\
 B_{\vartheta,b}&=\frac{(1-\vartheta)^{p-1}}q\bigl(b(m-q\vartheta)-p\vartheta\bigr),\label{eq:Btwo}\\
 G_{\vartheta,b}(U,V)&=F_{\vartheta,b}(U,V)-A_{\vartheta,b}U^q-B_{\vartheta,b}V^q.
 \label{eq:Gtwo}
\end{align}
The coefficients $A_{\vartheta,b}$ and $B_{\vartheta,b}$ are chosen
so that $F_{\vartheta,b}$ and $A_{\vartheta,b}U^q+B_{\vartheta,b}V^q$
agree, together with their first derivatives, on the diagonal $U=V$.
For $V>0$, the inequality $G_{\vartheta,b}\ge0$ says that the graph
of $x\mapsto F_{\vartheta,b}(x^{1/q},1)$ lies above its tangent
line at $x=1$; this is the meaning of \emph{support} below. The
remainder is strict when $G_{\vartheta,b}(U,V)>0$ for $U\ne V$. Also,
\begin{equation}\label{eq:supportsum}
 A_{\vartheta,b}+B_{\vartheta,b}=(1-\vartheta)^p(1+b).
\end{equation}
They need not both be nonnegative.

\begin{lemma}[Strict scalar support]\label{lem:scalar-support}
For $n_{p,q}$ in \eqref{eq:n-pq},
\begin{equation}\label{eq:scalar-positive}
 G_{\vartheta,b}(U,V)\ge0\qquad
 \bigl(U,V\ge0,\ 0\le b\le\vartheta^{n_{p,q}}\bigr).
\end{equation}
For fixed $\vartheta\in(0,1)$, the inequality is strict if $U\ne V$. Moreover,
\begin{equation}\label{eq:cdef}
 c(\vartheta,b):=\inf_{\substack{U,V\ge0\\U\ne V}}
 \frac{G_{\vartheta,b}(U,V)}{(U^{q/2}-V^{q/2})^2}>0
 \qquad\text{when }b=0\text{ or }b=\vartheta^{n_{p,q}}.
\end{equation}
Consequently $c(\vartheta,b)>0$ throughout $0\le b\le\vartheta^{n_{p,q}}$.
\end{lemma}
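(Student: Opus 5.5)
The plan rests on two structural facts. First, $G_{\vartheta,b}$ is affine in $b$. Indeed, by \eqref{eq:Ftwo}--\eqref{eq:Btwo},
\[
 G_{\vartheta,b}=G_{\vartheta,0}+b\,R_\vartheta,
\]
where
\[
 R_\vartheta(U,V)=|U-\vartheta V|^pV^m-\tfrac{(1-\vartheta)^{p-1}}{q}\bigl(pU^q+(m-q\vartheta)V^q\bigr).
\]
Second, $G_{\vartheta,b}$ is positively homogeneous of degree $q$ in $(U,V)$. Because of the affine structure, every quotient $G_{\vartheta,b}/(U^{q/2}-V^{q/2})^2$ with $U\ne V$ is affine in $b$. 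Its infimum $c(\vartheta,\cdot)$ is therefore concave on $[0,\vartheta^{n_{p,q}}]$, and so $c(\vartheta,b)\ge\min\{c(\vartheta,0),c(\vartheta,\vartheta^{n_{p,q}})\}$. This gives the final sentence of the lemma, and, applied pointwise, it also reduces both \eqref{eq:scalar-positive} and strictness to the two endpoints $b=0$ and $b=\vartheta^{n_{p,q}}$. So I will prove \eqref{eq:cdef} at those two endpoints; everything else then follows.

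At a fixed endpoint, homogeneity lets me set $V=1$ and study the one-variable function $h(U)=G_{\vartheta,b}(U,1)$ on $[0,\infty)$, together with the boundary ray $V=0$. By construction, $h(1)=h'(1)=0$. I will establish four facts.
\begin{enumerate}
\item $h''(1)>0$. This is an explicit computation from \eqref{eq:Atwo}--\eqref{eq:Btwo}, and it gives the quadratic lower bound $h(U)\gtrsim(U-1)^2\asymp(U^{q/2}-1)^2$ near the diagonal.
\item $h(0)=\vartheta^pb-B_{\vartheta,b}>0$. For $b=0$ this is $(1-\vartheta)^{p-1}p\vartheta/q>0$. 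For $b=\vartheta^{n}$ it requires $\vartheta^{p+n}\bigl(q-(1-\vartheta)^{p-1}m\bigr)/q+(1-\vartheta)^{p-1}(q\vartheta b+p\vartheta)/q>0$, which I will check directly.
\item $1-A_{\vartheta,b}>0$, which controls both $U\to\infty$ and the ray $V=0$. For $b=0$ this holds because $(1-\vartheta)^{p-1}(1-m\vartheta/q)$ decreases from $1$ at $\vartheta=0$.
\item $h>0$ on $(0,1)\cup(1,\infty)$.
\end{enumerate}
Given these four facts, a compactness argument on the projective parameter $U/(U+V)\in[0,1]$ shows that the quotient in \eqref{eq:cdef} is continuous and positive away from the diagonal, has a positive limit at the diagonal by item~1, and has positive limits at the two ends by items~2 and~3. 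Hence $c>0$.

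For item~4 at $b=0$, I will write $x=U^q$ and show that $x\mapsto|x^{1/q}-\vartheta|^p x^{m/q}$ lies above its tangent at $x=1$. The approach is to split into $U\le\vartheta$ and $U\ge\vartheta$. On $[\vartheta,\infty)$ I will show that $h'$ changes sign only at $U=1$, by factoring $h'(U)=U^{q-1}\psi(U)$ and checking that $\psi$ is monotone. On $[0,\vartheta]$ I will compare against the value $h(0)>0$ and use that $h$ is monotone there.

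The main obstacle is the endpoint $b=\vartheta^{n_{p,q}}$, which is where the three regimes in \eqref{eq:n-pq} arise. Here I will split the $V^m$ term as $b|U-\vartheta V|^pV^m$ and bound it below by its own tangent. The goal is to reduce positivity to a single inequality in $\vartheta$ of the form $\vartheta^{n}\bigl(\cdot\bigr)\ge(1-\vartheta)^{p-1}(\cdot)$. For $q\ge2$ I expect this to follow from convexity of $t\mapsto t^{q/p}$-type expressions: $n=1$ when $q\le2p$, and $n=q-p$ when $q>2p$, which is where the $V^m$ growth dominates. For $1<p<q<2$ the worst case should occur at the critical ratio $1-\vartheta=2^{-1/(p-1)}$, and the exponent $1+\log(q-1)/\log(1-2^{-1/(p-1)})$ is exactly what makes the reduced inequality hold there. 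I would first try to prove this reduced inequality by comparing logarithms at that extremal $\vartheta$, and then show monotonicity in $\vartheta$ on either side of it.
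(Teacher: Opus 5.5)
Your reduction to the two endpoints is the paper's own. Since $G_{\vartheta,b}=(1-\lambda)G_{\vartheta,0}+\lambda G_{\vartheta,b_*}$ with $\lambda=b/b_*$, the quotient is affine in $b$ and $c(\vartheta,\cdot)$ is concave. Your compactness step, with limits at the diagonal, at $U=0$, and at $V=0$ (equivalently $U/V\to\infty$, value $1-A_{\vartheta,b}$), is also what the paper does. Items 1--3 are local or asymptotic and can be checked directly at both endpoints. In item 2 the $bm$-term should be $-(1-\vartheta)^{p-1}\vartheta^{n}m/q$, not $-(1-\vartheta)^{p-1}\vartheta^{p+n}m/q$; the corrected bracket $\vartheta^{n}(q\vartheta-m)+p\vartheta$ is still positive. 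At $b=0$ your argument on $[\vartheta,\infty)$ is fine, because $h'(U)/U^{q-1}=pw^{p-1}+mw^p-qA_{\vartheta,0}$ with $w=1-\vartheta/U$ is increasing. On $[0,\vartheta]$, however, $h$ is not monotone: for $m<1$, $h'(0^+)=+\infty$ while $h'(\vartheta)=-qA_{\vartheta,0}\vartheta^{q-1}<0$. Also $h(0)$ is the wrong comparison point. Instead use $F_{\vartheta,b}\ge0$ and $A_{\vartheta,b}>0$ to get $h(U)\ge-A_{\vartheta,b}U^q-B_{\vartheta,b}\ge h(\vartheta)>0$.

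The genuine gap is item~4 at $b=\vartheta^{n_{p,q}}$, which is the whole content of the lemma. Bounding $b|U-\vartheta V|^pV^m$ below by its own tangent cannot work. That tangent remainder is exactly your $R_\vartheta$, and $R_\vartheta(U,1)/U^q\to-p(1-\vartheta)^{p-1}/q<0$ as $U\to\infty$. If $R_\vartheta\ge0$ held, support would hold for every $b\ge0$, contradicting $G_{\vartheta,b}(U,1)/U^q\to1-A_{\vartheta,b}<0$ for large $b$. The $b$-term must instead be absorbed by $G_{\vartheta,0}$ at large $U/V$, and your sketch gives no mechanism for this.

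Nor does convexity alone suffice in the middle range. With $f(x)=|x^{1/q}-\vartheta|^p(x^{m/q}+b)$ and $z=x^{1/q}>\vartheta$, $f''$ has the sign of $E_{\vartheta,b}(z)=\vartheta z^{m-1}((q-1)z-m\vartheta)+b((q-1)\vartheta-mz)$. For $p=2$, $q=4$, $b=\vartheta=1/10$, $z=3/10$ this is negative.

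The paper's route is as follows.
\begin{itemize}
\item For $q>2p$: $E_{\vartheta,\vartheta^m}>0$, so $f$ is convex above $\vartheta^q$.
\item For $2\le q\le2p$: on $z\le1$ it combines the Frank--Seiringer inequality $|z-\vartheta|^p\ge(1-\vartheta)^{p-1}(z^p-\vartheta)$ with $z^p-z^m\ge\frac{p-m}{q}(z^q-1)$; on $z>1$ it uses convexity if $m\ge1$.
\item For $0<m<1$: $f$ is convex up to some $x_*$ and concave beyond, with $f'$ decreasing to $1$ there. Hence $A_{\vartheta,b}=f'(1)<1$ forces $1<x_*$ and yields tangent support.
\end{itemize}
Only after this criterion does the problem become the single inequality in $\vartheta$ you anticipate. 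Even then, $\vartheta_0=1-2^{-1/(p-1)}$ is not an extremal point but the switch between two crude bounds. For $\vartheta\le\vartheta_0$ one has $\vartheta^n\le(q-1)\vartheta$, giving $A_{\vartheta,\vartheta^n}\le(1-\vartheta)^{p-1}(1+(p-1)\vartheta)<1$. For $\vartheta\ge\vartheta_0$ one has $(1-\vartheta)^{p-1}\le\tfrac12$, giving $A_{\vartheta,\vartheta^n}\le(p+q)/(2q)<1$. Nothing is tight at $\vartheta_0$; indeed $n_{p,q}$ exceeds the optimal exponent $\mathfrak n_{p,q}$. So a proof organized around tightness there would not close.
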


\begin{proof}
\emph{Endpoint interpolation and reduction.}
Fix $\vartheta\in(0,1)$ and put $b_*:=\vartheta^{n_{p,q}}>0$.
We first explain why it suffices to prove the assertions at the two
endpoints $b=0$ and $b=b_*$. Expanding
\eqref{eq:Ftwo}--\eqref{eq:Gtwo} with respect to $b$ gives
\[
 G_{\vartheta,b}(U,V)
 =G_{\vartheta,0}(U,V)+bH_{\vartheta}(U,V),
\]
where
\[
 H_{\vartheta}(U,V)
 =|U-\vartheta V|^pV^m
 -\frac{(1-\vartheta)^{p-1}}q
   \bigl(pU^q+(m-q\vartheta)V^q\bigr)
\]
does not depend on $b$. In particular,
$G_{\vartheta,b_*}-G_{\vartheta,0}=b_*H_{\vartheta}$.
For $0\le b\le b_*$, set $\lambda=b/b_*\in[0,1]$. Substituting
this identity yields the endpoint interpolation formula
\[
\begin{aligned}
 G_{\vartheta,b}(U,V)
 &=G_{\vartheta,0}(U,V)
   +\frac{b}{b_*}\bigl(G_{\vartheta,b_*}(U,V)
                      -G_{\vartheta,0}(U,V)\bigr)\\
 &=\left(1-\frac{b}{b_*}\right)G_{\vartheta,0}(U,V)
   +\frac{b}{b_*}G_{\vartheta,b_*}(U,V).
\end{aligned}
\]
Both coefficients are nonnegative and their sum is one.
For $U\ne V$, divide by $(U^{q/2}-V^{q/2})^2$ and take the
infimum over $U,V\ge0$ to obtain
\[
\begin{aligned}
 c(\vartheta,b)
 &\ge(1-\lambda)c(\vartheta,0)+\lambda c(\vartheta,b_*)\\
 &\ge\min\{c(\vartheta,0),c(\vartheta,b_*)\}.
\end{aligned}
\]
Thus positivity of the two endpoint constants implies a positive
quadratic remainder, and hence strict support for $U\ne V$, throughout
$0\le b\le b_*$. On the diagonal,
$G_{\vartheta,b}(U,U)=0$ by \eqref{eq:supportsum}.
It therefore remains only to prove $c(\vartheta,0)>0$ and
$c(\vartheta,b_*)>0$. We record the common formulas and two support
criteria, then treat $b=0$ and the three exponent ranges for $b=b_*$.

First suppose $V>0$. Positive $q$-homogeneity permits division
by $V^q$, so set $V=1$ and $U=z$. The case $V=0$ is recovered from
the endpoint limit below. Put $x=z^q$ and define
\[
 f(x)=|x^{1/q}-\vartheta|^p(x^{m/q}+b).
\]
Write $\ell(x)=f(1)+f'(1)(x-1)$ for its tangent line at $1$.
Direct differentiation at $x=1$ gives $f'(1)=A_{\vartheta,b}$
and $f(1)-f'(1)=B_{\vartheta,b}$. Hence
$G_{\vartheta,b}(z,1)=f(z^q)-f(1)-f'(1)(z^q-1)$.
For the second derivative, put
$H(z)=(z-\vartheta)^p(z^m+b)$ on $z>\vartheta$. Then
\[
 f''(z^q)=\frac{zH''(z)-(q-1)H'(z)}{q^2z^{2q-1}},
\]
where
\begin{align*}
 H'(z)&=p(z-\vartheta)^{p-1}(z^m+b)
       +mz^{m-1}(z-\vartheta)^p,\\
 H''(z)&=p(p-1)(z-\vartheta)^{p-2}(z^m+b)\\
 &\quad+2pmz^{m-1}(z-\vartheta)^{p-1}
       +m(m-1)z^{m-2}(z-\vartheta)^p.
\end{align*}
Substitution and $p+m=q$ give
\begin{equation}\label{eq:fsecond}
 f''(z^q)=\frac{p(z-\vartheta)^{p-2}}{q^2z^{2q-1}}
 E_{\vartheta,b}(z),
\end{equation}
where
\begin{equation}\label{eq:Escalar}
 E_{\vartheta,b}(z)=
 \vartheta z^{m-1}\bigl((q-1)z-m\vartheta\bigr)
 +b\bigl((q-1)\vartheta-mz\bigr).
\end{equation}

\smallskip\noindent
\emph{Two common support criteria.}
By \eqref{eq:Atwo}, the tangent slope is always positive:
$q-m\vartheta+pb>q-m=p>0$.

\emph{(i) Strict convexity above $\vartheta^q$.}
If $f''(x)>0$ for $x>\vartheta^q$, Taylor's integral formula gives
\[
 f(x)-\ell(x)=
 \begin{cases}
  \displaystyle\int_x^1(t-x)f''(t)\dd t>0,
       &\vartheta^q\le x<1,\\[2mm]
  \displaystyle\int_1^x(x-t)f''(t)\dd t>0,&x>1.
 \end{cases}
\]
At the left endpoint the formula follows by a limit; the only
possible singularity has order $(x-\vartheta^q)^{p-2}$ and is
integrable because $p>1$.
Since $f(\vartheta^q)=0$, the first formula gives
$\ell(\vartheta^q)<0$. Its positive slope then gives
$\ell(x)<0\le f(x)$ for $0\le x\le\vartheta^q$.
Thus this criterion proves strict support on the whole half-line
and also $f''(1)>0$.

\emph{(ii) A criterion for $0<m<1$.}
For later use, we prove that
\[
 0<m<1,\qquad b>0,\qquad A_{\vartheta,b}=f'(1)<1
\]
imply strict tangent support on $[0,\infty)$ and $f''(1)>0$.
Throughout this argument assume all three conditions. Put $k=q-1$,
so $k-m=p-1>0$, and set $z=\vartheta y$.
The second-derivative factor is
\[
 E_{\vartheta,b}(\vartheta y)
 =\vartheta\bigl(\vartheta^m y^{m-1}(ky-m)-b(my-k)\bigr).
\]
It is positive for $1<y\le k/m$: the first term in parentheses
is positive and the second is nonnegative. For $y>k/m$, write
\[
 E_{\vartheta,b}(\vartheta y)
 =\vartheta^{m+1}(my-k)
   \left(R(y)-\frac b{\vartheta^m}\right),
 \qquad
 R(y)=\frac{y^{m-1}(ky-m)}{my-k}.
\]
All factors outside the parentheses are positive. Moreover,
\[
\begin{aligned}
 \frac{R'(y)}{R(y)}
 &=\frac{m-1}{y}+\frac{k}{ky-m}-\frac m{my-k}\\
 &=\frac{m-1}{y}
   +\frac{m^2-k^2}{(ky-m)(my-k)}<0.
\end{aligned}
\]
Here $m-1<0$, $k>m$, and both denominator factors are positive.
The endpoint limits are
\[
 \lim_{y\downarrow k/m}R(y)=+\infty,
 \qquad
 R(y)\sim\frac{k}{m}y^{m-1}\longrightarrow0
 \quad(y\to\infty).
\]
Thus there is a unique $y_*>k/m$ such that
$R(y_*)=b/\vartheta^m$. With $x_*=(\vartheta y_*)^q$,
equation \eqref{eq:fsecond} gives
\[
 f''(x)>0\quad(\vartheta^q<x<x_*),\qquad
 f''(x)<0\quad(x>x_*).
\]
The first derivative can be written explicitly as
\[
 f'(z^q)
 =\left(1-\frac\vartheta z\right)^{p-1}
   \left(1-\frac{m\vartheta}{qz}+\frac{pb}{qz^m}\right)
 \longrightarrow1\quad(z\to\infty).
\]
Hence $f'$ increases strictly up to $x_*$ and then decreases
strictly to $1$. In particular, $f'(x)>1$ for $x\ge x_*$:
a value at most $1$ on this decreasing branch would force a later
value strictly below $1$, and hence a limiting value below $1$.
Since $A_{\vartheta,b}=f'(1)<1$, necessarily $1<x_*$, and therefore
\[
 f'(x)<A_{\vartheta,b}\quad(\vartheta^q<x<1),\qquad
 f'(x)>A_{\vartheta,b}\quad(x>1).
\]
For the second inequality, use the increasing branch for $1<x<x_*$
and $f'(x)>1>A_{\vartheta,b}$ for $x\ge x_*$.
These derivative inequalities give strict tangent support directly:
\[
 f(x)-\ell(x)=
 \begin{cases}
  \displaystyle\int_x^1(A_{\vartheta,b}-f'(t))\dd t>0,
    &\vartheta^q\le x<1,\\[2mm]
  \displaystyle\int_1^x(f'(t)-A_{\vartheta,b})\dd t>0,
    &x>1.
 \end{cases}
\]
The first formula remains valid at $x=\vartheta^q$, since
$f'$ is continuous from the right there and $p>1$.
As $f(\vartheta^q)=0$, it follows that $\ell(\vartheta^q)<0$.
Also $A_{\vartheta,b}>0$, because
$q-m\vartheta+pb>q-m=p>0$. Thus
\[
 \ell(x)=\ell(\vartheta^q)+A_{\vartheta,b}(x-\vartheta^q)<0
 \qquad(0\le x\le\vartheta^q),
\]
whereas $f(x)\ge0$. This proves strict support on the whole
half-line whenever $0<m<1$, $b>0$, and $A_{\vartheta,b}<1$.
It also shows $f''(1)>0$, since $1<x_*$.

\smallskip\noindent
\emph{A common bound for the tangent slope.}
The function
\[
 g(\vartheta)=(1-\vartheta)^{p-1}(1+(p-1)\vartheta)
\]
satisfies $g(0)=1$ and
\[
 g'(\vartheta)=-p(p-1)\vartheta(1-\vartheta)^{p-2}<0.
\]
Hence $g(\vartheta)<1$ for $0<\vartheta<1$.
This bound will be used in Cases~2 and~3.

\smallskip\noindent
\emph{The endpoint $b=0$, for every $1<p<q$.}
For $z>\vartheta$, equation \eqref{eq:Escalar} gives
\[
 E_{\vartheta,0}(z)
 =\vartheta z^{m-1}((q-1)z-m\vartheta)>0,
\]
because $(q-1)z-m\vartheta>(q-1-m)\vartheta=(p-1)\vartheta>0$.
Equations \eqref{eq:fsecond} and criterion~(i) prove strict support
on the whole half-line and $f''(1)>0$. This endpoint is now
settled independently of the exponent cases below.

\smallskip\noindent
\emph{Case 1: $q>2p$, with $b=b_* =\vartheta^m$.}
Here $m=q-p>p>1$. Put $z=\vartheta y$, $y>1$, and $k=q-1$.
Then \eqref{eq:Escalar} yields
\[
\begin{aligned}
 \vartheta^{-m-1}E_{\vartheta,\vartheta^m}(z)
 &=y^{m-1}(ky-m)+k-my\\
 &\ge ky-m+k-my=(p-1)(y+1)>0.
\end{aligned}
\]
The inequality uses $y^{m-1}\ge1$, $ky-m>0$, and $k-m=p-1$.
Criterion~(i) therefore proves strict support on the whole
half-line and $f''(1)>0$, completing Case~1.

\smallskip\noindent
\emph{Case 2: $2\le q\le2p$, with $b=b_* =\vartheta$.}
Here $0<m\le p$. Write $c=(p-m)/q\ge0$.

\emph{(a) The interval $0\le z\le1$.}
The scalar inequality of Frank and Seiringer~\cite[Lemma~2.6, (2.15)]{FS} gives
\begin{equation}\label{eq:picone}
 |z-\vartheta|^p\ge(1-\vartheta)^{p-1}(z^p-\vartheta).
\end{equation}
Since $p>1$ and $0<\vartheta<1$, equality holds if and only if $z=1$. For completeness, we recall a short proof using the triangle inequality and convexity:
\[
\begin{aligned}
 z^p&\le(|z-\vartheta|+\vartheta)^p\\
 &\le(1-\vartheta)\left(\frac{|z-\vartheta|}{1-\vartheta}\right)^p+\vartheta\\
 &=(1-\vartheta)^{1-p}|z-\vartheta|^p+\vartheta.
\end{aligned}
\]
The second inequality uses the convexity of $t\mapsto t^p$ with weights
$1-\vartheta$ and $\vartheta$ and entries
$|z-\vartheta|/(1-\vartheta)$ and $1$.
Equality in both steps requires $z=1$.
Also,
\begin{equation}\label{eq:power-compare}
 z^p-z^m\ge\frac{p-m}{p+m}(z^{p+m}-1),\qquad 0\le z\le1.
\end{equation}
If $p=m$, both sides of \eqref{eq:power-compare} vanish identically.
Suppose $p>m$ and first take $0<z<1$. Put $r=-\log z>0$.
Factoring the same exponential from numerator and denominator gives
\[
\begin{aligned}
 z^m-z^p
 &=2e^{-(p+m)r/2}\sinh((p-m)r/2),\\
 1-z^{p+m}
 &=2e^{-(p+m)r/2}\sinh((p+m)r/2).
\end{aligned}
\]
To compare these quantities, define $\phi(t)=\sinh t/t$ for $t>0$.
Then
\[
 \phi'(t)=\frac{t\cosh t-\sinh t}{t^2}>0,
\]
because $\psi(t)=t\cosh t-\sinh t$ satisfies $\psi(0)=0$ and
$\psi'(t)=t\sinh t>0$. Since $0<p-m<p+m$, it follows that
\[
\begin{aligned}
 \frac{z^m-z^p}{1-z^{p+m}}
 &=\frac{p-m}{p+m}
   \frac{\phi((p-m)r/2)}{\phi((p+m)r/2)}\\
 &\le\frac{p-m}{p+m}.
\end{aligned}
\]
Multiplication by $1-z^{p+m}>0$, followed by a change of sign,
is precisely \eqref{eq:power-compare}. At $z=0$, that inequality
reads $0\ge-(p-m)/(p+m)$; at $z=1$, both sides are zero.
Thus it holds on the entire closed interval.

We now combine the two scalar inequalities. Since $z^m+\vartheta>0$,
multiplying \eqref{eq:picone} by this factor gives
\[
\begin{aligned}
 |z-\vartheta|^p(z^m+\vartheta)
 &\ge(1-\vartheta)^{p-1}(z^p-\vartheta)(z^m+\vartheta)\\
 &=(1-\vartheta)^{p-1}
   \bigl(z^q+\vartheta(z^p-z^m)-\vartheta^2\bigr)\\
 &\ge(1-\vartheta)^{p-1}
   \bigl((1+c\vartheta)z^q-\vartheta(c+\vartheta)\bigr),
\end{aligned}
\]
where the last step uses $p+m=q$ and $c=(p-m)/q$.
For $b=\vartheta$, the coefficients in \eqref{eq:Atwo}--\eqref{eq:Btwo}
satisfy
\[
\begin{aligned}
 A_{\vartheta,\vartheta}
 &=(1-\vartheta)^{p-1}(1+c\vartheta),\\
 B_{\vartheta,\vartheta}
 &=-(1-\vartheta)^{p-1}\vartheta(c+\vartheta).
\end{aligned}
\]
The preceding estimate is therefore $G_{\vartheta,\vartheta}(z,1)\ge0$
for $0\le z\le1$. It is strict for $z\ne1$, since
\eqref{eq:picone} is then strict and its multiplier is positive.

\emph{(b) The interval $z>1$.}
First note that the slope satisfies
\[
 p-1-c=\frac{p(q-2)}q\ge0,\qquad
 0<A_{\vartheta,\vartheta}
 =(1-\vartheta)^{p-1}(1+c\vartheta)\le g(\vartheta)<1,
\]
where $g$ is the common slope bound defined above.
We now distinguish the two possible ranges of $m$.

If $m\ge1$, put $w=z^{m-1}\ge1$ and $k=q-1$.
Then $k-m=p-1>0$, and \eqref{eq:Escalar} becomes
\[
 \frac{E_{\vartheta,\vartheta}(z)}\vartheta
 =z(kw-m)+\vartheta(k-mw).
\]
If $k-mw\ge0$, the right side is at least
$(k-m)z=(p-1)z>0$. If $k-mw<0$, then
$\vartheta(k-mw)\ge k-mw$, so
\[
\begin{aligned}
 \frac{E_{\vartheta,\vartheta}(z)}\vartheta
 &\ge(kz-m)w+k-mz\\
 &\ge kz-m+k-mz=(p-1)(z+1)>0.
\end{aligned}
\]
In the second line we used $kz-m\ge k-m>0$ and $w\ge1$.
It follows from \eqref{eq:fsecond} that $f''(x)>0$ for $x\ge1$.
Writing $\ell(x)=f(1)+f'(1)(x-1)$, we obtain, for $x>1$,
\[
 f(x)-\ell(x)=\int_1^x(x-t)f''(t)\dd t>0.
\]
This proves the desired support on $z>1$ in the subcase $m\ge1$.
If $0<m<1$, apply criterion~(ii) with $b=\vartheta>0$ and
the slope bound $A_{\vartheta,\vartheta}<1$ just established.
It gives strict support on the whole half-line, and in particular
on $z>1$, together with $f''(1)>0$.
Combining (a) and (b) completes Case~2.

\smallskip\noindent
\emph{Case 3: $1<p<q<2$, with $b=b_* =\vartheta^{n_{p,q}}$.}
In this case $0<m=q-p<1$. We will apply criterion~(ii), so
it remains to verify $A_{\vartheta,b}<1$. Put
\[
 \vartheta_0=1-2^{-1/(p-1)},\qquad
 n=n_{p,q},\qquad b=\vartheta^n.
\]
The definition \eqref{eq:n-pq} gives $n>1$ and
$\vartheta_0^{n-1}=q-1$. If $\vartheta\le\vartheta_0$, then
\[
 b=\vartheta\vartheta^{n-1}
 \le\vartheta\vartheta_0^{n-1}=(q-1)\vartheta.
\]
Substitution into \eqref{eq:Atwo}, using
$p(q-1)-m=q(p-1)$, gives
\[
 A_{\vartheta,b}
 \le\frac{(1-\vartheta)^{p-1}}q
      \bigl(q+(p(q-1)-m)\vartheta\bigr)
 =g(\vartheta)<1.
\]
If $\vartheta\ge\vartheta_0$, then
$(1-\vartheta)^{p-1}\le1/2$ and $b=\vartheta^n\le1$, whence
\[
 A_{\vartheta,b}
 \le\frac{q-m\vartheta+pb}{2q}
 \le\frac{q+p}{2q}<1.
\]
Thus $A_{\vartheta,b}<1$ for every $0<\vartheta<1$.
Since also $0<m<1$ and $b>0$, criterion~(ii) proves strict
support on the whole half-line and $f''(1)>0$. This completes
Case~3.

\smallskip\noindent
\emph{Positive quadratic remainder at the two endpoints.}
We next prove the quantitative assertion \eqref{eq:cdef} at
$b=0$ and $b=\vartheta^{n_{p,q}}$. For $z\ne1$, define
\[
 Q_{\vartheta,b}(z)
 =\frac{G_{\vartheta,b}(z,1)}{(z^{q/2}-1)^2}.
\]
Since $f$ is smooth near $x=1$ and is tangent to $\ell$ there,
Taylor's theorem gives
\[
\begin{aligned}
 G_{\vartheta,b}(z,1)
 &=\tfrac12f''(1)(z^q-1)^2+o((z^q-1)^2),\\
 (z^{q/2}-1)^2&=\tfrac{q^2}{4}(z-1)^2+o((z-1)^2).
\end{aligned}
\]
Using $z^q-1=q(z-1)+o(z-1)$ and \eqref{eq:fsecond}, the
continuous extension at $z=1$ is
\begin{equation}\label{eq:diagonalratio}
 Q_{\vartheta,b}(1)=2f''(1)
 =\frac{2p}{q^2}(1-\vartheta)^{p-2}E_{\vartheta,b}(1)>0.
\end{equation}
The positivity of $f''(1)$ follows from strict positivity of
\eqref{eq:Escalar} in the convex cases above, and from $1<x_*$
in the case $0<m<1$. In particular, when $b=\vartheta$,
direct substitution gives
\[
 E_{\vartheta,\vartheta}(1)
 =\vartheta\bigl((q-1)-m\vartheta+(q-1)\vartheta-m\bigr)
 =(p-1)\vartheta(1+\vartheta)>0.
\]
At the two ends of the half-line,
\[
 Q_{\vartheta,b}(0)=\vartheta^pb-B_{\vartheta,b}>0,
 \qquad
 \lim_{z\to\infty}Q_{\vartheta,b}(z)=1-A_{\vartheta,b}.
\]
The first inequality is strict support at $z=0$. For the second
limit, divide numerator and denominator by $z^q$ and use
$F_{\vartheta,b}(z,1)/z^q\to1$.
The limiting value is positive in every case considered. We have
already proved this for $b=\vartheta$, $2\le q\le2p$, and for
$b=\vartheta^{n_{p,q}}$, $q<2$. In the other cases,
\[
\begin{aligned}
 0<A_{\vartheta,0}
 &=(1-\vartheta)^{p-1}\left(1-\frac mq\vartheta\right)<1,\\
 A_{\vartheta,\vartheta^m}
 &\le A_{\vartheta,\vartheta}
  =(1-\vartheta)^{p-1}
    \left(1+\frac{p-m}{q}\vartheta\right)<1
  \qquad(q>2p).
\end{aligned}
\]
In the last line, $m>p>1$ implies $\vartheta^m\le\vartheta$
and $p-m<0$.

\smallskip\noindent
\emph{Compactness and a uniform positive lower bound.}
Keep $\vartheta$ and one of the two endpoint values of $b$ fixed.
Introduce the change of variables
\[
 \xi=\frac{z}{1+z},\qquad
 z=\frac{\xi}{1-\xi}\quad(0\le\xi<1).
\]
The first map is continuous and strictly increasing on $[0,\infty)$,
since its derivative is $(1+z)^{-2}>0$, and its inverse is the second
map. The values $z=0$ and $z=1$ correspond to $\xi=0$ and
$\xi=1/2$, respectively, while $z\to\infty$ corresponds to
$\xi\to1$. Thus adjoining the endpoint $z=\infty$ replaces the
extended half-line by the compact interval $[0,1]$.

Using the value of $Q_{\vartheta,b}(1)$ defined in
\eqref{eq:diagonalratio}, set
\[
 \widetilde Q_{\vartheta,b}(\xi)
 =\begin{cases}
 Q_{\vartheta,b}\!\left(\dfrac{\xi}{1-\xi}\right),
       &0\le\xi<1,\\[2mm]
 1-A_{\vartheta,b},&\xi=1.
 \end{cases}
\]
Away from $\xi=1/2,1$, this function is continuous because
$G_{\vartheta,b}(z,1)$ is continuous and the denominator
$(z^{q/2}-1)^2$ is nonzero. At $\xi=1/2$, continuity is exactly
the Taylor limit in \eqref{eq:diagonalratio}; at $\xi=1$, it is
the limit $Q_{\vartheta,b}(z)\to1-A_{\vartheta,b}$ proved above.
Hence $\widetilde Q_{\vartheta,b}$ is continuous on all of $[0,1]$.
Strict support gives positivity whenever $0\le\xi<1$ and
$\xi\ne1/2$. The two remaining values are also positive:
\[
 \widetilde Q_{\vartheta,b}(1/2)=2f''(1)>0,
 \qquad
 \widetilde Q_{\vartheta,b}(1)=1-A_{\vartheta,b}>0.
\]
By the extreme value theorem, there is $\xi_*\in[0,1]$ such that
\[
 \kappa_{\vartheta,b}
 :=\min_{\xi\in[0,1]}\widetilde Q_{\vartheta,b}(\xi)
 =\widetilde Q_{\vartheta,b}(\xi_*)>0.
\]
The strict inequality follows because the minimum is attained and
the function is positive at every point, including both endpoints.

To transfer this bound to all pairs, let $V>0$ and $U\ne V$.
With $z=U/V$, positive $q$-homogeneity gives
\[
 \frac{G_{\vartheta,b}(U,V)}{(U^{q/2}-V^{q/2})^2}
 =\frac{V^qG_{\vartheta,b}(z,1)}
        {V^q(z^{q/2}-1)^2}
 =\widetilde Q_{\vartheta,b}\!\left(\frac{z}{1+z}\right)
 \ge\kappa_{\vartheta,b}.
\]
On the axis $V=0$, $U>0$, the same bound holds since
\[
 \frac{G_{\vartheta,b}(U,0)}{U^q}
 =1-A_{\vartheta,b}
 =\widetilde Q_{\vartheta,b}(1)\ge\kappa_{\vartheta,b}.
\]
Conversely, the pairs $(U,V)=(z,1)$, $z\ge0$, $z\ne1$,
represent the dense subset $[0,1)\setminus\{1/2\}$ of $[0,1]$.
Continuity therefore implies
\[
 c(\vartheta,b)
 =\inf_{\xi\in[0,1]\setminus\{1/2\}}
      \widetilde Q_{\vartheta,b}(\xi)
 =\min_{\xi\in[0,1]}\widetilde Q_{\vartheta,b}(\xi)
 =\kappa_{\vartheta,b}>0.
\]
On the diagonal, $G_{\vartheta,b}(U,U)=0$ by
\eqref{eq:supportsum}, including the zero pair. Consequently
\[
 G_{\vartheta,b}(U,V)
 \ge\kappa_{\vartheta,b}(U^{q/2}-V^{q/2})^2
 \qquad\text{for all }U,V\ge0.
\]
This lower bound is uniform in $U,V$, with $\vartheta,b$ fixed.
\end{proof}

The next lemma addresses the singular region of small logarithmic displacements. Pointwise strictness at a fixed displacement would not suffice there.

\begin{lemma}[Uniform remainder near $\vartheta=1$]\label{lem:short}
Let $r_p=(p-2)_+$. There are $h_0,c_0>0$, depending only on $p,q$, such that
\begin{equation}\label{eq:short-bound}
 G_{\vartheta,b}(U,V)\ge
 c_0(1-\vartheta)^{r_p}(U^{q/2}-V^{q/2})^2
\end{equation}
whenever $1-h_0<\vartheta<1$, $1/2\le b\le2$, and $U,V\ge0$.
\end{lemma}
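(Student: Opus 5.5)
The plan is to reduce to the one-variable tangent-line picture from the proof of Lemma~\ref{lem:scalar-support} and treat small logarithmic displacements perturbatively in $\varepsilon:=1-\vartheta$. By positive $q$-homogeneity it suffices to take $V=1$, $U=z\ge0$; the axis $V=0$ is the limit $z\to\infty$. Write $x=z^q$, $f(x)=|x^{1/q}-\vartheta|^p(x^{m/q}+b)$ and $\ell$ for its tangent line at $x=1$, so that $G_{\vartheta,b}(z,1)=f(z^q)-\ell(z^q)$. From \eqref{eq:Atwo}--\eqref{eq:Btwo},
\[
A_{\vartheta,b}=O(\varepsilon^{p-1}),\qquad B_{\vartheta,b}=O(\varepsilon^{p-1}),
\]
uniformly for $b\in[1/2,2]$. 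Moreover $B_{\vartheta,b}<0$ for small $\varepsilon$, since $b(m-q\vartheta)-p\vartheta\to b(m-q)-p<0$. Fix a small $\delta\in(0,1/4)$ depending only on $p,q$, and split into a far region $|z-1|\ge\delta$ and a near region $|z-1|<\delta$.

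\emph{Far region.} Here, once $\varepsilon<\delta/2$, we have $|z-\vartheta|\ge\tfrac12|z-1|$. Hence
\[
F_{\vartheta,b}(z,1)\ge 2^{-p}|z-1|^p\bigl(z^m+\tfrac12\bigr)\ge c_\delta\,(1+z^q),
\]
using homogeneity for large $z$ and compactness on $[0,1-\delta]\cup[1+\delta,R]$. Since $-A_{\vartheta,b}z^q-B_{\vartheta,b}\ge -C\varepsilon^{p-1}(1+z^q)$, shrinking $h_0$ gives
\[
G_{\vartheta,b}(z,1)\ge \tfrac{c_\delta}{2}(1+z^q)\ge c\,(z^{q/2}-1)^2 .
\]
No loss in $\varepsilon$ occurs in this region.

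\emph{Near region.} Use Taylor's integral formula $f(x)-\ell(x)=\int_{[x,1]}|t-x|\,f''(t)\,dt$, now valid on both sides of $\vartheta$. The computation behind \eqref{eq:fsecond} applies verbatim for $z<\vartheta$ with $(z-\vartheta)$ replaced by $|z-\vartheta|$, giving
\[
f''(z^q)=\frac{p|z-\vartheta|^{p-2}}{q^2z^{2q-1}}\,E_{\vartheta,b}(z).
\]
At $z=1$, $\vartheta=1$ one has $E=(q-1-m)(1+b)=(p-1)(1+b)\ge\tfrac12(p-1)$. By continuity, $E_{\vartheta,b}(z)\ge c>0$ for $|z-1|<\delta$, $\varepsilon<h_0$, $b\in[1/2,2]$. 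Thus $f''(t)\gtrsim|t^{1/q}-\vartheta|^{p-2}$ on the near region, which is integrable since $p>1$; this is the same integrability as in criterion~(i).
\begin{itemize}
\item If $p\le2$, this weight is $\ge(2\delta)^{p-2}$, so $f-\ell\gtrsim(x-1)^2\asymp(z^{q/2}-1)^2$, which is the claim with $r_p=0$.
\item If $p>2$, the weight can degenerate near $t^{1/q}=\vartheta$. Restrict the Taylor integral to the half of $[x,1]$ adjacent to $1$. There $t^{1/q}-\vartheta\ge\varepsilon$ when $x\le1$, and $\ge\varepsilon+|t^{1/q}-1|$ when $x>1$. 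This gives $f-\ell\gtrsim\varepsilon^{p-2}(x-1)^2$, hence the factor $(1-\vartheta)^{r_p}$.
\end{itemize}
Finally, on $|z-1|<\delta$ the comparison $(z^q-1)^2\asymp(z^{q/2}-1)^2$ holds with constants depending only on $q$.

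The step I expect to be most delicate is the near region when $p>2$, specifically the part $z\in(1-\delta,\vartheta)$ where $f''$ vanishes at $z=\vartheta$. One must check that discarding the portion of the Taylor integral near $\vartheta$ still leaves a quadratic lower bound uniform in $\vartheta$ and $b$. The fallback is the explicit estimate $(t-x)\ge\tfrac12(1-x)$ on the retained half-interval, whose length is $\ge\tfrac12(1-x)$. Combining the two regions and taking the smaller constant gives $h_0,c_0$ depending only on $p,q$.
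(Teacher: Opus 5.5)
Your far-region bound is correct. So is the extension of the second-derivative factorization $f''(z^q)=p|z-\vartheta|^{p-2}E_{\vartheta,b}(z)/(q^2z^{2q-1})$ across $z=\vartheta$, and the case $p\le2$ goes through. The gap is in the case $p>2$, $x=z^q<1$.

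\textbf{The failing step.} You assert that on the retained half $[(1+x)/2,1]$ one has $t^{1/q}-\vartheta\ge\varepsilon$. This is backwards: $t\le1$ forces $t^{1/q}-\vartheta\le1-\vartheta=\varepsilon$. More seriously, once $1-z$ exceeds about $2\varepsilon$, the zero $t=\vartheta^q$ of the weight $|t^{1/q}-\vartheta|^{p-2}$ lies inside the retained half. There $f''$ vanishes, so no pointwise bound $f''\gtrsim\varepsilon^{p-2}$ holds on that interval. Your fallback $(t-x)\ge\frac12(1-x)$ controls only the Taylor factor, not the degenerate weight. Hence $f-\ell\gtrsim\varepsilon^{p-2}(x-1)^2$ remains unproved for $1-\delta<z<1-c\varepsilon$, which is exactly the region you flagged.

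\textbf{What is needed.} You need an integrated lower bound for a weight that may vanish inside the interval. Pass to $\tau=t^{1/q}$; on the near region $t-x\asymp\tau-z$ and $\dd t\asymp\dd\tau$. With $w=1-z$, the required estimate is
\[
\int_z^1(\tau-z)|\tau-\vartheta|^{p-2}\dd\tau\gtrsim\varepsilon^{p-2}w^2 .
\]
Restrict to $\tau\in[(1+z)/2,1]$, where $\tau-z\ge w/2$, and split into two cases.
\begin{enumerate}
\item If $w\le\varepsilon$, then $\tau-\vartheta\ge\varepsilon-w/2\ge\varepsilon/2$ on this interval. This is the correct form of your claim, and it gives the bound directly.
\item If $w>\varepsilon$, the points of this interval with $|\tau-\vartheta|<w/8$ form a set of length at most $w/4$. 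So the weight is at least $(w/8)^{p-2}$ on a set of length at least $w/4$. This gives a lower bound $\gtrsim w^p\ge\varepsilon^{p-2}w^2$.
\end{enumerate}
This is the content of the paper's Bregman estimate $|h+z|^p-h^p-ph^{p-1}z\ge c_pz^2(h+|z|)^{p-2}$. The paper proves it by normalizing $(h,z)$ to $a+|v|=1$ and using compactness.

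\textbf{Comparison once repaired.} With this step added, your route is a valid alternative to the paper's. You work from the exact factorization of $f''$, with $E_{\vartheta,b}(1)\to(p-1)(1+b)$ as $\vartheta\to1$, and an explicit far-region bound. The paper instead expands $G_{\vartheta,b}(1+z,1)$ exactly around the Bregman term, with $O(z^2(h+|z|)^{p-1})$ errors. It then handles the off-diagonal region by continuity at $\vartheta=1$ on a compact set.
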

\begin{proof}
First consider a neighborhood of the diagonal with $V>0$.
By $q$-homogeneity we may set $V=1$, $U=1+z$.
Write $h=1-\vartheta$ and $m=q-p$.
For $|z|\le1/2$, Taylor's theorem on $[1/2,3/2]$ gives
\[
 (1+z)^m=1+mz+R_m(z),\qquad
 (1+z)^q=1+qz+R_q(z),\qquad
 |R_m(z)|+|R_q(z)|\le C z^2.
\]
Equations \eqref{eq:Atwo} and \eqref{eq:supportsum} imply
\[
 qA_{\vartheta,b}=p(1+b)h^{p-1}+mh^p,
 \qquad A_{\vartheta,b}+B_{\vartheta,b}=(1+b)h^p.
\]
Substituting these identities and the Taylor expansions into $G$ gives
\begin{align*}
 G_{\vartheta,b}(1+z,1)
 ={}&(1+b)\bigl(|h+z|^p-h^p-ph^{p-1}z\bigr)\\
 &+mz\bigl(|h+z|^p-h^p\bigr)
   +R_m(z)|h+z|^p-A_{\vartheta,b}R_q(z).
\end{align*}
This decomposition is exact. The mean value theorem for
$t\mapsto|t|^p$ yields
\[
 \bigl||h+z|^p-h^p\bigr|
 \le p|z|(h+|z|)^{p-1}.
\]
Also $|A_{\vartheta,b}|\le C h^{p-1}$ when $0<h\le1$ and
$b\in[1/2,2]$. If $h+|z|\le1$, the last three terms of the
decomposition are therefore bounded respectively by
\[
 C z^2(h+|z|)^{p-1},\qquad
 C z^2(h+|z|)^p,\qquad
 C z^2h^{p-1}.
\]
Each is at most $Cz^2(h+|z|)^{p-1}$. Consequently
\begin{equation}\label{eq:shortexpand}
 G_{\vartheta,b}(1+z,1)
 =(1+b)\bigl(|h+z|^p-h^p-ph^{p-1}z\bigr)
 +O_{p,q}\bigl(z^2(h+|z|)^{p-1}\bigr),
\end{equation}
uniformly for $b\in[1/2,2]$.

The expression in parentheses is the remainder after subtracting
the value and tangent line of $t\mapsto|t|^p$ at $t=h$.
This Bregman difference has the integral representation
\[
 |h+z|^p-h^p-ph^{p-1}z
 =p(p-1)z^2\int_0^1(1-r)|h+rz|^{p-2}\dd r.
\]
For $1<p<2$, any zero of $h+rz$ gives an integrable singularity,
since $p-2>-1$; thus the identity follows from the absolute
continuity of the first derivative. In this range,
$|h+rz|\le h+|z|$ gives the direct lower bound
\[
 \int_0^1(1-r)|h+rz|^{p-2}\dd r
 \ge\tfrac12(h+|z|)^{p-2}.
\]
For $p=2$ the Bregman difference is exactly $z^2$.
For $p>2$ and $z\ne0$, put
$a=h/(h+|z|)$ and $v=z/(h+|z|)$. Then
$a\ge0$ and $a+|v|=1$. The function
\[
 (a,v)\longmapsto\int_0^1(1-r)|a+rv|^{p-2}\dd r
\]
is continuous on this compact set and is positive everywhere:
an affine function $a+rv$ could vanish identically only if
$a=v=0$, which is excluded. Its minimum is therefore positive.
These three cases prove
\begin{equation}\label{eq:bregman}
 |h+z|^p-h^p-ph^{p-1}z
 \ge c_p z^2(h+|z|)^{p-2}.
\end{equation}
The assertion is also immediate at $z=0$.

Combining \eqref{eq:shortexpand}--\eqref{eq:bregman} gives
\[
 G_{\vartheta,b}(1+z,1)
 \ge z^2(h+|z|)^{p-2}
       \bigl(\tfrac32c_p-C(h+|z|)\bigr),
\]
because $1+b\ge3/2$. Choose $0<\rho\le1/2$, depending only
on $p,q$, such that $C\rho\le3c_p/4$.
For $0<h\le\rho/2$ and $|z|\le\rho/2$, this yields
\[
 G_{\vartheta,b}(1+z,1)
 \ge\tfrac34c_p z^2(h+|z|)^{p-2}.
\]
On this neighborhood the derivative of $u\mapsto u^{q/2}$ is
bounded above and below by positive constants, so
$|(1+z)^{q/2}-1|\asymp_{q}|z|$.
If $p\ge2$, then $(h+|z|)^{p-2}\ge h^{p-2}$;
if $p<2$, then $(h+|z|)^{p-2}\ge1$, since $h+|z|\le1$.
This proves \eqref{eq:short-bound} near the diagonal.

For the remaining pairs, use homogeneity to normalize
$U^q+V^q=1$. Remove the relative neighborhood
$V>0$, $|U/V-1|<\rho/2$ of its diagonal point.
The complement is compact, includes both axes, and has
$(U^{q/2}-V^{q/2})^2>0$.
At $\vartheta=1$, the coefficients $A_{\vartheta,b}$ and
$B_{\vartheta,b}$ vanish, and hence
\[
 G_{1,b}(U,V)=|U-V|^p(U^m+bV^m)>0
\]
on this complement, uniformly for $b\in[1/2,2]$.
Its ratio to $(U^{q/2}-V^{q/2})^2$ has a positive minimum
$\gamma$ on the compact product set. Joint continuity, including
at $\vartheta=1$, makes that ratio at least $\gamma/2$ when
$1-h_0<\vartheta<1$, for some $h_0\le\rho/2$ depending only
on $p,q$. Since $h^{(p-2)_+}\le1$, this also gives
\eqref{eq:short-bound} on the complement. Taking the smaller
constant from the two regions completes the proof; the zero
pair follows directly from homogeneity.
\end{proof}

\begin{lemma}[Continuity and a bound for the negative part]\label{lem:continuity}
The function $c(\vartheta,b)$ defined by \eqref{eq:cdef} is finite and continuous on $(0,1)\times[0,\infty)$. For each $\vartheta_*<1$ there is $C=C(p,q,\vartheta_*)$ such that
\begin{equation}\label{eq:cnegative}
 [c(\vartheta,b)]_-\le C(1+b)
 \qquad(0\le\vartheta\le\vartheta_*,\ b\ge0).
\end{equation}
\end{lemma}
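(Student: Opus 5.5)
The plan is to reduce everything to the normalized ratio on the compact interval, as in the proof of Lemma~\ref{lem:scalar-support}. By positive $q$-homogeneity,
\[
 c(\vartheta,b)=\inf_{\xi\in[0,1]\setminus\{1/2\}}\widetilde Q_{\vartheta,b}(\xi),
\]
where $\widetilde Q_{\vartheta,b}$ is defined as before. At $\xi=1/2$ we use the Taylor value $2f''(1)$ from \eqref{eq:diagonalratio}. At $\xi=1$ we use the limit $1-A_{\vartheta,b}$, which comes from the axis $V=0$. The derivation of these values did not use any sign information on $G$, so they hold for all $(\vartheta,b)\in(0,1)\times[0,\infty)$.

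The key step is joint continuity of the extended function
\[
 (\vartheta,b,\xi)\mapsto\widetilde Q_{\vartheta,b}(\xi)
 \quad\text{on }(0,1)\times[0,\infty)\times[0,1].
\]
Granting this, $c(\vartheta,b)=\min_{\xi\in[0,1]}\widetilde Q_{\vartheta,b}(\xi)$ is a minimum over a compact set. It is therefore finite, and it is continuous in $(\vartheta,b)$, since a minimum of a jointly continuous function over a fixed compact fiber is continuous. Finiteness can also be seen directly from the value at $\xi=1$.

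I would prove joint continuity in three regions.

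\emph{Away from $\xi=1/2$ and $\xi=1$.} Here $G_{\vartheta,b}(z,1)$ is jointly continuous in all variables, because $A$ and $B$ are polynomial in $b$ and continuous in $\vartheta$. The denominator $(z^{q/2}-1)^2$ is bounded away from zero, so the ratio is jointly continuous.

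\emph{Near $\xi=1/2$, that is, $z$ near $1$.} Since $\vartheta<1$, the function $f$ is smooth in a neighborhood of $x=1$ that is uniform for $(\vartheta,b)$ in compact sets: $z>\vartheta$ there. I would write the remainder in integral form,
\[
 G_{\vartheta,b}(z,1)=\int_1^{z^q}(z^q-t)f''_{\vartheta,b}(t)\dd t,
\]
and divide by $(z^{q/2}-1)^2$. The quotient is an average of $f''$ against a weight that is jointly continuous and tends to $2f''(1)$. Because $f''$ is continuous in $(\vartheta,b,t)$ by \eqref{eq:fsecond}, the whole expression is jointly continuous, including on the diagonal.

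\emph{Near $\xi=1$, that is, $z\to\infty$.} Set $w=1/z$ and write
\[
 \frac{G(z,1)}{z^q}=|1-\vartheta w|^p(1+bw^m)-A-Bw^q.
\]
This is jointly continuous in $(\vartheta,b,w)$ at $w=0$. The factor $z^q/(z^{q/2}-1)^2=(1-w^{q/2})^{-2}$ is also continuous there. This gives continuity at $\xi=1$.

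For the negative-part bound \eqref{eq:cnegative}, I would simply evaluate the minimum at one convenient point. Take $\xi=1$:
\[
 c(\vartheta,b)\le\widetilde Q_{\vartheta,b}(1)=1-A_{\vartheta,b},
\]
but this gives only an upper bound, and what we need is a lower bound on $c$. So instead I bound $G$ from below for every pair. Since $F\ge0$,
\[
 G(U,V)\ge -A U^q-BV^q\ge-\bigl(A_+U^q+B_+V^q\bigr).
\]
Both $|A|$ and $|B|$ are at most $C(1+b)$, with constants depending only on $p$ and $q$, because $0<\vartheta<1$. This controls $G$ from below by $-C(1+b)(U^q+V^q)$. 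The difficulty is that $(U^{q/2}-V^{q/2})^2$ vanishes on the diagonal, so this crude bound alone does not bound the ratio.

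I would therefore split into two regions.
\begin{itemize}
\item \emph{Normalized pairs away from the diagonal, $|\xi-1/2|\ge\delta$.} Normalize $U^q+V^q=1$. The denominator is at least $c_\delta>0$, so the ratio is at least $-C(1+b)/c_\delta$.
\item \emph{Near the diagonal.} Use the integral representation above. The ratio is an average of $f''(t)$ for $t$ in a fixed neighborhood of $1$, bounded below by $\min f''$ there.
\end{itemize}
From \eqref{eq:fsecond} and \eqref{eq:Escalar}, $f''$ is affine in $b$, and its coefficients are bounded uniformly when $0\le\vartheta\le\vartheta_*$ and $t$ is near $1$. The factor $(z-\vartheta)^{p-2}$ is harmless there because $z-\vartheta\ge(1-\vartheta_*)/2$. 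Hence $f''\ge-C(1+b)$ in this region.

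The main obstacle is choosing the neighborhood $\delta$ uniformly in $\vartheta\le\vartheta_*$ so that it stays away from the singular point $z=\vartheta$. Taking $\delta$ so that $z\ge(1+\vartheta_*)/2$ works, which is exactly why the constant depends on $\vartheta_*$. The lower endpoint $\vartheta=0$ is included: there $F=|U|^p(U^m+bV^m)$, and the same estimates persist.
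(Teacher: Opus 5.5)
Your proposal is correct and follows essentially the paper's argument. You use compactness of the normalized pair set (your $\xi$-compactification in place of the paper's $\{U^q+V^q=1\}$), the Taylor extension at the diagonal, and continuity of a minimum of a jointly continuous function over a fixed compact fiber. For \eqref{eq:cnegative} the paper only notes that the extended ratio is affine in $b$, with coefficient functions bounded on $\{U^q+V^q=1\}\times[0,\vartheta_*]$; your near/away-diagonal split verifies exactly this by hand.

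One small correction: near the diagonal the ratio equals $\tfrac12(\sqrt{x}+1)^2$ times an average of $f''$, not an average of $f''$ itself. It is therefore bounded below by that factor times $\min f''$, which changes only the constant.
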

\begin{proof}
By homogeneity it is enough to minimize over
$\mathcal K=\{(U,V)\ge0:U^q+V^q=1\}$.
This is compact. The denominator vanishes at its single diagonal
point. Near that point $U,V$ are positive, and
$U-\vartheta V$ stays nonzero when $\vartheta$ belongs to a
compact subinterval of $(0,1)$. Taylor's theorem and tangency
give the continuous extension \eqref{eq:diagonalratio}.
At the axes the denominator is nonzero, so no extension is needed.
The resulting ratio is jointly continuous on $\mathcal K$ and
on every compact parameter set. Uniform continuity implies
\[
 |\min_{\mathcal K}R_{\vartheta,b}
   -\min_{\mathcal K}R_{\vartheta',b'}|
 \le\sup_{\mathcal K}|R_{\vartheta,b}-R_{\vartheta',b'}|\longrightarrow0,
\]
which proves finiteness and continuity of $c$.

The same extension is valid on $0\le\vartheta\le\vartheta_*<1$.
The ratio is affine in $b$, even at its extended diagonal point:
$R_{\vartheta,b}=R_{\vartheta,0}+bR^{(1)}_\vartheta$.
Both coefficient functions are bounded on this compact set.
Thus $R_{\vartheta,b}\ge-C_0-bC_1$, uniformly over
$(U,V)\in\mathcal K$. Taking the minimum proves
$[c(\vartheta,b)]_-\le C(1+b)$ for every $b\ge0$.
\end{proof}

\subsection{The exact scalar criterion in the low-exponent range}

The elementary exponent in Lemma~\ref{lem:scalar-support} can be sharpened when $1<p<q<2$. Here it suffices to compare the slope $A_{\vartheta,b}=f'(1)$ of
the tangent line used in the proof of Lemma~\ref{lem:scalar-support}
with $\lim_{x\to\infty}f'(x)=1$. The condition $A_{\vartheta,b}\le1$
gives the exact criterion below, even though $f$ is not globally convex.

\begin{proposition}[Exact low-exponent support criterion]\label{prop:low-exact}
Assume $1<p<q<2$, $0<\vartheta<1$, and $b\ge0$. With the notation in \eqref{eq:Ftwo}--\eqref{eq:Gtwo}, define
\begin{equation}\label{eq:low-C}
 C_{p,q}(\vartheta)=
 \frac{q(1-\vartheta)^{1-p}-q+(q-p)\vartheta}{p}.
\end{equation}
Then
\begin{equation}\label{eq:low-iff}
 G_{\vartheta,b}(U,V)\ge0\quad\text{for all }U,V\ge0
 \quad\Longleftrightarrow\quad
 A_{\vartheta,b}\le1
 \quad\Longleftrightarrow\quad
 b\le C_{p,q}(\vartheta).
\end{equation}
If $b<C_{p,q}(\vartheta)$, then $c(\vartheta,b)>0$. This lower bound is uniform on compact subsets of the strict parameter region.
\end{proposition}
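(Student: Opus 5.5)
The plan is to prove the three equivalences in \eqref{eq:low-iff} in a cycle. The middle and right conditions are linked by algebra, the left condition forces the middle one by evaluating on an axis, and the middle condition gives the left one through criterion~(ii) in the proof of Lemma~\ref{lem:scalar-support} together with a limiting argument at the boundary case.

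\emph{Step 1: $A_{\vartheta,b}\le1\iff b\le C_{p,q}(\vartheta)$.} By \eqref{eq:Atwo}, $A_{\vartheta,b}\le1$ means $(1-\vartheta)^{p-1}(q-m\vartheta+pb)\le q$ with $m=q-p$. Since $(1-\vartheta)^{p-1}>0$, this is equivalent to $pb\le q(1-\vartheta)^{1-p}-q+m\vartheta$, which is exactly $b\le C_{p,q}(\vartheta)$ by \eqref{eq:low-C}. I also note that $C_{p,q}(\vartheta)>0$, because $(1-\vartheta)^{1-p}>1$ and $m\vartheta>0$. Hence the admissible set of $b$ is a nontrivial interval $[0,C_{p,q}(\vartheta)]$.

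\emph{Step 2: necessity.} On the axis $V=0$ we have $F_{\vartheta,b}(U,0)=U^pU^m=U^q$. Therefore $G_{\vartheta,b}(U,0)=(1-A_{\vartheta,b})U^q$, and global nonnegativity of $G$ forces $A_{\vartheta,b}\le1$.

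\emph{Step 3: sufficiency, strict case.} Suppose $b<C_{p,q}(\vartheta)$, so that $A_{\vartheta,b}<1$.
\begin{enumerate}
\item If $b>0$, then $0<m<1$, $b>0$ and $A_{\vartheta,b}<1$ are exactly the hypotheses of criterion~(ii). It gives strict support on $[0,\infty)$ and $f''(1)>0$.
\item If $b=0$, the endpoint case $b=0$ of Lemma~\ref{lem:scalar-support} already gives strict support and $c(\vartheta,0)>0$.
\end{enumerate}
In either case, I repeat the compactification argument from the end of the proof of Lemma~\ref{lem:scalar-support} verbatim. The ratio $\widetilde Q_{\vartheta,b}$ on $[0,1]$ is continuous and positive away from $\xi=1/2$ and $\xi=1$. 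At $\xi=1/2$ it equals $2f''(1)>0$, and at $\xi=1$ it equals $1-A_{\vartheta,b}>0$. Its minimum is therefore positive, which gives $c(\vartheta,b)>0$.

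\emph{Step 4: boundary case $b=C_{p,q}(\vartheta)$.} Here strictness may fail, since the limit at infinity is $1-A_{\vartheta,b}=0$. For fixed $U,V$, the remainder $G_{\vartheta,b}(U,V)$ is affine, hence continuous, in $b$. Letting $b\uparrow C_{p,q}(\vartheta)$ in the nonnegativity obtained in Step~3 yields $G_{\vartheta,b}\ge0$ at the endpoint.

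\emph{Step 5: uniformity.} Lemma~\ref{lem:continuity} shows that $c$ is continuous on $(0,1)\times[0,\infty)$. The strict region $\{b<C_{p,q}(\vartheta)\}$ is open, because $C_{p,q}$ is continuous. On any compact subset of this region, $c$ is a positive continuous function, so it has a positive minimum.

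\emph{Main obstacle.} The delicate point is that $f$ is not globally convex. Sufficiency therefore rests entirely on criterion~(ii): the unique inflection point $x_*$ of $f$, followed by the monotone decrease of $f'$ towards its limit $1$. I take that criterion as already established.
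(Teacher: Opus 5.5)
Your proposal is correct and follows essentially the same route as the paper. You solve $A_{\vartheta,b}\le1$ algebraically and get necessity from the axis $V=0$; the paper instead takes the limit $U/V\to\infty$, which is equivalent by homogeneity. Sufficiency then comes from criterion~(ii) for $b>0$ and the $b=0$ convexity case, with a limit in $b$ at $b=C_{p,q}(\vartheta)$, and the uniform bound follows from the compactified ratio together with Lemma~\ref{lem:continuity}.
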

\begin{proof}
For $V=1$ and $U\to\infty$, dividing the definition of $G$
by $U^q$ gives
\[
 \frac{G_{\vartheta,b}(U,1)}{U^q}
 =\left|1-\frac\vartheta U\right|^p(1+bU^{-m})
   -A_{\vartheta,b}-B_{\vartheta,b}U^{-q}
 \longrightarrow1-A_{\vartheta,b},
\]
where $m=q-p\in(0,1)$. Thus support requires
$A_{\vartheta,b}\le1$. Solving this inequality using
\eqref{eq:Atwo} gives
\[
\begin{aligned}
 A_{\vartheta,b}\le1
 &\quad\Longleftrightarrow\quad
 q-m\vartheta+pb\le q(1-\vartheta)^{1-p}\\
 &\quad\Longleftrightarrow\quad
 b\le\frac{q((1-\vartheta)^{1-p}-1)+m\vartheta}{p}
      =C_{p,q}(\vartheta).
\end{aligned}
\]
In particular, $C_{p,q}(\vartheta)>0$ for $0<\vartheta<1$,
because both terms in its numerator are positive.

For sufficiency, first suppose $0<b<C_{p,q}(\vartheta)$.
Then $A_{\vartheta,b}<1$, and the general argument for
$0<m<1$, $b>0$ in Lemma~\ref{lem:scalar-support} applies
without any restriction of the form $b\le\vartheta^{n_{p,q}}$.
Specifically, the unique solution of
\[
 \frac{y^{m-1}((q-1)y-m)}{my-(q-1)}
 =\frac b{\vartheta^m},\qquad y>\frac{q-1}{m},
\]
determines the change from $f''>0$ to $f''<0$.
Since the decreasing branch of $f'$ lies above its limit $1$,
the inequality $f'(1)=A_{\vartheta,b}<1$ places $1$ strictly
on the increasing branch. The integral formulas in that proof
then give $f(x)>f(1)+f'(1)(x-1)$ for every $x\ge0$, $x\ne1$,
and also $f''(1)>0$. For $b=0$ the strict-convexity argument
in the same lemma gives these conclusions.
At $b=C_{p,q}(\vartheta)$, take
$b_j=(1-1/j)C_{p,q}(\vartheta)$, $j\ge2$.
For each fixed $U,V\ge0$, continuity in $b$ gives
\[
 G_{\vartheta,C_{p,q}(\vartheta)}(U,V)
 =\lim_{j\to\infty}G_{\vartheta,b_j}(U,V)\ge0.
\]
This proves all equivalences in \eqref{eq:low-iff}.

If $b<C_{p,q}(\vartheta)$, the extended ratio in
\eqref{eq:cdef} is positive at $z=1$ by $f''(1)>0$,
at $z=0$ by strict support, and at $z=\infty$ because
$1-A_{\vartheta,b}>0$. It is continuous and strictly positive
on the compactified half-line, so its minimum is positive.
Finally Lemma~\ref{lem:continuity} makes $c(\vartheta,b)$
continuous. Its positive minimum on any compact subset of
$\{(\vartheta,b):0<\vartheta<1,\ 0\le b<C_{p,q}(\vartheta)\}$
gives the stated uniform bound.
\end{proof}

\begin{proposition}[Optimal power exponent for the scalar inequality]\label{prop:nsc}
For $1<p<q<2$, the number $\nsc$ in \eqref{eq:nsc-main} is finite, is attained at a point of $(0,1)$, and satisfies \eqref{eq:nsc-improvement-main}. For a real exponent $n$, set $b=\vartheta^n$. Then
\begin{equation}\label{eq:nsc-optimal}
 G_{\vartheta,\vartheta^n}(U,V)\ge0
 \ \text{for all }0<\vartheta<1,\ U,V\ge0
 \quad\Longleftrightarrow\quad n\ge\nsc.
\end{equation}
\end{proposition}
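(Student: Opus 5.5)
The plan is to reduce \eqref{eq:nsc-optimal} to a one-variable inequality using Proposition~\ref{prop:low-exact}, and then to study that inequality by elementary calculus. By \eqref{eq:low-iff}, support for $b=\vartheta^n$ holds at a fixed $\vartheta\in(0,1)$ if and only if $\vartheta^n\le C_{p,q}(\vartheta)$. That proposition also shows $C_{p,q}(\vartheta)>0$, so we may take logarithms. Since $\log\vartheta<0$, the condition is equivalent to
\[
 n\ge h(\vartheta):=\frac{\log C_{p,q}(\vartheta)}{\log\vartheta}.
\]
Comparing \eqref{eq:low-C} with \eqref{eq:nsc-main}, the argument of the logarithm in \eqref{eq:nsc-main} is exactly $C_{p,q}(t)$. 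Hence $\nsc=\sup_{0<t<1}h(t)$, and support for every $\vartheta$ is equivalent to $n\ge\sup h$. This gives \eqref{eq:nsc-optimal} once we know that the supremum is a finite maximum.

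For finiteness and attainment, I examine $h$ at both ends of $(0,1)$; it is continuous on the open interval.
\begin{itemize}
\item As $\vartheta\downarrow0$, expand $(1-\vartheta)^{1-p}=1+(p-1)\vartheta+O(\vartheta^2)$. This gives
\[
 C_{p,q}(\vartheta)=\frac{q(p-1)+q-p}{p}\,\vartheta+O(\vartheta^2)=(q-1)\vartheta\,(1+O(\vartheta)).
\]
Therefore $h(\vartheta)=1+\log\bigl((q-1)(1+O(\vartheta))\bigr)/\log\vartheta$. Because $q<2$, we have $\log(q-1)<0$, so $h(\vartheta)>1$ for small $\vartheta$ while $h(\vartheta)\to1$.
\item As $\vartheta\uparrow1$, $C_{p,q}(\vartheta)\to+\infty$ while $\log\vartheta\uparrow0^-$, so $h(\vartheta)\to-\infty$.
\end{itemize}
Hence $\sup h>1$ and it exceeds both boundary limits. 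A standard compactness argument on a closed subinterval then shows that the supremum is attained at some $t_*\in(0,1)$, is finite, and satisfies $\nsc>1$.

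For the strict upper bound in \eqref{eq:nsc-improvement-main}, I reuse Case~3 of the proof of Lemma~\ref{lem:scalar-support}. There it was shown that $A_{\vartheta,\vartheta^{n_{p,q}}}<1$ strictly for every $\vartheta\in(0,1)$. By the equivalences in the proof of Proposition~\ref{prop:low-exact}, $A_{\vartheta,b}<1$ is the same as $b<C_{p,q}(\vartheta)$. So $\vartheta^{n_{p,q}}<C_{p,q}(\vartheta)$, that is, $h(\vartheta)<n_{p,q}$ pointwise. Evaluating at the maximizer gives $\nsc=h(t_*)<n_{p,q}$, which is the right-hand inequality, since $n_{p,q}$ is the first line of \eqref{eq:n-pq}.

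The only delicate point is attainment. It needs the sign observation $\log(q-1)<0$, which makes $h$ approach $1$ from above near $\vartheta=0$ rather than from below. Without it the supremum could merely be the boundary limit.
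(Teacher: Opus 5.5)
Your proposal is correct and follows the paper's proof essentially step for step. You reduce via Proposition~\ref{prop:low-exact} to the condition $n\ge\log C_{p,q}(\vartheta)/\log\vartheta$, use the endpoint asymptotics (the quotient tends to $1$ from above because $\log(q-1)<0$, and to $-\infty$ as $\vartheta\uparrow1$) to get interior attainment, and convert the strict bound $A_{\vartheta,\vartheta^{n_{p,q}}}<1$ from Case~3 of Lemma~\ref{lem:scalar-support} into the pointwise inequality $h<n_{p,q}$, noting that the algebraic equivalence $A_{\vartheta,b}\le1\iff b\le C_{p,q}(\vartheta)$ holds verbatim with strict inequalities.
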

\begin{proof}
The positivity of $C_{p,q}$ follows from the numerator formula
in the proof of Proposition~\ref{prop:low-exact}. Taylor expansion gives
\[
 (1-\vartheta)^{1-p}
 =1+(p-1)\vartheta+\frac{p(p-1)}2\vartheta^2
   +O(\vartheta^3).
\]
Substitute this into \eqref{eq:low-C} and use
$q(p-1)+q-p=p(q-1)$ to obtain
\begin{align}
 C_{p,q}(\vartheta)
  &=(q-1)\vartheta+\frac{q(p-1)}2\vartheta^2+O(\vartheta^3)
  &&(\vartheta\downarrow0),\label{eq:low-C-zero}\\
 C_{p,q}(\vartheta)
  &\sim\frac qp(1-\vartheta)^{1-p}
  &&(\vartheta\uparrow1).\label{eq:low-C-one}
\end{align}
The second relation holds because $(1-\vartheta)^{1-p}\to\infty$,
whereas the remaining terms in \eqref{eq:low-C} stay bounded.
For
\[
 L(\vartheta)=\frac{\log C_{p,q}(\vartheta)}{\log\vartheta},
\]
the expansion at zero yields
\[
 L(\vartheta)
 =1+\frac{\log(q-1)+O(\vartheta)}{\log\vartheta}
 \longrightarrow1.
\]
Since $q-1<1$, the numerator of the fraction is negative
for small $\vartheta>0$; its denominator is also negative.
Hence $L(\vartheta)>1$ there. At the other endpoint,
\[
 \log C_{p,q}(\vartheta)
 =\log(q/p)+(1-p)\log(1-\vartheta)+o(1)
 \longrightarrow+\infty,
\]
while $\log\vartheta\to0^-$. Thus $L(\vartheta)\to-\infty$
as $\vartheta\uparrow1$.
Choose an interior point $\vartheta_1$ with $L(\vartheta_1)>1$.
The endpoint limits allow both tails to be removed while
all their values remain below $L(\vartheta_1)$.
On the remaining compact interval $L$ is continuous and
attains its maximum. This proves interior attainment,
finiteness, and $\nsc>1$.

Let $n_0$ be the first line of \eqref{eq:n-pq}, and put
$\vartheta_0=1-2^{-1/(p-1)}$. The identity
$\vartheta_0^{n_0-1}=q-1$ and the two estimates proved in
Lemma~\ref{lem:scalar-support} give
\[
 A_{\vartheta,\vartheta^{n_0}}
 \le
 \begin{cases}
  (1-\vartheta)^{p-1}(1+(p-1)\vartheta)<1,
    &0<\vartheta\le\vartheta_0,\\[1mm]
  (p+q)/(2q)<1,&\vartheta_0\le\vartheta<1.
 \end{cases}
\]
By Proposition~\ref{prop:low-exact}, this is equivalent to
$\vartheta^{n_0}<C_{p,q}(\vartheta)$. Taking logarithms
and dividing by $\log\vartheta<0$ reverses the inequality:
$L(\vartheta)<n_0$ for every $\vartheta\in(0,1)$.
Evaluation at an interior maximizer now gives $\nsc<n_0$,
and hence \eqref{eq:nsc-improvement-main}.
Finally, for every real $n$ and $0<\vartheta<1$,
\[
 \vartheta^n\le C_{p,q}(\vartheta)
 \quad\Longleftrightarrow\quad
 n\log\vartheta\le\log C_{p,q}(\vartheta)
 \quad\Longleftrightarrow\quad n\ge L(\vartheta).
\]
Requiring this at every $\vartheta$ is equivalent to
$n\ge\max_{0<\vartheta<1}L(\vartheta)=\nsc$.
Proposition~\ref{prop:low-exact} therefore proves
\eqref{eq:nsc-optimal}.
\end{proof}

\begin{remark}[The optimized scalar endpoint and its vanishing remainder]\label{rem:nsc-limit}
At a maximizing value of $\vartheta$ in \eqref{eq:nsc-main}, one has $\vartheta^{\nsc}=C_{p,q}(\vartheta)$ and $A_{\vartheta,\vartheta^{\nsc}}=1$. The support remains valid, but the quantitative coefficient
$c(\vartheta,\vartheta^{\nsc})$ is zero: support makes it
nonnegative, while sending $U/V\to\infty$ makes its defining
ratio tend to $1-A_{\vartheta,\vartheta^{\nsc}}=0$. Thus the scalar inequality remains valid at its optimized endpoint,
but its quadratic remainder coefficient vanishes at a maximizing
displacement. Theorem~\ref{thm:main}(iii) proves sharpness up to this
endpoint. Extending the interval further would require a new estimate
for the integrated remainder.
\end{remark}

\section{A reference energy for all logarithmic displacements}\label{sec:reference}

For $f\in L^2(\Cyl)$ define
\begin{align}
 \mathcal D_{\beta,t}[f]&=\int_\R\int_{\Sph^{d-1}}\int_{\Sph^{d-1}}
 K_{\beta,t}(\omega,\sigma)|f(\tau,\omega)-f(\tau+t,\sigma)|^2
 \dd\sigma\dd\omega\dd\tau,\label{eq:Dt}\\
 \eref[f]&=\frac1{\omega_{d-1}}\int_1^2\int_\R
 \int_{\Sph^{d-1}}\int_{\Sph^{d-1}}
 |f(\tau,\omega)-f(\tau+t,\sigma)|^2
 \dd\sigma\dd\omega\dd\tau\dd t.\label{eq:Eref}
\end{align}
Write $S=\omega_{d-1}$. Norms of functions of $\tau$ alone are
taken in $L^2(\R)$, and norms of functions on $\Cyl$ are taken
with respect to $\dd\tau\dd\omega$.
The reference energy averages squared differences over $1\le t\le2$.
Using $|a-b|^2\le2|a|^2+2|b|^2$ and translation invariance gives
\[
 \eref[f]\le\frac2S\int_1^2
      \bigl(S\|f\|_2^2+S\|f(\cdot+t,\cdot)\|_2^2\bigr)\dd t
 =4\|f\|_2^2.
\]
For every fixed $t>0$, integrating $K_{\beta,t}$ in either angular
variable gives $k_\beta(t)<\infty$. Fubini's theorem and the same
estimate give
\[
 \mathcal D_{\beta,t}[f]
 \le2k_\beta(t)\bigl(\|f\|_2^2+
                    \|f(\cdot+t,\cdot)\|_2^2\bigr)
 =4k_\beta(t)\|f\|_2^2.
\]
Thus all fixed-displacement quadratic forms used below are finite.

\begin{lemma}[Control of arbitrary displacements]\label{lem:reference}
There is a constant $C=C(d,s,p)$, independent of $\beta$, such that
\begin{equation}\label{eq:reference-control}
 \mathcal D_{\beta,t}[f]\le C k_\beta(t)(1+t^2)\eref[f]
 \qquad(t>0,\ f\in L^2(\Cyl)).
\end{equation}
Also $\eref[f]=0$ for an $L^2$ function implies $f=0$.
\end{lemma}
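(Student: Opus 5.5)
Write $g(\tau,\omega,\sigma,t)=|f(\tau,\omega)-f(\tau+t,\sigma)|^2$. The plan is to bound $\mathcal D_{\beta,t}[f]$ by composing short displacements, each controlled by the reference energy $\eref$. For a fixed $t>0$, the kernel $K_{\beta,t}$ is bounded above by a constant multiple of $k_\beta(t)$ on the sphere, except for the diagonal singularity when $t$ is small. So the argument splits into the regimes $t\ge1$ and $0<t<1$. In both, the basic tool is the triangle inequality along a chain of intermediate points, with the intermediate angles averaged over $\Sph^{d-1}$.

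\emph{Large displacements, $t\ge1$.} Here $|\omega-e^t\sigma|\ge e^t-1\ge c\,e^t$, which gives the pointwise bound $K_{\beta,t}(\omega,\sigma)\le C\,e^{(\beta-sp)t}\le C' k_\beta(t)$ uniformly in the angles, by \eqref{eq:k0bounds} and continuity on compact $t$-sets. The constant should be kept independent of $\beta$ by comparing $e^{\beta t}K_{0,t}$ with $e^{\beta t}k_0(t)$. It then suffices to prove
\[
\int_\R\!\iint g\,\dd\sigma\dd\omega\dd\tau\le C(1+t^2)\,S\,\eref[f].
\]
To do this, choose $N=\lceil t\rceil$ and steps $t_j\in[1,2]$ summing to $t$. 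Write the difference as a telescoping sum through intermediate points $(\tau+s_j,\sigma_j)$, average over the intermediate spheres $\sigma_1,\dots,\sigma_{N-1}$ and over small perturbations of the step lengths, and apply Cauchy--Schwarz to get the factor $N$. This yields $N^2\lesssim 1+t^2$ times an average of the form defining $\eref$. The perturbation of step lengths is needed so that each step ranges over a set of $t$ of positive measure in $[1,2]$; one can take $t_j=t/N+r_j$ with the $r_j$ averaged over a small interval, and absorb the final correction. Translation invariance in $\tau$ handles the shifts.

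\emph{Small displacements, $0<t<1$.} Here the kernel is singular as $\sigma\to\omega$, with $k_\beta(t)\asymp t^{-1-sp}$. The plan is to go out and back: write
\[
f(\tau,\omega)-f(\tau+t,\sigma)=\bigl[f(\tau,\omega)-f(\tau+t',\xi)\bigr]-\bigl[f(\tau+t,\sigma)-f(\tau+t',\xi)\bigr],
\]
average over $\xi\in\Sph^{d-1}$ and over $t'-t\in[1,2]$ and $t'\in[1,2]$ (a set of positive measure, say $t'\in[1+t,2]$ after adjusting the ranges). Each bracket is then a displacement in $[1,2]$ or its reverse, and reversal is harmless because $\eref$ is symmetric under $(\tau,\omega)\leftrightarrow(\tau+t,\sigma)$ after translation. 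This gives
\[
\iint g\le C\,S\,\eref[f]\quad\text{for all small }t,
\]
uniformly in the angles. Multiplying by $K_{\beta,t}$ and integrating in $\sigma$ produces exactly $k_\beta(t)$, via the rotation-invariance identity from the paper. The essential point is to bound $g$ averaged over $(\tau)$ for each fixed pair $(\omega,\sigma)$, not just on average over the pair, so that the singular kernel integrates against a uniform bound.

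The main obstacle is precisely this pointwise-in-angle control. $\eref$ averages over $\omega$ and $\sigma$, so I would bound $\int_\R g\,\dd\tau$ for fixed $(\omega,\sigma)$ by averaging over the intermediate $\xi$: each bracket, integrated over $\tau$ and $\xi$, is controlled by a partial average of $\eref$ with one angle frozen. This is still not $\eref$ itself. To fix it, I would use two intermediate points and the Fubini structure, routing $\omega\to\xi\to\eta\to\sigma$ with each of $\xi,\eta$ averaged, so that the middle step is a full angular average. The end steps need a one-angle-fixed estimate, which I expect must be replaced by integrating $K$ against them directly, using $\int K_{\beta,t}\dd\sigma=k_\beta(t)$ in one variable at a time.

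Finally, if $\eref[f]=0$, then $f(\tau,\omega)=f(\tau+t,\sigma)$ for almost every $\tau,\omega,\sigma$ and $t\in[1,2]$. Hence $f$ is almost everywhere constant in $\omega$ and periodic in $\tau$ with every period in $[1,2]$, so $f$ is a.e.\ constant. Since $f\in L^2$ on the infinite cylinder, $f=0$.
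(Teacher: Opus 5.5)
Your route can be made into a correct proof, and it differs from the paper's. As written, however, the small-displacement step rests on a false claim, and the mechanism that actually closes it appears only as an expectation in your last paragraph.

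\textbf{The false step.} The bound $\int_\R g\,\dd\tau\le CS\,\eref[f]$ ``uniformly in the angles'' cannot hold. For fixed $(\omega,\sigma)$, a slice of an $L^2(\Cyl)$ function is not controlled by any cylinder average. For example, take $f=\phi(\tau)\psi(\omega)$ with $\psi\in L^2(\Sph^{d-1})$ unbounded near some $\omega_0$, fix $\sigma$, and let $\omega\to\omega_0$.

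\textbf{What actually works.} Pointwise-in-angle control is not needed, and neither is a second intermediate point. With one averaged intermediate point $(\tau+t',\xi)$:
\begin{itemize}
\item the bracket $|f(\tau,\omega)-f(\tau+t',\xi)|^2$ does not involve $\sigma$;
\item the bracket $|f(\tau+t',\xi)-f(\tau+t,\sigma)|^2$ does not involve $\omega$.
\end{itemize}
Integrating $K_{\beta,t}$ in the missing angle gives exactly $k_\beta(t)$, and what remains is a full unweighted angular average. With $P_f$ as in Lemma~\ref{lem:chain} and $P_f(-h)=P_f(h)$, this gives, for any interval $I$,
\[
 \mathcal D_{\beta,t}[f]\le\frac{2k_\beta(t)}{|I|}\int_I\bigl(P_f(t')+P_f(t-t')\bigr)\dd t'.
\]

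\textbf{Remaining bookkeeping.}
\begin{itemize}
\item With $I=[1+t,2]$ the factor $1/(1-t)$ blows up as $t\uparrow1$. Cut the singular regime at $t=1/2$, and treat $[1/2,1)$ with your nonsingular-kernel bound; the ratio $K_{0,t}/k_0(t)$ is bounded uniformly for $t\ge1/2$.
\item In the large-$t$ chain, $N=\lceil t\rceil$ steps of length at least $1$ sum to more than $t$, unless $t$ is an integer and every step equals $1$.
\item For $t\in[1,2)$, a single step equal to $t$ admits no averaging at all.
\item The fix is to perturb the intermediate positions (not the step lengths) with $N\approx t/1.5$, adding an out-and-back step where needed. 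Alternatively, use the paper's device: $P_F(h)\le2P_F(r+h)+2P_F(r)$, averaged over $r$ in a subinterval of $[1,2]$, gives every shift $|h|\le1/2$; then telescope in such steps.
\end{itemize}
Your argument for $\eref[f]=0\Rightarrow f=0$ is fine; a single period $t\in[1,2]$ of the spherical constant already forces it to vanish in $L^2(\R)$.

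\textbf{Comparison with the paper.} The paper splits $f=\bar f+g$ into spherical mean and fluctuation. It proves the exact identities $\eref[f]=2\|g\|_2^2+SQ_{\bar f}$ and $\mathcal D_{\beta,t}[f]=Sk_\beta(t)P_{\bar f}(t)+\mathcal D_{\beta,t}[g]$. It then bounds $\mathcal D_{\beta,t}[g]\le4k_\beta(t)\|g\|_2^2$ crudely; this removes the small-$t$ singularity, since only the kernel marginal enters. Everything else reduces to a one-dimensional translation chain on $L^2(\R)$.

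That route buys three things:
\begin{itemize}
\item the explicit constant $C=64$;
\item a clean proof of injectivity;
\item the estimate $\|f-\bar f\|_2^2\le\frac12\eref[f]$, used later for near-extremizers.
\end{itemize}
Your intermediate-point averaging needs only the marginal identity $\int K_{\beta,t}\,\dd\sigma=\int K_{\beta,t}\,\dd\omega=k_\beta(t)$, with no decomposition. The singular regime then becomes a one-line estimate. In exchange, you need a separate unweighted chain with perturbation bookkeeping for larger $t$, and the argument does not by itself isolate the angular fluctuation.
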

\begin{proof}
The kernel in \eqref{eq:cylinderchange} depends on the angles only
through $\omega\cdot\sigma$, since
\[
 |\omega-e^t\sigma|^2=1+e^{2t}-2e^t\omega\cdot\sigma.
\]
It is symmetric in $\omega,\sigma$ and invariant under their
simultaneous rotation. Invariance of surface measure therefore gives
\[
 \int_{\Sph^{d-1}}K_{\beta,t}(\omega,\sigma)\dd\sigma
 =\int_{\Sph^{d-1}}K_{\beta,t}(\sigma,\omega)\dd\sigma
 =k_\beta(t),
\]
independently of $\omega$. These masses are finite for $t>0$,
because $|\omega-e^t\sigma|\ge e^t-1>0$.
Moreover,
\[
 K_{\beta,t}=e^{\beta t}K_{0,t},\qquad
 k_\beta(t)=e^{\beta t}k_0(t).
\]
Hence the normalized kernel $K_{\beta,t}/k_\beta(t)$ does not
depend on $\beta$.

Decompose $f$ into its spherical mean and fluctuation:
\[
 \bar f(\tau)=\frac1S\int_{\Sph^{d-1}}f(\tau,\omega)\dd\omega,
 \qquad g(\tau,\omega)=f(\tau,\omega)-\bar f(\tau).
\]
Cauchy--Schwarz gives
$S|\bar f(\tau)|^2\le\int|f(\tau,\omega)|^2\dd\omega$.
Thus $\bar f\in L^2(\R)$ and $g\in L^2(\Cyl)$, with
\[
 \int_{\Sph^{d-1}}g(\tau,\omega)\dd\omega=0,
 \qquad
 \|f\|_2^2=S\|\bar f\|_2^2+\|g\|_2^2.
\]
For a displacement $r$, put
$a_r(\tau)=\bar f(\tau)-\bar f(\tau+r)$.
In the unweighted angular integral, the terms involving $a_r$
and one factor of $g$ vanish by the zero-mean property.
The cross term between the two fluctuations also vanishes, since
\[
 \int\!\!\int g(\tau,\omega)
          \overline{g(\tau+r,\sigma)}\dd\sigma\dd\omega
 =\left(\int g(\tau,\omega)\dd\omega\right)
  \overline{\left(\int g(\tau+r,\sigma)\dd\sigma\right)}=0.
\]
Expanding the square consequently gives, for almost every $\tau$,
\[
\begin{aligned}
 &\int\!\!\int
 |a_r(\tau)+g(\tau,\omega)-g(\tau+r,\sigma)|^2
 \dd\sigma\dd\omega\\
 &\quad=S^2|a_r(\tau)|^2
       +S\int|g(\tau,\omega)|^2\dd\omega
       +S\int|g(\tau+r,\sigma)|^2\dd\sigma.
\end{aligned}
\]
Integrate in $\tau$, divide by $S$, and then integrate over
$1\le r\le2$. Translation invariance in $\tau$ yields
\begin{equation}\label{eq:Eref-decomp}
 \eref[f]=2\|g\|_2^2+
 S\int_1^2\|\bar f(\cdot+r)-\bar f\|_2^2\dd r.
\end{equation}

For the weighted angular integral the fluctuation--fluctuation
term need not vanish, so retain it as $\mathcal D_{\beta,t}[g]$.
The mixed term between $a_t$ and the fluctuation vanishes by
Fubini's theorem:
\[
\begin{aligned}
 &\int\!\!\int K_{\beta,t}(\omega,\sigma)
        \bigl(g(\tau,\omega)-g(\tau+t,\sigma)\bigr)
          \dd\sigma\dd\omega\\
 &\quad=k_\beta(t)
   \left(\int g(\tau,\omega)\dd\omega
         -\int g(\tau+t,\sigma)\dd\sigma\right)=0.
\end{aligned}
\]
The mean contribution carries the total angular mass $Sk_\beta(t)$,
the integral of $K_{\beta,t}$ over both angular variables.
Hence the exact decomposition and its bound are
\[
\begin{aligned}
 \mathcal D_{\beta,t}[f]
 &=Sk_\beta(t)\|\bar f(\cdot+t)-\bar f\|_2^2
      +\mathcal D_{\beta,t}[g],\\
 \mathcal D_{\beta,t}[g]
 &\le2k_\beta(t)\int_\R
     \left(\int|g(\tau,\omega)|^2\dd\omega
          +\int|g(\tau+t,\sigma)|^2\dd\sigma\right)\dd\tau\\
 &=4k_\beta(t)\|g\|_2^2.
\end{aligned}
\]

We next establish the scalar estimate for translations.
For $F\in L^2(\R)$ define $T_hF(\tau)=F(\tau+h)$ and
\[
 P_F(h)=\|T_hF-F\|_2^2,\qquad
 Q_F=\int_1^2P_F(r)\dd r.
\]
The operators $T_h$ are isometries and satisfy $T_hT_r=T_{h+r}$.
For $|h|\le1/2$, choose the interval of length $1/2$
\[
 I_h=\begin{cases}
       [1,3/2],&0\le h\le1/2,\\
       [3/2,2],&-1/2\le h<0.
      \end{cases}
\]
Both $I_h$ and $I_h+h$ lie in $[1,2]$.
For each $r\in I_h$, translation invariance and the squared
triangle inequality give
\[
\begin{aligned}
 P_F(h)&=\|T_{r+h}F-T_rF\|_2^2\\
       &=\|(T_{r+h}F-F)-(T_rF-F)\|_2^2\\
       &\le2P_F(r+h)+2P_F(r).
\end{aligned}
\]
Averaging over $I_h$ now shows explicitly that
\[
\begin{aligned}
 P_F(h)&\le\frac2{|I_h|}
       \left(\int_{I_h+h}P_F(u)\dd u
             +\int_{I_h}P_F(u)\dd u\right)\\
       &\le4(Q_F+Q_F)=8Q_F.
\end{aligned}
\]
For $t>0$, set $N=\lceil2t\rceil\ge1$ and $h=t/N$.
Then $0<h\le1/2$ and $N\le2t+1\le2(1+t)$.
The telescoping identity in $L^2(\R)$ is
\[
 T_tF-F=\sum_{j=0}^{N-1}T_{jh}(T_hF-F).
\]
Cauchy--Schwarz for this finite sum, followed by the isometry
of $T_{jh}$, gives
\[
 P_F(t)\le N\sum_{j=0}^{N-1}\|T_{jh}(T_hF-F)\|_2^2
          =N^2P_F(h).
\]
Combining this with the small-shift estimate yields
\begin{equation}\label{eq:reference-scalar-chain}
 P_F(t)\le N^2P_F(h)
 \le8N^2Q_F\le32(1+t)^2Q_F.
\end{equation}

Apply \eqref{eq:reference-scalar-chain} to $F=\bar f$ and put
$M_t=32(1+t)^2$. Since $M_t\ge2$, the two parts of the
weighted decomposition can be combined using their respective
terms in \eqref{eq:Eref-decomp}:
\[
\begin{aligned}
 \frac{\mathcal D_{\beta,t}[f]}{k_\beta(t)}
 &\le4\|g\|_2^2+M_t S Q_{\bar f}\\
 &\le M_t\bigl(2\|g\|_2^2+S Q_{\bar f}\bigr)
   =M_t\eref[f]\\
 &\le64(1+t^2)\eref[f].
\end{aligned}
\]
The last step uses $(1+t)^2\le2(1+t^2)$.
This proves \eqref{eq:reference-control}, in fact with $C=64$,
independently of $\beta$.

Finally suppose $\eref[f]=0$. Its nonnegative decomposition
\eqref{eq:Eref-decomp} gives $g=0$ and $Q_{\bar f}=0$.
The small-shift estimate then gives
$P_{\bar f}(h)=0$ for every $|h|\le1/2$.
In particular, $T_{1/2}\bar f=\bar f$ in $L^2(\R)$.
Writing $a=1/2$ and using this equality for all integer translates,
\[
 \|\bar f\|_2^2
 =\sum_{j\in\mathbb Z}\int_0^a|\bar f(\tau+ja)|^2\dd\tau
 =\sum_{j\in\mathbb Z}\int_0^a|\bar f(\tau)|^2\dd\tau.
\]
The sum is finite only if its repeated nonnegative term is zero.
Thus $\bar f=0$ almost everywhere, and $f=\bar f+g=0$.
All identities above are identities of $L^2$ functions or of
absolutely integrable products; no differentiability of $f$ is required.
\end{proof}

\begin{remark}
The reference energy controls the angular fluctuation $g$ in
$L^2$ by \eqref{eq:Eref-decomp}. Its control of the spherical
mean is through translation differences. To quantify the distinction,
let $f_T(\tau,\omega)=\zeta(\tau/T)$, where
$0\ne\zeta\in C_c^\infty(\R)$ and $T\ge1$.
Then $g_T=0$ and
\[
 \|f_T\|_2^2=ST\|\zeta\|_2^2,\qquad
 \eref[f_T]=ST\int_1^2
    \|\zeta(\cdot+r/T)-\zeta\|_2^2\dd r.
\]
The fundamental theorem of calculus and translation invariance give
$\|\zeta(\cdot+a)-\zeta\|_2\le|a|\|\zeta'\|_2$.
Consequently
\[
 \eref[f_T]\le\frac{S}{T}\|\zeta'\|_2^2\int_1^2r^2\dd r
 =\frac{7S}{3T}\|\zeta'\|_2^2,
 \qquad
 \frac{\eref[f_T]}{\|f_T\|_2^2}
 \le\frac7{3T^2}\frac{\|\zeta'\|_2^2}{\|\zeta\|_2^2}
 \xrightarrow[T\to+\infty]{}0.
\]
Hence no positive constant can give
$\eref[f]\ge c\|f\|_2^2$ for every $f\in L^2(\Cyl)$.
The same family explains the growth in the displacement estimate:
\[
 \frac{\mathcal D_{\beta,T}[f_T]}{k_\beta(T)}
 =ST\|\zeta(\cdot+1)-\zeta\|_2^2.
\]
The norm on the right is positive, since a nonzero compactly
supported function cannot have period one. Together with the
bound $\eref[f_T]=O(T^{-1})$, this shows that a bound of the
form \eqref{eq:reference-control} requires a displacement factor
of at least quadratic growth.
\end{remark}

\begin{lemma}[An elementary translation-chain estimate]\label{lem:chain}
Set
\[
 P_f(h)=\frac1{\omega_{d-1}}
 \int_\R\int_{\Sph^{d-1}}\int_{\Sph^{d-1}}
 |f(\tau,\omega)-f(\tau+h,\sigma)|^2
 \dd\sigma\dd\omega\dd\tau.
\]
Then, for every $h\ge1$ and $f\in L^2(\Cyl)$,
\begin{equation}\label{eq:chain}
 P_f(h)\le32(1+h)^2\eref[f].
\end{equation}
\end{lemma}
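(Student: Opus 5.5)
The plan is to reuse the decomposition from the proof of Lemma~\ref{lem:reference}, now with the unweighted angular kernel. The quantity $P_f(h)$ is exactly $\mathcal D$ with $K\equiv1$, normalized by $S=\omega_{d-1}$, so the same orthogonal splitting applies. Write $f=\bar f+g$ with $\bar f$ the spherical mean and $g$ the mean-zero fluctuation.

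Expanding the square and using the zero-mean property kills all cross terms; this computation was already carried out pointwise in $\tau$ for any displacement $r$ before \eqref{eq:Eref-decomp}. Integrating in $\tau$, dividing by $S$, and using translation invariance gives the identity
\[
 P_f(h)=2\|g\|_2^2+S\,\|\bar f(\cdot+h)-\bar f\|_2^2 .
\]
This holds for every real $h$, not only for $h\in[1,2]$.

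For the mean part, I would apply the scalar translation chain \eqref{eq:reference-scalar-chain} to $F=\bar f\in L^2(\R)$. That chain was proved for arbitrary $t>0$ and gives
\[
 \|\bar f(\cdot+h)-\bar f\|_2^2=P_{\bar f}(h)\le32(1+h)^2Q_{\bar f},
 \qquad Q_{\bar f}=\int_1^2\|\bar f(\cdot+r)-\bar f\|_2^2\dd r.
\]
For the fluctuation part, $2\|g\|_2^2\le32(1+h)^2\cdot2\|g\|_2^2$ holds trivially, since $32(1+h)^2\ge1$; the condition $h\ge1$ gives this with lots of room. Adding the two bounds and comparing with \eqref{eq:Eref-decomp}, namely $\eref[f]=2\|g\|_2^2+SQ_{\bar f}$, yields $P_f(h)\le32(1+h)^2\eref[f]$.

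I expect no real obstacle here. The only points needing care are that the cross-term cancellation in the unweighted expansion is valid for general $h$ (it is, because the mean-zero property is used separately at $\tau$ and at $\tau+h$), and that the normalizing factor $1/\omega_{d-1}$ in $P_f$ matches the one in $\eref$. Both are bookkeeping inherited from Lemma~\ref{lem:reference}.
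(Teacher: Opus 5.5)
Your proposal is correct and matches the paper's proof step for step: it uses the same unweighted mean--fluctuation identity $P_f(h)=S\,\|\bar f(\cdot+h)-\bar f\|_2^2+2\|g\|_2^2$ alongside $\eref[f]=S\,Q_{\bar f}+2\|g\|_2^2$, applies the scalar chain \eqref{eq:reference-scalar-chain} to $\bar f$, and absorbs the fluctuation term via $32(1+h)^2\ge1$. As you observe, the argument only uses $h>0$, so the restriction $h\ge1$ is not actually needed.
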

\begin{proof}
Write $f=\bar f+g$ as in Lemma~\ref{lem:reference}.
The unweighted angular expansion in that proof gives
\[
 P_f(h)=S P_{\bar f}(h)+2\|g\|_2^2,\qquad
 \eref[f]=S Q_{\bar f}+2\|g\|_2^2.
\]
Set $M_h=32(1+h)^2\ge1$. Applying
\eqref{eq:reference-scalar-chain} to $\bar f$ yields
\[
\begin{aligned}
 P_f(h)&\le M_h S Q_{\bar f}+2\|g\|_2^2\\
       &\le M_h\bigl(S Q_{\bar f}+2\|g\|_2^2\bigr)
        =32(1+h)^2\eref[f].
\end{aligned}
\]
This is \eqref{eq:chain}. Grouping the mean and fluctuation
terms before using \eqref{eq:Eref-decomp} preserves the stated
constant $32$.
\end{proof}

\section{Proof of the sharp-interval and strict-extension theorems}\label{sec:proofs}

This section proves Theorems~\ref{thm:extension} and~\ref{thm:main}.
Lemma~\ref{lem:cutoff} already gives the upper bound by the
homogeneous candidate $J_{a,\beta}^{q/p}$. The task in this section
is therefore to prove the matching Hardy lower bound on the
asserted parameter intervals. A common reduction expresses this
task in terms of an integrated scalar remainder.

We first derive that reduction, including the changes of variables
and limiting operations. We then prove the intervals for $D>0$,
treat $D<0$ by reflection, and handle $D=0$ directly. An independent
convexity proof for $q>2p$ then supplies a direct extension width. The final
subsection collects the positive remainder estimates and their
consequences for equality cases and near-extremizers.

\subsection{A common reduction to an integrated remainder}\label{sec:proof-reduction}

We first establish the reduction used in the proofs of both
Theorems~\ref{thm:extension} and~\ref{thm:main}.

Fix an admissible exponent $a$ in \eqref{eq:a-range} and a nonzero
smooth test $u$. The zero test is immediate. Replacing $u$ by $|u|$
preserves the denominator and decreases the energy, since
$||u(x)|-|u(y)||\le|u(x)-u(y)|$.
Thus it suffices to work with
$v(\tau,\omega)=e^{a\tau}|u(e^\tau\omega)|\ge0$.
This function is Lipschitz and compactly supported on the cylinder;
the argument uses no differentiation at its zeros. Set
\begin{equation}\label{eq:IN}
 N[v]=\int_\Cyl v^q,\qquad I_{a,\beta}[v]=\int_\Cyl v^{q-p}\Aa_{a,\beta}[v].
\end{equation}
Here $0<N[v]<\infty$. With $\rho=q/p$ and its conjugate exponent
$\rho'=q/(q-p)$, one has $(q-p)\rho'=q$.
H\"older's inequality therefore gives
\begin{equation}\label{eq:outerHolder}
 I_{a,\beta}[v]\le
 \left(\int_\Cyl\Aa_{a,\beta}[v]^\rho\right)^{1/\rho}
 N[v]^{1-1/\rho}.
\end{equation}
The integral on the right is finite by the admissibility of $a$
and the cylindrical representation. In particular, $I_{a,\beta}[v]$
is finite. Rearranging \eqref{eq:outerHolder}, if
$I_{a,\beta}[v]\ge J_{a,\beta}N[v]+R[v]$ with $R[v]\ge0$, then
\[
 \E_{\alpha,\beta}[u]
 \ge\frac{I_{a,\beta}[v]^\rho}{N[v]^{\rho-1}}
 \ge N[v]\left(J_{a,\beta}+\frac{R[v]}{N[v]}\right)^\rho.
\]
Consequently $I_{a,\beta}[v]\ge J_{a,\beta}N[v]$ is sufficient
for the sharp Hardy lower bound. If $a\ne0$, then $J_{a,\beta}>0$,
and convexity also gives the useful implication
\[
 \E_{\alpha,\beta}[u]-J_{a,\beta}^\rho N[v]
 \ge\rho J_{a,\beta}^{\rho-1}R[v].
\]

To derive the identity, let $a>0$. No restriction on the sign of $D$
is needed at this stage. For $t>0$ define
\begin{equation}\label{eq:theta-b-a}
 \vartheta_a(t)=e^{-at},\qquad b_a(t)=e^{-(D-pa)t}.
\end{equation}
Splitting the displacement integral into its positive and negative
parts and rewriting both contributions as integrals over $t>0$, we
obtain
\begin{equation}\label{eq:pairidentity}
 I_{a,\beta}[v]-J_{a,\beta}N[v]
 =\int_0^\infty\int_\R\int_{\Sph^{d-1}}\int_{\Sph^{d-1}}
 K_{\beta,t}(\omega,\sigma)G_{\vartheta_a(t),b_a(t)}(U,V)
 \dd\sigma\dd\omega\dd\tau\dd t,
\end{equation}
where $U=v(\tau,\omega)$ and $V=v(\tau+t,\sigma)$.
We first prove the identity with a common cutoff. Let $m=q-p>0$
and $0<\epsilon<T$. In the calculation below, $\int\dd\mu$ denotes
integration over $\R\times\Sph^{d-1}\times\Sph^{d-1}$ with
$\dd\mu=\dd\sigma\dd\omega\dd\tau$. Define
\[
\begin{aligned}
 I_{a,\beta,\epsilon,T}[v]
 &=\int_{\epsilon<|r|<T}\int
 K_{\beta,r}(\omega,\sigma)v(\tau,\omega)^m\\
 &\qquad\qquad\times
 |v(\tau,\omega)-e^{-ar}v(\tau+r,\sigma)|^p\dd\mu\dd r,\\
 J_{a,\beta,\epsilon,T}
 &=\int_{\epsilon<|r|<T}k_\beta(r)|1-e^{-ar}|^p\dd r.
\end{aligned}
\]
Every integral at these cutoffs is finite. Split the first integral
as $I_{a,\beta,\epsilon,T}[v]=I^+_{\epsilon,T}[v]+I^-_{\epsilon,T}[v]$
according to the sign of $r$. For $t>0$, abbreviate
$\vartheta=\vartheta_a(t)$ and $b=b_a(t)$; these parameters still
depend on $t$.

\emph{The positive and negative displacements.}
In the positive part, set $r=t$. With $U=v(\tau,\omega)$ and
$V=v(\tau+t,\sigma)$, this gives
\[
 I^+_{\epsilon,T}[v]
 =\int_\epsilon^T\int K_{\beta,t}(\omega,\sigma)
 U^m|U-\vartheta V|^p\dd\mu\dd t.
\]
In the negative part, first set $r=-t$ and reverse the integration
limits. Thus
\[
\begin{aligned}
 I^-_{\epsilon,T}[v]
 &=\int_\epsilon^T\int
 K_{\beta,-t}(\omega,\sigma)v(\tau,\omega)^m\\
 &\qquad\qquad\times
 |v(\tau,\omega)-e^{at}v(\tau-t,\sigma)|^p\dd\mu\dd t.
\end{aligned}
\]
For each fixed $t$, now make the change of variables
\[
 (\tau_{\mathrm{old}},\omega_{\mathrm{old}},\sigma_{\mathrm{old}})
 =(\tau+t,\sigma,\omega).
\]
Translation in $\tau$ and exchange of the two angular variables
preserve the domain and the product measure. The two function
values become, respectively,
\[
 v(\tau_{\mathrm{old}},\omega_{\mathrm{old}})
 =v(\tau+t,\sigma)=V,\qquad
 v(\tau_{\mathrm{old}}-t,\sigma_{\mathrm{old}})
 =v(\tau,\omega)=U.
\]
Consequently the transformed integrand is
$K_{\beta,-t}(\sigma,\omega)V^m|V-e^{at}U|^p$.
The kernel reversal identity \eqref{eq:reversal} and homogeneity give
\[
 K_{\beta,-t}(\sigma,\omega)
 =e^{-Dt}K_{\beta,t}(\omega,\sigma),\qquad
 |V-e^{at}U|^p=e^{pat}|U-e^{-at}V|^p.
\]
The two exponential factors combine as
$e^{-Dt}e^{pat}=e^{-(D-pa)t}=b$. Hence
\[
 I^-_{\epsilon,T}[v]
 =\int_\epsilon^T\int K_{\beta,t}(\omega,\sigma)
 bV^m|U-\vartheta V|^p\dd\mu\dd t.
\]
Adding the two parts and using \eqref{eq:Ftwo} yields
\[
\begin{aligned}
 I_{a,\beta,\epsilon,T}[v]
 &=\int_\epsilon^T\int K_{\beta,t}(\omega,\sigma)
 |U-\vartheta V|^p(U^m+bV^m)\dd\mu\dd t\\
 &=\int_\epsilon^T\int K_{\beta,t}(\omega,\sigma)
 F_{\vartheta,b}(U,V)\dd\mu\dd t.
\end{aligned}
\]
There is no factor $1/2$: the original integral is the sum of its
positive- and negative-displacement parts, and both contributions
are included in $F_{\vartheta,b}$.

\emph{Representing and subtracting $J N[v]$ within the same integral.}
Integrating \eqref{eq:reversal} in an angular variable gives
$k_\beta(-t)=e^{-Dt}k_\beta(t)$. Pairing the two displacement
signs in $J$ therefore gives
\[
\begin{aligned}
 J_{a,\beta,\epsilon,T}
 &=\int_\epsilon^T
 \bigl(k_\beta(t)|1-e^{-at}|^p
       +k_\beta(-t)|1-e^{at}|^p\bigr)\dd t\\
 &=\int_\epsilon^T k_\beta(t)
 \bigl((1-\vartheta)^p+e^{-Dt}e^{pat}(1-\vartheta)^p\bigr)\dd t\\
 &=\int_\epsilon^T k_\beta(t)(1-\vartheta)^p(1+b)\dd t.
\end{aligned}
\]
For each fixed $t>0$, integrating the kernel in either angular
variable gives $k_\beta(t)$, and Fubini's theorem gives
\[
\begin{aligned}
 \int K_{\beta,t}(\omega,\sigma)U^q\dd\mu
 &=k_\beta(t)\int_\R\int_{\Sph^{d-1}}
 v(\tau,\omega)^q\dd\omega\dd\tau
 =k_\beta(t)N[v],\\
 \int K_{\beta,t}(\omega,\sigma)V^q\dd\mu
 &=k_\beta(t)\int_\R\int_{\Sph^{d-1}}
 v(\tau+t,\sigma)^q\dd\sigma\dd\tau
 =k_\beta(t)N[v].
\end{aligned}
\]
The last equality in the second line uses the translation
$\tau\mapsto\tau+t$.

To represent $J_{a,\beta,\epsilon,T}N[v]$ using the same kernel
and integration variables as in the formula for $I_{a,\beta,\epsilon,T}[v]$,
we use the coefficients $A_{\vartheta,b},B_{\vartheta,b}$ from
\eqref{eq:Atwo}--\eqref{eq:Btwo}. Their choice makes
$A_{\vartheta,b}U^q+B_{\vartheta,b}V^q$ agree with
$F_{\vartheta,b}$ in value and first derivatives on the positive
diagonal $U=V>0$, as required for the scalar remainder studied
in Section~\ref{sec:scalar}. Indeed, matching the two first
derivatives at $(1,1)$ gives
\[
 A_{\vartheta,b}=\frac1q\,\partial_U F_{\vartheta,b}(1,1),
 \qquad
 B_{\vartheta,b}=\frac1q\,\partial_V F_{\vartheta,b}(1,1).
\]
Direct differentiation of \eqref{eq:Ftwo} gives exactly
\eqref{eq:Atwo}--\eqref{eq:Btwo}. Since $F_{\vartheta,b}$ is
homogeneous of degree $q$, Euler's identity also gives
\[
 F_{\vartheta,b}(1,1)
 =\frac{\partial_U F_{\vartheta,b}(1,1)
        +\partial_V F_{\vartheta,b}(1,1)}q
 =A_{\vartheta,b}+B_{\vartheta,b}.
\]
Thus the definition \eqref{eq:Gtwo} implies the contact conditions
\[
 G_{\vartheta,b}(1,1)=0,\qquad
 \partial_U G_{\vartheta,b}(1,1)
 =\partial_V G_{\vartheta,b}(1,1)=0.
\]
Homogeneity extends these conditions to every $(c,c)$ with $c>0$.
These contact conditions explain why we subtract
$A_{\vartheta,b}U^q+B_{\vartheta,b}V^q$ from $F_{\vartheta,b}$
and use $G_{\vartheta,b}$ as the remainder in the scalar estimates.

Since $A_{\vartheta,b}$ and $B_{\vartheta,b}$
depend only on $t$, the identity
$A_{\vartheta,b}+B_{\vartheta,b}=(1-\vartheta)^p(1+b)$
from \eqref{eq:supportsum} now implies
\[
\begin{aligned}
 &\int_\epsilon^T\int K_{\beta,t}(\omega,\sigma)
   \bigl(A_{\vartheta,b}U^q+B_{\vartheta,b}V^q\bigr)\dd\mu\dd t\\
 &\qquad=N[v]\int_\epsilon^T k_\beta(t)
   (A_{\vartheta,b}+B_{\vartheta,b})\dd t
 =J_{a,\beta,\epsilon,T}N[v].
\end{aligned}
\]
Subtracting this equality from the paired formula for $I$, and
using the definition \eqref{eq:Gtwo}, proves
\[
\begin{aligned}
 &I_{a,\beta,\epsilon,T}[v]-J_{a,\beta,\epsilon,T}N[v]\\
 &\quad=\int_\epsilon^T\int K_{\beta,t}(\omega,\sigma)
 \bigl(F_{\vartheta,b}(U,V)-A_{\vartheta,b}U^q
                         -B_{\vartheta,b}V^q\bigr)\dd\mu\dd t\\
 &\quad=\int_\epsilon^T\int K_{\beta,t}(\omega,\sigma)
 G_{\vartheta,b}(U,V)\dd\mu\dd t.
\end{aligned}
\]
This is the truncated form of \eqref{eq:pairidentity}.
For each fixed $0<\epsilon<T$, the integrals of
$K_{\beta,t}A_{\vartheta,b}U^q$ and
$K_{\beta,t}B_{\vartheta,b}V^q$ are finite, so the subtraction
is legitimate. We keep the common cutoffs until these terms have
been combined with $K_{\beta,t}F_{\vartheta,b}$ into
$K_{\beta,t}G_{\vartheta,b}$. We do not assume that the first two
integrals converge separately when the lower cutoff is removed
by letting $\epsilon\downarrow0$.

We justify the signed integral when the cutoffs are removed.
Put $f=v^{q/2}\in L^2(\Cyl)$, so $\|f\|_2^2=N[v]$.
As $t\downarrow0$, the pair $(\vartheta_a(t),b_a(t))$ tends to
$(1,1)$. Lemma~\ref{lem:short} therefore makes $G$ nonnegative
for all sufficiently small $t>0$.
On the remaining range $t\ge t_0>0$, the parameter
$\vartheta_a(t)\le e^{-at_0}<1$, and Lemma~\ref{lem:continuity}
implies
\[
 [G_{\vartheta_a(t),b_a(t)}(U,V)]_-
 \le C(1+b_a(t))(U^{q/2}-V^{q/2})^2.
\]
Its integral in $\tau,\omega,\sigma$ is at most
$C(1+b_a(t))\mathcal D_{\beta,t}[f]$.
For $t\ge1$, Lemma~\ref{lem:reference} and \eqref{eq:k0bounds}
bound this by
\[
 C(1+t^2)
 \bigl(e^{-\lambda_+t}+e^{-(\lambda_--pa)t}\bigr)\eref[f].
\]
Here $\lambda_++D=\lambda_-$ and admissibility gives
$\lambda_--pa>0$. This is integrable in $t$; on every compact
subinterval of $(0,\infty)$ the coefficients and $k_\beta$
are bounded. Thus the negative part of the full paired integrand
has finite integral.
For nested cutoffs, the integrals of its positive and negative
parts both increase. The latter have a finite limit, and the
truncated identity bounds the former by
$I_{a,\beta}[v]$ plus that finite negative-part integral.
Both parts are consequently integrable. Tonelli and monotone
convergence for $I$ and $J$, and then for these two parts, prove
\eqref{eq:pairidentity}. This also explains why the same cutoff
must be used in the two terms on its left.

With the notation
\begin{equation}\label{eq:fcentral}
 f=v^{q/2},\qquad c_{a,\beta}(t)=c(\vartheta_a(t),b_a(t)),
\end{equation}
the defining scalar inequality for $c$ gives
\begin{equation}\label{eq:pairlower}
 I_{a,\beta}[v]-J_{a,\beta}N[v]\ge\int_0^\infty c_{a,\beta}(t)\mathcal D_{\beta,t}[f]\dd t.
\end{equation}
The negative part on the right is integrable by the preceding
bound; its positive part is bounded by the positive paired
integrand. Thus this comparison is also justified as a signed
integral. It reduces the nonlinear remainder to a quadratic
energy of $v^{q/2}$.

All subsequent quantitative estimates use the same fixed interval
$1\le t\le2$ as the reference energy in \eqref{eq:Eref}. On this
interval, $|\omega-e^t\sigma|\le1+e^2$ and
$e^{(d+\beta)t}\ge e^{d+\beta}$. Since $d+\beta>0$,
\[
 K_{\beta,t}(\omega,\sigma)\ge
 k_*:=\frac{e^{d+\beta}}{(1+e^2)^{d+sp}}>0
 \qquad(1\le t\le2).
\]
Writing $S=\omega_{d-1}$, the definition of $\eref$ gives
$\int_1^2\mathcal D_{\beta,t}[f]\dd t\ge Sk_*\eref[f]$.
This kernel estimate is independent of $a$ and of the sign of $D$;
it will be used in each of the cases below.

\subsection{Sharpness when \texorpdfstring{$D>0$}{D>0}}\label{sec:low-new}

Throughout this subsection $D>0$. We prove
Theorem~\ref{thm:extension}\textup{(i)} in this case and all three
parts of Theorem~\ref{thm:main}. We first obtain the initial
interval from scalar support and then enlarge it by the integrated
remainder estimate. In the low-exponent range, we also obtain an
explicit enlargement of the initial interval from the sharper scalar
criterion. We finish by
converting these intervals to the original weight $\alpha$.

\emph{The initial interval.}
Put $a_0=D/(p+n_{p,q})$.
Since $n_{p,q}>0$ and $D<\lambda_-$, one has
$0<a_0<D/p<\lambda_-/p$.
For $0<a\le a_0$, division by $a>0$ gives
$D/a-p\ge n_{p,q}$. Because $0<\vartheta_a(t)<1$,
\[
 b_a(t)=\vartheta_a(t)^{D/a-p}
 \le\vartheta_a(t)^{n_{p,q}}.
\]
Lemma~\ref{lem:scalar-support} makes every paired remainder
nonnegative. Equations \eqref{eq:outerHolder}--\eqref{eq:pairlower}
therefore give $\E_{\alpha_H+qa,\beta}[u]\ge J_{a,\beta}^\rho N[v]$.
The test sequence in Lemma~\ref{lem:cutoff} gives the reverse
bound for the infimum, and hence
\begin{equation}\label{eq:seedsharp}
 H_{\alpha_H+qa,\beta}^{p,q}=J_{a,\beta}^\rho\qquad(0\le a\le a_0).
\end{equation}
At $a=0$, nonnegativity of the energy and the same cutoff upper
bound give $0\le H_{\alpha_H,\beta}^{p,q}\le J_{0,\beta}^\rho=0$.

\emph{Extension by the integrated remainder.}
We next extend the interval past $a_0$ by estimating the integrated
remainder in \eqref{eq:pairlower}. This argument applies to all
$p<q$; in particular, it supplies the strict extension needed when
$q\ge2$. The factor $\mathcal D_{\beta,t}[f]$ is nonnegative, but
$c_{a,\beta}(t)$ may become negative when $a>a_0$. We retain a
uniformly positive contribution from $1\le t\le2$ and bound the
absolute value of the integral over the set where $c_{a,\beta}(t)<0$.
The comparison will leave a positive total remainder.

We first ensure that sufficiently small logarithmic shifts have
nonnegative remainder uniformly in $a$. Choose a compact interval
$\mathcal U_0$ containing $a_0$ in its interior, with
$0<a_-\le a\le a_+<D/p$ on $\mathcal U_0$.
This also ensures admissibility. Put
$L=\max_{a\in\mathcal U_0}(D-pa)>0$.
Choose $t_0\in(0,1)$ so small that
\[
 e^{-a_+t_0}>1-h_0,\qquad e^{-Lt_0}>1/2,
\]
where $h_0$ is from Lemma~\ref{lem:short}.
For $a\in\mathcal U_0$ and $0<t<t_0$ this gives
$1-h_0<\vartheta_a(t)<1$ and $1/2<b_a(t)\le1$.
The lemma thus proves
\begin{equation}\label{eq:shortpositive}
 c_{a,\beta}(t)\ge0\qquad (0<t<t_0,\ a\in\mathcal U_0).
\end{equation}

We next obtain a positive lower bound on the fixed interval
$1\le t\le2$. At $a=a_0$, the identity $D-pa_0=n_{p,q}a_0$ gives
$b_{a_0}(t)=\vartheta_{a_0}(t)^{n_{p,q}}$.
Lemma~\ref{lem:scalar-support} therefore gives $c_{a_0,\beta}(t)>0$.
By Lemma~\ref{lem:continuity}, this coefficient is continuous in
$t$, so its minimum on the compact interval $[1,2]$ is positive.
Uniform continuity in $(a,t)$ then allows us to shrink $\mathcal U_0$
to a compact neighborhood $\mathcal U$ of $a_0$ such that
$c_{a,\beta}(t)\ge\gamma>0$ for $a\in\mathcal U$, $1\le t\le2$.
Using the kernel lower bound $k_*$ above yields
\begin{equation}\label{eq:strippositive}
 \int_1^2c_{a,\beta}(t)\mathcal D_{\beta,t}[f]\dd t\ge c_*\eref[f]
 \qquad(a\in\mathcal U),\qquad c_*=\gamma Sk_*>0.
\end{equation}

It remains to estimate the part where $c_{a,\beta}(t)<0$.
By \eqref{eq:shortpositive}, this can occur only for $t\ge t_0$.
Define the total possible negative cost by
\begin{equation}\label{eq:negativecost}
 \mathcal N_\beta(a)=\int_{t_0}^\infty[c_{a,\beta}(t)]_-k_\beta(t)(1+t^2)\dd t.
\end{equation}
For $t\ge t_0$ and $a\in\mathcal U$, one has
$\vartheta_a(t)\le e^{-a_-t_0}<1$ and $b_a(t)\le1$.
Lemma~\ref{lem:continuity} therefore gives
$[c_{a,\beta}(t)]_-\le2C$ with $C$ independent of $a,t$.
The majorant $2Ck_\beta(t)(1+t^2)$ is integrable on
$[t_0,\infty)$: it is bounded on compact intervals away from
zero and decays exponentially at infinity because $\lambda_+>0$.
For each fixed $t>0$, continuity gives
$[c_{a,\beta}(t)]_-\to[c_{a_0,\beta}(t)]_-=0$.
Dominated convergence proves
\begin{equation}\label{eq:Nsmall}
 \mathcal N_\beta(a)\longrightarrow0\qquad(a\to a_0).
\end{equation}
In \eqref{eq:pairlower}, keep the integral over $1\le t\le2$,
where $c_{a,\beta}(t)\ge\gamma>0$. Outside this interval, discard
the nonnegative contributions and estimate the absolute value
of the negative part using Lemma~\ref{lem:reference}. Together
with \eqref{eq:strippositive} and the definition
\eqref{eq:negativecost}, this gives
\[
 I_{a,\beta}[v]-J_{a,\beta}N[v]
 \ge\bigl(c_*-C_{\mathrm{ref}}\mathcal N_\beta(a)\bigr)\eref[f],
 \qquad C_{\mathrm{ref}}=64.
\]
Choose $\eta>0$ with $[a_0,a_0+\eta]\subset\mathcal U$ and
$C_{\mathrm{ref}}\mathcal N_\beta(a)\le c_*/2$ throughout that
interval. Then
\begin{equation}\label{eq:globalremainder}
 I_{a,\beta}[v]-J_{a,\beta}N[v]\ge c\eref[v^{q/2}]
 \qquad(a_0\le a\le a_0+\eta),\qquad c=c_*/2>0.
\end{equation}
All choices depend only on the fixed parameters, not on $v$.
Combining \eqref{eq:outerHolder} with \eqref{eq:globalremainder} gives
\[
 \E_{\alpha,\beta}[u]\ge N[v]
 \left(J_{a,\beta}+\frac{c_*\eref[v^{q/2}]}{2N[v]}\right)^{q/p}
 \ge J_{a,\beta}^{q/p}N[v].
\]
Lemma~\ref{lem:cutoff} gives the matching upper bound. Together
with \eqref{eq:seedsharp}, this proves
Theorem~\ref{thm:extension}(i) for $D>0$. A sufficient width is any
$\eta$ satisfying the neighborhood, positivity, and negative-part
estimates just established.

\emph{The optimized low-exponent interval.}
The common argument proves a positive extension width for every
$p<q$. For $1<p<q<2$, the sharper scalar criterion gives an
explicit enlargement of the initial interval.
If $0<a\le D/(p+\nsc)$, then
$D/a-p\ge\nsc$. Propositions~\ref{prop:low-exact} and~\ref{prop:nsc}
give, for every $t>0$,
\[
 b_a(t)=\vartheta_a(t)^{D/a-p}
 \le\vartheta_a(t)^{\nsc}
 \le C_{p,q}(\vartheta_a(t)).
\]
Thus \eqref{eq:pairidentity} has nonnegative integrand, including
at the optimized endpoint. Combining this nonnegativity with
\eqref{eq:outerHolder} gives the lower bound
$H_{\alpha_H+qa,\beta}^{p,q}\ge J_{a,\beta}^{q/p}$,
while Lemma~\ref{lem:cutoff} gives the matching upper bound. Hence
\begin{equation}\label{eq:low-improved-sharp}
 H_{\alpha_H+qa,\beta}^{p,q}=J_{a,\beta}^{q/p}
 \qquad\left(0\le a\le\frac{D}{p+\nsc}\right).
\end{equation}
The value at zero was proved above. Since $\alpha=\alpha_H+qa$,
the upper endpoint is
$sq-d-q\beta/p+q(d+2\beta-sp)/(p+\nsc)$, which is
\eqref{eq:main-low-alpha}. All these parameters are admissible,
because $D/(p+\nsc)<D/p<\lambda_-/p$.

\emph{Conclusion in the original weight.}
For $q>2p$, $n_{p,q}=q-p$ and $a_0=D/q$. The identity
\[
 \alpha_H+qa_0=\alpha_H+D=s(q-p)+(2-q/p)\beta
\]
shows that \eqref{eq:main-high-alpha} follows from the common
extension with $\varepsilon=q\eta$.
For $2\le q\le2p$, the common absorption proof has
$n_{p,q}=1$, and its starting endpoint is
\[
 \alpha_H+\frac{qD}{p+1}
 =\frac{sq+(q-p-1)d+q(p-1)\beta/p}{p+1}.
\]
Converting its width by $\varepsilon=q\eta$ proves
\eqref{eq:main-middle-alpha}. The low-exponent assertion is
\eqref{eq:low-improved-sharp}. The admissibility checks above
apply to each interval.

\subsection{Sharpness when \texorpdfstring{$D\le0$}{D<=0}}\label{sec:proof-other-signs}

It remains to prove Theorem~\ref{thm:extension}\textup{(i)} for
$D<0$ and part~\textup{(ii)} for $D=0$.
For $D<0$ we verify that reflection preserves the ingredients of
the positive-$D$ proof. For $D=0$ the starting point is $a=0$,
and we compare the positive integral and its possibly negative
tail directly.

\emph{The case $D<0$: reflection.}
Let $D=-\widetilde D<0$ and $a=-\widetilde a<0$.
Define $\widetilde v(\tau,\omega)=v(-\tau,\omega)$ and
$\widetilde K_{\beta,t}(\omega,\sigma)=K_{\beta,-t}(\omega,\sigma)$.
In the full inner energy at $(-\tau,\omega)$, substitute the old
displacement $r=-t$. Since $a=-\widetilde a$, this gives
\[
\begin{aligned}
 \Aa_{a,\beta}[v](-\tau,\omega)
 &=\int_\R\int_{\Sph^{d-1}}K_{\beta,-t}(\omega,\sigma)
   |v(-\tau,\omega)-e^{-\widetilde a t}v(-\tau-t,\sigma)|^p
   \dd\sigma\dd t\\
 &=\int_\R\int_{\Sph^{d-1}}\widetilde K_{\beta,t}(\omega,\sigma)
   |\widetilde v(\tau,\omega)
     -e^{-\widetilde a t}\widetilde v(\tau+t,\sigma)|^p
   \dd\sigma\dd t.
\end{aligned}
\]
Changing $\tau$ to $-\tau$ in the outer integral preserves its
value and $N[v]$. The reflected kernel has
$\widetilde k_\beta(t)=k_\beta(-t)$, and the same substitution gives
\[
 \int_\R\widetilde k_\beta(t)|1-e^{-\widetilde a t}|^p\dd t
 =\int_\R k_\beta(r)|1-e^{\widetilde a r}|^p\dd r
 =J_{-\widetilde a,\beta}.
\]
Thus the reflected candidate is exactly the original candidate.

By symmetry in the angles and \eqref{eq:reversal},
\[
 \widetilde K_{\beta,t}=e^{-Dt}K_{\beta,t},\qquad
 \widetilde K_{\beta,-t}(\sigma,\omega)
 =e^{Dt}\widetilde K_{\beta,t}(\omega,\sigma)
 =e^{-\widetilde Dt}\widetilde K_{\beta,t}(\omega,\sigma).
\]
The reflected pairing therefore has positive parameters
\[
 \widetilde\vartheta=e^{-\widetilde a t},\qquad
 \widetilde b=e^{-(\widetilde D-p\widetilde a)t},\qquad t>0.
\]
In particular, all scalar lemmas are applied with
$0<\widetilde\vartheta<1$. The equivalent algebraic reversal
of an unsplit pair is
\[
 |U-\vartheta V|^p(U^{q-p}+bV^{q-p})
 =\vartheta^p b\,|V-\vartheta^{-1}U|^p
                    (V^{q-p}+b^{-1}U^{q-p}).
\]
Indeed, the difference contributes $\vartheta^p$ and the
second factor contributes $b$; for the original paired
parameters their product is $\vartheta^pb=e^{-Dt}$.

The kernel properties in the absorption proof are preserved.
For any function $h$ on the cylinder,
\[
 \widetilde{\mathcal D}_{\beta,t}[h]=e^{-Dt}\mathcal D_{\beta,t}[h],
 \qquad \widetilde k_\beta(t)=e^{-Dt}k_\beta(t).
\]
The reference estimate is unchanged because its two sides acquire
the same factor. The reflected kernel has a positive
minimum on $[1,2]$ times the two spheres. Its tails satisfy
\[
 \widetilde k_\beta(t)\asymp e^{-\lambda_-t}\quad(t\to+\infty),
 \qquad
 \widetilde k_\beta(t)\asymp e^{\lambda_+t}\quad(t\to-\infty).
\]
For example, the first exponent is
$\lambda_++D=\lambda_-$ after multiplication by $e^{-Dt}$.
Both decay rates are positive. Also, for
$\widetilde f(\tau,\omega)=f(-\tau,\omega)$, translation and
exchange of the two angles give
$P_{\widetilde f}(t)=P_f(-t)=P_f(t)$, and hence
$\eref[\widetilde f]=\eref[f]$.

The absorption proof therefore applies with
$(D,a,k_\beta)$ replaced by
$(\widetilde D,\widetilde a,\widetilde k_\beta)$.
Its initial endpoint is
$\widetilde a_0=\widetilde D/(p+n_{p,q})$.
Since $\widetilde D=\lambda_+-\lambda_-<\lambda_+$, choose
the extension neighborhood inside
$0<\widetilde a<\widetilde D/p<\lambda_+/p$.
This ensures both $\widetilde D-p\widetilde a>0$ and the
original admissibility $a=-\widetilde a>-\lambda_+/p$.
Reflecting back gives sharpness on
$-(\widetilde a_0+\eta)\le a\le0$, as asserted in
Theorem~\ref{thm:extension}(i) for $D<0$.

\emph{The case $D=0$: a direct comparison.}
Suppose $D=0$, or $d+2\beta=sp$. Then
$\lambda:=\lambda_+=\lambda_->0$. For $a>0$, the paired
parameters are $\vartheta_a(t)=e^{-at}$ and $b_a(t)=e^{pat}$.
Write $h_0,c_{\mathrm{sh}}>0$ for the constants in
Lemma~\ref{lem:short}, and choose a displacement constant $c_0>0$
such that
\[
 e^{-c_0}>1-h_0,\qquad e^{pc_0}<2.
\]
For $0<t<c_0/a$ one then has
$1-h_0<\vartheta_a(t)<1$ and $1\le b_a(t)<2$.
Thus the scalar remainder is nonnegative throughout this range.
Choose $a_*>0$ with
$a_*<\min\{\lambda/(2p),c_0/2\}$, so $[1,2]$ is inside it
for every $0<a\le a_*$.
On that strip,
\[
 1-e^{-at}=at\int_0^1e^{-rat}\dd r
 \ge at e^{-at}\ge a e^{-2a_*}.
\]
Set $r_p=(p-2)_+$. Lemma~\ref{lem:short} consequently gives
$c_{a,\beta}(t)\ge c_{\mathrm{sh}}e^{-2a_*r_p}a^{r_p}$ there.
Combining this with $K_{\beta,t}\ge k_*$ on $[1,2]$ yields
\begin{equation}\label{eq:balancedpositive}
 \int_1^2c_{a,\beta}(t)\mathcal D_{\beta,t}[f]\dd t
 \ge c_+a^{r_p}\eref[f],\qquad
 c_+=c_{\mathrm{sh}}e^{-2a_*r_p}Sk_*>0.
\end{equation}

Only $t\ge T_a:=c_0/a$ can contribute a negative remainder.
There $\vartheta_a(t)\le e^{-c_0}<1$, so
Lemma~\ref{lem:continuity} gives
$[c_{a,\beta}(t)]_-\le C(1+e^{pat})$.
Moreover $T_a>2$, $\lambda-pa\ge\lambda/2$, and the kernel
tail is bounded by $Ce^{-\lambda t}$. Hence
\[
\begin{aligned}
 &\int_{T_a}^\infty[c_{a,\beta}(t)]_-k_\beta(t)(1+t^2)\dd t\\
 &\quad\le C\int_{T_a}^\infty(1+t^2)
       \bigl(e^{-\lambda t}+e^{-(\lambda-pa)t}\bigr)\dd t\\
 &\quad\le C\int_{T_a}^\infty(1+t^2)e^{-\lambda t/2}\dd t.
\end{aligned}
\]
For $\mu>0$, integration by parts gives
\[
 \int_T^\infty(1+t^2)e^{-\mu t}\dd t
 =e^{-\mu T}
   \left(\frac{1+T^2}{\mu}+\frac{2T}{\mu^2}+
         \frac2{\mu^3}\right).
\]
Take $\mu=\lambda/2$ and $T=c_0/a$. With constants independent
of $0<a\le a_*$, this proves
\begin{equation}\label{eq:balancednegative}
 \int_{c_0/a}^\infty[c_{a,\beta}(t)]_-k_\beta(t)(1+t^2)\dd t
 \le C(1+a^{-2})e^{-c/a},\qquad c=\lambda c_0/2>0.
\end{equation}
The ratio of the right side to $a^{r_p}$ tends to zero:
putting $y=1/a$ gives
\[
 a^{-r_p}(1+a^{-2})e^{-c/a}
 =(y^{r_p}+y^{r_p+2})e^{-cy}\longrightarrow0,
\]
because $cy$ dominates each fixed multiple of $\log y$.
Lemma~\ref{lem:reference} thus allows a choice
$0<\eta\le a_*$ for which the negative contribution is at most
$c_+a^{r_p}\eref[f]/2$ for every $0<a\le\eta$.
Keeping the positive strip in \eqref{eq:balancedpositive} gives
\[
 I_{a,\beta}[v]-J_{a,\beta}N[v]
 \ge\tfrac12c_+a^{r_p}\eref[v^{q/2}]\ge0.
\]
The common-cutoff identity justifies the comparison and its limit.
H\"older's inequality gives the Hardy lower bound, and
Lemma~\ref{lem:cutoff} gives its sharpness.

If $a<0$, the preceding reflection has the same kernel, because
$D=0$ and symmetry give $K_{\beta,-t}=K_{\beta,t}$.
It replaces $a$ by $|a|$ and preserves the denominator,
reference energy, and the candidate integral. At $a=0$, the
cutoff upper bound gives the zero best constant as before.
The common width $\eta<\lambda/p$ is admissible on both sides.
This proves Theorem~\ref{thm:extension}(ii).

\subsection{An explicit alternative when \texorpdfstring{$q>2p$}{q>2p}}\label{sec:high-new}

We give an independent proof of
Theorem~\ref{thm:main}\textup{(ii)} for $D>0$ and $q>2p$,
using convexity to obtain a direct choice of extension width. The argument also
retains a quantitative remainder. In this subsection $m=q-p>p>1$;
the convexity estimate controls a bounded range of logarithmic
radial shifts, and Lemma~\ref{lem:chain} controls the remaining tail.

\begin{lemma}[Convexity and a uniform positive remainder]\label{lem:perspective}
Define
\[
 \Phi_\lambda(\xi,\zeta)
 =\left(\xi^{m/q}+\lambda\zeta^{m/q}\right)
   \left|\xi^{1/q}-\zeta^{1/q}\right|^p,
 \qquad \xi,\zeta\ge0,
\]
and let
\begin{equation}\label{eq:beta-width}
 B_*=1+\frac{p-1}{2m}>1.
\end{equation}
Then $\Phi_\lambda$ is convex for $1\le\lambda\le B_*$.
If $b=\lambda\vartheta^m$ with $0<\vartheta<1$, then
$G_{\vartheta,b}\ge0$. For $\vartheta$ in a compact subinterval
of $(0,1)$ and $1\le\lambda\le B_*$, there is a uniform
positive constant $c_*$ such that
\[
 G_{\vartheta,\lambda\vartheta^m}(U,V)
 \ge c_*(U^{q/2}-V^{q/2})^2.
\]
\end{lemma}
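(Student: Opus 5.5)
The plan is to exploit the degree-one homogeneity of $\Phi_\lambda$. The exponents add up to $m/q+p/q=1$, so $\Phi_\lambda$ is positively homogeneous of degree one on the closed quadrant, and it equals the perspective of its one-variable profile:
\[
 \Phi_\lambda(\xi,\zeta)=\zeta\,\varphi_\lambda(\xi/\zeta),\qquad
 \varphi_\lambda(x)=(x^{m/q}+\lambda)|x^{1/q}-1|^p .
\]
A perspective of a convex function is convex. By continuity up to the axis $\zeta=0$, convexity of $\Phi_\lambda$ therefore reduces to convexity of $\varphi_\lambda$ on $[0,\infty)$.

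Formally, $\varphi_\lambda$ is the function $f$ of Lemma~\ref{lem:scalar-support} with $\vartheta=1$ and $b=\lambda$. Formula \eqref{eq:fsecond} therefore gives, for $z=x^{1/q}>1$ and $k=q-1$,
\[
 \varphi_\lambda''(z^q)=\frac{p(z-1)^{p-2}}{q^2z^{2q-1}}E_\lambda(z),\qquad
 E_\lambda(z)=z^{m-1}(kz-m)+\lambda(k-mz).
\]
For $0\le z<1$ I will redo the differentiation of $H(z)=(1-z)^p(z^m+\lambda)$. The terms odd in $(1-z)$ change sign, so the corresponding factor becomes $z^{m-1}(kz-m)+\lambda(k-mz)$ with the cross term $2pm z^{m-1}(z-1)^{p-1}$ reversed; equivalently, one checks the sign of $zH''-(q-1)H'$ directly. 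The claim to verify is that this factor is nonnegative on both sides of $z=1$ exactly when $\lambda\le B_*$.

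At $z=1$ the factor equals $(k-m)(1+\lambda)=(p-1)(1+\lambda)>0$. For $z>1$, I would first try the bound $z^{m-1}\ge 1+(m-1)(z-1)$, valid since $m>1$. This reduces $E_\lambda$ to a quadratic in $z-1$ with nonnegative leading coefficient. The linear coefficient at $z=1$ is $k(m-1)+k+\ldots-\lambda m$, and requiring it to be nonnegative produces the restriction $\lambda\le 1+(p-1)/(2m)$. This is where $B_*$ should come from. On $0\le z<1$ the sign of the analogous factor must be handled by a similar elementary comparison, using $z^{m-1}\le1$ and $\lambda\ge1$. I expect this sign analysis, in particular pinning down exactly why $B_*$ suffices on both branches, to be the main obstacle.

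Once convexity is established, I will write
\[
 F_{\vartheta,b}(U,V)=|U-\vartheta V|^p(U^m+\lambda\vartheta^mV^m)=\Phi_\lambda(U^q,\vartheta^qV^q)\qquad\text{for }b=\lambda\vartheta^m .
\]
A convex function lies above its tangent plane at the point $(1,\vartheta^q)$, where $\Phi_\lambda$ is smooth since $\vartheta<1$. That tangent plane is linear in $(U^q,V^q)$ and has first-order contact with $F_{\vartheta,b}$ on the diagonal. By the contact conditions in Section~\ref{sec:proof-reduction}, it must therefore coincide with $A_{\vartheta,b}U^q+B_{\vartheta,b}V^q$, and hence $G_{\vartheta,b}\ge0$.

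For the uniform remainder I will repeat the compactification argument of Lemma~\ref{lem:scalar-support} for the ratio $Q_{\vartheta,b}(z)=G(z,1)/(z^{q/2}-1)^2$ on $[0,\infty]$. This requires positivity at three special points and strict support elsewhere.
\begin{itemize}
\item At the diagonal, $Q=2f''(1)$, and $E_{\vartheta,b}(1)>0$ follows as in Case~1.
\item At $z=0$, positivity is strict support.
\item At $z=\infty$, the limit is $1-A_{\vartheta,\lambda\vartheta^m}$. Positivity follows from $A\le A_{\vartheta,B_*\vartheta^m}$ together with the bound $g(\vartheta)<1$ used in Case~2, possibly after a direct estimate.
\item Off the diagonal, strict support comes from strict convexity of $\varphi_\lambda$, since $E_\lambda$ vanishes at most at isolated points.
\end{itemize}
Joint continuity of the compactified ratio in $(\vartheta,\lambda,\xi)$ on the compact set $[\vartheta_1,\vartheta_2]\times[1,B_*]\times[0,1]$, as in Lemma~\ref{lem:continuity}, then yields a positive minimum $c_*$.
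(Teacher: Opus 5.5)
\textbf{There is a genuine gap, at the step you yourself flagged as the main obstacle.} Your overall plan matches the paper's: convexity of $\Phi_\lambda$ as a perspective, the supporting plane at $(1,\vartheta^q)$ identified with $A_{\vartheta,b}U^q+B_{\vartheta,b}V^q$, and compactness of the extended ratio. Using the contact conditions here is a clean substitute for the paper's explicit coefficient computation. The only structural difference is that you take the perspective in $\zeta$, with profile $\varphi_\lambda(x)=\Phi_\lambda(x,1)$, while the paper uses $g_\lambda(r)=\Phi_\lambda(1,r)$. The two second-derivative factors satisfy $E_\lambda(z)=z^mM_\lambda(1/z)$, so your dangerous side is $z>1$ and the paper's is $z<1$.

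The gap is the sign of $E_\lambda$ on $z>1$. The bound $z^{m-1}\ge1+(m-1)(z-1)$ is Bernoulli's inequality with exponent $m-1$, and it holds only for $m\ge2$. This section assumes only $m=q-p>p>1$. For $1<m<2$ (for instance $p=6/5$, $q=5/2$) the map $z\mapsto z^{m-1}$ is concave, the inequality reverses, and you get no lower bound on $z>1$. Even when $m\ge2$ the bookkeeping is off. The linear coefficient is $k+(m-1)(p-1)-\lambda m=m(p-\lambda)$, so that route gives $\lambda\le p$, not $B_*$. Positivity also does not hold ``exactly when'' $\lambda\le B_*$; $B_*$ is only a convenient sufficient width.

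The repair is a cruder bound. For $z\ge1$ use only $z^{m-1}\ge1$ and $kz-m\ge k-m=p-1>0$:
\[
 E_\lambda(z)\ge(k-\lambda m)z+(\lambda k-m)\ge\frac{p-1}{2}\,z+(p-1)>0
 \qquad(1\le\lambda\le B_*).
\]
This uses $k-\lambda m=(p-1)-(\lambda-1)m\ge(p-1)/2$ and $\lambda k-m\ge k-m$. This is where $B_*$ actually comes from; it is the counterpart of the paper's bound $M_\lambda\ge(p-1)-(\lambda-1)m$ on $(0,1]$.

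Your branch $0<z<1$ is fine: reduce to $\lambda=1$ using $k-mz>0$, then argue as in Case~1 with $z^{m-1}\le1$. Three further corrections:
\begin{enumerate}
\item No sign reversal occurs for $z<1$. $H'$ and $H''$ contain $(z-1)|z-1|^{p-2}$ consistently, so \eqref{eq:fsecond} holds on both sides with $|z-1|^{p-2}$ and the same factor $E_\lambda$.
\item Diagonal positivity does not follow ``as in Case~1'', which treats only $\lambda=1$. One has $E_{\vartheta,\lambda\vartheta^m}(1)=\vartheta^{m+1}E_\lambda(1/\vartheta)$ with $1/\vartheta>1$, on the dangerous branch. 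So it rests on the estimate above; equivalently, $f''(1)=\vartheta^{-q}\varphi_\lambda''(\vartheta^{-q})>0$.
\item At $z=\infty$ no direct estimate is needed: $A_{\vartheta,\lambda\vartheta^m}=\varphi_\lambda'(\vartheta^{-q})$, and $\varphi_\lambda'$ increases strictly to $1$.
\end{enumerate}
With these corrections the rest of your argument goes through.
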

\begin{proof}
We first prove the convexity of $\Phi_\lambda$, then identify the
supporting plane that gives $G_{\vartheta,\lambda\vartheta^m}\ge0$.
Finally, we show that the remainder divided by
$(U^{q/2}-V^{q/2})^2$ has a uniform positive lower bound.

\smallskip\noindent
\textbf{Step 1. Convexity of $\Phi_\lambda$.}
Fix $1\le\lambda\le B_*$. For $\xi>0$ write
$\Phi_\lambda(\xi,\zeta)=\xi g_\lambda(\zeta/\xi)$, where
\[
 g_\lambda(z^q)=H_\lambda(z):=(1+\lambda z^m)|1-z|^p.
\]
For $z>0$, $z\ne1$, differentiation gives
\begin{align*}
 H_\lambda'(z)
 &=\lambda m z^{m-1}|1-z|^p
   +p(1+\lambda z^m)(z-1)|1-z|^{p-2},\\
 H_\lambda''(z)
 &=\lambda m(m-1)z^{m-2}|1-z|^p\\
 &\quad+2\lambda mpz^{m-1}(z-1)|1-z|^{p-2}
   +p(p-1)(1+\lambda z^m)|1-z|^{p-2}.
\end{align*}
Since $\dd/\dd(z^q)=(qz^{q-1})^{-1}\dd/\dd z$, substitution yields
\[
\begin{aligned}
 g_\lambda''(z^q)
 &=\frac{zH_\lambda''(z)-(q-1)H_\lambda'(z)}{q^2z^{2q-1}}\\
 &=\frac p{q^2}z^{1-2q}|1-z|^{p-2}M_\lambda(z),\\
 M_\lambda(z)
 &=(q-1)-mz+\lambda z^{m-1}((q-1)z-m).
\end{aligned}
\]
At $\lambda=1$, regrouping with $q-1=(p-1)+m$ gives
\begin{equation}\label{eq:M-perspective}
 M_1(z)=(p-1)(1+z^m)+m(1-z)(1-z^{m-1})
 \ge(p-1)(1+z^m)>0.
\end{equation}
The last product is nonnegative on both sides of $z=1$, since
$m>1$. The change in $M$ is
\[
 M_\lambda(z)-M_1(z)
 =(\lambda-1)z^{m-1}((q-1)z-m).
\]
For $0<z\le1$, its negative part is at most
$(\lambda-1)m$, because $z^{m-1}\le1$ and
$((q-1)z-m)_-\le m$. Thus
\[
 M_\lambda(z)\ge p-1-(B_*-1)m=(p-1)/2>0.
\]
For $z\ge1$ the change is nonnegative, since
$(q-1)z-m\ge q-1-m=p-1>0$.
Hence $g_\lambda''>0$ on $(0,1)$ and $(1,\infty)$.
The derivative $g_\lambda'$ is continuous at $1$, with value zero,
because $p>1$. It is therefore strictly increasing across that
point as well. By continuity at zero, $g_\lambda$ is strictly
convex on $[0,\infty)$.

We now deduce the joint convexity of $\Phi_\lambda$ from the
convexity of $g_\lambda$. For $0<\gamma<1$ and two pairs
$(\xi_i,\zeta_i)$ with $\xi_i>0$ and $\zeta_i\ge0$, set
$\xi=\gamma\xi_1+(1-\gamma)\xi_2$ and
$w_1=\gamma\xi_1/\xi$, $w_2=(1-\gamma)\xi_2/\xi$.
Then $w_1+w_2=1$, and
\[
\begin{aligned}
 \Phi_\lambda(\xi,\gamma\zeta_1+(1-\gamma)\zeta_2)
 &=\xi g_\lambda\left(w_1\frac{\zeta_1}{\xi_1}
                      +w_2\frac{\zeta_2}{\xi_2}\right)\\
 &\le\gamma\xi_1 g_\lambda(\zeta_1/\xi_1)
      +(1-\gamma)\xi_2 g_\lambda(\zeta_2/\xi_2).
\end{aligned}
\]
Equivalently,
\[
 \Phi_\lambda\!\bigl(\gamma\xi_1+(1-\gamma)\xi_2,\,
                     \gamma\zeta_1+(1-\gamma)\zeta_2\bigr)
 \le\gamma\Phi_\lambda(\xi_1,\zeta_1)
       +(1-\gamma)\Phi_\lambda(\xi_2,\zeta_2).
\]
The explicit formula for $\Phi_\lambda$ is continuous on the
closed quadrant, with
$\Phi_\lambda(\xi,0)=\xi$ and $\Phi_\lambda(0,\zeta)=\lambda\zeta$.
Passing to limits extends the convexity inequality to both axes.

\smallskip\noindent
\textbf{Step 2. The supporting inequality for $F$.}
Fix $\vartheta\in(0,1)$ and put $r_0=\vartheta^q$.
We will use the supporting plane of $\Phi_\lambda$ at $(1,r_0)$
to obtain the required lower bound for $F_{\vartheta,\lambda\vartheta^m}$.
The connection between the two functions is
\[
 F_{\vartheta,\lambda\vartheta^m}(U,V)
 =\Phi_\lambda(U^q,r_0V^q)
\]
which follows by taking the $q$th roots in each factor.
We first compute the two coefficients of the supporting plane.
At $(\xi,\zeta)=(1,r_0)$ the derivatives are
\[
 \partial_\xi\Phi_\lambda(1,r_0)=g_\lambda(r_0)-r_0g_\lambda'(r_0),
 \qquad \partial_\zeta\Phi_\lambda(1,r_0)=g_\lambda'(r_0).
\]
The second coefficient after composition is
\[
 r_0g_\lambda'(r_0)
 =\frac{(1-\vartheta)^{p-1}}q
    \bigl(\lambda\vartheta^m(m-q\vartheta)-p\vartheta\bigr)
 =B_{\vartheta,\lambda\vartheta^m}.
\]
Using $g_\lambda(r_0)=(1-\vartheta)^p(1+\lambda\vartheta^m)$
and \eqref{eq:supportsum}, the coefficient of $U^q$ in the tangent plane
after composition is
\[
\begin{aligned}
 \partial_\xi\Phi_\lambda(1,r_0)
 &=g_\lambda(r_0)-r_0g_\lambda'(r_0)
  =(1-\vartheta)^p(1+\lambda\vartheta^m)
   -B_{\vartheta,\lambda\vartheta^m}\\
 &=\frac{(1-\vartheta)^{p-1}}q
   \bigl(q-m\vartheta+p\lambda\vartheta^m\bigr)
 =A_{\vartheta,\lambda\vartheta^m}.
\end{aligned}
\]
To write the supporting plane explicitly, use the one-homogeneity
of $\Phi_\lambda$. Since $m+p=q$, the identity
$\Phi_\lambda(c\xi,c\zeta)=c\Phi_\lambda(\xi,\zeta)$ holds for $c>0$.
Differentiating in $c$ at $c=1$, with $(\xi,\zeta)=(1,r_0)$,
gives Euler's identity at this point:
\[
 \Phi_\lambda(1,r_0)
 =\partial_\xi\Phi_\lambda(1,r_0)
  +r_0\partial_\zeta\Phi_\lambda(1,r_0).
\]
The supporting plane at $(1,r_0)$ is therefore
\[
\begin{aligned}
 L_\lambda(\xi,\zeta)
 &=\Phi_\lambda(1,r_0)
   +\partial_\xi\Phi_\lambda(1,r_0)(\xi-1)
   +\partial_\zeta\Phi_\lambda(1,r_0)(\zeta-r_0)\\
 &=\partial_\xi\Phi_\lambda(1,r_0)\,\xi
   +\partial_\zeta\Phi_\lambda(1,r_0)\,\zeta.
\end{aligned}
\]
The constant term cancels by the preceding identity.
Convexity gives $\Phi_\lambda(\xi,\zeta)\ge L_\lambda(\xi,\zeta)$
for $\xi,\zeta\ge0$, with the axis cases following by continuity.
Substituting $(\xi,\zeta)=(U^q,r_0V^q)$ and using the two
coefficients computed above gives
\[
 F_{\vartheta,\lambda\vartheta^m}(U,V)
 =\Phi_\lambda(U^q,r_0V^q)
 \ge L_\lambda(U^q,r_0V^q)
 =A_{\vartheta,\lambda\vartheta^m}U^q
   +B_{\vartheta,\lambda\vartheta^m}V^q.
\]
This is precisely $G_{\vartheta,\lambda\vartheta^m}(U,V)\ge0$.
For later use, the inequality is strict when $U,V>0$ and $U\ne V$.
Indeed, with $r=r_0V^q/U^q$ the remainder can be written as
\[
 G_{\vartheta,\lambda\vartheta^m}(U,V)
 =U^q\bigl(g_\lambda(r)-g_\lambda(r_0)
                 -g_\lambda'(r_0)(r-r_0)\bigr).
\]
Strict convexity of $g_\lambda$ makes the expression in parentheses
positive unless $r=r_0$, which is equivalent to $U=V$.

\smallskip\noindent
\textbf{Step 3. A uniform positive quadratic remainder.}
Fix a compact interval $I=[\vartheta_-,\vartheta_+]\Subset(0,1)$.
Both $G_{\vartheta,\lambda\vartheta^m}$ and
$(U^{q/2}-V^{q/2})^2$ are homogeneous of degree $q$ in $(U,V)$.
Since the desired inequality is immediate at $(0,0)$, it suffices
to work on the compact set
\[
 \mathcal N=\{(U,V)\in[0,\infty)^2:U^q+V^q=1\}.
\]
For $U\ne V$ define
\[
 R_{\vartheta,\lambda}(U,V)
 =\frac{G_{\vartheta,\lambda\vartheta^m}(U,V)}
        {(U^{q/2}-V^{q/2})^2}.
\]
Step~2 shows that this ratio is positive when $U,V>0$ and $U\ne V$.
To obtain a positive minimum, we check the two endpoints of
$\mathcal N$ and extend the ratio continuously across its diagonal point.

At $(U,V)=(1,0)$, the denominator equals one, and strict convexity
of $g_\lambda$ gives
\[
 R_{\vartheta,\lambda}(1,0)
 =1-A_{\vartheta,\lambda\vartheta^m}
 =g_\lambda(0)-g_\lambda(r_0)+r_0g_\lambda'(r_0)>0.
\]
Here we used $g_\lambda(0)=1$ and the strict inequality between
$g_\lambda(0)$ and the value at zero of the tangent line at $r_0>0$.
For the other endpoint, the formula for $H_\lambda'$ in Step~1 gives
\[
 g_\lambda'(z^q)=\frac{H_\lambda'(z)}{qz^{q-1}}
 \longrightarrow\frac{\lambda(m+p)}q=\lambda
 \qquad(z\to\infty).
\]
Since $g_\lambda'$ is strictly increasing, $g_\lambda'(r_0)<\lambda$.
Using $r_0=\vartheta^q$ and $m+p=q$, we therefore obtain
\[
 R_{\vartheta,\lambda}(0,1)
 =\vartheta^p(\lambda\vartheta^m)-B_{\vartheta,\lambda\vartheta^m}
 =r_0\bigl(\lambda-g_\lambda'(r_0)\bigr)>0.
\]

It remains to examine $U=V$, where the numerator and denominator
both vanish. For $U,V>0$ set $x=U^q/V^q$ and
\[
 f(x)=\Phi_\lambda(x,r_0)=xg_\lambda(r_0/x).
\]
The coefficient identities from Step~2 give
\[
 f(1)=A_{\vartheta,\lambda\vartheta^m}
       +B_{\vartheta,\lambda\vartheta^m},
 \qquad f'(1)=A_{\vartheta,\lambda\vartheta^m},
 \qquad f''(x)=\frac{r_0^2}{x^3}g_\lambda''(r_0/x)
\]
for $x$ near one. Cancelling the common factor $V^q$ in the
numerator and denominator of $R$ now gives
\[
 R_{\vartheta,\lambda}(U,V)
 =\frac{f(x)-f(1)-f'(1)(x-1)}{(\sqrt{x}-1)^2}.
\]
Taylor's formula with integral remainder makes the extension explicit:
\[
 \frac{f(x)-f(1)-f'(1)(x-1)}{(\sqrt{x}-1)^2}
 =(\sqrt{x}+1)^2\int_0^1(1-s)f''(1+s(x-1))\,\dd s
 \longrightarrow 2r_0^2g_\lambda''(r_0)>0
 \quad(x\to1).
\]
This is the positive diagonal limit in \eqref{eq:diagonalratio}.
Moreover, $r_0$ ranges in the compact interval
$[\vartheta_-^q,\vartheta_+^q]\Subset(0,1)$.
Thus, for $x$ in a common neighborhood of one, $r_0/x$ stays
in a compact subinterval of $(0,1)$, where the formula from
Step~1 shows that $g_\lambda''$ is jointly continuous in its
argument and $\lambda$. The integral formula therefore also
proves joint continuity of the extended ratio in $(U,V)$,
$\vartheta$, and $\lambda$ at the diagonal.

Away from the diagonal, joint continuity follows directly from
the formula defining $R$. We have proved that the extended ratio
is continuous and strictly positive everywhere on
$\mathcal N\times I\times[1,B_*]$. Hence
\[
 c_*:=\min_{\substack{(U,V)\in\mathcal N,\ \vartheta\in I\\
                      1\le\lambda\le B_*}}
         R_{\vartheta,\lambda}(U,V)>0.
\]
Multiplying by $(U^{q/2}-V^{q/2})^2$ and restoring the scale
gives the asserted inequality for all $U,V\ge0$, with the same
$c_*$ throughout the stated parameter range.
\end{proof}

Set
\[
 a_0=\frac Dq,\qquad a_1=\frac12\left(\frac Dq+\frac Dp\right).
\]
Then $a_0<a_1<D/p<\lambda_-/p$, so these parameters are admissible.
For $a\in[a_0,a_1]$, the paired coefficients satisfy
\[
 \vartheta=e^{-at},\qquad b=e^{(pa-D)t},\qquad
 \lambda=\frac b{\vartheta^m}
          =e^{(qa-D)t}=e^{q(a-a_0)t}.
\]
For $1\le t\le2$, the first parameter lies in
$[e^{-2a_1},e^{-a_0}]\Subset(0,1)$.
Lemma~\ref{lem:perspective} supplies a common coefficient
$\kappa_0>0$ on this interval times $[1,B_*]$.
The kernel bound $K_{\beta,t}\ge k_*$ on $[1,2]$ then gives
a positive strip contribution
$\kappa\eref[f]$, where $\kappa=Sk_*\kappa_0>0$,
whenever $1\le\lambda\le B_*$ there.

For $t\ge1$ and $a\in[a_0,a_1]$, one has
$\vartheta\le e^{-a_0}<1$ and $0<b\le1$.
Lemma~\ref{lem:continuity} therefore gives a common $C_*>0$ such that
\[
 [G_{\vartheta,b}(U,V)]_-
 \le C_*(U^{q/2}-V^{q/2})^2.
\]
Factoring $e^t$ from the distance in the kernel gives
\[
 K_{\beta,t}(\omega,\sigma)
 =\frac{e^{-\lambda_+t}}{|\sigma-e^{-t}\omega|^{d+sp}}
 \le C_K e^{-\lambda_+t},\qquad
 C_K=(1-e^{-1})^{-(d+sp)}\quad(t\ge1).
\]
The integral of $(U^{q/2}-V^{q/2})^2$ in the two angles and
$\tau$ is $SP_f(t)$. Hence Lemma~\ref{lem:chain} bounds the
negative contribution for $t>T\ge2$ by
\[
 32C_*SC_K\left(\int_T^\infty(1+t)^2e^{-\lambda_+t}\dd t\right)\eref[f].
\]
Twice integrating by parts gives, for every $\mu>0$,
\[
 \int_T^\infty(1+t)^2e^{-\mu t}\dd t
 =e^{-\mu T}
  \left(\frac{(T+1)^2}{\mu}
       +\frac{2(T+1)}{\mu^2}+\frac2{\mu^3}\right).
\]
This expression tends to zero as $T\to\infty$. Choose $T\ge2$
such that
\begin{equation}\label{eq:high-T}
 32C_*\omega_{d-1}C_K e^{-\lambda_+ T}
 \left(\frac{(T+1)^2}{\lambda_+}+\frac{2(T+1)}{\lambda_+^2}+\frac{2}{\lambda_+^3}\right)
 \le\frac\kappa2.
\end{equation}
Now put
\begin{equation}\label{eq:high-delta}
 \delta_a=\min\left\{a_1-a_0,\frac{\log B_*}{qT}\right\}>0.
\end{equation}
For $a_0\le a\le a_0+\delta_a$ and $0<t\le T$,
$1\le\lambda=e^{q(a-a_0)t}\le e^{q\delta_aT}\le B_*$.
Thus all these displacements have nonnegative remainder, and
their strip $[1,2]$ contributes at least $\kappa\eref[f]$.
The possibly negative tail costs at most $\kappa\eref[f]/2$.
At common cutoffs $0<\epsilon<1$, $M\ge T$, this already gives
\[
 I_{a,\beta,\epsilon,M}[v]-J_{a,\beta,\epsilon,M}N[v]
 \ge\frac\kappa2\eref[f].
\]
Passing to the limits proves the same inequality without cutoffs.
The convexity implication after \eqref{eq:outerHolder} yields
\begin{equation}\label{eq:high-remainder}
 \E_{\alpha_H+qa,\beta}[u]
 \ge J_{a,\beta}^{q/p}\Pp_{\alpha_H+qa,\beta}[u]
 +\frac{q\kappa}{2p}J_{a,\beta}^{q/p-1}\eref[v^{q/2}].
\end{equation}
Lemma~\ref{lem:cutoff} supplies sharpness of the main constant.
The starting weight is the high-exponent endpoint computed in
Subsection~\ref{sec:low-new}. Thus \eqref{eq:main-high-alpha} holds
with the direct width $\varepsilon=q\delta_a$, where $\delta_a$ is
specified by \eqref{eq:high-delta}.

\Needspace{14\baselineskip}
\subsection{Quantitative remainders and near-extremizers}\label{sec:proof-consequences}

The sharpness proofs also yield positive lower bounds for the
Hardy deficit. We now collect these estimates and derive their
consequences for equality cases and normalized near-extremizers.
We first give an explicit parameter interval for the low-exponent
remainder, before stating the common corollary.
Assume $D>0$ and $1<p<q<2$, and return to
$a_0=D/(p+n_{p,q})$ from Subsection~\ref{sec:low-new}.
To prepare for Corollary~\ref{cor:remainder}, choose
\begin{equation}\label{eq:low-width}
 n_1=\frac{n_{p,q}+\nsc}{2},\qquad
 \delta_a=\frac{D}{p+n_1}-\frac{D}{p+n_{p,q}}>0.
\end{equation}
Here $\nsc<n_1<n_{p,q}$. For
$a_0\le a\le a_0+\delta_a=D/(p+n_1)$, the exponent
$D/a-p\ge n_1>\nsc$, and therefore
\[
 b_a(t)\le\vartheta_a(t)^{n_1}
          <\vartheta_a(t)^{\nsc}
          \le C_{p,q}(\vartheta_a(t))\qquad(t>0).
\]
On the compact set of pairs $(a,t)$ in this interval times
$[1,2]$, Proposition~\ref{prop:low-exact} gives a common lower
bound $c_{a,\beta}(t)\ge\gamma>0$.
The fixed-interval kernel estimate in
Subsection~\ref{sec:proof-reduction} controls the reference energy.
The other displacements have nonnegative remainder, so
\begin{equation}\label{eq:low-remainder}
 I_{a,\beta}[v]-J_{a,\beta}N[v]\ge c\eref[v^{q/2}],
 \qquad c=\gamma Sk_*>0.
\end{equation}
The convexity implication proved after \eqref{eq:outerHolder} now gives
\[
 \E_{\alpha_H+qa,\beta}[u]-J_{a,\beta}^{q/p}\Pp_{\alpha_H+qa,\beta}[u]
 \ge\frac qp\,cJ_{a,\beta}^{q/p-1}\eref[v^{q/2}].
\]
The width in $\alpha$ is $q\delta_a$. This quantitative argument
uses a strict subinterval and does not require a positive scalar
coefficient at the optimized endpoint itself.

We now collect the consequences of the positive reference-energy
remainders obtained above. The following corollary shows that the
Hardy deficit controls a quadratic difference energy of the transformed
amplitude. This control rules out nonzero equality cases in the
specified test class and forces angular fluctuations to vanish along
normalized near-extremizing sequences. The parameters are fixed in
the neighborhoods constructed above, and the positive remainder
constant may depend on that choice.

\begin{corollary}[A quantitative remainder]\label{cor:remainder}
For fixed parameters in the strict-extension neighborhoods constructed
above, with $a\ne0$, there is $c'>0$ such that, for
$v(\tau,\omega)=e^{a\tau}|u(e^\tau\omega)|$,
\begin{equation}\label{eq:Hardy-remainder}
 \E_{\alpha,\beta}[u]-J_{a,\beta}^\rho\Pp_{\alpha,\beta}[u]
 \ge c'\eref[v^{q/2}].
\end{equation}
The sharp Hardy inequality has no nonzero equality case in the
specified test class. If $\Pp_{\alpha,\beta}[u_j]=1$ and
$\E_{\alpha,\beta}[u_j]\to J_{a,\beta}^\rho$, then
\begin{equation}\label{eq:near-extremizer}
 \eref[v_j^{q/2}]\to0,\qquad
 \|v_j^{q/2}-\overline{v_j^{q/2}}\|_{L^2(\Cyl)}\to0.
\end{equation}
The overline denotes spherical averaging only.
\end{corollary}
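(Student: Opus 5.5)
The plan is to assemble the remainder estimates already proved in Section~\ref{sec:proofs} and then convert them, through the Hölder step, into a statement about the Hardy deficit.

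\textbf{Step 1: a uniform remainder for the integrated quantity.} In each strict-extension neighborhood there is a constant $c>0$, independent of $v$, such that
\[
 I_{a,\beta}[v]-J_{a,\beta}N[v]\ge c\,\eref[v^{q/2}].
\]
This is \eqref{eq:globalremainder} for $D>0$, with $a$ near $a_0$. For $D<0$ it is the reflected version of the same estimate, and the reflection preserves $\eref$. For $D=0$ it is the bound $\tfrac12c_+a^{r_p}\eref$, where $a\ne0$ is fixed. The explicit variants are \eqref{eq:low-remainder} and the $q>2p$ estimate behind \eqref{eq:high-remainder}.

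\textbf{Step 2: transfer to the energy.} The convexity implication after \eqref{eq:outerHolder} is
\[
 \E-J^\rho N\ge\rho J^{\rho-1}R[v].
\]
It applies because $J_{a,\beta}>0$ for $a\ne0$. Take $R[v]=c\,\eref[v^{q/2}]$ and use $N[v]=\Pp_{\alpha,\beta}[u]$. This yields \eqref{eq:Hardy-remainder} with $c'=\rho cJ_{a,\beta}^{\rho-1}$.

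\textbf{Step 3: no nonzero equality case.} Suppose equality holds for some $u\ne0$. Then $\eref[v^{q/2}]=0$. Since $v^{q/2}\in L^2(\Cyl)$ (it is Lipschitz with compact support), Lemma~\ref{lem:reference} gives $v^{q/2}=0$. Hence $u=0$, a contradiction.

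\textbf{Step 4: near-extremizers.} Let $\Pp[u_j]=1$ and $\E[u_j]\to J^\rho$. By Step~2 the deficit, which tends to zero, dominates $c'\eref[v_j^{q/2}]$, so $\eref[v_j^{q/2}]\to0$. Now apply the decomposition \eqref{eq:Eref-decomp} to $f=v_j^{q/2}$. It gives
\[
 \eref[f]\ge2\|f-\bar f\|_{L^2(\Cyl)}^2,
\]
because the mean-translation term is nonnegative. This yields the second limit in \eqref{eq:near-extremizer}.

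\textbf{Main obstacle.} The delicate point is not the deduction itself but that $c$ is uniform in $v$ at each fixed parameter. This requires each neighborhood's positivity constant to have been chosen independently of the test function, which the constructions do ensure. The other thing to confirm is that the case $D=0$, and the reflected case $D<0$, indeed produce a bound of exactly this form.
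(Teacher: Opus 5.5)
Your proposal is correct and follows the paper's proof step for step: the uniform bound $I_{a,\beta}[v]-J_{a,\beta}N[v]\ge c\,\eref[v^{q/2}]$ from the extension arguments, the convexity form of the H\"older step giving $c'=\rho cJ_{a,\beta}^{\rho-1}$, injectivity of $\eref$ from Lemma~\ref{lem:reference}, and the decomposition \eqref{eq:Eref-decomp} for the angular fluctuation. One small correction: when $q<2$ the function $v^{q/2}$ need not be Lipschitz at zeros of $v$, but it lies in $L^2(\Cyl)$ simply because $\|v^{q/2}\|_{L^2(\Cyl)}^2=N[v]<\infty$, which is all Lemma~\ref{lem:reference} requires.
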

\begin{proof}
For each of the fixed parameter choices, the preceding arguments
give $I_{a,\beta}[v]\ge J_{a,\beta}N[v]+c\eref[v^{q/2}]$ with
$c>0$. This is \eqref{eq:globalremainder},
\eqref{eq:low-remainder}, or the corresponding bound in the direct
high-exponent, reflected, or balanced argument.
In the balanced case $c$ may depend on the fixed nonzero $a$.
The H\"older estimate therefore gives
\[
 \E_{\alpha,\beta}[u]\ge N[v]
 \left(J_{a,\beta}+c\frac{\eref[v^{q/2}]}{N[v]}\right)^\rho.
\]
Since $J_{a,\beta}>0$, the inequality
$(J+x)^\rho\ge J^\rho+\rho J^{\rho-1}x$ for $x\ge0$
proves \eqref{eq:Hardy-remainder} with
$c'=\rho cJ_{a,\beta}^{\rho-1}>0$.
For a nonzero test, $f=v^{q/2}$ is a nonzero $L^2$ function;
Lemma~\ref{lem:reference} gives $\eref[f]>0$.
The sharp Hardy deficit is consequently positive.

For the normalized sequence, the same inequality gives
\[
 0\le\eref[v_j^{q/2}]
 \le\frac{\E_{\alpha,\beta}[u_j]-J_{a,\beta}^\rho}{c'}
 \longrightarrow0.
\]
Finally \eqref{eq:Eref-decomp}, applied to $f_j=v_j^{q/2}$,
gives
\[
 \|f_j-\bar f_j\|_{L^2(\Cyl)}^2
 \le\tfrac12\eref[f_j]\longrightarrow0.
\]
This proves both limits in \eqref{eq:near-extremizer}.
\end{proof}

The conclusions of Corollary~\ref{cor:remainder} describe how the sharp
constant can be approached despite the absence of nonzero equality
cases in the test class. For $f_j=v_j^{q/2}$, the limit
$\|f_j-\bar f_j\|_{L^2(\Cyl)}\to0$ means that the transformed amplitudes
approach the subspace of functions independent of the spherical
variable. The spherical averages $\bar f_j$ still depend on $\tau$.
Moreover, \eqref{eq:Eref-decomp} shows that the average squared
increments of $\bar f_j$ over logarithmic shifts $1\le t\le2$ tend to
zero. Thus near equality forces small angular fluctuations and small
averaged changes on these logarithmic scales. These conditions allow
the sequence to spread in $\tau$, as the slow profiles in
Section~\ref{sec:reference} illustrate, so they do not imply convergence
to a fixed nonzero extremal profile.

\section{Endpoint loss and first sharpness thresholds}\label{sec:endpoint-loss}

\subsection{\texorpdfstring{Proof of the endpoint estimates in Theorem~\ref{thm:loss}}{Proof of the endpoint estimates}}

We prove the endpoint estimates \eqref{eq:endpoint-upper} and
\eqref{eq:Jendpoint}, and hence the endpoint loss of sharpness
asserted in Theorem~\ref{thm:loss}.

Fix $|e|=1$, $0\ne\psi\in C_c^\infty(B_{1/4})$, and let
$u_\ell(x)=\psi((x-e)/\ell)$, $0<\ell<1/4$.
For $\alpha$ near either outer endpoint and fixed admissible $\beta$,
$\Pp_{\alpha,\beta}[u_\ell]\asymp\ell^d$.

Define the contribution from the outer region $B(e,1/2)$ by
\[
 \E_{\alpha,\beta,\mathrm{near}}[u_\ell]
 :=\int_{B(e,1/2)}|x|^\alpha
 \left(\int_{\R^d}
 \frac{|u_\ell(x)-u_\ell(y)|^p|y|^\beta}{|x-y|^{d+sp}}
 \,\dd y\right)^{q/p}\dd x.
\]
Here the subscript $\mathrm{near}$ restricts only the outer
$x$-integration; the inner $y$-integration is still over all of
$\R^d$. For $x\in B(e,1/2)$, write the inner integral as
\begin{align*}
 A_{u_\ell,\beta}(x)
 &=I_{\ell,1}(x)+I_{\ell,2}(x),\\
 I_{\ell,1}(x)
 &:=\int_{B(e,3/4)}
    \frac{|u_\ell(x)-u_\ell(y)|^p|y|^\beta}{|x-y|^{d+sp}}\dd y,\\
 I_{\ell,2}(x)
 &:=\int_{\R^d\setminus B(e,3/4)}
    \frac{|u_\ell(x)-u_\ell(y)|^p|y|^\beta}{|x-y|^{d+sp}}\dd y.
\end{align*}
On the first integration region, $|e|=1$ gives
\[
 \frac14\le|y|\le\frac74
 \quad\Longrightarrow\quad
 0<c_\beta\le|y|^\beta\le C_\beta<\infty
 \qquad(y\in B(e,3/4)).
\]
Define the unweighted inner energy density by
\[
 A_{\psi,0}(X)
 :=\int_{\R^d}\frac{|\psi(X)-\psi(Y)|^p}{|X-Y|^{d+sp}}\dd Y.
\]
The subscript $0$ means that the inner weight parameter is zero.
Set $X=(x-e)/\ell$ and make the change of variables
\[
 y=e+\ell Y,\qquad \dd y=\ell^d\dd Y,\qquad
 |x-y|=\ell|X-Y|.
\]
Then the first term satisfies
\begin{align*}
 I_{\ell,1}(x)
 &=\ell^{-sp}\int_{|Y|<3/(4\ell)}
   \frac{|\psi(X)-\psi(Y)|^p|e+\ell Y|^\beta}{|X-Y|^{d+sp}}\dd Y\\
 &\le C_\beta\ell^{-sp}\int_{|Y|<3/(4\ell)}
   \frac{|\psi(X)-\psi(Y)|^p}{|X-Y|^{d+sp}}\dd Y\\
 &\le C_\beta\ell^{-sp}A_{\psi,0}(X)
  =C_\beta\ell^{-sp}A_{\psi,0}\!\left(\frac{x-e}{\ell}\right).
\end{align*}

For the second term, $\operatorname{supp}u_\ell\subset B(e,\ell/4)
\subset B(e,3/4)$, so $u_\ell(y)=0$ on its integration region. Hence
\[
 I_{\ell,2}(x)
 =|u_\ell(x)|^p\int_{\R^d\setminus B(e,3/4)}
        |y|^\beta|x-y|^{-d-sp}\dd y
 \le C|u_\ell(x)|^p.
\]
The constant is uniform for $x\in B(e,1/2)$: on these two regions
$|x-y|\ge1/4$, and $-d<\beta<sp$ ensures integrability at zero
and infinity. Combining the two bounds yields
\[
 A_{u_\ell,\beta}(x)
 \le C\ell^{-sp}A_{\psi,0}\!\left(\frac{x-e}{\ell}\right)
       +C|u_\ell(x)|^p
 \qquad(x\in B(e,1/2)).
\]
The outer weight $|x|^\alpha$ is uniformly bounded on $B(e,1/2)$
for the endpoint neighborhoods under consideration. Raising the
last bound to the power $q/p>1$ and integrating therefore gives
\begin{align*}
 \E_{\alpha,\beta,\mathrm{near}}[u_\ell]
 &\le C\ell^{-sq}\int_{B(e,1/2)}
       A_{\psi,0}\!\left(\frac{x-e}{\ell}\right)^{q/p}\dd x
       +C\int_{B(e,1/2)}|u_\ell(x)|^q\dd x\\
 &\le C\ell^{d-sq}\int_{\R^d}A_{\psi,0}(X)^{q/p}\dd X
       +C\ell^d\|\psi\|_{L^q(\R^d)}^q\\
 &\le C_{\psi,\beta}\ell^{d-sq}.
\end{align*}
The unweighted integral is finite because $s<1$ and
$q(d+sp)/p>d$. Outside $B(e,1/2)$, the support of $u_\ell$
has inner weight comparable to one, so
$A_{u_\ell,\beta}(x)\le C\ell^d|x-e|^{-d-sp}$.
For $\alpha=\alpha_+-\epsilon$ or $\alpha=\alpha_-+\epsilon$,
the resulting outer integral is $O(\ell^{dq/p}/\epsilon)$.
Indeed, near zero it reduces to
$\int_0^{1/2}r^{\alpha+d-1}\dd r$, and at infinity to
$\int_2^\infty r^{\alpha+d-1-(d+sp)q/p}\dd r$; only the
corresponding endpoint contributes $1/\epsilon$.
Consequently
\begin{equation}\label{eq:concentration}
 H_{\alpha,\beta}^{p,q}\le C\left(\ell^{-sq}
 +\epsilon^{-1}\ell^{d(q/p-1)}\right).
\end{equation}
Taking $\ell=\epsilon^{1/\Lambda}$, with
$\Lambda=sq+d(q/p-1)$, proves \eqref{eq:endpoint-upper}.

At $\alpha=\alpha_+-\epsilon$, put $S=\omega_{d-1}$ and
\[
 a_*:=\frac{d+\beta}{p}>0,\qquad
 a=a_*-\frac\epsilon q,\qquad
 \eta:=d+\beta-pa=\frac{p\epsilon}{q}.
\]
For sufficiently small $\epsilon>0$, $a\in[a_*/2,a_*]$.
Writing $z=r\omega$ for $0<r\le1/2$ and using
$|1-r^{-a}|^p=r^{-pa}(1-r^a)^p$, define
\[
 h_a(r):=(1-r^a)^p\int_{\Sph^{d-1}}
              |e-r\omega|^{-d-sp}\dd\omega
       =S+O(r^\delta),\qquad
 \delta:=\min\{1,a_*/2\}>0.
\]
The error is uniform for $0<r\le1/2$ and small $\epsilon$, since
$(1-r^a)^p=1+O(r^{a_*/2})$ and
$|e-r\omega|^{-d-sp}=1+O(r)$ uniformly in $\omega$.
Outside $B(0,1/2)$, the integral is uniformly bounded:
the numerator is $O(|z-e|^p)$ near $e$, and $\beta<sp$
controls the tail. Therefore
\[
\begin{aligned}
 J_{a,\beta}
 &=\int_0^{1/2}r^{\eta-1}h_a(r)\dd r+O(1)\\
 &=\frac{S2^{-\eta}}\eta
   +O\!\left(\int_0^{1/2}r^{\eta+\delta-1}\dd r\right)+O(1)
  =\frac S\eta+O(1).
\end{aligned}
\]
Here the error integral is at most $1/\delta$ and
$2^{-\eta}=1+O(\eta)$. In particular,
\[
 \eta J_{a,\beta}\longrightarrow S,\qquad
 J_{a,\beta}\sim\frac{\omega_{d-1}}{d+\beta-pa}
              =\frac{q\omega_{d-1}}{p\epsilon}
 \qquad(\epsilon\downarrow0).
\]
At $\alpha=\alpha_-+\epsilon$, instead set
$a=(\beta-sp)/p+\epsilon/q$ and
$\eta_-:=sp-\beta+pa=p\epsilon/q$.
Now $a$ stays bounded away from zero on the negative side.
The same polar calculation at infinity gives
\[
 J_{a,\beta}
 =S\int_2^\infty r^{-1-\eta_-}\dd r+O(1)
 =\frac{S2^{-\eta_-}}{\eta_-}+O(1)
 \sim\frac{q\omega_{d-1}}{p\epsilon}
 \qquad(\epsilon\downarrow0).
\]
The error is again uniform: the power and angular corrections
decay as $O(r^{-\delta_-})$ for some fixed $\delta_->0$,
while $\beta>-d$ and cancellation near $e$ control the bounded region.
Since
\[
 \theta=\frac{sq}{sq+d(q/p-1)}<1<q/p,
\]
we obtain
\begin{equation}\label{eq:ratiozero}
 H_{\alpha,\beta}^{p,q}/J_{a,\beta}^{q/p}\longrightarrow0
\end{equation}
at either outer endpoint. This proves the endpoint assertions of
Theorem~\ref{thm:loss}.

\begin{remark}[Dependence on the inner weight]
The exponent in \eqref{eq:endpoint-upper} is independent of $\beta$;
its multiplicative constant may depend on $\beta$.
\end{remark}

\subsection{\texorpdfstring{Proof of the threshold assertions in Theorem~\ref{thm:loss}}{Proof of the threshold assertions}}

We now prove the threshold assertions of Theorem~\ref{thm:loss},
starting with the relative closedness of the sharpness set.

For each fixed $\beta$, let
\begin{equation}\label{eq:sharp-set}
 \mathcal S=\{a\in(-\lambda_+/p,\lambda_-/p):H_{\alpha_H+qa,\beta}^{p,q}=J_{a,\beta}^\rho\}.
\end{equation}

For each fixed $0\ne u\in\test$, the ratio
 $\E_{\alpha_H+qa,\beta}[u]/\Pp_{\alpha_H+qa,\beta}[u]$
is continuous in $a$ on $(-\lambda_+/p,\lambda_-/p)$.
To check this, restrict $a$ to a compact subinterval and let
$\alpha_L,\alpha_U$ be its endpoint weights. The energy integrand
is dominated by
$(|x|^{\alpha_L}+|x|^{\alpha_U})A_{u,\beta}(x)^{q/p}$, which is
integrable by the strict finite-energy range. The denominator
is continuous by compact support away from zero and stays positive.
Dominated convergence thus proves quotient continuity.
For $J_{a,\beta}$, split the integral into neighborhoods of zero and
infinity and a neighborhood of $e$. On the first two use the
extreme powers of the compact $a$-interval; near $e$ the numerator
is uniformly bounded by $C|z-e|^p$. These are integrable
majorants, proving continuity of $J_{a,\beta}$ as well.
By \eqref{eq:Jupper}, membership in $\mathcal S$ is equivalent
to the family of inequalities
\[
 \frac{\E_{\alpha_H+qa,\beta}[u]}{\Pp_{\alpha_H+qa,\beta}[u]}\ge J_{a,\beta}^\rho
 \quad\text{for every }0\ne u\in\test.
\]

For every test this inequality defines a relatively closed set,
and $\mathcal S$ is the intersection of all such sets.
Thus $\mathcal S$ is relatively closed and its complement is
relatively open. This argument does not require continuity of
the best constant itself, which is an infimum over tests.

For $D>0$, define
\begin{equation}\label{eq:first-threshold}
 a_{\sh}=\sup\{b\in[0,\lambda_-/p):[0,b]\subset\mathcal S\}.
\end{equation}
The common strict-extension theorem gives $a_{\sh}>D/(p+n_{p,q})$, while \eqref{eq:ratiozero} gives $a_{\sh}<\lambda_-/p$. 
For each $0\le a<a_{\sh}$, the definition of the supremum supplies
some $b>a$ with $[0,b]\subset\mathcal S$, hence $a\in\mathcal S$.
Taking a sequence $a_j\uparrow a_{\sh}$ and using relative
closedness gives $a_{\sh}\in\mathcal S$. Therefore
$[0,a_{\sh}]\subset\mathcal S$.
For any $0<\delta<\lambda_-/p-a_{\sh}$, the whole interval
$(a_{\sh},a_{\sh}+\delta)$ cannot lie in $\mathcal S$:
otherwise $[0,a_{\sh}+\delta/2]\subset\mathcal S$, contradicting
\eqref{eq:first-threshold}. Hence there exists
$a\in(a_{\sh},a_{\sh}+\delta)\setminus\mathcal S$.
The global upper bound $H_{\alpha_H+qa,\beta}^{p,q}\le J_{a,\beta}^{\rho}$
then gives the strict-loss parameter asserted in
Theorem~\ref{thm:loss}.

Equivalently, $[0,a_{\sh}]$ is the connected component containing
zero of $\mathcal S\cap[0,\lambda_-/p)$, and
\[
 a_{\sh}=\inf\bigl((0,\lambda_-/p)\setminus\mathcal S\bigr).
\]
In the low-exponent range, the additional result gives the stronger bound
\[
 a_{\sh}\ge\frac{D}{p+\nsc}>\frac{D}{p+n_{p,q}}.
\]

For $D<0$, the same definition in the oriented variable $b=-a$ gives a threshold strictly between $|D|/(p+n_{p,q})$ and $\lambda_+/p$. For $D=0$, the small two-sided interval from Theorem~\ref{thm:extension} and endpoint loss give positive first thresholds on both sides of zero.

\begin{remark}[Threshold comparisons]\label{rem:threshold-comparison}
For $D>0$, the endpoint $D/(p+n_{p,q})$ belongs to an elementary
sufficient condition, whereas $a_{\sh}$ is determined by optimality
over all admissible tests. Theorem~\ref{thm:extension} gives
$D/(p+n_{p,q})<a_{\sh}$. In the low-exponent range, the optimized
scalar endpoint $D/(p+\nsc)$ gives the stronger, non-strict lower
bound displayed above. The angular calculation in
Section~\ref{sec:explicit} is a local perturbation test for $p=2$,
$q=4$: a negative second-order coefficient at an admissible $a>0$
produces a strict-loss test and hence $a_{\sh}<a$. The exact relation
between the onset of this instability and $a_{\sh}$ remains open.

The definition of $a_{\sh}$ concerns the component containing zero.
Relative closedness alone permits later components; for example,
a nonnegative sharpness set could have the form
\[
 [0,a_{\sh}]\cup[b,c],
 \qquad a_{\sh}<b\le c<(d+\beta)/p.
\]
Here $a_{\sh}$ is the initial endpoint and $c$ is the largest sharp
parameter. Excluding a return to sharpness requires further information.
\end{remark}

\section{Examples and angular loss of sharpness}\label{sec:explicit}

\subsection{A low-exponent example of the scalar criterion}\label{sec:low-example}

We illustrate the optimized scalar exponent $\nsc$ of
Proposition~\ref{prop:nsc} and its use in
Theorem~\ref{thm:main}\textup{(iii)}. For $p=3/2$ and $q=7/4$ the
elementary exponent in \eqref{eq:n-pq} is $n_{p,q}=2$, whereas
\[
 C_{p,q}(t)=\frac76\bigl((1-t)^{-1/2}-1\bigr)+\frac t6.
\]
The proof only needs $1<\nsc<2$; a one-dimensional numerical
evaluation of \eqref{eq:nsc-main} gives $\nsc\approx1.10137$, at
$t_{\mathrm{max}}\approx0.14657$.

Take $d=2$, $s=1/2$, and $-5/8<\beta<3/4$, so that
$-d<\beta<sp$ and $D>0$; here $D=\tfrac54+2\beta$ and
$\alpha_H=-\tfrac98-\tfrac{7\beta}{6}$. Theorem~\ref{thm:main}(iii)
enlarges the elementary interval
\[
 -\frac98-\frac{7\beta}{6}\le\alpha\le-\frac12-\frac\beta6
\]
to
\begin{equation}\label{eq:low-example-alpha}
 -\frac98-\frac{7\beta}{6}\le\alpha\le
 -\frac98-\frac{7\beta}{6}
 +\frac{\tfrac74(\tfrac54+2\beta)}{3/2+\mathfrak n_{3/2,7/4}}.
\end{equation}
Throughout both intervals the best constant is
$J_{2,1/2,3/2}^{(\beta)}(a)^{7/6}$ with
$a=4\alpha/7+9/14+2\beta/3$. At $\beta=0$ the elementary interval is
$[-9/8,-1/2]$, and the improved one keeps its left endpoint with
right endpoint $-9/8+35/[16(3/2+\nsc)]>-1/2$.

\subsection{An angular second variation for \texorpdfstring{$p=2,q=4$}{p=2, q=4}}

We derive the angular criterion \eqref{eq:secondvariation} for loss
of homogeneous sharpness and justify it on the original smooth test
class. Keep $p=2,q=4$, $-d<\beta<2s$, and
$(\beta-2s)/2<a<(d+\beta)/2$. Using the unweighted coefficient
$\mathfrak c_n$ from Appendix~\ref{app:evaluations}, put
\[
 \mathfrak c_{n,\beta}(r)
 =\mathfrak c_n(r-\beta)-\mathfrak c_0(-\beta).
\]
For the values $r=a,2a$ used below, the exponents lie in
the interval $-2s<\gamma<d$ in \eqref{eq:powerL}. We use a
weighted angular test independent of the logarithmic coordinate.
Let $Y_n$ be a real spherical harmonic with
$\langle Y_n\rangle=0$ and $\langle Y_n^2\rangle=1$, and set
\[
 v_\varepsilon(\tau,\omega)=1+\varepsilon Y_n(\omega).
\]
Since $\widehat x=\omega$ under the identification $x=e^\tau\omega$
of \eqref{eq:cylinderchange}, the same angular factor is written
$Y_n(\omega)$ on the cylinder and $Y_n(\widehat x)$ below; for small
$\varepsilon$ the multiplier is positive. As $v_\varepsilon$ does not
depend on $\tau$, both formal cylinder integrals carry the divergent
factor $\int_\R\dd\tau$, so we work with the quotient of the spherical
integrals, which the logarithmic cutoffs constructed after
\eqref{eq:secondvariation} justify on the stated test class. Let
\begin{align}
 B_{a,n,\beta}&=\mathfrak c_{0,\beta}(a)+\mathfrak c_{n,\beta}(a)-\mathfrak c_{n,\beta}(2a),\label{eq:Bn}\\
 C_{a,n,\beta}&=2\mathfrak c_{n,\beta}(a)-\mathfrak c_{0,\beta}(2a).\label{eq:Cn}
\end{align}

Let $L$ denote the unweighted operator in \eqref{eq:powerL}
and define $L_\beta f=L(|x|^\beta f)-fL(|x|^\beta)$.
Thus $L_\beta f(x)=\PV\int(f(x)-f(y))|y|^\beta
|x-y|^{-d-2s}\dd y$. Put
$\phi(x)=|x|^{-a}$, and $\psi(x)=|x|^{-a}Y_n(\widehat x)$.
The product identity, proved by expanding the integrand at a common
principal-value cutoff, is
\[
 \int\frac{(\phi(x)-\phi(y))(\psi(x)-\psi(y))|y|^\beta}{|x-y|^{d+2s}}\dd y
 =\phi L_\beta\psi+\psi L_\beta\phi-L_\beta(\phi\psi).
\]
On $|x|=1$, its right side is $B_{a,n,\beta}Y_n(\omega)$.
Similarly the quadratic energy density of $\psi$ is
$Z_{a,n,\beta}(\omega)=2\psi L_\beta\psi-L_\beta(\psi^2)$.
Since $Y_n^2$ has spherical average one, its degree-zero component
is one. Applying the weighted power formula to its finite harmonic
decomposition gives $\langle Z_{a,n,\beta}\rangle=C_{a,n,\beta}$.
Consequently
\[
 \Aa_{a,\beta}[v_\varepsilon](\omega)
 =J_{a,\beta}+2\varepsilon B_{a,n,\beta}Y_n(\omega)
      +\varepsilon^2 Z_{a,n,\beta}(\omega).
\]
Squaring, averaging, and using $\langle Y_n\rangle=0$ and
$\langle Y_n^2\rangle=1$ yields
\[
 \langle\Aa_{a,\beta}[v_\varepsilon]^2\rangle
 =J_{a,\beta}^2+(4B_{a,n,\beta}^2+2J_{a,\beta}C_{a,n,\beta})\varepsilon^2
       +O(\varepsilon^3).
\]
The denominator is
$\langle v_\varepsilon^4\rangle=1+6\varepsilon^2+O(\varepsilon^3)$.
Dividing these expansions gives
\begin{equation}\label{eq:secondvariation}
 \frac{\langle\Aa_{a,\beta}[v_\varepsilon]^2\rangle}
 {\langle v_\varepsilon^4\rangle}
 =J_{a,\beta}^2+2\bigl(J_{a,\beta}C_{a,n,\beta}+2B_{a,n,\beta}^2-3J_{a,\beta}^2\bigr)\varepsilon^2
 +o(\varepsilon^2).
\end{equation}
For $n=1$, parity makes this expansion even, with remainder $O(\varepsilon^4)$.

If the coefficient in parentheses is negative, the homogeneous candidate is not sharp. Indeed,
\[
 u_{T,\varepsilon}(x)=|x|^{-a}\zeta\left(\frac{\log|x|}{T}\right)
 (1+\varepsilon Y_n(\widehat x))
\]
belongs to the original test class. 
To justify the limit, fix $h(\omega)=1+\varepsilon Y_n(\omega)$
and decompose the cylindrical difference as
\begin{align*}
 &\zeta_T(\tau)h(\omega)-e^{-at}\zeta_T(\tau+t)h(\sigma)\\
 &\quad=\zeta_T(\tau)\bigl(h(\omega)-e^{-at}h(\sigma)\bigr)
       +e^{-at}h(\sigma)\bigl(\zeta_T(\tau)-\zeta_T(\tau+t)\bigr).
\end{align*}
The second summand has mixed-norm bound
$C\|h\|_\infty\|\zeta_T'\|_4$, by the finite moment in
Lemma~\ref{lem:cutoff}. The first has energy root
$\|\zeta_T\|_4(\int\Aa_{a,\beta}[h]^2\dd\omega)^{1/4}$.
The denominator is exactly
$\|\zeta_T\|_4^4\int h^4\dd\omega$.
Since $\|\zeta_T'\|_4/\|\zeta_T\|_4=O(T^{-1})$, the norm
triangle inequality proves the asserted quotient limit.
Choose a nonzero $\varepsilon$ so small that the angular quotient
is strictly below $J_{a,\beta}^2$, and then choose a finite $T$ large
enough to preserve half that strict gap. This produces an actual
test in the original class, rather than an infinite-cylinder ansatz.

\medskip
\noindent\textit{The model $(d,s,p,q)=(3,1/2,2,4)$.} For $\beta=0$,
\eqref{eq:secondvariation} then produces an explicit strict-loss
example. Here

\begin{equation}\label{eq:f0f1}
 \mathfrak c_0(\gamma)=\pi^2 f_0(\gamma),\quad
 f_0(\gamma)=(1-\gamma)\tan\frac{\pi\gamma}{2},\qquad
 \mathfrak c_1(\gamma)=\pi^2 f_1(\gamma),\quad
 f_1(\gamma)=\frac{\gamma(2-\gamma)}{1-\gamma}\cot\frac{\pi\gamma}{2}.
\end{equation}
The values at removable singularities are taken by continuity. Write
\begin{align}
 j(a)&=2f_0(a)-f_0(2a),\nonumber\\
 b(a)&=f_0(a)+f_1(a)-f_1(2a),\nonumber\\
 c(a)&=2f_1(a)-f_0(2a),\label{eq:jbc}\\
 \Sigma_1(a)&=j(a)c(a)+2b(a)^2-3j(a)^2.\label{eq:Sigma}
\end{align}
Then the second-order coefficient in \eqref{eq:secondvariation} is $2\pi^4\Sigma_1(a)$.

At $a=4/3$, direct trigonometric evaluation gives
\[
 j=\frac{7\sqrt3}{3},\qquad b=\frac{13\sqrt3}{15},\qquad
 c=\frac{31\sqrt3}{9},\qquad
 \Sigma_1(4/3)=-\frac{4586}{225}<0.
\]
By \eqref{eq:secondvariation} and the logarithmic cutoff
construction above, this gives $H_{13/3,0}^{2,4}<J_{4/3,0}^2$.
Thus the homogeneous candidate is not optimal in this example.

\section{Hardy inequalities with singularities along flat subspaces}\label{sec:flat}

The equal-exponent flat problem and its sharp constants have been studied
in \cite{Mallick,Sahu,KS}. We prove mixed-exponent sharpness intervals
for $p<q$ and a sharp inequality throughout the ambient finite-energy
range for $p>q$. The latter flat geometry is not covered by
\Kij\ (Theorems~1 and~2), whose mixed-exponent results concern
the whole-space point singularity and the regional half-space.

The singularity at a point and the singularity along a flat subspace have
different dimensional parameters. The codimension determines the homogeneous
normal power, whereas the ambient dimension enters the tangential integration
factor and the far-field condition for compactly supported tests. We prove
the flat analogue on the intervals obtained from the pointwise scalar criterion. An affine translation-chain estimate controls the tangential increments
and proves strict extension beyond the elementary scalar criterion.

\subsection{Finiteness of test energies and of the homogeneous integral}

Let $n\ge2$, $1\le k<n$, and $m=n-k$. Write $X=(y,z)\in\R^k\times\R^m$,
and set $\Sigma=\R^k\times\{0\}$. Throughout this section,
$0<s<1$, $1<p,q<\infty$, and $-m<\beta<sp$.
The interval and strict-extension assertions assume $p<q$;
Theorem~\ref{thm:flat-reverse} treats $p>q$.
The natural inner weight is the distance weight $|z'|^\beta$.
An ambient weight $|(y',z')|^\beta$ would describe a different
problem and is not used in this flat translation-invariant result. For
$u\in C_c^\infty(\R^n\setminus\Sigma)$, define
\begin{align}
 \E_{\Sigma,\alpha,\beta}[u]
 &=\int_{\R^k\times\R^m}|z|^\alpha
 \left(\int_{\R^k\times\R^m}
 \frac{|u(y,z)-u(y',z')|^p|z'|^\beta}
 {(|y-y'|^2+|z-z'|^2)^{(n+sp)/2}}\dd y'\dd z'
 \right)^{q/p}\dd y\dd z,\label{eq:flat-energy}\\
 \Pp_{\Sigma,\alpha,\beta}[u]
 &=\int_{\R^k\times\R^m}|u(y,z)|^q|z|^{\alpha+q\beta/p-sq}\dd y\dd z,
 \label{eq:flat-denominator}\\
 H_{\Sigma,\alpha,\beta}^{p,q}
 &=\inf_{\substack{0\ne u\in C_c^\infty(\R^n\setminus\Sigma)}}
 \frac{\E_{\Sigma,\alpha,\beta}[u]}{\Pp_{\Sigma,\alpha,\beta}[u]}.
 \label{eq:flat-best}
\end{align}
The subscript $\Sigma$ indicates the singular plane, and both power
weights remain explicit in the notation.
The kernel again has coefficient one. Put
\begin{equation}\label{eq:flat-parameters}
 a=\frac{\alpha+m+q\beta/p-sq}{q},\qquad D_m=m+2\beta-sp,\qquad
 \mathfrak g_{n,k,s,p}
 =\frac{\pi^{k/2}\Gamma((m+sp)/2)}{\Gamma((n+sp)/2)}.
\end{equation}

\begin{proposition}[Finiteness of test energies and of the homogeneous integral]
\label{prop:flat-finite}
Every nonzero test function has finite energy in \eqref{eq:flat-energy}
if and only if
\begin{equation}\label{eq:flat-ambient-range}
 -m<\alpha<sq+n\left(\frac qp-1\right).
\end{equation}
For a unit vector $e\in\R^m$, the homogeneous integral
\begin{equation}\label{eq:flat-J}
\begin{aligned}
 J_{\Sigma,a,\beta}
 &=\mathfrak g_{n,k,s,p}J_{m,s,p}^{(\beta)}(a)\\
 &=\mathfrak g_{n,k,s,p}\int_{\R^m}
 \frac{|1-|w|^{-a}|^p|w|^\beta}{|e-w|^{m+sp}}\dd w.
\end{aligned}
\end{equation}
is finite exactly when
\begin{equation}\label{eq:flat-profile-range}
 \frac{\beta-sp}{p}<a<\frac{m+\beta}{p},
 \qquad\text{equivalently}\qquad
 -m<\alpha<sq+m\left(\frac qp-1\right).
\end{equation}
It vanishes at $a=0$ and is positive at the other exponents in this interval.
\end{proposition}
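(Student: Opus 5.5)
The plan is to follow the point-singularity finiteness argument of Theorem~\ref{thm:basic}, replacing the origin by the plane $\Sigma$ and keeping track of which dimension governs each region. There are three parts: the ambient range \eqref{eq:flat-ambient-range}, the identity \eqref{eq:flat-J} by tangential integration, and the profile range \eqref{eq:flat-profile-range}.

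\emph{Ambient range.} Fix $0\ne u\in C_c^\infty(\R^n\setminus\Sigma)$, supported where $\delta\le|z|$ and $|(y,z)|\le R$. The inner integral $A(y,z)$ is finite and locally bounded on bounded sets. Near the diagonal we use smoothness, as before. Near $\Sigma$ the weight $|z'|^\beta$ is integrable in $z'$ because $\beta>-m$, and at infinity the integral converges because $\beta<sp$, after integrating out $y'\in\R^k$ against the decay $|X-X'|^{-n-sp}$. This yields a factor $|z-z'|^{-m-sp}$, and against $|z'|^\beta$ in $\R^m$ it converges when $\beta<sp$.

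For the outer integral we split into two regions.
\begin{itemize}
\item Near $\Sigma$ but within a bounded tangential region, $u(y,z)=0$, and dominated convergence gives $A(y,z)\to\int|u(X')|^p|z'|^\beta|(y-y',-z')|^{-n-sp}\dd X'$. This limit is positive for $y$ in an open set, so $\int_{|z|<1}|z|^\alpha\dd z$ forces $\alpha>-m$.
\item At ambient infinity, \eqref{eq:compacttail} has the analogue $A(X)=|X|^{-n-sp}\bigl(\int|u|^p|z'|^\beta\bigr)(1+O(|X|^{-1}))$. Integrating $|z|^\alpha|X|^{-(n+sp)q/p}$ over $|X|>R$ in $\R^n$ is finite iff $\alpha>-m$ (for local integrability in $z$) and $\alpha+n-(n+sp)q/p<0$, which is exactly the upper bound in \eqref{eq:flat-ambient-range}.
\end{itemize}
Necessity in both regions comes from the same asymptotics, taken on cones where $|z|\asymp|X|$ and on thin slabs near $\Sigma$.

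\emph{Tangential integration.} For $e\in\R^m$ a unit vector, I would define $J_{\Sigma,a,\beta}$ as the inner integral of the homogeneous profile $|z|^{-a}$ at $(0,e)$. The integrand depends on $y'$ only through the kernel, and
\[
\int_{\R^k}(|y'|^2+\rho^2)^{-(n+sp)/2}\dd y'=\mathfrak g_{n,k,s,p}\,\rho^{-(m+sp)}
\]
by the Beta-function integral. This reduces the profile to $\mathfrak g_{n,k,s,p}J^{(\beta)}_{m,s,p}(a)$, which is the first line of \eqref{eq:flat-J}, and Tonelli justifies the interchange because the integrand is nonnegative.

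\emph{Profile range.} Finiteness of $J^{(\beta)}_{m,s,p}(a)$ is the point-case statement \eqref{eq:a-range} in dimension $m$. The only technical point is that the paper assumes $d\ge2$ while here $m\ge1$ is allowed, so for $m=1$ I would recheck the three regions directly. Near $w=e$ the numerator is $O(|w-e|^p)$ and $p>sp$. Near $w=0$ the integrand behaves like $|w|^{-pa+\beta}$ for $a>0$, which needs $pa<m+\beta$. At infinity it behaves like $|w|^{\beta-m-sp}\max(1,|w|^{-pa})$, which needs $\beta<sp$ and, for $a<0$, $pa>\beta-sp$. Divergence at the opposite inequalities follows from the same leading terms. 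Vanishing at $a=0$ and positivity elsewhere hold because the integrand vanishes identically at $a=0$ and is otherwise positive on a set of positive measure. The equivalence in $\alpha$ is then the substitution $\alpha=qa-m-q\beta/p+sq$.

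The main obstacle is the ambient far-field necessity. There $|z|^\alpha$ and $|X|^{-(n+sp)q/p}$ interact on the unbounded region, so the argument needs both the slab where $|z|$ is small relative to $|X|$ and the cone. I would first compute the integral in polar coordinates in $(y,z)$, separating the $z$-direction, to check that the two conditions come out as stated.
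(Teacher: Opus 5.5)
Your proposal is correct and follows essentially the same route as the paper: boundedness and strict positivity of the inner energy near $\Sigma$ give $\alpha>-m$, and the compact-support far-field expansion gives the ambient upper bound; the Beta integral produces $\mathfrak g_{n,k,s,p}$, and a direct three-region check (near $w=0$, near $w=e$, at infinity) gives the profile range for every $m\ge1$. The far-field step you flag as the main obstacle is handled in the paper exactly as you plan, by polar coordinates in $\R^n$: the tail splits into an angular factor $\int_{\Sph^{n-1}}|\theta_z|^\alpha\dd\theta$, finite iff $\alpha>-m$, and a radial integral, finite iff $\alpha+n<q(n+sp)/p$.
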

\begin{proof}
For a fixed outer point with $u\ne0$, integration close to the
inner singular plane requires $\beta>-m$; tangential integration
followed by large normal radii requires $\beta<sp$.
Under these conditions, on bounded sets near $\Sigma$, the test vanishes and its inner difference
energy is bounded, continuous, and strictly positive at every point of
$\Sigma$. Normal integration is finite precisely when $\alpha>-m$.
At infinity, the compact-support expansion \eqref{eq:compacttail}, with $d=n$ and the compact inner moment
$\int|u(y',z')|^p|z'|^\beta\dd y'\dd z'$, is uniform in direction. The remaining tail is therefore comparable
to
\[
 \int_{|X|>R}|z|^\alpha |X|^{-q(n+sp)/p}\dd X.
\]
The angular integral is finite when $\alpha>-m$. The radial integral is
finite precisely when $\alpha+n<q(n+sp)/p$, giving
\eqref{eq:flat-ambient-range}. Smoothness and $s<1$ control the diagonal.
The denominator is finite and positive for every nonzero test and every
real $\alpha$.

The exact tangential integration is
\begin{equation}\label{eq:flat-gamma}
 \int_{\R^k}(|\xi|^2+R^2)^{-(n+sp)/2}\dd\xi
 =\mathfrak g_{n,k,s,p}R^{-m-sp},\qquad R>0.
\end{equation}
This follows from polar coordinates and the beta integral. The inner
weight is independent of the tangential increment. In the candidate,
integration near $w=0$ requires $pa<m+\beta$ when $a>0$, and integration at infinity
imposes $pa>\beta-sp$ when $a<0$; the standing conditions
$-m<\beta<sp$ control the other signs. The cancellation at $w=e$
uses $s<1$. This proves the finiteness range of the homogeneous integral and endpoint divergence.
\end{proof}

For $k>0$ and $q>p$, the upper endpoint of
\eqref{eq:flat-profile-range} is strictly smaller than that of
\eqref{eq:flat-ambient-range}. In particular, a finite-energy test problem
can coexist with an infinite homogeneous candidate. These two ranges
must not be identified. For $p>q$ their order is reversed: the ambient
range is the smaller one and is nonempty exactly when
$q(n+sp)/p>k$. All assertions about finite-energy tests below retain
that ambient restriction.

\subsection{The sharp inequality}

\begin{theorem}[Flat-subspace Hardy inequality]\label{thm:flat}
Suppose $m=n-k$, $0<s<1$, $1<p<q<\infty$, and
$-m<\beta<sp$ with $m+2\beta>sp$.
Set $a=(\alpha+m+q\beta/p-sq)/q$. With $J_{\Sigma,a,\beta}$ defined in
\eqref{eq:flat-J}, for every $u\in C_c^\infty(\R^n\setminus\Sigma)$,
\begin{equation}\label{eq:flat-main}
\begin{aligned}
 &\int_{\R^k\times\R^m}|z|^\alpha
 \left(\int_{\R^k\times\R^m}
 \frac{|u(y,z)-u(y',z')|^p|z'|^\beta}
 {(|y-y'|^2+|z-z'|^2)^{(n+sp)/2}}\dd y'\dd z'\right)^{q/p}
 \dd y\dd z\\
 &\quad\ge
 J_{\Sigma,a,\beta}^{q/p}
 \int_{\R^k\times\R^m}|u(y,z)|^q|z|^{\alpha+q\beta/p-sq}\dd y\dd z
\end{aligned}
\end{equation}
holds with the best possible constant on each of the following intervals.
In parts \textup{(i)} and \textup{(ii)}, the width $\varepsilon>0$
depends only on $n,k,s,p,q,\beta$.
\begin{enumerate}[label=\textup{(\roman*)}]
\item If $2\le q\le2p$,
\[
 sq-m-q\beta/p\le\alpha\le sq-m-q\beta/p+\frac{q(m+2\beta-sp)}{p+1}+\varepsilon.
\]
\item If $q>2p$,
\[
 sq-m-q\beta/p\le\alpha\le s(q-p)+(2-q/p)\beta+\varepsilon.
\]
\item If $1<p<q<2$, with $\nsc$ defined in \eqref{eq:nsc-main},
\[
 sq-m-q\beta/p\le\alpha\le sq-m-q\beta/p+\frac{q(m+2\beta-sp)}{p+\nsc}.
\]
\end{enumerate}
The best constant is zero at $\alpha=sq-m-q\beta/p$ and is positive at the other
parameters displayed above. The
widths are chosen inside \eqref{eq:flat-profile-range}; they are proved
in Proposition~\ref{prop:flat-extension}.
\end{theorem}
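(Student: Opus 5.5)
The plan is to transport the whole point-singularity machinery of Sections~\ref{sec:elementary}--\ref{sec:proofs} to the normal variable $z\in\R^m$, treating the tangential variable $y\in\R^k$ as a parameter. For a nonnegative test $v$ I pass to logarithmic normal coordinates $z=e^\tau\omega$, $\omega\in\Sph^{m-1}$, and set $u(y,e^\tau\omega)=e^{-a\tau}v(y,\tau,\omega)$. In the inner integral I write $y'=y+e^{\tau}\xi$ after rescaling, so that the kernel becomes $e^{(m+\beta)t}(|\xi|^2+|\omega-e^t\sigma|^2)^{-(n+sp)/2}$ with the normal weight $e^{\beta(\tau+t)}$. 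The outer power of $e^\tau$ cancels exactly as in \eqref{eq:cylinderenergy}, because $\alpha+m-q(a+s-\beta/p)=0$. Integrating the kernel in $\xi$ by \eqref{eq:flat-gamma} gives $\mathfrak g_{n,k,s,p}K^{(m)}_{\beta,t}(\omega,\sigma)$, where $K^{(m)}$ is the $m$-dimensional kernel of \eqref{eq:cylinderchange}. Its reversal identity therefore holds with $D_m=m+2\beta-sp$. The reversal must be checked with the tangential shift included: the map $(y,\xi,\tau,t)\mapsto(y+e^\tau\xi,-e^{t}\xi,\tau+t,-t)$ preserves the product measure up to the factor $e^{-D_mt}$, because the Jacobian in $\xi$ contributes $e^{-kt}$ and this is absorbed consistently with the $k$-dimensional integral.

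The lower bound then follows the template of Subsection~\ref{sec:proof-reduction}. I apply Hölder in the outer variables $(y,\tau,\omega)$ to reduce to $I\ge J_{\Sigma,a,\beta}N$. The pairing of opposite displacements $\pm t$, now also paired with $\pm\xi$, yields the identity
\[
I-J_{\Sigma,a,\beta}N=\int_0^\infty\!\!\int K\,G_{\vartheta_a(t),b_a(t)}(U,V),
\]
with $U=v(y,\tau,\omega)$, $V=v(y+e^\tau\xi,\tau+t,\sigma)$, $\vartheta_a(t)=e^{-at}$ and $b_a(t)=e^{-(D_m-pa)t}$. The common-cutoff justification carries over verbatim. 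Two further ingredients are already available. Lemma~\ref{lem:scalar-support} shows that the integrand is nonnegative for $0\le a\le D_m/(p+n_{p,q})$. Propositions~\ref{prop:low-exact} and~\ref{prop:nsc} show the same up to $D_m/(p+\nsc)$ when $q<2$; this settles (iii) without any extension. For the lower endpoint $a=0$ the constant vanishes, and positivity for $a>0$ follows from the flat analogue of the annular argument.

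For sharpness I use cutoffs $u_T=|z|^{-a}\zeta(\log|z|/T)\chi(y/T')$, with $T'$ chosen much larger than $e^{T}$. The normal part reproduces Lemma~\ref{lem:cutoff}, with tangential factor $\mathfrak g$. The tangential cutoff error is controlled by $\|\nabla\chi\|$ times a displacement moment, which is finite inside \eqref{eq:flat-profile-range}, so it vanishes relative to the main term. The main obstacle is the strict extension past $a_0=D_m/(p+n_{p,q})$ in (i)--(ii), deferred to Proposition~\ref{prop:flat-extension}. The difficulty is the reference energy. I would define
\[
\eref[f]=\int_1^2\!\!\int_{|\xi|\le1}\cdots\,|f(y,\tau,\omega)-f(y+e^\tau\xi,\tau+t,\sigma)|^2,
\]
and this requires a replacement for Lemma~\ref{lem:reference}. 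One must bound the displacement energy at large $t$ and large $\xi$ by $(1+t^2)(1+|\xi|)^{C}$ times $\eref$, with a polynomial loss that the kernel decay $e^{-\lambda_+t}(1+|\xi|)^{-n-sp}$ absorbs. I would first try composing the affine shifts $(y,\tau)\mapsto(y+e^\tau\eta,\tau+t)$ in a telescoping chain of short steps, as in \eqref{eq:reference-scalar-chain}. The angular fluctuation is handled separately, using orthogonality as in \eqref{eq:Eref-decomp} after averaging over $\omega$. Once this estimate holds, the continuity argument of Subsection~\ref{sec:low-new} gives the width, with $\varepsilon=q\eta$ kept inside \eqref{eq:flat-profile-range}.
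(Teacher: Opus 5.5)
Your reduction (flat cylinder with $y'=y+e^{\tau}\xi$, tangential Jacobian $e^{-kt}$ in the reversal, pairing identity), the unextended intervals, part (iii), and the two-scale cutoff with the tangential scale removed first all follow the paper. Three small corrections there:
\begin{itemize}
\item The reversed tangential variable is $-e^{-t}\xi$, not $-e^{t}\xi$; that is the map with the Jacobian $e^{-kt}$ you quote.
\item The tangential cutoff error must be bounded with the truncated increment $\min\{1,e^{\tau}|\xi|/T'\}$. The plain $p$th displacement moment can diverge, e.g.\ as $t\to-\infty$, where $e^{\tau}\le e^{CT-t}$ because $\tau+t$ lies in the support of the normal cutoff.
\item Positivity needs no annular argument, since on the displayed intervals the best constant equals $J_{\Sigma,a,\beta}^{q/p}>0$ for $a\ne0$.
\end{itemize}

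The genuine gap is the width $\varepsilon$ in (i) and (ii). You leave the replacement for Lemma~\ref{lem:reference} as something to try, and the form you expect rests on a wrong kernel bound.
\begin{itemize}
\item \emph{The kernel.} For $t\ge t_0$ one has $\mathcal K_{\beta,t}(\xi,\omega,\sigma)\asymp e^{-(k+\lambda_+)t}(1+e^{-2t}|\xi|^2)^{-(n+sp)/2}$ with $\lambda_+=sp-\beta$. It is flat on $|\xi|\lesssim e^{t}$ and decays only in the rescaled increment $e^{-t}\xi$, not like $e^{-\lambda_+t}(1+|\xi|)^{-n-sp}$.
\item \emph{Polynomial loss.} A loss $(1+t^2)(1+|\xi|)^C$ therefore costs a factor $e^{Ct}$ after the $\xi$-integration, and is absorbed only if $C<\lambda_+$. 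Since $D_m>0$ forces $\lambda_+<(m+sp)/2$, and $\lambda_+\to0$ as $\beta\to sp$, no degree independent of $\beta$ suffices. The degree $C=2$ produced by a chain of short tangential steps as in \eqref{eq:reference-scalar-chain} already fails for every $m\le3$.
\item \emph{Reordering alone is not enough.} Factoring $(\xi,t)=(0,t)(e^{-t}\xi,0)$, so that the tangential steps are taken at level $\tau+t$, improves the loss to $(1+t+e^{-t}|\xi|)^2$. This is still not integrable against $(1+|w|^2)^{-(n+sp)/2}$ over $w\in\R^k$ when $m+sp\le2$, e.g.\ in codimension one with $sp\le1$, which the theorem includes.
\end{itemize}

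The missing idea is Lemma~\ref{lem:flat-affine}. Use the group law $U_{\eta,h}U_{\zeta,r}=U_{\eta+e^{h}\zeta,h+r}$ and write $(\eta,t)=(0,H)(e^{-H}\eta,0)(0,t-H)$ with $H=t+\log(1+\delta^{-1}e^{-t}|\eta|)$. This goes up far enough in $\tau$ that the tangential shift becomes a single short step, so only $O(1+t+\log(1+e^{-t}|\eta|))$ short steps are needed. Each step is controlled by the local bound \eqref{eq:flat-local-chain}, obtained by averaging over a fixed parameter box with the left-multiplication Jacobian $e^{kh}$, since the shifts do not commute. After rescaling $\eta=e^{t}w$, the logarithmic loss is integrable because $n+sp>k$, which gives \eqref{eq:flat-reference-control}. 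Only then does your continuity and absorption argument produce $\varepsilon$.
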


The theorem includes $m=1$ whenever $-1<\beta<sp$ and $1+2\beta>sp$. Then $\Sph^0$ has
counting measure on its two points, and $k_{\beta,1}$ is explicitly
\[
 k_{\beta,1}(t)=e^{(1+\beta)t}
 \bigl(|1-e^t|^{-1-sp}+|1+e^t|^{-1-sp}\bigr).
\]
It has the same short-scale order $|t|^{-1-sp}$ and tail
orders $e^{-(sp-\beta)t}$ and $e^{(1+\beta)t}$ as the general $k_\beta$.
The scalar support proof is dimension independent, and the
cutoff proof uses only these estimates. Thus neither requires
a positive-dimensional sphere.

\subsection{The flat cylinder and the lower bound}

We first prove the lower bound in Theorem~\ref{thm:flat} on the
intervals supplied directly by the scalar criterion. The additional
widths in parts~\textup{(i)} and~\textup{(ii)} will be proved in
Proposition~\ref{prop:flat-extension}.

Keep the tangential coordinate $y$ unscaled, and write
\begin{equation}\label{eq:flat-cylinder}
 u(y,e^\tau\omega)=e^{-a\tau}v(y,\tau,\omega),\qquad
 \mathcal K_{\beta,t}(\eta,\omega,\sigma)
 =\frac{e^{(m+\beta)t}}
 {(|\eta|^2+|\omega-e^t\sigma|^2)^{(n+sp)/2}}.
\end{equation}
Use measure $\dd y\dd\tau\dd\omega$ on
$\mathcal C_\Sigma=\R^k\times\R\times\Sph^{m-1}$.
Use $y'=y+e^\tau\eta$, $z'=e^{\tau+t}\sigma$, and
$|z'|^\beta=e^{\beta(\tau+t)}$. The inner scaling factor is
$e^{-(pa+sp-\beta)\tau}$; the outer one cancels because
$\alpha+m=q(a+s-\beta/p)$. This gives
\begin{equation}\label{eq:flat-cylinder-energy}
 \E_{\Sigma,\alpha,\beta}[u]=\int_{\mathcal C_\Sigma}
 \mathcal A_{\Sigma,a,\beta}[v]^{q/p},\qquad
 \Pp_{\Sigma,\alpha,\beta}[u]=\int_{\mathcal C_\Sigma}|v|^q.
\end{equation}
\begin{equation}\label{eq:flat-cylinder-A}
\begin{aligned}
 \mathcal A_{\Sigma,a,\beta}[v](y,\tau,\omega)
 ={}&\int_\R\int_{\R^k}\int_{\Sph^{m-1}}
 \mathcal K_{\beta,t}(\eta,\omega,\sigma)\\
 &\quad\times|v(y,\tau,\omega)-e^{-at}v(y+e^\tau\eta,\tau+t,\sigma)|^p
 \dd\sigma\dd\eta\dd t.
\end{aligned}
\end{equation}
The reversal identity is
\begin{equation}\label{eq:flat-reversal}
 \mathcal K_{\beta,-t}(-e^{-t}\eta,\sigma,\omega)e^{-kt}
 =e^{-(m+2\beta-sp)t}\mathcal K_{\beta,t}(\eta,\omega,\sigma).
\end{equation}
Here $e^{-kt}$ is the Jacobian of the reversed tangential increment;
the transformation $(y,\tau)\mapsto(y+e^\tau\eta,\tau+t)$ has
Jacobian one. Furthermore,
\begin{equation}\label{eq:flat-k}
 k_{\Sigma,\beta}(t):=\int_{\R^k}\int_{\Sph^{m-1}}
 \mathcal K_{\beta,t}(\eta,\omega,\sigma)\dd\sigma\dd\eta
 =\mathfrak g_{n,k,s,p}k_{\beta,m}(t),
\end{equation}
where $k_{\beta,m}$ is the quantity above computed with $d=m$.
It is independent of $\omega$, as in the point case, and integrating
in the other angular variable gives the same value.

\begin{proof}[Proof of the lower bound in Theorem~\ref{thm:flat}]
Replacing $u$ by $|u|$ can only decrease its difference energy. Thus
take $v\ge0$, set $N_\Sigma[v]=\int_{\mathcal C_\Sigma}v^q$ and
$I_{\Sigma,a,\beta}[v]=\int_{\mathcal C_\Sigma}
v^{q-p}\mathcal A_{\Sigma,a,\beta}[v]$. H\"older gives
\[
 I_{\Sigma,a,\beta}[v]\le
 \E_{\Sigma,\alpha,\beta}[u]^{p/q}N_\Sigma[v]^{1-p/q}.
\]
For $t>0$, put $\vartheta=e^{-at}$ and
$b=e^{-(m+2\beta-sp-pa)t}$. Reversing the negative displacements by
\eqref{eq:flat-reversal} yields the exact identity
\begin{equation}\label{eq:flat-pair}
\begin{aligned}
 I_{\Sigma,a,\beta}[v]-J_{\Sigma,a,\beta}N_\Sigma[v]
 ={}&\int_0^\infty\int_{\mathcal C_\Sigma}
 \int_{\R^k}\int_{\Sph^{m-1}}
 \mathcal K_{\beta,t}(\eta,\omega,\sigma)G_{\vartheta,b}(U,V)
 \dd\sigma\dd\eta\dd y\dd\tau\dd\omega\dd t,
\end{aligned}
\end{equation}
where $U=v(y,\tau,\omega)$,
$V=v(y+e^\tau\eta,\tau+t,\sigma)$, and $G$ is defined in
\eqref{eq:Gtwo}. Integrating either $U^q$ or $V^q$ against the kernel
gives $k_{\Sigma,\beta}(t)N_\Sigma[v]$. This proves the subtraction in
\eqref{eq:flat-pair} by \eqref{eq:supportsum}.
As in \eqref{eq:pairidentity}, the identity is first established for
$\epsilon<|t|<T$ and only then passed to the limit.

When $0<a\le D_m/(p+n_{p,q})$, one has
$b=\vartheta^{D_m/a-p}\le\vartheta^{n_{p,q}}$.
Lemma~\ref{lem:scalar-support} makes the paired remainder nonnegative.
For $1<p<q<2$, Propositions~\ref{prop:low-exact} and~\ref{prop:nsc}
replace this condition by $0<a\le D_m/(p+\nsc)$.
H\"older now proves \eqref{eq:flat-main} on the stated intervals without
the additional widths. Proposition~\ref{prop:flat-extension} proves
those widths in parts \textup{(i)} and \textup{(ii)}. At $a=0$ the lower bound
is simply nonnegativity. The matching upper bound, including this zero
case, is proved next.
\end{proof}

\subsection{\texorpdfstring{Proof of optimality in Theorem~\ref{thm:flat}}{Proof of optimality for flat singular sets}}

We construct a two-scale sequence whose energy quotients converge
to $J_{\Sigma,a,\beta}^{q/p}$, proving the optimality assertion in
Theorem~\ref{thm:flat}. The same construction also supplies the upper
bound for Theorem~\ref{thm:flat-reverse}.

Let $\E_{m,\alpha,\beta}$ and $\Pp_{m,\alpha,\beta}$ denote the
point-singularity functionals in the normal space $\R^m$,
defined by \eqref{eq:energy}--\eqref{eq:denominator} with
$d=m$ and kernel coefficient one. This notation also applies
when $m=1$, using the explicit formula for $k_{\beta,1}$ above.

\begin{lemma}[Tangential cutoff limit]\label{lem:flat-product}
Let $1<p,q<\infty$ and assume both
\eqref{eq:flat-profile-range} and \eqref{eq:flat-ambient-range}.
Fix nonzero
$\chi\in C_c^\infty(\R^k)$ and
$w\in C_c^\infty(\R^m\setminus\{0\})$. Then
\begin{equation}\label{eq:flat-product-limit}
 \lim_{R\to\infty}
 \frac{\E_{\Sigma,\alpha,\beta}[\chi(y/R)w(z)]}
 {\Pp_{\Sigma,\alpha,\beta}[\chi(y/R)w(z)]}
 =\mathfrak g_{n,k,s,p}^{q/p}
 \frac{\E_{m,\alpha,\beta}[w]}{\Pp_{m,\alpha,\beta}[w]}.
\end{equation}
\end{lemma}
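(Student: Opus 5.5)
The plan is to substitute $y=Ry_1$ in the outer integral and to split the inner tangential integration into a local part and a far part. First, the denominator: since $u_R(y,z)=\chi(y/R)w(z)$ factorizes,
\[
 \Pp_{\Sigma,\alpha,\beta}[u_R]=R^k\|\chi\|_{L^q(\R^k)}^q\Pp_{m,\alpha,\beta}[w].
\]
So it suffices to prove
\[
 \E_{\Sigma,\alpha,\beta}[u_R]=R^k\|\chi\|_q^q\,\mathfrak g_{n,k,s,p}^{q/p}\E_{m,\alpha,\beta}[w]+o(R^k).
\]
For the main term, write the difference as
\[
 u_R(y,z)-u_R(y',z')=\chi(y/R)\bigl(w(z)-w(z')\bigr)+w(z')\bigl(\chi(y/R)-\chi(y'/R)\bigr)=:M_R+E_R .
\]
We treat $\|\cdot\|$ as the mixed norm $L^q(|z|^\alpha\dd y\dd z;L^p(\text{kernel}\,|z'|^\beta\dd y'\dd z'))$ and use the triangle inequality, exactly as in Lemma~\ref{lem:cutoff}. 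For $M_R$, the inner integrand depends on $y'$ only through the kernel. The tangential integration \eqref{eq:flat-gamma} with $R_0=|z-z'|$ gives
\[
 \int_{\R^k}(|y-y'|^2+|z-z'|^2)^{-(n+sp)/2}\dd y'=\mathfrak g_{n,k,s,p}|z-z'|^{-m-sp}.
\]
The inner integral is therefore exactly $|\chi(y/R)|^p\mathfrak g_{n,k,s,p}A^{(m)}_{w,\beta}(z)$, and
\[
 \|M_R\|^q=R^k\|\chi\|_q^q\,\mathfrak g^{q/p}\E_{m,\alpha,\beta}[w].
\]
This uses the finiteness of $\E_{m,\alpha,\beta}[w]$, which is the $d=m$ version of Theorem~\ref{thm:basic}'s finiteness. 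It is here that the profile range \eqref{eq:flat-profile-range}, i.e. $\alpha<sq+m(q/p-1)$, enters.

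The main obstacle is the error term: we need $\|E_R\|^q=o(R^k)$. Its inner integral is
\[
 \int |w(z')|^p|z'|^\beta\,|\chi(y/R)-\chi(y'/R)|^p(|y-y'|^2+|z-z'|^2)^{-(n+sp)/2}\dd y'\dd z'.
\]
I would bound $|\chi(y/R)-\chi(y'/R)|\le\min\{2\|\chi\|_\infty,\,R^{-1}\|\nabla\chi\|_\infty|y-y'|\}$ and split the estimate into two regimes.

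In the first regime, $|z|$ is bounded, say $|z|\le 2\rho$ with $\supp w\subset\{\rho^{-1}<|z'|<\rho\}$. The $y'$-integral of $\min\{1,|\xi|^p/R^p\}(|\xi|^2+c)^{-(n+sp)/2}$ is $O(R^{-sp})$ when $sp<1$ and $k\ge1$ (after scaling $\xi=R\xi_1$; logarithmic or $R^{-p}$ variants arise otherwise), so it is $o(1)$ in every case. Moreover, only $y$ in an $O(R)$-neighborhood of $\supp\chi(\cdot/R)$ contributes significantly. For $y$ at distance $D\gg R$ from that support, $\chi(y/R)=0$ and the inner integral is at most
\[
 C\int_{|y'|\lesssim R}(|y-y'|^2+1)^{-(n+sp)/2}\dd y'\lesssim R^kD^{-n-sp}.
\]
After raising to $q/p$ and integrating over $y$ and over bounded $z$, this contributes $O(R^k)$ times a small factor: with $q/p>1$ the tail $\int_{D>R}(R^kD^{-n-sp})^{q/p}D^{k-1}\dd D$ is $o(R^k)$ because $q(n+sp)/p>n>k$. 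Hence the contribution of bounded $z$ is $o(R^k)$.

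In the second regime, $|z|$ is large. There the inner integral is at most $C\,\mathbf 1\cdot R^k(|y-y_R|+|z|)^{-n-sp}$-type bounds, i.e. the compact-tail estimate \eqref{eq:compacttail} in the ambient dimension $n$ after rescaling, with $|z|$ replaced by $|X|$. Integrability at infinity is then exactly the ambient condition \eqref{eq:flat-ambient-range}. I would scale $X=R X_1$ for $|z|\gtrsim R$ and check that this part is $O(R^{k}\cdot R^{-\kappa})$ with
\[
 \kappa=sq+n(q/p-1)-\alpha-k\cdot 0>0\text{-type}.
\]
The power count needs care, and at this point the difference between the ambient range and the profile range matters. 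For the intermediate range $\rho\lesssim|z|\lesssim R$, I would use dominated convergence in the rescaled variables, with pointwise vanishing coming from the factor $R^{-1}|y-y'|$.

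Finally, the two-sided triangle inequality gives
\[
 \bigl|\E^{1/q}-\|M_R\|\bigr|\le\|E_R\|=o(R^{k/q}).
\]
Dividing by $\Pp^{1/q}\asymp R^{k/q}$ and raising to the $q$th power yields \eqref{eq:flat-product-limit}.
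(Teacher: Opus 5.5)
Your overall architecture matches the paper's. The splitting into $M_R+E_R$, the exact evaluation of $\|M_R\|$ through \eqref{eq:flat-gamma}, and the two-sided triangle inequality in the mixed norm are exactly the paper's steps, and your bounded-$z$ regime is essentially correct.

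The gap is in your second regime. The far-field bound $CR^k(\cdots)^{-n-sp}$ for the inner integral of $E_R$ holds only where $\chi(y/R)=0$. Suppose instead $|\chi(y/R)|\ge c>0$ and $|z|\gg R$. Then $|\chi(y/R)-\chi(y'/R)|^p=|\chi(y/R)|^p$ for every $y'$ in the ball $|y'-y|<|z|$ lying outside $R\,\supp\chi$. That set has volume $\asymp|z|^k$, and the kernel is $\asymp|z|^{-n-sp}$ on it. Hence the inner integral there is $\gtrsim|z|^{-m-sp}$, which exceeds your bound by the factor $(|z|/R)^k$. Your dominated-convergence step is planned only for $|z|\lesssim R$, so the region $\{|y|\lesssim R,\ |z|\gtrsim R\}$ is left without a valid estimate. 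Your exponent is also misidentified. Relative to $R^k$, the error decays like $R^{\alpha+m-\frac qp(m+sp)}$, so the decay is governed by the profile range \eqref{eq:flat-profile-range}, not by the ambient gap $sq+n(q/p-1)-\alpha$.

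The repair, which is the paper's proof, is to split by tangential position rather than by $|z|$. Put $Y=y/R$, $\xi=y'-y$, and choose $L$ with $\supp\chi\subset B_L$. On $|Y|\le2L$, the Lipschitz bound gives a $Y$-uniform majorant
\[
 B_R(Y,z)\le F_R(z)^p:=C\int_{\R^m}|w(z')|^p|z'|^\beta\int_{\R^k}\frac{\min\{|\xi|/R,1\}^p}{(|\xi|^2+|z-z'|^2)^{(n+sp)/2}}\dd\xi\dd z'.
\]
It satisfies $F_R\le F_1$ and $F_R\to0$ pointwise. $F_1$ is bounded on bounded sets. At the diagonal $z'=z$, which your constant $c=|z-z'|^2$ overlooks, use $|\xi|^p\le|(\xi,z-z')|^p$ and $s<1$. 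For large $|z|$, \eqref{eq:flat-gamma} gives $F_1(z)^p\le C|z|^{-m-sp}$. Thus $|z|^\alpha F_1^q\in L^1(\R^m)$ exactly by $\alpha>-m$ and the profile upper bound, and dominated convergence handles bounded and unbounded $z$ at once.

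On $|Y|>2L$ one has $\chi(Y)=0$, and only there is $B_R\le CR^k(R^2|Y|^2+|z|^2)^{-(n+sp)/2}$ correct. The substitution $z=R|Y|Z$ yields $CR^{\alpha+m-\frac qp(m+sp)}\int_{|Y|>2L}|Y|^{\alpha+m-\frac qp(n+sp)}\dd Y$. The $Y$-integral, and the $Z$-integral, are finite by the ambient range \eqref{eq:flat-ambient-range}.

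Finally, the lemma covers both orders of $p,q$; it is used for Theorem~\ref{thm:flat-reverse}. So your appeal to $q/p>1$ in the first regime should be replaced by $\frac qp(n+sp)>k$, which holds because the ambient range is nonempty.
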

\begin{proof}
Write $Y=y/R$ and $y'=y+\xi$. The product difference is
\[
 \chi(Y)(w(z)-w(z'))
 +w(z')\bigl(\chi(Y)-\chi(Y+\xi/R)\bigr).
\]
Use the mixed norm with outer measure $|z|^\alpha\dd Y\dd z$
and inner measure
\[
 |z'|^\beta(|\xi|^2+|z-z'|^2)^{-(n+sp)/2}\dd\xi\dd z'.
\]
The first summand has norm
\[
 \|\chi\|_{L^q(\R^k)}\,
 \mathfrak g_{n,k,s,p}^{1/p}\E_{m,\alpha,\beta}[w]^{1/q}.
\]
We prove that the second summand tends to zero without interchanging
the inner and outer norms. Put $\rho=q/p$ and denote its inner
$p$th-power integral by $B_R(Y,z)$. Choose $L\ge1$ with
$\supp\chi\subset B_L$.

On $|Y|\le2L$, the pointwise Lipschitz bound
$|\chi(Y)-\chi(Y+\xi/R)|\le C\min\{|\xi|/R,1\}$ gives
\[
 B_R(Y,z)\le F_R(z)^p,\qquad
 F_R(z)^p=C^p\int_{\R^m}|w(z')|^p|z'|^\beta
 \int_{\R^k}\frac{\min\{|\xi|/R,1\}^p}
 {(|\xi|^2+|z-z'|^2)^{(n+sp)/2}}\dd\xi\dd z'.
\]
For $R\ge1$, $F_R\le F_1$, and $F_R(z)\to0$ by dominated
convergence. The majorant is bounded on bounded normal sets:
near the diagonal use $|\xi|^p\le|(\xi,z-z')|^p$ and $s<1$;
away from it use kernel integrability and the compact support of $w$,
which stays away from the singular plane. Tangential integration also
gives $F_1(z)^p\le C_{w,\beta}|z|^{-m-sp}$ for large $|z|$.
The finiteness range of the homogeneous integral therefore implies
\[
 \int_{|Y|\le2L}\int_{\R^m}|z|^\alpha B_R(Y,z)^\rho\dd z\dd Y
 \le |B_{2L}|\int_{\R^m}|z|^\alpha F_R(z)^q\dd z\longrightarrow0.
\]

For $|Y|>2L$, one has $\chi(Y)=0$. The remaining factor is supported
where $|Y+\xi/R|\le L$, hence $|\xi|\ge R|Y|/2$ and the allowed
$\xi$-volume is at most $C R^k$. Since $w$ has compact support,
for all sufficiently large $R$ this gives the uniform estimate
\[
 B_R(Y,z)\le C_{w,\beta,\chi}R^k
       (R^2|Y|^2+|z|^2)^{-(n+sp)/2}.
\]
Integration in $z$, by $z=R|Y|Z$, and then in $Y$ yields
\begin{equation}\label{eq:flat-product-far-error}
\begin{aligned}
 &\int_{|Y|>2L}\int_{\R^m}|z|^\alpha B_R(Y,z)^\rho\dd z\dd Y\\
 &\quad\le C R^{\alpha+m-\rho(m+sp)}
 \int_{|Y|>2L}|Y|^{\alpha+m-\rho(n+sp)}\dd Y
 \longrightarrow0.
\end{aligned}
\end{equation}
Indeed, the normal integral is finite because $\alpha>-m$ and
$\alpha+m<\rho(n+sp)$; the last $Y$-integral is finite because
$\alpha+n<\rho(n+sp)$. The power of $R$ is negative because
$\alpha+m<\rho(m+sp)$. These are precisely the two upper restrictions
assumed in the lemma. Thus the error tends to zero for either order
of $p,q$.
The norm triangle inequality in both
directions proves convergence of $R^{-k/q}\E_{\Sigma,\alpha,\beta}^{1/q}$.
Finally,
\[
 \Pp_{\Sigma,\alpha,\beta}[\chi(y/R)w(z)]
 =R^k\|\chi\|_q^q\Pp_{m,\alpha,\beta}[w],
\]
which proves \eqref{eq:flat-product-limit}.
\end{proof}

Take a nonzero nonnegative $\zeta\in C_c^\infty(\R)$ and set
\begin{equation}\label{eq:flat-cutoff}
 u_{R,T}(y,z)=\chi(y/R)|z|^{-a}
 \zeta\left(\frac{\log|z|}{T}\right).
\end{equation}
For each fixed $T$, Lemma~\ref{lem:flat-product} removes the
tangential cutoff as $R\to\infty$. The normal logarithmic cutoff
argument in Lemma~\ref{lem:cutoff}, with $d=m$, then gives
the limit below for $p<q$. For $p>q$, use the direct exponential-tail
error estimate \eqref{eq:reverse-cutoff-error}, which requires no
mixed-norm interchange; it uses the same estimates for $k_{\beta,m}$.
Thus for either order
\begin{equation}\label{eq:flat-sharp-limit}
 \lim_{T\to\infty}\lim_{R\to\infty}
 \frac{\E_{\Sigma,\alpha,\beta}[u_{R,T}]}
 {\Pp_{\Sigma,\alpha,\beta}[u_{R,T}]}
 =J_{\Sigma,a,\beta}^{q/p}.
\end{equation}
For $m=1$ the same proof uses counting measure on $\Sph^0$.

For a single sequence, take $T_j\to\infty$ with the normal
quotient error below $1/j$. For each fixed $T_j$, choose $R_j$
so large that the tangential quotient error in
\eqref{eq:flat-product-limit} is also below $1/j$.
Then $u_{R_j,T_j}$ belongs to the stated smooth test class, and the ratios
 $\E_{\Sigma,\alpha,\beta}[u_{R_j,T_j}]/
 \Pp_{\Sigma,\alpha,\beta}[u_{R_j,T_j}]$
converge to $J_{\Sigma,a,\beta}^{q/p}$ as $j\to\infty$,
as required by \eqref{eq:flat-sharp-limit}.
The two limits are taken in this order.

\begin{remark}[One-sided comparison does not transfer a lower bound]
\label{rem:flat-comparison}
For $p<q$, Lemma~\ref{lem:flat-product} proves, throughout
\eqref{eq:flat-profile-range},
\begin{equation}\label{eq:flat-best-comparison}
 H_{\Sigma,\alpha,\beta}^{p,q}
 \le\mathfrak g_{n,k,s,p}^{q/p}H_{m,\alpha,\beta}^{p,q}.
\end{equation}
This is an upper bound, not an equality theorem. In particular,
sharpness for the point problem alone does not prove the flat lower
bound on a strictly extended interval. Conversely, any strict loss
of homogeneous sharpness for the normal point problem implies strict
loss for the flat problem at the same weight. For $m\ge2$, the
point-endpoint loss theorem therefore gives flat loss near both
endpoints of \eqref{eq:flat-profile-range}. The upper normal endpoint
is an interior point of the ambient finite-energy interval when $k>0$.
\end{remark}

\subsection{A reference estimate for tangential shifts}

We establish the reference estimate needed in the proof of
Proposition~\ref{prop:flat-extension}. It controls the negative part
of the integrated remainder and yields the additional widths in
Theorem~\ref{thm:flat}\textup{(i), (ii)}.

For $f\in L^2(\mathcal C_\Sigma)$ define
\begin{align}
 \mathcal D_{\Sigma,\beta,t}[f]
 &=\int_{\mathcal C_\Sigma}\int_{\R^k}\int_{\Sph^{m-1}}
 \mathcal K_{\beta,t}(\eta,\omega,\sigma)
 |f(y,\tau,\omega)-f(y+e^\tau\eta,\tau+t,\sigma)|^2
 \dd\sigma\dd\eta\dd y\dd\tau\dd\omega,
 \label{eq:flat-D}\\
 \mathcal Q_\Sigma[f]
 &=\int_1^2\int_{\mathcal C_\Sigma}\int_{|\eta|<1}
 \int_{\Sph^{m-1}}
 |f(y,\tau,\omega)-f(y+e^\tau\eta,\tau+h,\sigma)|^2
 \dd\sigma\dd\eta\dd y\dd\tau\dd\omega\dd h.
 \label{eq:flat-Q}
\end{align}
The shifts $(y,\tau)\mapsto(y+e^\tau\eta,\tau+t)$ have
Jacobian one and preserve the $L^2$ norm. The reference energy averages
their squared differences over $|\eta|<1$ and $1\le t\le2$, and is
therefore finite for $L^2$ functions.

\begin{lemma}[An estimate for affine shifts in $L^2$]\label{lem:flat-affine}
For $F\in L^2(\R^k\times\R)$, put
\[
 U_{\eta,h}F(y,\tau)=F(y+e^\tau\eta,\tau+h),\qquad
 P_F(\eta,h)=\|F-U_{\eta,h}F\|_2^2,
\]
and
\[
 Q_F=\int_1^2\int_{|\zeta|<1}P_F(\zeta,r)\dd\zeta\dd r.
\]
There is $C=C(k)$ such that, for $t\ge0$ and $\eta\in\R^k$,
\begin{equation}\label{eq:flat-affine-chain}
 P_F(\eta,t)\le
 C\bigl(1+t+\log(1+e^{-t}|\eta|)\bigr)^2 Q_F.
\end{equation}
\end{lemma}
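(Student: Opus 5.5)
The plan is to exploit the group structure of the shifts. A direct computation shows that $U_{\eta_1,h_1}U_{\eta_2,h_2}=U_{\eta_1+e^{h_1}\eta_2,\,h_1+h_2}$. Writing $g=(\eta,h)$ and $g\cdot g'=(\eta+e^h\eta',h+h')$, the map $g\mapsto U_g$ is therefore a representation of the affine group of $\R^k$ by isometries of $L^2(\R^k\times\R)$; they are isometries because the shifts have Jacobian one. The proof then copies the one-dimensional argument in Lemma~\ref{lem:reference}. First we prove a small-shift bound $P_F(g_0)\le C Q_F$ for all $g_0$ in a fixed neighbourhood of the identity. Then we write a general $(\eta,t)$ as a word of length $N\lesssim 1+t+\log(1+e^{-t}|\eta|)$ in that neighbourhood. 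Finally we telescope.

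\emph{Small shifts.} For any $g$, isometry gives $P_F(g_0)=\|U_gF-U_{g\cdot g_0}F\|_2^2\le 2P_F(g)+2P_F(g\cdot g_0)$. Let $g_0=(\eta,h)$ with $|\eta|\le c_0:=\tfrac12e^{-2}$ and $|h|\le 1/2$. Average over $g=(\zeta,r)$ in $E_{g_0}=\{|\zeta|<1/2,\ r\in I_h\}$, where $I_h$ is the interval of length $1/2$ used in Lemma~\ref{lem:reference}. Then $E_{g_0}$ lies in the reference set $\mathcal R=\{|\zeta|<1,\ 1\le r\le2\}$. Its right translate $E_{g_0}\cdot g_0=\{(\zeta+e^r\eta,r+h)\}$ also lies in $\mathcal R$, because $e^r|\eta|\le e^2c_0=1/2$. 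Moreover, the map $(\zeta,r)\mapsto(\zeta+e^r\eta,r+h)$ has Jacobian one. Averaging therefore gives $P_F(g_0)\le C(k)Q_F$, with $C(k)$ depending only on $|E_{g_0}|$, a fixed multiple of the volume of the unit ball in $\R^k$. I expect this measure bookkeeping to be the main obstacle: the translate must stay inside $\mathcal R$ uniformly, and the triangular Jacobian must be exactly one. Both are settled by the explicit choices above.

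\emph{Word decomposition and telescoping.} Write $(\eta,t)=(0,t)\cdot(\xi,0)$ with $\xi=e^{-t}\eta$. The factor $(0,t)$ is a product of $\lceil 2t\rceil$ copies of $(0,t/\lceil2t\rceil)$, each of which is small (if $t=0$ it is simply omitted). For the tangential factor, use conjugation: $(0,s)\cdot(\zeta,0)\cdot(0,-s)=(e^s\zeta,0)$. If $|\xi|\le c_0$, then $(\xi,0)$ is already small. Otherwise take $s=\log(|\xi|/c_0)$ and $\zeta=e^{-s}\xi$, so that $|\zeta|=c_0$. This writes $(\xi,0)$ as a word of length at most $2\lceil2s\rceil+1\lesssim 1+\log(1+|\xi|)$ in small elements. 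Hence $(\eta,t)=g_1\cdots g_N$ with every $g_j$ small and $N\le C(1+t+\log(1+e^{-t}|\eta|))$. The telescoping identity
\[
F-U_{g_1\cdots g_N}F=\sum_{j=1}^N U_{g_1\cdots g_{j-1}}\bigl(F-U_{g_j}F\bigr),
\]
together with isometry and Cauchy--Schwarz, gives $P_F(\eta,t)\le N\sum_jP_F(g_j)\le C N^2Q_F$. This is \eqref{eq:flat-affine-chain}.
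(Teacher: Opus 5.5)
Your proof is correct and takes essentially the same route as the paper. Both arguments use the group law $U_gU_{g'}=U_{gg'}$ and a local bound $P_F(g_0)\le C(k)Q_F$, obtained by averaging a squared triangle inequality over a fixed piece of the reference region; both then use the conjugation decomposition (your word $(0,t+s)(e^{-t-s}\eta,0)(0,-s)$ is exactly the paper's $(0,H)(e^{-H}\eta,0)(0,t-H)$ with $H=t+s$) and finish by telescoping with Cauchy--Schwarz. The only cosmetic difference is that you average over right translates $g\mapsto g\cdot g_0$, whose Jacobian is exactly one, whereas the paper averages over left translates $v\mapsto gv$ and absorbs the Jacobian $e^{kh}$ into the constant.
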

\begin{proof}
Each $U_{\eta,h}$ preserves the $L^2$ norm, and is therefore
unitary, because its coordinate map has Jacobian one. Composition has the exact form
\[
 U_{\eta,h}U_{\zeta,r}=U_{\eta+e^h\zeta,h+r}.
\]
Write $g=(\eta,h)$, use this product on parameter pairs, and set
$S=B_1\times[1,2]$ and $S_0=B_{1/4}\times[5/4,7/4]$.

With $\delta=1/8$, left multiplication sends
$(\zeta,r)\in S_0$ to $(\eta+e^h\zeta,h+r)$.
If $|\eta|\le\delta$ and $|h|\le\delta$, then
$|\eta+e^h\zeta|\le1/8+e^{1/8}/4<1$ and
$9/8\le h+r\le15/8$. Thus it maps $S_0$ into $S$.

For $v\in S_0$, use the exact identity
$F-U_gF=(F-U_{gv}F)+U_g(U_vF-F)$.
The norm triangle inequality, $(a+b)^2\le2a^2+2b^2$, and
unitarity therefore give
\[
 P_F(g)\le2P_F(gv)+2P_F(v).
\]
Left multiplication in parameter space has Jacobian $e^{kh}$.
Averaging the last bound over $S_0$ therefore gives
\begin{equation}\label{eq:flat-local-chain}
 P_F(g)\le
 \frac{2(e^{k\delta}+1)}{|S_0|}Q_F
 \qquad(|\eta|\le\delta,\ |h|\le\delta).
\end{equation}
This argument controls a whole neighborhood of the identity by an
average over the fixed region $|\zeta|<1$, $1\le r\le2$.

For arbitrary $t\ge0$, set
\[
 H=t+\log(1+\delta^{-1}e^{-t}|\eta|).
\]
Then $H\ge t$ and $|e^{-H}\eta|\le\delta$, and
\[
 (\eta,t)=(0,H)(e^{-H}\eta,0)(0,t-H).
\]
Split each of the two pure logarithmic shifts into increments of
absolute size at most $\delta$. Together with the middle tangential
shift, the resulting number $N$ of factors is at most
\[
 C_\delta\bigl(1+t+\log(1+e^{-t}|\eta|)\bigr).
\]

If these factors are $g_1,\ldots,g_N$, telescoping gives
\[
 F-U_{g_1}\cdots U_{g_N}F
 =\sum_{j=1}^N U_{g_1}\cdots U_{g_{j-1}}(F-U_{g_j}F).
\]
Unitarity and Cauchy--Schwarz thus bound the squared norm by
$N\sum_{j=1}^N P_F(g_j)$. Applying \eqref{eq:flat-local-chain}
to each factor proves \eqref{eq:flat-affine-chain}.
\end{proof}

\begin{lemma}[Control of flat logarithmic displacements]
\label{lem:flat-reference}
For every $t_0>0$ there is $C_{t_0}=C(n,k,s,p,\beta,t_0)<\infty$ such that
\begin{equation}\label{eq:flat-reference-control}
 \mathcal D_{\Sigma,\beta,t}[f]
 \le C_{t_0}k_{\Sigma,\beta}(t)(1+t^2)\mathcal Q_\Sigma[f]
 \qquad(t\ge t_0,\ f\in L^2(\mathcal C_\Sigma)).
\end{equation}
Furthermore, $\mathcal Q_\Sigma[f]=0$ implies $f=0$.
\end{lemma}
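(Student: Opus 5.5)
We follow the proof of Lemma~\ref{lem:reference}: split $f$ into a part controlled through angular fluctuations and a part controlled through the affine translation chain of Lemma~\ref{lem:flat-affine}. The new feature is that the tangential increment $e^\tau\eta$ ranges over all of $\R^k$ under the kernel $\mathcal K_{\beta,t}$. We must therefore bound $P$ at large tangential shifts, and the logarithmic growth in \eqref{eq:flat-affine-chain} has to be integrated against the tangential tail of the kernel.

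\emph{Step 1 (spherical decomposition).} Write $f=\bar f+g$, where $\bar f(y,\tau)$ is the average of $f$ over $\Sph^{m-1}$ and $g$ has zero angular mean. In the unweighted expression \eqref{eq:flat-Q}, the angular integrals kill all cross terms, exactly as in \eqref{eq:Eref-decomp}. This gives
\[
 \mathcal Q_\Sigma[f]=c_k\,2S\|g\|_2^2+S^2\int_1^2\int_{|\zeta|<1}P_{\bar f}(\zeta,h)\dd\zeta\dd h,
\]
with $S=|\Sph^{m-1}|$ and $c_k=|B_1|$. Here we use that the affine shift preserves the $L^2$ norm of $g(\cdot,\cdot,\omega)$. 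So $\mathcal Q_\Sigma$ controls $\|g\|_2^2$ and $Q_{\bar f}$. For $m=1$ we use counting measure on $\Sph^0$, and the computation is the same.

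\emph{Step 2 (weighted decomposition).} In $\mathcal D_{\Sigma,\beta,t}[f]$, the mixed term between the mean increment and the fluctuation again vanishes. The reason is that integrating $\mathcal K_{\beta,t}$ over $\omega$ (or over $\sigma$) for fixed $\eta$ gives a function of $\eta$ and $t$ alone, by rotation invariance. The fluctuation--fluctuation term is at most $4k_{\Sigma,\beta}(t)\|g\|_2^2$, using unitarity of the shift $(y,\tau)\mapsto(y+e^\tau\eta,\tau+t)$. The mean term is
\[
 S\int_{\R^k}\kappa_t(\eta)P_{\bar f}(\eta,t)\dd\eta,\qquad
 \kappa_t(\eta)=\int_{\Sph^{m-1}}\mathcal K_{\beta,t}(\eta,\omega,\sigma)\dd\sigma,
\]
and we insert Lemma~\ref{lem:flat-affine} into it.

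\emph{Step 3 (main obstacle: the tangential tail).} It remains to prove
\[
 \int_{\R^k}\kappa_t(\eta)\bigl(1+t+\log(1+e^{-t}|\eta|)\bigr)^2\dd\eta\le C_{t_0}(1+t^2)k_{\Sigma,\beta}(t)\qquad(t\ge t_0).
\]
For $t\ge t_0$ we have $|\omega-e^t\sigma|\ge e^t-1\ge c_{t_0}e^t$, so $\kappa_t(\eta)\asymp e^{(m+\beta)t}(|\eta|^2+e^{2t})^{-(n+sp)/2}$ with constants depending on $t_0$. Substituting $\eta=e^t\xi$ gives
\[
 e^{(m+\beta)t}e^{-(m+sp)t}\int_{\R^k}(1+|\xi|^2)^{-(n+sp)/2}\bigl(1+t+\log(1+|\xi|)\bigr)^2\dd\xi.
\]
The weight $(1+|\xi|^2)^{-(n+sp)/2}$ decays like $|\xi|^{-(n+sp)}$ and $n+sp>k$, so it integrates the squared logarithm. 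Hence the integral is at most $C(1+t)^2$. The prefactor is comparable to $k_{\Sigma,\beta}(t)$ for $t\ge t_0$: this follows from \eqref{eq:flat-k} and \eqref{eq:k0bounds} (with $d=m$), since on $[t_0,\infty)$ the quantity $k_{\beta,m}(t)e^{\lambda_+t}$ is bounded above and below. This is where the dependence on $t_0$ enters, because the lower bound $|\omega-e^t\sigma|\gtrsim e^t$ fails as $t\to0$. Combining Steps 1--3, and using $M_t\ge2$ as in Lemma~\ref{lem:reference}, gives \eqref{eq:flat-reference-control}.

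\emph{Step 4 (nondegeneracy).} Suppose $\mathcal Q_\Sigma[f]=0$. Step 1 gives $g=0$ and $Q_{\bar f}=0$. Then \eqref{eq:flat-local-chain} gives $P_{\bar f}(0,h)=0$ for $|h|\le1/8$, so $\bar f(\cdot,\tau+1/8)=\bar f(\cdot,\tau)$. The periodic-sum argument at the end of Lemma~\ref{lem:reference}, now summing over translates in $\tau$ with $y$ integrated, forces $\bar f=0$. Hence $f=0$.
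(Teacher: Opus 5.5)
Your proposal is correct and follows the paper's proof essentially step for step. It uses the same mean/fluctuation splitting of $\mathcal Q_\Sigma$ and of $\mathcal D_{\Sigma,\beta,t}$, and it integrates the affine chain bound of Lemma~\ref{lem:flat-affine} against the kernel rescaled by $\eta=e^t\xi$, with $n+sp>k$ absorbing the squared logarithm; it then compares with $k_{\Sigma,\beta}(t)\asymp_{t_0}e^{-(sp-\beta)t}$ and proves nondegeneracy by the same translation-invariance argument.
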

\begin{proof}
Let $S_m=|\Sph^{m-1}|$, with counting measure if $m=1$, and write
$f=F+g$, where
\[
 F(y,\tau)=\frac1{S_m}\int_{\Sph^{m-1}}f(y,\tau,\omega)\dd\omega,
 \qquad\int_{\Sph^{m-1}}g(y,\tau,\omega)\dd\omega=0.
\]
Independent angular integration in \eqref{eq:flat-Q} gives exactly
\begin{equation}\label{eq:flat-Q-split}
 \mathcal Q_\Sigma[f]=S_m^2Q_F+2S_m|B_1|\|g\|_2^2.
\end{equation}
For each fixed $\eta$ and $t$, integrating $\mathcal K_{\beta,t}$ in
either angular variable gives a quantity independent of the remaining
angle. Hence, by Fubini's theorem, the mixed $F,g$ term in
$\mathcal D_{\Sigma,\beta,t}$ vanishes.
The $g$ contribution is at most $4k_{\Sigma,\beta}(t)\|g\|_2^2$, using
$|a-b|^2\le2|a|^2+2|b|^2$ and the measure-preserving shifts.

The mean contribution is
\[
 \int_{\R^k}\int_{\Sph^{m-1}}\int_{\Sph^{m-1}}
 \mathcal K_{\beta,t}(\eta,\omega,\sigma)P_F(\eta,t)
 \dd\sigma\dd\omega\dd\eta.
\]
Apply Lemma~\ref{lem:flat-affine} and change variables
$\eta=e^t\xi$. The kernel times tangential measure becomes
\begin{equation}\label{eq:flat-scaled-tail}
 \mathcal K_{\beta,t}(e^t\xi,\omega,\sigma)e^{kt}
 =\frac{e^{-(sp-\beta)t}}
 {(|\xi|^2+|e^{-t}\omega-\sigma|^2)^{(n+sp)/2}}.
\end{equation}
For $t\ge t_0$, one has
$1-e^{-t_0}\le|e^{-t}\omega-\sigma|\le2$.
In particular,
\[
 \int_{\R^k}
 \frac{1+\log^2(1+|\xi|)}
 {(|\xi|^2+(1-e^{-t_0})^2)^{(n+sp)/2}}\dd\xi<\infty,
\]
since $n+sp>k$. The mean contribution is thus at most
$C_{t_0}e^{-(sp-\beta)t}(1+t^2)Q_F$.
The same scaled kernel, without the logarithmic factor, gives
$k_{\Sigma,\beta}(t)\asymp_{t_0}e^{-(sp-\beta)t}$ for $t\ge t_0$.
Combining these bounds with \eqref{eq:flat-Q-split} proves
\eqref{eq:flat-reference-control}.

If the reference energy vanishes, then $g=0$ and $Q_F=0$.
Equation~\eqref{eq:flat-local-chain} makes $F$ invariant under
all sufficiently short pure $\tau$ translations, hence under all
$\tau$ translations. An $L^2(\R^k\times\R)$ function with that
property is zero. This proves the last assertion.
\end{proof}

\Needspace{15\baselineskip}
\subsection{Strict extension}

\begin{proposition}[Strict extension for flat singular sets]
\label{prop:flat-extension}
Fix $m+2\beta>sp$ and the other parameters of Theorem~\ref{thm:flat}.
The elementary flat interval extends by a positive width in $a$
beyond $a_0=(m+2\beta-sp)/(p+n_{p,q})$. More precisely, there are
$\eta>0$ and $c>0$ such that, for every nonnegative transformed test $v$,
\begin{equation}\label{eq:flat-strict-remainder}
 I_{\Sigma,a,\beta}[v]-J_{\Sigma,a,\beta}N_\Sigma[v]
 \ge c\mathcal Q_\Sigma[v^{q/2}]
 \qquad(a_0\le a\le a_0+\eta),
\end{equation}
and
\begin{equation}\label{eq:flat-strict-sharp}
 H_{\Sigma,sq-m-q\beta/p+qa,\beta}^{p,q}=J_{\Sigma,a,\beta}^{q/p}
 \qquad(0\le a\le a_0+\eta).
\end{equation}
The width is chosen so that \eqref{eq:flat-profile-range} holds.
For $1<p<q<2$, the optimized low-exponent interval in
Theorem~\ref{thm:flat} is retained; no additional width beyond its
optimized endpoint is asserted in this proposition.
\end{proposition}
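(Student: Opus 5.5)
The plan is to copy the absorption argument of Subsection~\ref{sec:low-new}, replacing the point cylinder by the flat cylinder $\mathcal C_\Sigma$. The reference energy $\eref$ becomes $\mathcal Q_\Sigma$, and Lemma~\ref{lem:reference} is replaced by Lemma~\ref{lem:flat-reference}.

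\emph{Starting point.} I start from the exact paired identity \eqref{eq:flat-pair}. As in Subsection~\ref{sec:proof-reduction}, it is proved with common cutoffs $\epsilon<t<T$. Setting $f=v^{q/2}$ and $c_{a,\beta}(t)=c(\vartheta_a(t),b_a(t))$, with $b_a(t)=e^{-(D_m-pa)t}$, the defining inequality for $c$ gives
\[
 I_{\Sigma,a,\beta}[v]-J_{\Sigma,a,\beta}N_\Sigma[v]\ge\int_0^\infty c_{a,\beta}(t)\mathcal D_{\Sigma,\beta,t}[f]\dd t .
\]
Removing the cutoffs is justified by the negative-part bounds obtained below.

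\emph{Parameter choices and the positive strip.} I fix a compact neighbourhood $\mathcal U$ of $a_0$ inside $(0,D_m/p)$; this keeps $a$ in \eqref{eq:flat-profile-range} and gives $b_a(t)\le1$. I then choose $t_0$ exactly as in \eqref{eq:shortpositive}. By Lemma~\ref{lem:short}, $c_{a,\beta}(t)\ge0$ for $0<t<t_0$. At $a=a_0$, Lemma~\ref{lem:scalar-support} gives $c_{a_0,\beta}>0$ on $[1,2]$. Continuity (Lemma~\ref{lem:continuity}) then gives $c_{a,\beta}(t)\ge\gamma>0$ for $a\in\mathcal U$ and $t\in[1,2]$, after shrinking $\mathcal U$.

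For the strip I need a lower bound $\int_1^2\mathcal D_{\Sigma,\beta,t}[f]\dd t\ge c_1\mathcal Q_\Sigma[f]$. This follows by restricting the $\eta$-integral in \eqref{eq:flat-D} to $|\eta|<1$, where the kernel satisfies $\mathcal K_{\beta,t}\ge e^{m+\beta}(1+(1+e^2)^2)^{-(n+sp)/2}>0$. The remaining integration region is exactly the one in \eqref{eq:flat-Q}.

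\emph{Absorbing the tail.} For $t\ge t_0$, Lemma~\ref{lem:continuity} gives $[c_{a,\beta}(t)]_-\le2C$, and Lemma~\ref{lem:flat-reference} gives $\mathcal D_{\Sigma,\beta,t}[f]\le C_{t_0}k_{\Sigma,\beta}(t)(1+t^2)\mathcal Q_\Sigma[f]$. Set
\[
 \mathcal N(a)=\int_{t_0}^\infty[c_{a,\beta}(t)]_-k_{\Sigma,\beta}(t)(1+t^2)\dd t .
\]
By \eqref{eq:flat-k} and \eqref{eq:k0bounds} with $d=m$, the integrand has the integrable majorant $Ck_{\Sigma,\beta}(t)(1+t^2)$, which decays like $e^{-(sp-\beta)t}$. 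Since $[c_{a_0,\beta}]_-=0$, dominated convergence gives $\mathcal N(a)\to0$ as $a\to a_0$. Choosing $\eta$ so that $C_{t_0}\mathcal N(a)\le\gamma c_1/2$ on $[a_0,a_0+\eta]$ yields \eqref{eq:flat-strict-remainder} with $c=\gamma c_1/2$.

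\emph{Conclusion.} H\"older's inequality, in the form used in the proof of the lower bound in Theorem~\ref{thm:flat}, then gives the lower bound $J_{\Sigma,a,\beta}^{q/p}$. The two-scale cutoff \eqref{eq:flat-sharp-limit} supplies the matching upper bound; it applies because $a_0+\eta<D_m/p<(m+\beta)/p$, and the ambient range is automatically satisfied there. Combining this with the elementary interval already proved gives \eqref{eq:flat-strict-sharp}.

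The main obstacle is the tail estimate. Lemma~\ref{lem:flat-reference} is only stated for $t\ge t_0$, so the constant $C_{t_0}$ must be fixed after $t_0$ is chosen but independently of $a$. I also need to check that the tangential logarithmic factor in Lemma~\ref{lem:flat-affine} is absorbed by the $\xi$-integral uniformly in $a\in\mathcal U$. The proof of Lemma~\ref{lem:flat-reference} appears to give both directly, since its constant does not depend on $a$.
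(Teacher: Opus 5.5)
Your proposal is correct and follows essentially the same route as the paper. Both arguments use the paired identity \eqref{eq:flat-pair} with common cutoffs, nonnegativity for short displacements from Lemma~\ref{lem:short}, a uniform positive strip on $1\le t\le2$ from the kernel lower bound on $|\eta|<1$, and absorption of the negative tail through Lemma~\ref{lem:flat-reference}, Lemma~\ref{lem:continuity}, and dominated convergence for $\mathcal N_{\Sigma,\beta}(a)$. Your explicit kernel constant and your check that the profile range lies inside the ambient range when $p<q$ are details the paper leaves implicit.
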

\begin{proof}
Choose a compact neighborhood $\mathcal U$ of $a_0$ such that
$a>0$ and $m+2\beta-sp-pa>0$ throughout $\mathcal U$.
Use \eqref{eq:flat-pair} with $f=v^{q/2}$ and put
\[
 c_{a,\beta}(t)=c(e^{-at},e^{-(m+2\beta-sp-pa)t}).
\]
Lemma~\ref{lem:short} makes $c_{a,\beta}(t)\ge0$ for
$0<t<t_0<1$ uniformly on a sufficiently small $\mathcal U$.
At $a_0$, the elementary scalar support has $c_{a_0,\beta}(t)>0$
for every $t>0$. Compactness on $1\le t\le2$ and the positive
lower bound of the kernel for $|\eta|<1$ imply, after shrinking
$\mathcal U$,
\[
 \int_1^2 c_{a,\beta}(t)\mathcal D_{\Sigma,\beta,t}[f]\dd t
 \ge c_*\mathcal Q_\Sigma[f],\qquad c_*>0.
\]
Lemma~\ref{lem:flat-reference} bounds the other negative
contributions by $C_{t_0}\mathcal N_{\Sigma,\beta}(a)\mathcal Q_\Sigma[f]$,
where
\[
 \mathcal N_{\Sigma,\beta}(a)=\int_{t_0}^\infty[c_{a,\beta}(t)]_-
 k_{\Sigma,\beta}(t)(1+t^2)\dd t.
\]
Lemma~\ref{lem:continuity} bounds the scalar negative part uniformly
on $\mathcal U$. Since $k_{\Sigma,\beta}(t)=\mathfrak g_{n,k,s,p}k_{\beta,m}(t)$
has an exponentially decaying tail as $t\to+\infty$, dominated convergence
and strict scalar support at $a_0$ give
$\mathcal N_{\Sigma,\beta}(a)\to0$ as $a\to a_0$.
Choose $\eta>0$ so that
$C_{t_0}\mathcal N_{\Sigma,\beta}(a)\le c_*/2$ on $[a_0,a_0+\eta]$.
Then \eqref{eq:flat-pair} gives \eqref{eq:flat-strict-remainder}.
H\"older yields the sharp Hardy lower bound, and
\eqref{eq:flat-sharp-limit} gives optimality.
The displacement pairing is first performed for
$\epsilon<|t|<T$; the reference estimate controls its negative
part when these cutoffs are removed.
\end{proof}

Lemma~\ref{lem:flat-affine} supplies the additional estimate for
tangential shifts needed to extend the point-singularity proof. Its
factorization of a large shift into small shifts gives the bound in
\eqref{eq:flat-reference-control}, which controls the negative
contribution in \eqref{eq:flat-pair}.

\Needspace{15\baselineskip}
\subsection{The case \texorpdfstring{$p>q$}{p>q}}

\begin{theorem}[Flat-subspace inequality for $p>q$]
\label{thm:flat-reverse}
Let $n\ge2$, $1\le k<n$, $m=n-k$, $0<s<1$, and $1<q<p<\infty$.
Assume
\begin{equation}\label{eq:flat-reverse-range}
 -m<\beta<sp,\qquad -m<\alpha<sq+n(q/p-1).
\end{equation}
Then, with $a=(\alpha+m+q\beta/p-sq)/q$ and
$J_{\Sigma,a,\beta}$ as in \eqref{eq:flat-J},
\[
 \E_{\Sigma,\alpha,\beta}[u]
 \ge J_{\Sigma,a,\beta}^{q/p}\Pp_{\Sigma,\alpha,\beta}[u]
 \qquad(u\in C_c^\infty(\R^n\setminus\Sigma)).
\]
The constant is optimal on the stated test class. It is zero if $a=0$
and positive otherwise.
\end{theorem}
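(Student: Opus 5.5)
The plan is to reduce the flat $p>q$ inequality to a pointwise scalar inequality on the flat cylinder, via the discrete Picone inequality \eqref{eq:intro-Picone}, rather than through the paired remainder $G$ used for $p<q$. I would pass to the cylinder coordinates \eqref{eq:flat-cylinder}, replace $u$ by $|u|$, and write $v\ge0$. The representation \eqref{eq:flat-cylinder-energy}--\eqref{eq:flat-cylinder-A} then expresses the energy as $\int_{\mathcal C_\Sigma}\mathcal A_{\Sigma,a,\beta}[v]^{q/p}$. The comparison function is the homogeneous profile $w(y,z)=|z|^{-a}$, which on the cylinder is the constant $1$. Its inner energy density is $\mathcal A_{\Sigma,a,\beta}[1]\equiv J_{\Sigma,a,\beta}$, by \eqref{eq:flat-k} and \eqref{eq:Jcylinder} with $d=m$. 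For $a\ne0$, the discrete Picone inequality is the natural tool for $q\le p$.

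Concretely, with $\rho=q/p<1$, I would apply the pointwise Picone-type estimate of Appendix~\ref{sec:reverse-order} for each fixed outer point $(y,\tau,\omega)$. Let $U=v(y,\tau,\omega)$ and $V=v(y+e^\tau\eta,\tau+t,\sigma)$. The estimate should bound $\mathcal A_{\Sigma,a,\beta}[v]^{q/p}$ from below by an expression linear in $U^q$ and $V^q$, namely
\[
 \mathcal A_{\Sigma,a,\beta}[v]^{q/p}\ge J_{\Sigma,a,\beta}^{q/p-1}\int\mathcal K_{\beta,t}\,\Psi(U,V;t)\,\dd\sigma\dd\eta\dd t .
\]
Here $\Psi$ is built from $|1-e^{-at}|^{p-2}(1-e^{-at})(U^q-e^{-at(q-1)}\cdots)$. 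To get there I would use concavity of $x\mapsto x^{\rho}$, via a tangent line at $J$, combined with H\"older in the inner measure. Integrating over the cylinder and pairing positive and negative displacements by the reversal \eqref{eq:flat-reversal}, the $V^q$ terms are converted into $U^q$ terms. Only the tangential Jacobian $e^{-kt}$ and the translation invariance of $(y,\tau)\mapsto(y+e^\tau\eta,\tau+t)$ enter, and both are measure-preserving. The $y$-variable is inert, so the resulting total coefficient of $N_\Sigma[v]$ is exactly $J_{\Sigma,a,\beta}^{q/p}$, as in the point case. This gives the lower bound. When $a=0$ it is trivial, and positivity for $a\ne0$ follows from Proposition~\ref{prop:flat-finite}.

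For optimality I would use the two-scale sequence \eqref{eq:flat-cutoff}. The range \eqref{eq:flat-reverse-range} is the ambient range, and since $p>q$ it is contained in the profile range \eqref{eq:flat-profile-range}. Hence both hypotheses of Lemma~\ref{lem:flat-product} hold, and \eqref{eq:flat-sharp-limit}, already stated for either order of exponents via \eqref{eq:reverse-cutoff-error}, yields the upper bound. Compact supports keep the sequence inside $C_c^\infty(\R^n\setminus\Sigma)$.

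The main obstacle is justifying the pairing step with cutoffs when $p>q$. Unlike the $p<q$ case, the scalar step is applied inside the $q/p<1$ power, so H\"older cannot be used in the outer integral. Instead I would apply the Picone bound at the level of the integrand first, and only then integrate, using the common $\epsilon<|t|<T$ truncation. The far-field integrability of $U^q$ terms against $k_{\Sigma,\beta}$ is where the ambient restriction $\alpha<sq+n(q/p-1)$ must be used explicitly. I expect to check it by the tangential Gamma factor \eqref{eq:flat-gamma} and the exponential tails of $k_{\beta,m}$.
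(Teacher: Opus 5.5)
Your proposal is correct and follows the paper's proof. At each outer point you apply the weighted H\"older inequality of Lemma~\ref{lem:reverse-calibrated} in the inner measure $\mathcal K_{\beta,t}\dd t\dd\eta\dd\sigma$ with reference weight $W=|1-\theta|$, $\theta=e^{-at}$; you linearize with Lemma~\ref{lem:reverse-picone} to $|1-\theta|^{p-2}(1-\theta)(U^q-\theta V^q)$, where the factor on $V^q$ is $\theta$, not $e^{-at(q-1)}$; you integrate at common cutoffs $\epsilon<|t|<T$; and you get optimality from the two-scale cutoffs of Lemma~\ref{lem:flat-product}.

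Some corrections:
\begin{itemize}
\item The tangent line of $x\mapsto x^\rho$ at the fixed point $J$ bounds $\mathcal A^\rho$ from above, so it cannot give the lower bound. H\"older alone does the job (equivalently, Jensen for the probability measure $W^p\dd\mu/\int W^p\dd\mu$).
\item No reversal pairing is needed. For each fixed $t$, the measure-preserving shear already gives $\int\mathcal K_{\beta,t}V^q=k_{\Sigma,\beta}(t)N_\Sigma[v]$.
\item The ambient upper bound on $\alpha$ is used only to get $J_{\Sigma,a,\beta}<\infty$ and in Lemma~\ref{lem:flat-product}. It is not needed for integrability of the $U^q$ terms, which is automatic at fixed $t\ne0$.
\end{itemize}
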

\begin{proof}
Since $q/p<1$ and $n>m$, the ambient upper bound in
\eqref{eq:flat-reverse-range} implies the upper bound for finiteness of the homogeneous integral.
In particular, $J:=J_{\Sigma,a,\beta}<\infty$. Replace the test by
its absolute value, and use \eqref{eq:flat-cylinder-energy} with
$\rho=q/p$, $N=\int_{\mathcal C_\Sigma}v^q$.
Assume first $a\ne0$. For $0<\epsilon<T$ restrict the displacement
to $\epsilon<|t|<T$. That is, replace the $t$-integral in
\eqref{eq:flat-cylinder-A} by its restriction to this annulus, and write
\begin{equation}\label{eq:flat-truncated-A}
\begin{aligned}
 \mathcal A_\epsilon^T[v](y,\tau,\omega)
 ={}&\int_{\epsilon<|t|<T}\int_{\R^k}\int_{\Sph^{m-1}}
 \mathcal K_{\beta,t}(\eta,\omega,\sigma)\\
 &\quad\times|v(y,\tau,\omega)-e^{-at}v(y+e^\tau\eta,\tau+t,\sigma)|^p
 \dd\sigma\dd\eta\dd t,
\end{aligned}
\end{equation}
and
$J_\epsilon^T=\int_{\epsilon<|t|<T}k_{\Sigma,\beta}(t)|1-e^{-at}|^p\dd t$.
At each outer point apply Lemma~\ref{lem:reverse-calibrated} with
measure $\mathcal K_{\beta,t}\dd t\dd\eta\dd\sigma$,
$F=|U-\theta V|$, $W=|1-\theta|$, and $\theta=e^{-at}$.
Lemma~\ref{lem:reverse-picone} then gives
\[
 (\mathcal A_\epsilon^T[v])^\rho
 \ge (J_\epsilon^T)^{\rho-1}
 \int_{\epsilon<|t|<T}\int_{\R^k}\int_{\Sph^{m-1}}
 \mathcal K_{\beta,t}|1-\theta|^{p-2}(1-\theta)
 (U^q-\theta V^q)\dd\sigma\dd\eta\dd t.
\]
Here $U=v(y,\tau,\omega)$ and
$V=v(y+e^\tau\eta,\tau+t,\sigma)$. For every fixed $t\ne0$,
the outer shift preserves $\dd y\dd\tau$, and integrating the kernel
in either angular variable gives the same value. Fubini's theorem
then gives
\begin{equation}\label{eq:flat-Uq-Vq}
\begin{aligned}
 &\int_{\mathcal C_\Sigma}\int_{\R^k}\int_{\Sph^{m-1}}
 \mathcal K_{\beta,t}(\eta,\omega,\sigma)U^q
 \dd\sigma\dd\eta\dd y\dd\tau\dd\omega\\
 &\qquad=\int_{\mathcal C_\Sigma}\int_{\R^k}\int_{\Sph^{m-1}}
 \mathcal K_{\beta,t}(\eta,\omega,\sigma)V^q
 \dd\sigma\dd\eta\dd y\dd\tau\dd\omega
 =k_{\Sigma,\beta}(t)N,
\end{aligned}
\end{equation}
the first identity integrating in $\sigma$ and the second integrating
in $\omega$ after the shear $y\mapsto y+e^\tau\eta$, which
leaves $\dd y\dd\eta$ unchanged. All signed expressions are integrable at
these finite displacement cutoffs. Integrating the preceding inequality
therefore gives
\[
 \int_{\mathcal C_\Sigma}(\mathcal A_\epsilon^T[v])^\rho
 \ge (J_\epsilon^T)^{\rho-1}
 \int_{\epsilon<|t|<T}k_{\Sigma,\beta}(t)|1-\theta|^p\dd t\,N
 =(J_\epsilon^T)^\rho N.
\]
Nested cutoffs and monotone convergence prove the lower bound.
For $a=0$ it is nonnegativity. Lemma~\ref{lem:flat-product} applies
under \eqref{eq:flat-reverse-range}; its proof above covers $p>q$.
Combining it with the normal logarithmic cutoffs and
\eqref{eq:reverse-cutoff-error} gives \eqref{eq:flat-sharp-limit},
including $a=0$. A diagonal choice of the two cutoff scales lies
in the original smooth test class and proves optimality.
\end{proof}

\subsection{The roles of ambient dimension and codimension}

The dimensional rules are
\[
 \text{homogeneous power: }\frac{\alpha+m+q\beta/p-sq}{q},\qquad
 \text{tangential factor: }\mathfrak g_{n,k,s,p},
\]
\[
 \text{test-energy range: }-m<\alpha<sq+n(q/p-1),\qquad
 \text{profile range: }-m<\alpha<sq+m(q/p-1).
\]
Thus a formal replacement of the ambient dimension by the codimension
in every statement would give an incorrect finiteness range.

\section{Regional Hardy inequalities on a half-space}\label{sec:halfspace}

Let $n\ge2$ and write $X=(y,r)\in\R^{n-1}\times(0,\infty)=:\R^n_+$.
Both integrations below are over the half-space. Thus the result
concerns a regional difference energy, as in \Kij\ (Theorem~2).
The full-space problem with singular set $\{r=0\}$ in
Section~\ref{sec:flat} includes interactions between the two sides of
that plane and has a different homogeneous constant.

\subsection{The one-sided homogeneous integral and finiteness}

For $0<s<1$, $1<p,q<\infty$, $-1<\beta<sp$, and
$u\in C_c^\infty(\R^n_+)$ put
\begin{align}
 \E_{+,\alpha,\beta}[u]
 &=\int_{\R^n_+}r^\alpha
 \left(\int_{\R^n_+}
 \frac{|u(y,r)-u(y',r')|^p(r')^\beta}
 {(|y-y'|^2+|r-r'|^2)^{(n+sp)/2}}\dd y'\dd r'
 \right)^{q/p}\dd y\dd r,\label{eq:half-energy}\\
 \Pp_{+,\alpha,\beta}[u]
 &=\int_{\R^n_+}|u(y,r)|^q r^{\alpha+q\beta/p-sq}\dd y\dd r,
 \label{eq:half-denominator}
\end{align}
and let $H_{+,\alpha,\beta}^{p,q}$ be the infimum of their quotient
over the nonzero tests in this class. Set
\begin{equation}\label{eq:half-parameters}
 a=\frac{\alpha+1+q\beta/p-sq}{q},\quad
 D_+=1+2\beta-sp,\quad
 g_+=\frac{\pi^{(n-1)/2}\Gamma((1+sp)/2)}{\Gamma((n+sp)/2)}.
\end{equation}
The one-sided homogeneous candidate is
\begin{equation}\label{eq:half-J}
\begin{aligned}
 J_{+,a,\beta}
 &=g_+\int_0^\infty\frac{|1-t^{-a}|^p t^\beta}{|1-t|^{1+sp}}\dd t\\
 &=g_+\int_0^1\frac{|1-t^a|^p
 (t^{\beta-pa}+t^{sp-\beta-1})}{(1-t)^{1+sp}}\dd t.
\end{aligned}
\end{equation}
For the second equality, split the first integral at one, substitute
$t\mapsto1/t$ in its upper part, and use
$|1-t^{-a}|^p=t^{-pa}|1-t^a|^p$ in its lower part.
This calculation fixes the power sign in the inverted formula.

\begin{proposition}[Half-space finiteness]\label{prop:half-finite}
Every nonzero test in $C_c^\infty(\R^n_+)$ has finite energy if and only if
\begin{equation}\label{eq:half-ambient-range}
 -1<\alpha<sq+n(q/p-1).
\end{equation}
The integral in \eqref{eq:half-J} is finite if and only if
\begin{equation}\label{eq:half-profile-range}
 \frac{\beta-sp}{p}<a<\frac{1+\beta}{p},
 \qquad\text{equivalently}\qquad
 -1<\alpha<sq+q/p-1.
\end{equation}
It is zero at $a=0$ and positive elsewhere in this range.
\end{proposition}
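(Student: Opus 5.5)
The plan is to follow the template of Proposition~\ref{prop:flat-finite}, since the half-space energy is a one-sided version of the flat problem with $k=n-1$, $m=1$. The two ranges are treated separately: first the test-energy range \eqref{eq:half-ambient-range}, then the candidate range \eqref{eq:half-profile-range}.

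\emph{Test-energy range.} Fix $0\ne u\in C_c^\infty(\R^n_+)$ with support in $\{\delta\le r\le R_0,\ |y|\le R_0\}$, and let $A_+(y,r)$ denote the inner integral in \eqref{eq:half-energy}.

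\begin{enumerate}
\item Local finiteness in the inner variable. For an outer point with $u(y,r)\ne0$, the region $r'\to0$ requires $\beta>-1$. Integrating first in $y'$ via \eqref{eq:flat-gamma} leaves $(r')^{\beta}|r-r'|^{-1-sp}$, and large $r'$ then requires $\beta<sp$. Smoothness and $s<1$ handle the diagonal.
\item Near the boundary. As $r\to0$, uniformly in bounded $y$, dominated convergence gives
\[
A_+(y,r)\to\int|u(y',r')|^p(r')^\beta|(y-y',r')|^{-n-sp}\dd y'\dd r',
\]
which is positive and continuous. The factor $r^\alpha$ is then integrable near $r=0$ exactly when $\alpha>-1$.
\item At infinity. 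The compact-tail expansion \eqref{eq:compacttail} with $d=n$ gives
\[
A_+(X)=|X|^{-n-sp}\Bigl(\int|u|^p(r')^\beta\Bigr)(1+O(|X|^{-1}))
\]
on the half-space. The tail is therefore comparable to $\int_{|X|>R,\,r>0}r^\alpha|X|^{-q(n+sp)/p}\dd X$. Its angular factor over the half-sphere is finite since $\alpha>-1$, and the radial factor is finite iff $\alpha+n<q(n+sp)/p$, i.e.\ $\alpha<sq+n(q/p-1)$.
\item Far from the boundary. For $\delta/2\le r$ on bounded sets, $A_+$ is bounded by splitting into diagonal, near-boundary, and tail pieces exactly as in the point case.
\end{enumerate}

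\emph{Candidate range.} I would work from the second line of \eqref{eq:half-J}. Near $t=1$, $|1-t^a|^p\lesssim(1-t)^p$ and $p-1-sp>-1$ since $s<1$, so this end is always integrable. Near $t=0$, $|1-t^a|^p\to1$ if $a>0$, behaves like $t^{pa}$ if $a<0$, and is $0$ if $a=0$. Hence the term $t^{\beta-pa}$ is integrable iff $\beta-pa>-1$ when $a>0$, and iff $\beta>-1$ when $a\le0$. Similarly the term $t^{sp-\beta-1}$ is integrable iff $sp-\beta>0$ when $a>0$, and iff $pa+sp-\beta>0$ when $a<0$. Under the standing $-1<\beta<sp$ the conditions reduce to $(\beta-sp)/p<a<(1+\beta)/p$. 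Since $\alpha=sq-1-q\beta/p+qa$, this is equivalent to $-1<\alpha<sq+q/p-1$. Divergence at the endpoints follows because the integrand is then exactly comparable to $t^{-1}$ near $0$. Finally, the integrand vanishes identically iff $a=0$ and is otherwise positive on a set of positive measure, which gives the positivity assertion.

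I expect the main obstacle to be the uniformity of the tail expansion at infinity near the boundary directions $r/|X|\to0$. There one must check that the error term $O(|X|^{-1})$ stays uniform in direction even though $u$ is compactly supported away from $\{r=0\}$. I would try to handle this by noting that $\operatorname{dist}(X,\supp u)\ge|X|/2$ for large $|X|$ regardless of direction, and that the restriction of the inner integration to the half-space does not matter because $u$ is supported inside it.
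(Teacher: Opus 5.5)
Your proposal is correct and follows essentially the same route as the paper: the inner energy extends positively up to $r=0$, which forces $\alpha>-1$; the uniform far-field asymptotic $|X|^{-n-sp}\int|u|^p(r')^\beta$ combined with polar coordinates on the upper hemisphere gives the upper ambient bound; and the endpoint analysis of the one-dimensional integral gives the candidate range. The only cosmetic difference is that you check integrability on the inverted form $\int_0^1$ of \eqref{eq:half-J}, whereas the paper examines the original form at $t=0$ and $t=\infty$, which is equivalent. The uniformity issue you flag is exactly the paper's remark that the tail expansion holds ``uniformly in the outer direction,'' and your $\operatorname{dist}(X,\supp u)\ge|X|/2$ observation settles it.
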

\begin{proof}
The test vanishes in a strip near the boundary. Its inner difference
energy extends to a bounded, strictly positive function on each
bounded boundary set, so outer normal integration requires $\alpha>-1$.
Outside a ball containing the support the inner integral is comparable,
uniformly in the outer direction, to
$|X|^{-n-sp}\int|u(y',r')|^p(r')^\beta\dd y'\dd r'$.
Writing $X=R\omega$ in the upper hemisphere gives an angular factor
$\int_{\Sph^{n-1}_+}\omega_n^\alpha\dd\omega$, finite precisely
for $\alpha>-1$, and the radial integral
$\int^\infty R^{\alpha+n-1-q(n+sp)/p}\dd R$.
This proves \eqref{eq:half-ambient-range}; smoothness and $s<1$
control the diagonal. The inner boundary and inner infinity
conditions are respectively $\beta>-1$ and $\beta<sp$.

Near $t=1$ the integrand in \eqref{eq:half-J} is
$O(|1-t|^{p-1-sp})$. At zero, a positive $a$ requires
$\beta-pa>-1$; at infinity, a negative $a$ requires
$\beta-sp-pa<0$. The standing bounds on $\beta$ cover the opposite
signs, proving \eqref{eq:half-profile-range}.
\end{proof}

For $p<q$ the finiteness range of the homogeneous integral is smaller than the ambient range.
For $p>q$ the ambient range is smaller and is nonempty exactly when
$q(n+sp)/p>n-1$. A one-dimensional weight condition alone therefore
does not establish finiteness of this $n$-dimensional energy.

% Layout: keep this heading with its theorem.
\Needspace{12\baselineskip}
\subsection{Sharp half-space inequalities}

\begin{theorem}[Regional half-space Hardy inequality]\label{thm:halfspace}
Let $n\ge2$, $0<s<1$, and $-1<\beta<sp$.
\begin{enumerate}[label=\textup{(\roman*)}]
\item Suppose $1<q<p<\infty$ and \eqref{eq:half-ambient-range} holds.
For every $u\in C_c^\infty(\R^n_+)$,
\begin{equation}\label{eq:half-main}
 \E_{+,\alpha,\beta}[u]
 \ge J_{+,a,\beta}^{q/p}\Pp_{+,\alpha,\beta}[u],
 \qquad a=\frac{\alpha+1+q\beta/p-sq}{q},
\end{equation}
with the optimal constant. This is the complementary-order
half-space comparison with \Kij\ (Theorem~2), on the overlap of
its hypotheses and the smooth test class specified here.
\item Suppose $1<p<q<\infty$ and $D_+=1+2\beta-sp>0$.
Set $\alpha_{H,+}=sq-1-q\beta/p$.
Then \eqref{eq:half-main} holds with the optimal constant on each
of the following intervals:
\[
\begin{array}{ll}
 2\le q\le2p:
 &\alpha_{H,+}\le\alpha\le
 \alpha_{H,+}+\dfrac{qD_+}{p+1}+\varepsilon,\\[5pt]
 q>2p:
 &\alpha_{H,+}\le\alpha\le
 \alpha_{H,+}+D_++\varepsilon,\\[5pt]
 1<p<q<2:
 &\alpha_{H,+}\le\alpha\le
 \alpha_{H,+}+\dfrac{qD_+}{p+\nsc}.
\end{array}
\]
Here $\nsc$ is defined in \eqref{eq:nsc-main}; in the first two
lines some $\varepsilon=\varepsilon(n,s,p,q,\beta)>0$ exists,
chosen within \eqref{eq:half-profile-range}.
\end{enumerate}
In either part the constant vanishes when $a=0$ and is positive at
every other displayed parameter. Optimality holds on $C_c^\infty(\R^n_+)$.
\end{theorem}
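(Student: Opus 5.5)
The plan is to transplant the flat-subspace argument of Section~\ref{sec:flat} to the half-space, taking codimension $m=1$ and replacing the sphere $\Sph^0$ by the single point $+1$, since only same-side interactions occur. In the coordinates $u(y,e^\tau)=e^{-a\tau}v(y,\tau)$ on $\R^{n-1}\times\R$, with $y'=y+e^\tau\eta$ and $r'=e^{\tau+t}$, the energy becomes $\int(\mathcal A_+[v])^{q/p}$ and the denominator becomes $\int|v|^q$. The kernel is
\[
 \mathcal K^+_{\beta,t}(\eta)=e^{(1+\beta)t}\bigl(|\eta|^2+(1-e^t)^2\bigr)^{-(n+sp)/2}.
\]
Tangential integration via \eqref{eq:flat-gamma} gives
\[
 k_+(t)=g_+e^{(1+\beta)t}|1-e^t|^{-1-sp},
\]
and its $t$-integral against $|1-e^{-at}|^p$ is exactly the first form of \eqref{eq:half-J}. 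The reversal identity \eqref{eq:flat-reversal} holds with $m=1$ and only the term $|1-e^t|$ kept, so the pairing parameter is $D_+=1+2\beta-sp$. Finiteness is already handled by Proposition~\ref{prop:half-finite}.

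For part (ii), with $p<q$, I would repeat the flat proof. First, pair displacements at common cutoffs to obtain the identity \eqref{eq:flat-pair} with $G_{\vartheta,b}$, where $\vartheta=e^{-at}$ and $b=e^{-(D_+-pa)t}$. On $0\le a\le D_+/(p+n_{p,q})$, or on $D_+/(p+\nsc)$ when $q<2$, Lemma~\ref{lem:scalar-support} and Propositions~\ref{prop:low-exact} and~\ref{prop:nsc} give a nonnegative remainder, and H\"older's inequality then gives the lower bound. For the extra width $\varepsilon$ in the first two lines, I would follow Proposition~\ref{prop:flat-extension}. Lemma~\ref{lem:short} gives positivity at short displacements. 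Continuity at $a_0$ gives a positive strip on $1\le t\le2$, bounded below by the reference energy over $|\eta|<1$, and the kernel there is bounded below. Negative tails are controlled by Lemma~\ref{lem:flat-affine}, which applies verbatim since it involves no angles, together with the tail bound $k_+(t)\asymp e^{-(sp-\beta)t}$ used in Lemma~\ref{lem:flat-reference}, plus dominated convergence for $\mathcal N(a)\to0$. The conversion $\alpha=\alpha_{H,+}+qa$ matches the displayed endpoints. In particular, for $q>2p$ we have $qa_0=qD_+/q=D_+$.

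Optimality in both parts would use the two-scale test $\chi(y/R)\,r^{-a}\zeta(\log r/T)$. Lemma~\ref{lem:flat-product}, adapted to the one-sided normal line, removes $R$. In that adaptation the far-field estimate needs the half-space angular factor $\int\omega_n^\alpha$, which is finite for $\alpha>-1$. Lemma~\ref{lem:cutoff}, with $k_+$ in place of $k_\beta$, then removes $T$ for $p<q$, and \eqref{eq:reverse-cutoff-error} does so for $p>q$. These limits require both \eqref{eq:half-ambient-range} and \eqref{eq:half-profile-range}. For part (i), $p>q$ makes the ambient range the smaller one, so $J_{+,a,\beta}<\infty$ automatically. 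For part (ii), all intervals are chosen inside the profile range.

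For part (i), I would follow the proof of Theorem~\ref{thm:flat-reverse} word for word. Truncate to $\epsilon<|t|<T$ and apply Lemma~\ref{lem:reverse-calibrated} pointwise with $F=|U-\theta V|$ and $W=|1-\theta|$. The discrete Picone step, Lemma~\ref{lem:reverse-picone}, then reduces the problem to $U^q-\theta V^q$. Integrating $U^q$ and $V^q$ against the kernel each gives $k_+(t)N$, using the shear $y\mapsto y+e^\tau\eta$ and the translation in $\tau$; this yields $(J_\epsilon^T)^\rho N$, and monotone convergence finishes the argument. Positivity for $a\ne0$ follows because $J_{+,a,\beta}>0$.

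I expect the main obstacle to be the tail control in the extension argument for (ii): checking that the affine-shift chain estimate combined with the one-sided kernel still gives an integrable bound $(1+t^2)e^{-(sp-\beta)t}$ for $t\ge t_0$, and, for negative $t$ after reversal, the factor $e^{-(D_+-pa)t}$. The latter needs $D_+-pa>0$ on the neighbourhood of $a_0$, which holds because $a_0<D_+/p$.
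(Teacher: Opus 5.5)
Your proposal is correct and follows essentially the same route as the paper. It uses the same cylinder change of variables, the kernel $K^+_{\beta,t}$ with reversal exponent $D_+$, the reverse H\"older--Picone argument at common displacement cutoffs for part~(i), and for the extension in part~(ii) the paired scalar remainder together with Lemma~\ref{lem:short}, the strip $1\le t\le2$, $|\eta|<1$, and the affine-shift chain of Lemma~\ref{lem:flat-affine}; optimality comes from the same two-scale test $\chi(y/R)\,r^{-a}\zeta(\log r/T)$. The differences are only cosmetic: the paper removes the normal logarithmic cutoff with the direct estimate \eqref{eq:reverse-cutoff-error} in both exponent orders, where your use of Lemma~\ref{lem:cutoff} for $p<q$ works equally well, and it bounds the tangential far field by the substitution $r=R|Y|Z$ rather than by a hemisphere angular factor, with both versions using exactly $\alpha>-1$ and the two upper range restrictions.
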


\subsection{Cylinder, lower bounds, and strict extension}

We prove the lower bounds in Theorem~\ref{thm:halfspace}, treating
parts~\textup{(i)} and~\textup{(ii)} separately. For part~\textup{(ii)},
we also prove the positive extension widths in its first two
parameter ranges. Optimality is proved in the next subsection.

Write $u(y,e^\tau)=e^{-a\tau}v(y,\tau)$ and use
$y'=y+e^\tau\eta$, $r'=e^{\tau+t}$. The transformed kernel and its
angular integral are
\begin{equation}\label{eq:half-kernel}
\begin{aligned}
 K^+_{\beta,t}(\eta)
 &=\frac{e^{(1+\beta)t}}
 {(|\eta|^2+|1-e^t|^2)^{(n+sp)/2}},\\
 k^+_\beta(t)&=\int_{\R^{n-1}}K^+_{\beta,t}(\eta)\dd\eta
 =g_+\frac{e^{(1+\beta)t}}{|1-e^t|^{1+sp}}.
\end{aligned}
\end{equation}
The powers of $e^\tau$ cancel because $\alpha+1=q(a+s-\beta/p)$.
On $\mathcal C_+=\R^{n-1}\times\R$ with measure $\dd y\dd\tau$,
\begin{equation}\label{eq:half-cylinder}
\begin{gathered}
 \E_{+,\alpha,\beta}[u]=\int_{\mathcal C_+}(A^+_{a,\beta}[v])^{q/p},
 \qquad \Pp_{+,\alpha,\beta}[u]=N_+[v]:=\int_{\mathcal C_+}|v|^q,\\
\begin{aligned}
 A^+_{a,\beta}[v](y,\tau)
 ={}&\int_\R\int_{\R^{n-1}}K^+_{\beta,t}(\eta)\\
 &\quad\times|v(y,\tau)-e^{-at}v(y+e^\tau\eta,\tau+t)|^p\dd\eta\dd t.
\end{aligned}
\end{gathered}
\end{equation}
Direct scaling gives the reversal identity
\begin{equation}\label{eq:half-reversal}
 K^+_{\beta,-t}(-e^{-t}\eta)e^{-(n-1)t}
 =e^{-D_+t}K^+_{\beta,t}(\eta).
\end{equation}
The factor on the left is the tangential Jacobian. Thus the reversal
exponent is $1+2\beta-sp$. The quantity $k^+_\beta$ satisfies
\begin{equation}\label{eq:half-k-estimates}
\begin{aligned}
 k^+_\beta(t)&\asymp |t|^{-1-sp} &&(0<|t|\le1),\\
 k^+_\beta(t)&\asymp e^{-(sp-\beta)t} &&(t\ge1),\\
 k^+_\beta(t)&\asymp e^{(1+\beta)t} &&(t\le-1).
\end{aligned}
\end{equation}

\begin{proof}[Proof of Theorem~\ref{thm:halfspace}\textup{(i)}: lower bound]
It suffices to consider $v\ge0$. Suppose $a\ne0$, and fix
$0<\delta<T$. Let $A^+_{\delta,T}$ denote the inner integral restricted
to $\delta<|t|<T$, and put
$J^+_{\delta,T}=\int_{\delta<|t|<T}k^+_\beta(t)|1-e^{-at}|^p\dd t$.
These reference integrals are positive and finite. Applying
Lemmas~\ref{lem:reverse-calibrated} and~\ref{lem:reverse-picone} gives,
with $\theta=e^{-at}$, $U=v(y,\tau)$, and
$V=v(y+e^\tau\eta,\tau+t)$,
\[
 (A^+_{\delta,T}[v])^{q/p}
 \ge (J^+_{\delta,T})^{q/p-1}
 \int_{\delta<|t|<T}\int_{\R^{n-1}}K^+_{\beta,t}(\eta)
 |1-\theta|^{p-2}(1-\theta)(U^q-\theta V^q)\dd\eta\dd t.
\]
For fixed $\eta,t$, the map $(y,\tau)\mapsto
(y+e^\tau\eta,\tau+t)$ has Jacobian one. Integration in
$y,\tau,\eta$ therefore turns the last parenthesis into
$k^+_\beta(t)(1-\theta)N_+[v]$. The resulting lower bound is
$(J^+_{\delta,T})^{q/p}N_+[v]$. Monotone convergence as
$\delta\downarrow0$, $T\uparrow\infty$ proves \eqref{eq:half-main}.
At $a=0$ the right side is zero. All signed integrals are finite
before taking limits, and no negative powers of the test are used.
\end{proof}

\begin{proof}[Proof of Theorem~\ref{thm:halfspace}\textup{(ii)}: lower bound and extension]
For $v\ge0$ set $I_+[v]=\int_{\mathcal C_+}v^{q-p}A^+_{a,\beta}[v]$.
Outer H\"older gives
$I_+[v]\le\E_{+,\alpha,\beta}[u]^{p/q}N_+[v]^{1-p/q}$.
Using the same displacement cutoffs in all terms, \eqref{eq:half-reversal} gives
\begin{equation}\label{eq:half-pair}
 I_+[v]-J_{+,a,\beta}N_+[v]
 =\int_0^\infty\int_{\mathcal C_+}\int_{\R^{n-1}}
 K^+_{\beta,t}(\eta)G_{\theta,b}(U,V)\dd\eta\dd y\dd\tau\dd t,
\end{equation}
where $\theta=e^{-at}$ and $b=e^{-(D_+-pa)t}$.
As in \eqref{eq:flat-pair}, integration of either $U^q$ or $V^q$
gives $k^+_\beta(t)N_+[v]$, justifying the homogeneous subtraction.
Lemma~\ref{lem:scalar-support} proves the lower bound for
$0<a\le D_+/(p+n_{p,q})$; the exact low-exponent criterion improves
this to $0<a\le D_+/(p+\nsc)$ if $1<p<q<2$.

We first bound all tangential differences by one fixed reference integral, which will control the negative contribution when the first two intervals are extended.
Put
\begin{align*}
 D^+_{\beta,t}[f]
 &=\int_{\mathcal C_+}\int_{\R^{n-1}}K^+_{\beta,t}(\eta)
 |f(y,\tau)-f(y+e^\tau\eta,\tau+t)|^2\dd\eta\dd y\dd\tau,\\
 Q_+[f]
 &=\int_1^2\int_{|\eta|<1}\int_{\mathcal C_+}
 |f(y,\tau)-f(y+e^\tau\eta,\tau+h)|^2\dd y\dd\tau\dd\eta\dd h.
\end{align*}
Lemma~\ref{lem:flat-affine}, with $k=n-1$, bounds the squared norm
of each increment by
$C(1+t+\log(1+e^{-t}|\eta|))^2Q_+[f]$.
For $t\ge t_0>0$, substitute $\eta=e^t\xi$:
\[
 K^+_{\beta,t}(e^t\xi)e^{(n-1)t}
 =e^{-(sp-\beta)t}(|\xi|^2+|1-e^{-t}|^2)^{-(n+sp)/2}.
\]
Here $|1-e^{-t}|\ge1-e^{-t_0}>0$. The integral of this kernel
against $1+\log^2(1+|\xi|)$ is finite since $n+sp>n-1$. Hence
\begin{equation}\label{eq:half-reference}
 D^+_{\beta,t}[f]\le C_{t_0}k^+_\beta(t)(1+t^2)Q_+[f],
 \qquad t\ge t_0.
\end{equation}
Also $Q_+[f]=0$ implies $f=0$ by the local affine-chain argument.

Let $a_0=D_+/(p+n_{p,q})$, $f=v^{q/2}$, and
$c_a(t)=c(e^{-at},e^{-(D_+-pa)t})$. On a sufficiently small
compact neighborhood of $a_0$, Lemma~\ref{lem:short} makes $c_a(t)$
nonnegative for $0<t<t_0<1$. At $a_0$ it is strictly positive for
every $t>0$. On $1\le t\le2$, $|\eta|<1$, the kernel has a positive
lower bound. Continuity thus supplies a uniform contribution
$c_*Q_+[f]$ from those finite scales. By \eqref{eq:half-reference}
the negative part of the scalar remainder contributes at most
\[
 C_{t_0}Q_+[f]\int_{t_0}^\infty[c_a(t)]_-
 k^+_\beta(t)(1+t^2)\dd t.
\]
Lemma~\ref{lem:continuity} and the exponential decay as $t\to+\infty$
in \eqref{eq:half-k-estimates} imply that this integral tends to
zero as $a\to a_0$. Choose $\eta_0>0$ so it is at most
$c_*/(2C_{t_0})$ for $a_0\le a\le a_0+\eta_0$. Equation
\eqref{eq:half-pair} gives
$I_+[v]-J_{+,a,\beta}N_+[v]\ge(c_*/2)Q_+[v^{q/2}]$.
Outer H\"older proves strict extension, with width $q\eta_0$
in $\alpha$. The estimates control the negative part before
the displacement cutoffs are removed. At $a=0$ the lower bound
is nonnegativity. Absolute values of tests complete both proofs.
\end{proof}

\subsection{\texorpdfstring{Proof of optimality in Theorem~\ref{thm:halfspace}}{Proof of optimality for the half-space}}

We prove the optimality assertion in both parts of
Theorem~\ref{thm:halfspace} by first removing a tangential cutoff
and then taking a logarithmic cutoff in the normal variable.

For $w\in C_c^\infty(0,\infty)$,
$\chi\in C_c^\infty(\R^{n-1})$, and $u_R(y,r)=\chi(y/R)w(r)$,
the tangential cutoff proof applies with normal variable in $(0,\infty)$.
On bounded $Y=y/R$ use the same $F_R$ estimate as in
Lemma~\ref{lem:flat-product}: $F_1(r)^p$ is bounded near zero and
decays as $r^{-1-sp}$ at infinity. On unbounded $Y$ the error satisfies
\[
 B_R(Y,r)\le C R^{n-1}(R^2|Y|^2+r^2)^{-(n+sp)/2}.
\]
The integration in \eqref{eq:flat-product-far-error}, with $m=1$,
bounds its outer integral by
$C R^{\alpha+1-(q/p)(1+sp)}\to0$; its $Y$-integral is finite
by \eqref{eq:half-ambient-range}. Thus, under both finiteness ranges,
\begin{equation}\label{eq:half-product}
 \lim_{R\to\infty}
 \frac{\E_{+,\alpha,\beta}[\chi(y/R)w(r)]}
 {\Pp_{+,\alpha,\beta}[\chi(y/R)w(r)]}
 =g_+^{q/p}
 \frac{\E_{(0,\infty),\alpha,\beta}[w]}
 {\Pp_{(0,\infty),\alpha,\beta}[w]}.
\end{equation}
Here the right side uses the one-dimensional regional functionals
obtained from \eqref{eq:half-energy}--\eqref{eq:half-denominator}
by omitting the tangential coordinates and setting the kernel
coefficient to one. The proof applies to either order of $p,q$.

Take $w_T(r)=r^{-a}\zeta(\log r/T)$ with
$0\ne\zeta\in C_c^\infty(\R)$. The corresponding normal integral is
$k^+_\beta/g_+$. By \eqref{eq:half-k-estimates}, its product
with $e^{-pat}$ has exponential tails in the finiteness range of the homogeneous integral and
a finite $p$th displacement moment. The direct estimate
\eqref{eq:reverse-cutoff-error} gives an error
$O(T^{1/q-1})+O(e^{-cT})$, against a denominator norm of order
$T^{1/q}$. The normal quotient tends to
$(J_{+,a,\beta}/g_+)^{q/p}$, including $a=0$.
First let $R\to\infty$ in \eqref{eq:half-product}, then $T\to\infty$.
A diagonal sequence lies in $C_c^\infty(\R^n_+)$ and proves
all optimality assertions in Theorem~\ref{thm:halfspace}.

The difference from the full-space codimension-one candidate is explicit:
\[
 J_{\Sigma,a,\beta}
 =J_{+,a,\beta}
 +g_+\int_0^\infty
 \frac{|1-t^{-a}|^p t^\beta}{(1+t)^{1+sp}}\dd t
 \qquad(m=1).
\]
The additional term is the interaction with the opposite normal side.

For comparison with the zero-extension theorem of~\cite{BBZ}, put
$q=p$, $\alpha=\beta=0$, and $a=(1-sp)/p$. Then
$J_{+,a,0}=D_{n,s,p}$, whereas
\[
 h_{n,s,p}(\R^n_+)=J_{+,a,0}+\frac{2g_+}{sp}
 =g_+\Lambda_{s,p}.
\]
The term $2g_+/(sp)$ is the exterior contribution in
\eqref{eq:intro-regional-full}. This additive constant conversion
uses the equality $q=p$: for a mixed-exponent energy the power
$q/p$ acts on the inner sum, so an additive conversion of best
constants does not follow from that identity.

\subsection{Loss near the homogeneous integral's finiteness endpoints}

\begin{proposition}[Half-space loss of homogeneous sharpness]
\label{prop:half-loss}
Let $n\ge2$, $1<p<q<\infty$, $0<s<1$, and $-1<\beta<sp$.
Put $\alpha_N=sq+q/p-1$. Within the finiteness range of the homogeneous integral,
\[
 \frac{H_{+,\alpha,\beta}^{p,q}}{J_{+,a,\beta}^{q/p}}
 \longrightarrow0
 \quad\text{as }\alpha\downarrow-1
 \quad\text{or as }\alpha\uparrow\alpha_N.
\]
The homogeneous constant is therefore strictly nonoptimal sufficiently
close to either endpoint, while compact interior tests have finite energy.
\end{proposition}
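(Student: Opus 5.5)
The proof mirrors the endpoint argument of Theorem~\ref{thm:loss}: compare a blow-up of $J_{+,a,\beta}$ at each finiteness endpoint with a bound on $H_{+,\alpha,\beta}^{p,q}$ that stays bounded. The two ingredients differ from the point case, because at the half-space endpoints the test energies remain finite.

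\emph{Blow-up of the candidate.} From the inverted form \eqref{eq:half-J}, near $\alpha=\alpha_N$ we have $a\uparrow(1+\beta)/p$, and the term $t^{\beta-pa}$ near $t=0$ gives
\[
 J_{+,a,\beta}=\frac{g_+}{1+\beta-pa}+O(1)=\frac{q\,g_+}{p\,\epsilon}+O(1),
 \qquad \alpha=\alpha_N-\epsilon .
\]
The argument is the same polar-type computation as for \eqref{eq:Jendpoint}. The factor $(1-t^a)^p/(1-t)^{1+sp}$ equals $1+O(t^\delta)$ near zero. Away from zero the integral is uniformly bounded, since the singularity at $t=1$ is $O(|1-t|^{p-1-sp})$ and the term $t^{sp-\beta-1}$ is integrable. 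Near $\alpha=-1$ we have $a\downarrow(\beta-sp)/p$. Before inversion, the behaviour at infinity in the first integral of \eqref{eq:half-J} gives $J_{+,a,\beta}\sim g_+/(sp-\beta+pa)\sim q g_+/(p\epsilon)$. In both cases $J_{+,a,\beta}^{q/p}\to\infty$ at rate $\epsilon^{-q/p}$.

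\emph{Bounding the best constant.} The key point is that both endpoints lie strictly inside the ambient finite-energy range \eqref{eq:half-ambient-range}. The lower endpoint $-1$ coincides, but there the energy of a test supported away from the boundary involves $\int r^\alpha$ only near $r=0$. So I would fix a single nonzero test $u\in C_c^\infty(\R^n_+)$, supported in $\{1<r<2,\ |y|<1\}$, and bound $\E_{+,\alpha,\beta}[u]/\Pp_{+,\alpha,\beta}[u]$ directly.

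For $\alpha\uparrow\alpha_N<sq+n(q/p-1)$, the quotient is a continuous function of $\alpha$ on a compact neighbourhood of $\alpha_N$, by the dominated convergence argument used for the relative closedness of $\mathcal S$. Hence $H^{p,q}_{+,\alpha,\beta}\le C$ there, and the ratio is $O(\epsilon^{q/p})\to0$.

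For $\alpha\downarrow-1$ the energy of a fixed test diverges only like $\int_0^{1}r^{\alpha}\dd r\sim\epsilon^{-1}$. This comes from the region near the boundary, where $A^+_{u,\beta}$ is bounded. Since $q/p>1$, even this gives a ratio $O(\epsilon^{-1}\epsilon^{q/p})\to0$.

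If more room is needed, I would use the concentration trick of \eqref{eq:concentration} with $u_\ell(X)=\psi((X-X_0)/\ell)$ and $X_0=(0,1)$. The near part of the energy is $O(\ell^{n-sq})$. The boundary layer contributes $O(\ell^{nq/p}\epsilon^{-1})$, while the denominator is $\asymp\ell^n$. Choosing $\ell=\epsilon^{1/\Lambda}$ gives $H\le C\epsilon^{-\theta}$ with $\theta<1<q/p$, exactly as in the proof of \eqref{eq:endpoint-upper}.

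\emph{Main obstacle.} The delicate step is the uniform control of the inner energy near the boundary as $\alpha\to-1$. We need $A^+_{u,\beta}(y,r)\le C$ for small $r$, uniformly in $y$, together with the decay $|X|^{-n-sp}$ at infinity, with constants independent of $\alpha$. This follows from $-1<\beta<sp$ and the support of $u$ being separated from the boundary, as in Proposition~\ref{prop:half-finite}. With this in place, comparing the rates $\epsilon^{-1}$ (or $\epsilon^{-\theta}$) against $\epsilon^{-q/p}$ finishes the proof. The strict nonoptimality then follows from the global upper bound $H\le J^{q/p}$ given by the optimality construction.
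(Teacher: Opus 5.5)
Your proposal is correct. For the blow-up $J_{+,a,\beta}\sim qg_+/(p\epsilon)$ and for the upper endpoint it is the paper's argument: there the paper also uses one fixed interior test, whose quotient stays bounded because $\alpha_N<sq+n(q/p-1)$.

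The difference is at $\alpha\downarrow-1$. The paper goes straight to the concentrating tests $\psi((X-e_n)/\ell)$ and proves $H_{+,-1+\epsilon,\beta}^{p,q}\le C\epsilon^{-\theta}$ with $\theta=sq/(sq+n(q/p-1))<1$. Your primary argument instead keeps a single test and accepts $H\le C\epsilon^{-1}$. That already suffices, since $J_{+,a,\beta}^{q/p}\asymp\epsilon^{-q/p}$ and $q/p>1$.

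Your route is simpler, and it is fully justified by the estimate you isolate. You need $A^+_{u,\beta}$ bounded on the whole strip $\{r<1\}$ with decay $(1+|X|)^{-n-sp}$. Then the $y$-integral over $\R^{n-1}$ converges because $(n+sp)q/p>n-1$, and the region $\{r\ge1\}$ contributes $O(1)$. The paper's route buys the sharper quantitative rate, parallel to \eqref{eq:endpoint-upper}, and your fallback paragraph reproduces it.

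Two small points of exposition:
\begin{itemize}
\item Your opening sentences say the test energies stay finite and the bound on $H$ stays bounded at both endpoints. This fails at $\alpha=-1$, which is also the ambient endpoint. Your later paragraphs handle that endpoint correctly.
\item Strict nonoptimality follows directly from the ratio tending to zero; the global bound $H\le J^{q/p}$ is not needed for it.
\end{itemize}
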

\begin{proof}
Write $\rho=q/p>1$. At $\alpha=-1+\epsilon$,
$a=(\beta-sp)/p+\epsilon/q$, and the large-$t$ part of
\eqref{eq:half-J} gives $J_{+,a,\beta}\sim g_+q/(p\epsilon)$.
At $\alpha=\alpha_N-\epsilon$,
$a=(1+\beta)/p-\epsilon/q$, and the small-$t$ part gives the same
asymptotic. These follow by comparison with
$\int_1^\infty t^{-1-p\epsilon/q}\dd t$ and
$\int_0^1t^{-1+p\epsilon/q}\dd t$, respectively.
After subtracting these leading powers, the corrections are uniformly
integrable near the corresponding endpoint; the other pieces are bounded.

For the lower endpoint take
$u_\ell(X)=\psi((X-e_n)/\ell)$, where
$0\ne\psi\in C_c^\infty(B_{1/4})$ and $0<\ell<1/4$.
The denominator is comparable to $\ell^n$, uniformly for small $\epsilon$.
In a fixed neighborhood of $e_n$ the weights are bounded. Scaling
the differences gives $C\ell^{n-sq}$, including the intervening
annulus between the support scale and that neighborhood.
Away from $e_n$ the inner moment is $O(\ell^n)$, giving the bound
\[
 C\ell^{n\rho}
 \int_{\substack{X\in\R^n_+\\|X-e_n|>1/2}}
 r^{-1+\epsilon}|X-e_n|^{-\rho(n+sp)}\dd X
 \le C\epsilon^{-1}\ell^{n\rho}.
\]
On bounded sets the normal integral is at most $C/\epsilon$.
At infinity the hemisphere angular integral is $O(1/\epsilon)$
and the radial tail converges uniformly since $\rho(n+sp)>n$.
Consequently
\[
 H_{+,-1+\epsilon,\beta}^{p,q}
 \le C\bigl(\ell^{-sq}+\epsilon^{-1}\ell^{n(\rho-1)}\bigr)
 \le C\epsilon^{-\theta},\qquad
 \theta=\frac{sq}{sq+n(\rho-1)}<1,
\]
by choosing $\ell=\epsilon^{1/(sq+n(\rho-1))}$.
Since $J^{q/p}$ has order $\epsilon^{-\rho}$, the first limit follows.

For the upper endpoint, $n\ge2$ and $\rho>1$ imply
$\alpha_N<sq+n(\rho-1)$. Any one fixed nonzero compact interior
test thus has a bounded quotient as $\alpha\uparrow\alpha_N$:
dominated convergence applies with weights in a compact subinterval
of the ambient range. Hence $H_{+,\alpha,\beta}^{p,q}\le C$,
whereas $J^{q/p}\to\infty$, proving the second limit.
\end{proof}

The sharp-interval statements and their proofs also apply on the
half-line after omitting tangential variables and setting $g_+=1$;
the affine estimate then reduces to the ordinary translation chain.
The preceding upper-endpoint proof uses $n\ge2$ explicitly.

\section{Further questions}\label{sec:questions}

The main variational questions are to locate $a_{\sh}$ and determine
the geometry of the sharpness set $\mathcal S$, including the possibility
of a return to sharpness after its first loss. The behavior of
near-extremizing sequences as the parameter approaches $a_{\sh}$
is also unresolved. Matching lower bounds
for $H_{\alpha,\beta}^{p,q}$ near the finite-energy endpoints would
clarify the concentration of minimizing sequences there.

For $1<p<q<2$, the quadratic scalar remainder coefficient vanishes
at a value of $\vartheta$ maximizing \eqref{eq:nsc-main}
(Remark~\ref{rem:nsc-limit}). A further extension of the sharpness
interval calls for an estimate of the integrated remainder that
retains a positive lower bound at this parameter.

For flat singularities and regional half-space energies with $p<q$,
an additional range appears when the tangential space has positive dimension:
compact tests can have finite energy while the homogeneous normal
candidate is infinite. The optimal constants and comparison profiles
in this range remain to be determined. Extensions to curved singular
sets would also have to account for geometric errors absent from the
flat calculation.

Corollary~\ref{cor:TL-completion} extends positive optimal constants
to the difference-norm completion of the smooth test class.
Comparison with the maximal measurable finite-energy class depends
on density, while attainment in a larger energy space depends on
compactness.

\appendix
\section{Classical evaluations of the homogeneous integral}\label{app:evaluations}

This appendix fixes the coefficient-one normalization of the established
constant formulas used in the article. With the kernel of
\eqref{eq:energy}, the operator acting on $\R^d$ satisfies
\begin{equation}\label{eq:L-normalization}
 Lf(x)=\PV\int_{\R^d}\frac{f(x)-f(y)}{|x-y|^{d+2s}}\dd y,
 \qquad
 L=\frac{(-\Delta)^s}{C^{\mathrm{FL}}_{d,s}},
 \qquad
 C^{\mathrm{FL}}_{d,s}
 =\frac{4^s\Gamma(d/2+s)}{\pi^{d/2}|\Gamma(-s)|}
 =\frac{4^s s\,\Gamma(d/2+s)}{\pi^{d/2}\Gamma(1-s)}.
\end{equation}
The master representation is the one-dimensional form
\begin{equation}\label{eq:Jradial}
 J_{d,s,p}^{(\beta)}(a)=\int_0^1 |1-t^a|^p
 \bigl(t^{d+\beta-1-pa}+t^{sp-\beta-1}\bigr)\Phi_{d,s,p}(t)\dd t
\end{equation}
of the radial inversion in~\cite[Section~3]{FS}, extended to the
mixed-exponent formula in \Kij\ (Theorem~1, Eq.~(4)). Evaluating
\eqref{eq:Jradial} in closed form, or instead applying the power formula
for $p=2$ to the two terms of
$|1-|z|^{-a}|^2=2(1-|z|^{-a})-(1-|z|^{-2a})$, gives the entries of
Table~\ref{tab:app-evaluations}. Every entry evaluates the candidate;
none of them asserts sharpness. The first row is derived in
Section~\ref{sec:constant-evaluation} and the $p=2$ rows in
Section~\ref{sec:quadratic-constant}.

\begin{table}[tbp]
\centering
\small
\caption{Established evaluations of the homogeneous integral
$J^{(\beta)}_{d,s,p}(a)$ of \eqref{eq:Ja}, in the coefficient-one
normalization \eqref{eq:L-normalization}.}
\label{tab:app-evaluations}
\begin{tabular}{@{}p{1.45in}p{3.05in}p{1.35in}@{}}
\toprule
Evaluation & Closed form & Range and source\\
\midrule
Angular factor
 & $\displaystyle \Phi_{d,s,p}(r)=\frac{2\pi^{d/2}}{\Gamma(d/2)}\,
 {}_2F_1\Bigl(\tfrac{d+sp}{2},1+\tfrac{sp}{2};\tfrac d2;r^2\Bigr)$
 & $0<r<1$;\newline \cite[Eq.~(16)]{DKWeighted}\\[2mm]
$d=3$, all $p$
 & $\displaystyle \Phi_{3,s,p}(r)=\frac{2\pi}{(1+sp)r}
 \bigl((1-r)^{-1-sp}-(1+r)^{-1-sp}\bigr)$
 & $0<r<1$\\[2mm]
Power, $p=2$
 & $\displaystyle (-\Delta)^s|x|^{-b}=\kappa_{d,s}(b)|x|^{-b-2s}$,\newline
 $\displaystyle \kappa_{d,s}(b)=4^s
 \frac{\Gamma\bigl(\tfrac{d-b}{2}\bigr)\Gamma\bigl(\tfrac{b+2s}{2}\bigr)}
 {\Gamma\bigl(\tfrac b2\bigr)\Gamma\bigl(\tfrac{d-b-2s}{2}\bigr)}$
 & $-2s<b<d$;\newline \cite[Eqs.~(3.3)--(3.5)]{FLS}\\[3mm]
$p=2$, no weight
 & $\displaystyle J_{d,s,2}(a)=
 \frac{2\kappa_{d,s}(a)-\kappa_{d,s}(2a)}{C^{\mathrm{FL}}_{d,s}}$
 & $-s<a<\tfrac d2$\\[3mm]
$p=2$, weight $|z|^\beta$
 & $\displaystyle J^{(\beta)}_{d,s,2}(a)=
 \frac{2\kappa_{d,s}(a-\beta)-\kappa_{d,s}(2a-\beta)
       -\kappa_{d,s}(-\beta)}{C^{\mathrm{FL}}_{d,s}}$
 & $\tfrac{\beta-2s}{2}<a<\tfrac{d+\beta}{2}$,\newline $-d<\beta<2s$\\[3mm]
Classical constant,\newline $\alpha=0$, $a=\tfrac{d-2s}{2}$
 & $\displaystyle \frac{2\pi^{d/2}|\Gamma(-s)|
 \bigl[\Gamma\bigl(\tfrac{d+2s}{4}\bigr)\bigr]^2}
 {\Gamma\bigl(\tfrac{d+2s}{2}\bigr)
 \bigl[\Gamma\bigl(\tfrac{d-2s}{4}\bigr)\bigr]^2}$
 & $d>2s$;\newline \cite[Eq.~(1.6)]{FS}\\[3mm]
$d=3$, $s=\tfrac12$, $p=2$
 & $\displaystyle \pi^2\Bigl[2(1-a)\tan\frac{\pi a}{2}
 -(1-2a)\tan(\pi a)\Bigr]$
 & $-\tfrac12<a<\tfrac32$,\newline limits at the poles\\
\bottomrule
\end{tabular}
\end{table}

Two remarks. Both closed forms for $\Phi_{d,s,p}$ are stated for
$0<r<1$; for $r>1$ they extend by the exact relation
$\Phi_{d,s,p}(1/r)=r^{d+sp}\Phi_{d,s,p}(r)$. The classical constant in
the sixth row is the value of the candidate at the weight $\alpha=0$,
that is, at $a=(d-2s)/2$; it is the constant of the one-weight
fractional Hardy inequality, and the spectral normalization
$4^s[\Gamma(\tfrac{d+2s}{4})/\Gamma(\tfrac{d-2s}{4})]^2$
of~\cite{Herbst,Beckner95,Yafaev} equals $C^{\mathrm{FL}}_{d,s}/2$
times it, as noted in Section~\ref{sec:problem}.

\subsection{Radial inversion}\label{sec:constant-evaluation}

We derive \eqref{eq:Jradial}. For $(\beta-sp)/p<a<(d+\beta)/p$,
polar coordinates and the angular factor $\Phi_{d,s,p}$ of
\eqref{eq:intro-Phi} give
$J^{(\beta)}_{d,s,p}(a)=\int_0^\infty r^{d+\beta-1}|1-r^{-a}|^p
\Phi_{d,s,p}(r)\dd r$. Since $|e-r^{-1}\omega|=r^{-1}|e-r\omega|$,
one has $\Phi_{d,s,p}(1/r)=r^{d+sp}\Phi_{d,s,p}(r)$. Substituting
$r=1/t$ in the part $r>1$ and using
$|1-t^{-a}|^p=t^{-pa}|1-t^a|^p$ in the part $r<1$ gives
\eqref{eq:Jradial}; all changes of variables apply to nonnegative
integrands. At zero, integrability is equivalent to $a<(d+\beta)/p$
when $a>0$ and to $a>(\beta-sp)/p$ when $a<0$. Near one,
$\Phi_{d,s,p}(t)\asymp(1-t)^{-1-sp}$ and $|1-t^a|^p=O((1-t)^p)$, so
$s<1$ suffices.

For $d=3$, direct integration of
$2\pi\int_{-1}^1(1-2t\xi+t^2)^{-(3+sp)/2}\dd\xi$ gives the second row of
Table~\ref{tab:app-evaluations}. At $sp=1$ that row reads
$\Phi_{3,1/2,2}(t)=4\pi(1-t^2)^{-2}$, and \eqref{eq:Jradial} becomes
\begin{equation}\label{eq:Phi3-half}
 J_{3,1/2,2}^{(\beta)}(a)=4\pi\int_0^1
 \frac{(t^{2+\beta-2a}+t^{-\beta})(1-t^a)^2}{(1-t^2)^2}\dd t.
\end{equation}
The hypergeometric entry is the Euler integral combined with the
quadratic transformation in~\cite{DLMF}.

\subsection{Gamma and trigonometric evaluations when \texorpdfstring{$p=2$}{p=2}}\label{sec:quadratic-constant}

For $p=2$ the power entry of Table~\ref{tab:app-evaluations} converts
\eqref{eq:Jradial} into Gamma functions. Write
\begin{equation}\label{eq:cGamma}
 \mathfrak c_n(\gamma)=
 \frac{\pi^{d/2}\Gamma(1-s)}{s\Gamma(d/2+s)}
 \frac{\Gamma((d+n-\gamma)/2)\Gamma((n+\gamma+2s)/2)}
 {\Gamma((n+\gamma)/2)\Gamma((d+n-\gamma-2s)/2)},
\end{equation}
the coefficient of the operator in \eqref{eq:L-normalization} on a
homogeneous power times a spherical harmonic. Then
\begin{equation}\label{eq:powerL}
 \PV\int_{\R^d}\frac{f(x)-f(y)}{|x-y|^{d+2s}}\dd y
 =\mathfrak c_n(\gamma)|x|^{-\gamma-2s}Y_n(\widehat x),
 \qquad f(x)=|x|^{-\gamma}Y_n(\widehat x),
\end{equation}
for $-2s<\gamma<d$ and $x\ne0$, with removable singularities
interpreted by continuity. For the coefficient, let
$\widehat f(\xi)=\int e^{-ix\cdot\xi}f(x)\dd x$ (the hat on a function
denotes the transform, in contrast with the direction $\widehat x$ of a
point). The Riesz-kernel Fourier formula gives the radial coefficient
$A_n(\gamma)=2^{d-\gamma}\pi^{d/2}\Gamma((d+n-\gamma)/2)/
\Gamma((n+\gamma)/2)$ and the angular factor $i^{-n}Y_n(\widehat\xi)$;
multiplication by $|\xi|^{2s}$ and inversion therefore produce
$4^sA_n(\gamma)/A_n(\gamma+2s)$, which is \eqref{eq:cGamma}. This
proves \eqref{eq:powerL} first for $0<\gamma<d-2s$. It extends to
$-2s<\gamma<d$ by analyticity in $\gamma$, as
in~\cite[Section~1.1 and Proposition~3]{DKK}: the power is locally
integrable for $\gamma<d$, the kernel times the power is integrable at
infinity for $\gamma>-2s$, and the symmetric second difference is
$O(|x-y|^2)$ uniformly on compact parameter intervals. The factor $4^s$
of the fractional Laplacian has thus cancelled in \eqref{eq:cGamma},
which is the only normalization statement used below.

\begin{proposition}[Weighted quadratic evaluation]\label{prop:quadratic-evaluation}
Let $d\ge2$, $0<s<1$, $-d<\beta<2s$, and
$(\beta-2s)/2<a<(d+\beta)/2$. Then
\begin{equation}\label{eq:Jgamma}
 J_{d,s,2}^{(\beta)}(a)
 =2\mathfrak c_0(a-\beta)-\mathfrak c_0(2a-\beta)
  -\mathfrak c_0(-\beta).
\end{equation}
\end{proposition}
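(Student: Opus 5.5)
The plan is to compute $J_{d,s,2}^{(\beta)}(a)$ by writing it as a quadratic energy density evaluated at a unit point and then applying the power formula \eqref{eq:powerL} with $n=0$. Fix $|e|=1$ and put $\phi(z)=|z|^{-a}$. Since $\phi(e)=1$, the integrand is $|\phi(e)-\phi(z)|^2|z|^\beta|e-z|^{-d-2s}$. The product identity used in Section~\ref{sec:explicit} expresses a squared difference through principal values:
\[
 (\phi(x)-\phi(y))^2|y|^\beta
 =2\phi(x)\bigl(\phi(x)-\phi(y)\bigr)|y|^\beta-\bigl(\phi(x)^2-\phi(y)^2\bigr)|y|^\beta .
\]
Each term on the right must then be turned into an $L$-type expression. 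To do this, write
\[
 (\phi(x)-\phi(y))|y|^\beta=\bigl(\phi(x)|x|^\beta-\phi(y)|y|^\beta\bigr)-\phi(x)\bigl(|x|^\beta-|y|^\beta\bigr).
\]
This rearrangement is exactly $L_\beta\phi=L(|x|^\beta\phi)-\phi L(|x|^\beta)$.

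Evaluating at $x=e$ with a common symmetric cutoff $|e-y|>\epsilon$ gives
\[
 J=2\bigl[L(|x|^{\beta-a})(e)-L(|x|^\beta)(e)\bigr]-\bigl[L(|x|^{\beta-2a})(e)-L(|x|^\beta)(e)\bigr].
\]
By \eqref{eq:powerL} with $n=0$ and $\gamma=a-\beta,\ 2a-\beta,\ -\beta$, each $L(|x|^{-\gamma})(e)$ equals $\mathfrak c_0(\gamma)$. The $L(|x|^\beta)$ terms combine with coefficient $-2+1=-1$, which gives \eqref{eq:Jgamma}.

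The main obstacle is legitimacy. Each exponent must lie in $-2s<\gamma<d$, and splitting the convergent integral into principal values must be valid. For the exponents: $\gamma=-\beta$ lies in the range because $-d<\beta<2s$; $\gamma=a-\beta$ and $\gamma=2a-\beta$ lie in the range because of the constraints on $a$ and $\beta$. I would verify the second case explicitly: $2a-\beta<d$ follows from $a<(d+\beta)/2$, and $2a-\beta>-2s$ follows from $a>(\beta-2s)/2$. The first case is then derived by averaging it with the case $\gamma=-\beta$, since $a-\beta=\tfrac12\bigl((2a-\beta)+(-\beta)\bigr)$ is the midpoint.

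For the splitting, I would truncate to $\{\epsilon<|e-y|\}$ and, near $y=0$ and at infinity, use the same integrability of each power that the $\gamma$-range guarantees. Each truncated principal value converges as $\epsilon\to0$ because the first-order terms cancel under the symmetric cutoff. The left side converges by dominated convergence, since the squared difference is $O(|e-y|^2)$. Passing to the limit in the finite linear identity then completes the proof.
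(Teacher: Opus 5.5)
Your proposal is correct and matches the paper's proof. It uses the same splitting $(1-|z|^{-a})^2=2(1-|z|^{-a})-(1-|z|^{-2a})$, the same reduction through $L_\beta f=L(|x|^\beta f)-fL(|x|^\beta)$ to three instances of \eqref{eq:powerL} with $n=0$, and the same limit under one common cutoff $|z-e|>\epsilon$; your midpoint argument for $a-\beta\in(-2s,d)$ just makes explicit the exponent-range check that the paper states in a single line.
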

\begin{proof}
For a power times a spherical harmonic, the exact product identity
\[
 L_\beta f=L(|x|^\beta f)-fL(|x|^\beta)
\]
and \eqref{eq:powerL} give
\[
 L_\beta\bigl(|x|^{-r}Y_n(\widehat x)\bigr)
 =\bigl(\mathfrak c_n(r-\beta)-\mathfrak c_0(-\beta)\bigr)
 |x|^{\beta-r-2s}Y_n(\widehat x).
\]
This uses $-\beta,r-\beta\in(-2s,d)$ for $r=a,2a$,
which follows from the stated strict inequalities. Integrate
$(1-|z|^{-a})^2=2(1-|z|^{-a})-(1-|z|^{-2a})$
against $|z|^\beta|e-z|^{-d-2s}$ with one common cutoff
$|z-e|>\epsilon$. Apply the formula to the two right-hand
integrals and pass to the limit. The left-hand integral converges
absolutely, proving \eqref{eq:Jgamma}.
\end{proof}

At $\beta=0$ the proposition reduces to
$J_{d,s,2}(a)=2\mathfrak c_0(a)-\mathfrak c_0(2a)$; at
$a=(d-2s)/2$ it gives the classical constant of
Table~\ref{tab:app-evaluations}. For a two-weight example, retain
$d=3$, $s=1/2$, $p=2$, $q=4$, and $a=1/4$. If $-5/8\le\beta<1$,
then $a\le D/3$ and $\alpha=-2\beta$, and
Theorem~\ref{thm:main} gives
\begin{equation}\label{eq:two-weight-example}
 H_{-2\beta,\beta}^{2,4}
 =\bigl[2\mathfrak c_0(1/4-\beta)
       -\mathfrak c_0(1/2-\beta)-\mathfrak c_0(-\beta)\bigr]^2,
\end{equation}
whose value at $\beta=0$ is $H_{0,0}^{2,4}=\pi^4(3\sqrt2-4)^2/4$.
All these identities use the coefficient-one kernel.

\section{An all-parameter scalar comparison for \texorpdfstring{$p>q$}{p>q}}\label{app:reverse-scalar}

The scalar support inequality from the $p<q$ analysis has an unrestricted counterpart when $p>q$. Because $q-p<0$, this statement is first made only for strictly positive $U,V$.

\begin{proposition}[All-parameter scalar support when $p>q$]\label{prop:reverse-scalar}
Let $1<q<p<\infty$, $0<\vartheta<1$, and $b\ge0$. Define
\begin{align}
 A_{\vartheta,b}
 &=\frac{(1-\vartheta)^{p-1}}q
       \bigl(q-(q-p)\vartheta+pb\bigr),\label{eq:reverse-support-A}\\
 B_{\vartheta,b}
 &=\frac{(1-\vartheta)^{p-1}}q
       \bigl(b(q-p-q\vartheta)-p\vartheta\bigr).\label{eq:reverse-support-B}
\end{align}
For all $U,V>0$,
\begin{equation}\label{eq:reverse-scalar-support}
 |U-\vartheta V|^p(U^{q-p}+bV^{q-p})
 \ge A_{\vartheta,b}U^q+B_{\vartheta,b}V^q,
\end{equation}
with equality if and only if $U=V$. Moreover, there is a constant
$c_{p,q}(\vartheta)>0$, independent of $b$, such that
\begin{equation}\label{eq:reverse-uniform-b}
\begin{aligned}
 &|U-\vartheta V|^p(U^{q-p}+bV^{q-p})
       -A_{\vartheta,b}U^q-B_{\vartheta,b}V^q\\
 &\hspace{25mm}\ge c_{p,q}(\vartheta)
                  (U^{q/2}-V^{q/2})^2.
\end{aligned}
\end{equation}
\end{proposition}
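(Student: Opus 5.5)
The plan is to exploit that $G_{\vartheta,b}$ is affine in $b$, exactly as in the endpoint interpolation step of Lemma~\ref{lem:scalar-support}, and to prove nonnegativity of the two coefficient functions separately. With $m=q-p<0$, expanding \eqref{eq:reverse-support-A}--\eqref{eq:reverse-support-B} gives
\[
 |U-\vartheta V|^p(U^{m}+bV^{m})-A_{\vartheta,b}U^q-B_{\vartheta,b}V^q
 =G^{(0)}_\vartheta(U,V)+b\,H_\vartheta(U,V),
\]
where
\[
\begin{aligned}
 G^{(0)}_\vartheta(U,V)&=|U-\vartheta V|^pU^m-A_{\vartheta,0}U^q-B_{\vartheta,0}V^q,\\
 H_\vartheta(U,V)&=|U-\vartheta V|^pV^m-\tfrac{(1-\vartheta)^{p-1}}{q}\bigl(pU^q+(m-q\vartheta)V^q\bigr).
\end{aligned}
\]
Both pieces vanish to first order on the diagonal, by the same derivative-matching computation as before. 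Since $b\ge0$ is unrestricted, it suffices to prove two facts. First, $H_\vartheta\ge0$, with strict inequality off the diagonal. Second, $G^{(0)}_\vartheta\ge c_{p,q}(\vartheta)(U^{q/2}-V^{q/2})^2$. Together these give \eqref{eq:reverse-uniform-b} with a constant independent of $b$, and hence also \eqref{eq:reverse-scalar-support} together with its equality case.

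Normalize $V=1$, $U=z>0$, and put $x=z^q$ and $y=x^{1/q}$. Each piece is then the gap between a function of $x$ and its tangent line at $x=1$, so strict convexity on $(0,\infty)$ suffices, via criterion~(i) of Lemma~\ref{lem:scalar-support}.

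\emph{The piece $H_\vartheta$.} The function is $h(x)=|x^{1/q}-\vartheta|^p$. Differentiating $\pm|y-\vartheta|^{p-1}y^{1-q}$ in $y$ gives a multiple of $|y-\vartheta|^{p-2}y^{-q}$ times a factor. For $y>\vartheta$ this factor is $(p-q)y+(q-1)\vartheta>0$, which uses $p>q$. For $y<\vartheta$ it is a sum of two positive terms. The derivative $h'$ is continuous at $x=\vartheta^q$ because $p>1$. Hence $h$ is strictly convex, and so $H_\vartheta\ge0$ with equality only at $U=V$.

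\emph{The piece $G^{(0)}_\vartheta$.} Here the function is $f_0(x)=x\,g(1/x)$ with $g(t)=|1-\vartheta t^{1/q}|^p$. This is the perspective of $g$, so it suffices to show that $g$ is strictly convex. The analogous computation with $u=\vartheta t^{1/q}$ gives a positive sign in both regimes: for $u>1$ the bracket is $(p-q)u+(q-1)>0$, and for $u<1$ both terms are positive. This yields strict support, and the equality case $U=V$ in \eqref{eq:reverse-scalar-support}.

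The main obstacle is the quantitative constant for $G^{(0)}_\vartheta$. I would repeat the compactification argument of Lemma~\ref{lem:scalar-support}, working with the ratio $G^{(0)}_\vartheta(z,1)/(z^{q/2}-1)^2$ on $\xi=z/(1+z)\in[0,1]$, and check three points.
\begin{itemize}
\item At the diagonal, the Taylor limit is $2f_0''(1)>0$, by strict convexity with a nonvanishing second derivative at $x=1$.
\item As $z\to\infty$, the ratio tends to $1-A_{\vartheta,0}$. We have
\[
A_{\vartheta,0}=(1-\vartheta)^{p-1}\bigl(1+\tfrac{p-q}{q}\vartheta\bigr)\le(1-\vartheta)^{p-1}\bigl(1+(p-1)\vartheta\bigr)<1
\]
by the common slope bound $g(\vartheta)<1$, so this limit is positive.
\item As $z\to0$, the term $\vartheta^pz^{m}$ diverges because $m<0$, so the ratio tends to $+\infty$. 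This endpoint is therefore harmless; it is exactly the reason the statement restricts to $U,V>0$.
\end{itemize}
Continuity and positivity of the ratio on the compactified interval then give a positive minimum $c_{p,q}(\vartheta)$. Adding the nonnegative term $bH_\vartheta$ preserves the bound uniformly in $b$.
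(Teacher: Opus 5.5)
Your proposal is correct and follows essentially the same route as the paper. The paper also writes the remainder as $G_0+bG^{(1)}$ with $G^{(1)}\ge0$ coming from the convexity of $t\mapsto|t^{1/q}-\vartheta|^p$, and it obtains $G_0\ge0$ from the perspective of the convex function $t\mapsto|1-\vartheta t^{1/q}|^p$. It then extracts $c_{p,q}(\vartheta)$ from the ratio $G_0(z,1)/(z^{q/2}-1)^2$ using the same three limits: the positive diagonal value, $1-A_{\vartheta,0}>0$ at infinity, and $+\infty$ at zero.

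The differences are cosmetic. You normalize $V=1$ and use one-variable tangent lines rather than the joint supporting plane of the perspective function. You also bound $A_{\vartheta,0}$ by the slope bound $(1-\vartheta)^{p-1}(1+(p-1)\vartheta)<1$, whereas the paper differentiates in $\vartheta$.
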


\begin{proof}
For $c>0$ put $h_c(t)=|t^{1/q}-c|^p$, $t>0$. Away from $t=c^q$,
\begin{equation}\label{eq:reverse-hsecond}
 h_c''(t)=\frac{p}{q^2}t^{1/q-2}|t^{1/q}-c|^{p-2}
            \bigl((p-q)t^{1/q}+(q-1)c\bigr)>0.
\end{equation}
Its first derivative is continuous at $c^q$ since $p>1$. Thus $h_c$ is strictly convex on $(0,\infty)$, including across that point.

Set $X=U^q$ and $Y=V^q$. The left side of \eqref{eq:reverse-scalar-support} is
\begin{align}
 \Phi_b(X,Y)
 &:=|X^{1/q}-\vartheta Y^{1/q}|^p
       (X^{1-p/q}+bY^{1-p/q})\nonumber\\
 &=\vartheta^p X h_{1/\vartheta}(Y/X)
       +bYh_\vartheta(X/Y).\label{eq:reverse-perspective}
\end{align}
Each term is a convex function of $(X,Y)$. For example,
$Xh(Y/X)$ is convex because the ratio for a convex combination is a convex combination of $Y_1/X_1$ and $Y_2/X_2$ with weights proportional to $X_1,X_2$. Hence $\Phi_b$ is convex and positively one-homogeneous. Its supporting plane at $(1,1)$ has no constant term by Euler's identity. Direct differentiation gives the coefficients in \eqref{eq:reverse-support-A}--\eqref{eq:reverse-support-B}. Strict convexity of the first ratio function makes contact with this plane possible only when $Y/X=1$, proving the equality statement. Here strictness means changing the ratio $Y/X$: scaling both
variables by the same positive factor is a direction in which the
function is linear.

Let $G_b$ denote the difference in \eqref{eq:reverse-uniform-b}. Both convex terms have nonnegative supporting-plane remainders, so
\[
 G_b(U,V)=G_0(U,V)+b\,G^{(1)}(U,V),\qquad G^{(1)}\ge0.
\]
It therefore suffices to bound $G_0$. By homogeneity set $V=1$ and consider
\[
 Q(z)=\frac{G_0(z,1)}{(z^{q/2}-1)^2},\qquad z>0,
\]
with its continuous extension at $z=1$. The limiting values are
\begin{align}
 \lim_{z\to1}Q(z)
 &=\frac{2p}{q^2}\vartheta(1-\vartheta)^{p-2}
             \bigl((q-1)+(p-q)\vartheta\bigr)>0,\label{eq:reverse-diagonal-gap}\\
 \lim_{z\to\infty}Q(z)&=1-A_{\vartheta,0}>0,\qquad
 \lim_{z\downarrow0}Q(z)=+\infty.\label{eq:reverse-end-gap}
\end{align}
Indeed,
\[
 A_{\vartheta,0}=(1-\vartheta)^{p-1}
              \left(1+\frac{p-q}{q}\vartheta\right)<1,
\]
because this expression equals one at $\vartheta=0$ and has strictly negative derivative for $0<\vartheta<1$. At zero, the term $\vartheta^p z^{q-p}$ diverges. Strict positivity on the remaining compact interval yields $\inf Q>0$. Taking this infimum as $c_{p,q}(\vartheta)$ proves the bound for every $b\ge0$.
\end{proof}

\begin{remark}[The scalar inequality is not, by itself, an integral proof]\label{rem:reverse-not-direct}
The factors $U^{q-p}$ and $V^{q-p}$ in Proposition~\ref{prop:reverse-scalar} have negative exponents. They cannot be inserted without regularization at zeros of a compactly supported test function. In addition, the reduction based on $\int v^{q-p}\Aa_{a,\beta}[v]$ uses an outer H\"older inequality with $q/p>1$, which is unavailable here. The treatment of $p>q$ avoids this obstruction by choosing the H\"older weight from the homogeneous power, so that only nonnegative powers of the test function occur; see~\cite[Theorem~1]{KijTL}.
\end{remark}

\section{Auxiliary estimates for \texorpdfstring{$p>q$}{p>q}}\label{sec:reverse-order}

In this appendix $1<q<p<\infty$. In this exponent order the sharp
constant for point singularities is known: \Kij\ (Theorem~1) proves
\eqref{eq:intro-Kij} with the constant
$J^{(\beta)}_{d,s,p}(a)^{q/p}$ on his test classes, and our test class,
whose functions vanish near the origin, is contained in either of his
classes. We therefore do not repeat that proof. What the flat and
half-space arguments of Sections~\ref{sec:flat}
and~\ref{sec:halfspace} use are the three estimates below. They are
stated for a general kernel, so that they apply unchanged to
$k_\beta$, $k_{\Sigma,\beta}$ and $k^+_\beta$.

For $q>1$ write $\varphi_q(t)=|t|^{q-2}t$, with $\varphi_q(0)=0$.

\begin{lemma}[A discrete Picone remainder without negative test powers]\label{lem:reverse-picone}
For $\theta>0$ and $U,V\ge0$, let
\begin{equation}\label{eq:reverse-Picone-def}
 D_{q,\theta}(U,V)
 =|U-\theta V|^q-\varphi_q(1-\theta)(U^q-\theta V^q).
\end{equation}
Then $D_{q,\theta}\ge0$, with equality exactly when $U=V$. If $\theta\ne1$, there is $d_q(\theta)>0$ such that
\begin{equation}\label{eq:reverse-Picone-coercive}
 D_{q,\theta}(U,V)\ge d_q(\theta)(U^{q/2}-V^{q/2})^2.
\end{equation}
The constant can be bounded below uniformly when $\theta$ lies in a compact subset of $(0,\infty)\setminus\{1\}$.
\end{lemma}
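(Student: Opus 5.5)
The plan follows the perspective-convexity argument of Proposition~\ref{prop:reverse-scalar}, now with only nonnegative powers. Put $h(t)=|t^{1/q}-\theta|^q$ for $t\ge0$, $X=U^q$ and $Y=V^q$. For $Y>0$ the left side of \eqref{eq:reverse-Picone-def} becomes
\[
 |U-\theta V|^q=Y\,h(X/Y),
\]
which is the perspective of $h$. The subtracted term $\varphi_q(1-\theta)(X-\theta Y)$ is linear in $(X,Y)$. At $X=Y$ it equals $|1-\theta|^q$. Its $X$-slope is $\varphi_q(1-\theta)$, which is exactly $h'(1)=(1-\theta)|1-\theta|^{q-2}$. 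The subtracted term is therefore the tangent line of $h$ at $t=1$, multiplied by $Y$:
\[
 D_{q,\theta}(U,V)=Y\bigl(h(x)-h(1)-h'(1)(x-1)\bigr),\qquad x=X/Y.
\]
On the axis $V=0$ one has $D=U^q(1-\varphi_q(1-\theta))$, and this is the same as the limit of the formula above as $Y\to0$.

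The first step is strict convexity of $h$ on $[0,\infty)$. Writing $t=z^q$, the chain-rule formula used in Section~\ref{sec:scalar} gives
\[
 h''(z^q)=\frac{(q-1)\,\theta\,|z-\theta|^{q-2}}{q\,z^{2q-1}}
\]
after simplification. This is positive for $z\ne\theta$. At $z=\theta$, $h'$ is continuous because $q>1$, and it is strictly increasing across that point. Convexity then gives $D\ge0$, with equality only when $x=1$, that is $U=V$, on the region $Y>0$. On the axis, $1-\varphi_q(1-\theta)>0$ holds automatically: it is obvious when $\theta\ge1$, and when $\theta<1$ it follows from $(1-\theta)^{q-1}<1$. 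This settles nonnegativity and the equality cases for every $\theta>0$. When $\theta=1$ the tangent line vanishes and $D=|U-V|^q$, so the equality statement still holds.

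For the coercive bound with $\theta\ne1$, I would copy the compactification argument from the proof of Lemma~\ref{lem:scalar-support}. Set
\[
 Q(z)=\frac{D(z,1)}{(z^{q/2}-1)^2}
\]
and check its three limits.
\begin{enumerate}
\item At $z=1$, Taylor expansion gives $Q(1)=2h''(1)=\frac{2(q-1)}{q}\theta|1-\theta|^{q-2}>0$. This is finite and positive because $\theta\ne1$; it is exactly where $\theta=1$ fails when $q<2$.
\item At $z=0$, $Q(0)=D(0,1)=\theta^q+\varphi_q(1-\theta)\theta>0$ by strict support.
\item As $z\to\infty$, $Q(z)\to 1-\varphi_q(1-\theta)>0$.
\end{enumerate}
The map $\xi=z/(1+z)$ then makes $Q$ continuous and strictly positive on $[0,1]$, so its minimum $d_q(\theta)$ is positive. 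Homogeneity transfers the bound to all pairs, including the axis.

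The main obstacle is the uniformity over compact subsets of $(0,\infty)\setminus\{1\}$. To handle it, I would show that the extended ratio is jointly continuous in $(\xi,\theta)$ on $[0,1]\times K$. Near the diagonal, $h$ is smooth in $(t,\theta)$ for $t$ close to $1$: since $\theta\in K$ stays away from $1$, the singular point $t=\theta^q$ stays away from $t=1$. The integral form of the Taylor remainder then gives joint continuity at $\xi=1/2$. Everywhere else, continuity is direct from the formula. A continuous positive function on the compact set $[0,1]\times K$ has a positive minimum, which gives the uniform lower bound.
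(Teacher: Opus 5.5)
Your proof is correct. For the coercive bound you follow the paper's route: the three limits of $Q$ at the diagonal, at zero and at infinity, followed by compactness. You also supply the joint-continuity argument in $(\xi,\theta)$ for the uniform constant, which the paper only asserts.

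For the support inequality itself the routes differ.
\begin{itemize}
\item The paper treats $0<\theta<1$ by the derivative-free chain $U^q\le(|U-\theta V|+\theta V)^q\le(1-\theta)^{1-q}|U-\theta V|^q+\theta V^q$. This is the triangle inequality plus convexity of $t\mapsto t^q$, and equality in both steps forces $U=V$. It then reduces $\theta>1$ to this case through the reflection identity $D_{q,\theta}(U,V)=\theta^qD_{q,1/\theta}(V,U)$.
\item You instead write $D_{q,\theta}$ as $V^q$ times the tangent-line remainder at $t=1$ of $h(t)=|t^{1/q}-\theta|^q$. You then prove strict convexity of $h$ from $h''(z^q)=(q-1)\theta|z-\theta|^{q-2}/(qz^{2q-1})$; this formula and the tangent-line identification are correct.
\end{itemize}
The paper's argument is elementary and handles zero values with no limiting step. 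Yours treats every $\theta>0$ at once, with no reflection. It also makes the diagonal constant $Q(1)=2h''(1)$ fall out of the same computation, in parallel with Lemma~\ref{lem:scalar-support} and Proposition~\ref{prop:reverse-scalar}.

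One side remark is misleading, though it does not affect the proof. You say the diagonal value ``is exactly where $\theta=1$ fails when $q<2$.'' At $\theta=1$ the coercive bound genuinely fails only for $q>2$, where $\frac{2(q-1)}{q}\theta|1-\theta|^{q-2}$ degenerates to $0$. For $q<2$ that value is $+\infty$, which is harmless for a lower bound.
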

\begin{proof}
For $0<\theta<1$, the triangle inequality and convexity give
\[
 U^q\le\bigl(|U-\theta V|+\theta V\bigr)^q
 \le (1-\theta)^{1-q}|U-\theta V|^q+\theta V^q.
\]
This proves \eqref{eq:reverse-Picone-def}; strict convexity identifies equality with $U=V$. For $\theta>1$, use the identity
\[
 D_{q,\theta}(U,V)=\theta^qD_{q,1/\theta}(V,U).
\]
At $\theta=1$, the remainder is simply $|U-V|^q$. This is a scalar specialization of the discrete Picone inequality in~\cite[Proposition~4.2]{BF}; the above argument includes zero test values directly.

For the quantitative assertion, divide by $(U^{q/2}-V^{q/2})^2$ and use homogeneity. With $V=1$, the limits at the diagonal, infinity, and zero are respectively
\begin{equation}\label{eq:reverse-Picone-limits}
\begin{aligned}
 &\frac{2(q-1)}q\,\theta|1-\theta|^{q-2}>0,\\
 &1-\varphi_q(1-\theta)>0,\\
 &\theta^q+\theta\varphi_q(1-\theta)>0.
\end{aligned}
\end{equation}
For example, when $\theta>1$ the last expression is
$\theta[\theta^{q-1}-(\theta-1)^{q-1}]>0$. Strictness elsewhere and compactness prove \eqref{eq:reverse-Picone-coercive}, including its parameter-uniform version.
\end{proof}

\begin{lemma}[Weighted H\"older inequality]\label{lem:reverse-calibrated}
Let $1<q<p$, let $\mu$ be a positive measure, and let $F,W\ge0$ with
$0<\int W^p\dd\mu<\infty$. The function $W$ is a chosen
reference weight; equality below holds for $F=W$. In the application,
$W=|1-e^{-at}|$ is the normalized difference of the homogeneous reference
profile at logarithmic displacement $t$. Then
\begin{equation}\label{eq:reverse-calibrated}
 \left(\int F^p\dd\mu\right)^{q/p}
 \ge
 \left(\int W^p\dd\mu\right)^{q/p-1}
 \int W^{p-q}F^q\dd\mu.
\end{equation}
\end{lemma}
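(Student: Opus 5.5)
The plan is a single application of H\"older's inequality with exponents $p/q$ and its conjugate $p/(p-q)$, after first using the reference weight $W$ to split $F^q$. If $\int W^{p-q}F^q\dd\mu=0$ there is nothing to prove, so assume it is positive. On the set where $W>0$, write
\[
 W^{p-q}F^q=\bigl(F^q\bigr)\cdot\bigl(W^{p-q}\bigr).
\]
Then apply H\"older's inequality with exponent $r=p/q>1$ on the first factor and $r'=p/(p-q)$ on the second:
\[
 \int W^{p-q}F^q\dd\mu
 \le\left(\int F^p\dd\mu\right)^{q/p}
    \left(\int W^{(p-q)p/(p-q)}\dd\mu\right)^{(p-q)/p}
 =\left(\int F^p\dd\mu\right)^{q/p}
    \left(\int W^p\dd\mu\right)^{1-q/p}.
\]
On the set where $W=0$, the integrand $W^{p-q}F^q$ vanishes because $p-q>0$, so restricting to $\{W>0\}$ loses nothing.

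Next, divide by $\left(\int W^p\dd\mu\right)^{1-q/p}$. This quantity is finite and strictly positive by the hypothesis $0<\int W^p\dd\mu<\infty$. Since
\[
 \left(\int W^p\dd\mu\right)^{-(1-q/p)}=\left(\int W^p\dd\mu\right)^{q/p-1},
\]
the result is exactly \eqref{eq:reverse-calibrated}. The same computation also covers the case $\int F^p\dd\mu=\infty$, where the inequality is trivial.

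For the equality remark, take $F=W$. Both sides then equal $\left(\int W^p\dd\mu\right)^{q/p}$, which is immediate by substitution.

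The only delicate point is bookkeeping rather than a real obstacle. One must confirm that all quantities are nonnegative and measurable, so that H\"older applies in the extended sense even when $\int F^p\dd\mu$ or $\int W^{p-q}F^q\dd\mu$ is infinite. One must also check that the exponent $p-q$ is positive, which guarantees that no negative powers of $W$ occur. This positivity is exactly what makes the lemma usable at the zeros of $|1-e^{-at}|$ (only $t=0$ when $a\ne0$) without any regularization.
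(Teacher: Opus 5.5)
Your proof is correct and follows the paper's argument: H\"older's inequality with the conjugate exponents $p/q$ and $p/(p-q)$, followed by division by the positive, finite factor $\bigl(\int W^p\dd\mu\bigr)^{1-q/p}$, with the case $\int F^p\dd\mu=\infty$ treated as trivial. The extra bookkeeping you add (the set $\{W=0\}$ and the check of the equality case $F=W$) is harmless, though not needed for the inequality itself.
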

\begin{proof}

If $\int F^p\dd\mu=\infty$, the assertion is immediate.
Otherwise H\"older with conjugate exponents $p/(p-q)$ and
$p/q$ gives
\[
 \int W^{p-q}F^q\dd\mu
 \le\left(\int W^p\dd\mu\right)^{(p-q)/p}
      \left(\int F^p\dd\mu\right)^{q/p}.
\]
The reference integral is positive and finite, so dividing by
its power $(p-q)/p=1-q/p$ proves the claim. In particular,
the exponent of the reference integral on the lower-bound side
is $q/p-1<0$; no concavity inequality is reversed.
\end{proof}

\begin{lemma}[Logarithmic cutoff error in the mixed norm]\label{lem:reverse-cutoff}
Let $T\ge1$ and $0\ne\zeta\in C_c^\infty((-1,1))$, and put
$v_T(\tau)=\zeta(\tau/T)$. Let $k\ge0$ be measurable with
$k(t)\asymp|t|^{-1-sp}$ as $t\to0$ and, for some
$\lambda_+,\lambda_->0$,
\[
 k(t)\le Ce^{-\lambda_+t}\ \ (t\to+\infty),\qquad
 k(t)\le Ce^{\lambda_-t}\ \ (t\to-\infty).
\]
If $-\lambda_+/p<a<\lambda_-/p$, then $w_a(t)=k(t)e^{-pat}$ satisfies
$\int_\R w_a(t)(1+|t|)^p\dd t<\infty$, and
\begin{equation}\label{eq:reverse-cutoff-error}
\begin{aligned}
 e_T(\tau)^p&=\int_\R w_a(t)
 \bigl|\zeta(\tau/T)-\zeta((\tau+t)/T)\bigr|^p\dd t
 \ \text{satisfies}\\
 \|e_T\|_{L^q(\R)}&\le C T^{1/q-1}+Ce^{-c'T},
 \qquad
 \|v_T\|_{L^q(\R)}=T^{1/q}\|\zeta\|_q .
\end{aligned}
\end{equation}
\end{lemma}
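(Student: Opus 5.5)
The plan has three parts: prove the moment bound for $w_a$, bound $e_T$ pointwise by splitting the displacement integral, and integrate in $L^q$.

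\emph{Moment bound.} Near $t=0$ we have $w_a(t)(1+|t|)^p\asymp|t|^{-1-sp}$. This alone is not integrable, so the finite moment needed for the error estimate must be the $|t|^p$-moment, which behaves like $|t|^{p-1-sp}$ and is integrable because $s<1$. I will therefore prove finiteness of
\[
 \int_\R w_a(t)\min\{|t|,1\}^p(1+|t|)^p\,\dd t .
\]
As $t\to+\infty$ the weight is at most $C(1+t)^pe^{-(\lambda_++pa)t}$, and as $t\to-\infty$ it is at most $C(1+|t|)^pe^{(\lambda_--pa)t}$. Both exponents are positive exactly when $-\lambda_+/p<a<\lambda_-/p$, so both tails are integrable. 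Between these regions $k$ is taken to be locally bounded; this is implicit in the stated hypotheses and holds for every kernel the lemma is applied to. With this reading, the displayed claim $\int w_a(1+|t|)^p<\infty$ refers to this cut-off moment.

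\emph{Pointwise bound on $e_T$.} Write $\Delta_t(\tau)=\zeta(\tau/T)-\zeta((\tau+t)/T)$ and split the $t$-integral at $|t|=T$.
\begin{itemize}
\item For $|t|\le T$, the mean value theorem gives $|\Delta_t(\tau)|\le(|t|/T)\|\zeta'\|_\infty$. Moreover $\Delta_t(\tau)$ vanishes unless $\tau\in(-T-|t|,T+|t|)\subset(-2T,2T)$. So this part of $e_T(\tau)^p$ is at most
\[
 C\,T^{-p}\int_\R w_a(t)|t|^p\,\dd t\;\mathbf 1_{|\tau|<2T}.
\]
\item For $|t|>T$, use the crude bound $|\Delta_t|\le 2\|\zeta\|_\infty$ together with the exponential tails. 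This part is at most $C\int_{|t|>T}w_a(t)\,\dd t\le Ce^{-c_0T}$, with $c_0=\min\{\lambda_++pa,\lambda_--pa\}>0$.
\end{itemize}

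\emph{Integration in $L^q$.} The main obstacle is that the tail piece does not have localized support in $\tau$. If $|\tau|>T$ then $\zeta(\tau/T)=0$, so only $\zeta((\tau+t)/T)$ survives, and this requires $|\tau+t|<T$. Hence for $|\tau|>2T$ the tail contribution is at most
\[
 C\int_{|\tau+t|<T}w_a(t)\,\dd t\le C\,T\,e^{-c_0(|\tau|-T)}.
\]
This decays exponentially in $|\tau|$, so its $L^q$ norm over $|\tau|>2T$ is $O(e^{-c'T})$ for any $c'<c_0$. Combining the pieces:
\begin{itemize}
\item the local piece has $L^q$ norm at most $CT^{-1}(4T)^{1/q}=CT^{1/q-1}$;
\item the tail restricted to $|\tau|<2T$ has $L^q$ norm at most $CT^{1/q}e^{-c_0T/p}\le Ce^{-c'T}$;
\item the tail for $|\tau|>2T$ is $O(e^{-c'T})$ as above.
\end{itemize}
The elementary inequality $(x+y)^{1/p}\le x^{1/p}+y^{1/p}$ then gives $\|e_T\|_{L^q}\le CT^{1/q-1}+Ce^{-c'T}$. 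Finally, the identity $\|v_T\|_{L^q}=T^{1/q}\|\zeta\|_q$ follows from the substitution $\tau=T\sigma$.
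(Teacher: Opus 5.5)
Your proposal is correct and follows the paper's proof. The Lipschitz bound on $\zeta$ gives $e_T\lesssim T^{-1}$ on $|\tau|\le 2T$. For $|\tau|>2T$ the support condition $|\tau+t|<T$ forces $|t|\ge|\tau|-T$, which gives exponential decay. Integrating $q$th powers then finishes the argument.

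Your split at $|t|=T$ is harmless but unnecessary, since the finite $|t|^p$-moment already covers all displacements. Your tail rate should read $c'<c_0/p$ rather than $c'<c_0$, which is immaterial because the lemma only asks for some $c'>0$.

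You are also right that $\int_\R w_a(t)(1+|t|)^p\dd t$ diverges at $t=0$ as literally stated. The moment must carry a factor $|t|^p$ near the origin, with $k$ locally bounded away from $0$, and that is exactly what the paper's argument uses.
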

\begin{proof}
On $|\tau|\le2T$ the Lipschitz bound on $\zeta$ gives
$e_T(\tau)\le C/T$. If $|\tau|>2T$ the integrand vanishes unless
$|\tau+t|<T$, hence $|t|\ge|\tau|-T$, and the exponential bounds on $k$
give $e_T(\tau)\le Ce^{-c(|\tau|-T)/p}$. Integrating the $q$th power
over $|\tau|\le2T$ contributes $CT^{1-q}$, and over $|\tau|>2T$ at
most $Ce^{-c'qT}$. The last identity is immediate.
\end{proof}

Lemma~\ref{lem:reverse-cutoff} is the step that replaces the
mixed-norm interchange used in~\cite{KijTL}: it needs only the
one-dimensional asymptotics of the kernel and applies to any test
profile that is radial in the logarithmic variable.

\section*{Statements and Declarations}

\paragraph{Use of AI tools.}
An AI assistant (ChatGPT, OpenAI) was used during preparation to assist
with mathematical calculations and proof drafts, literature searches,
and the organization and wording of the manuscript. Responsibility for
all mathematical claims, references, and the final text rests with the
authors.

% AUTHOR ACTION BEFORE SUBMISSION:
% Add the verified funding and competing-interests declarations here.
% Review the AI-use statement before submission.

\end{document}